 \setlist[itemize]{noitemsep,nolistsep}
  \setlist[enumerate]{noitemsep,nolistsep}
\def\Z{{\bf Z}}
\def\C{{\bf C}}
\def\P{{\bf P}}
\def\PP{{\bf P}}
\def\T{{\bf T}}
 \def\moins{\smallsetminus}
\def\wtilde{\widetilde}
\def\cI{\mathscr{I}}
\def\cA{\mathscr{A}}
\def\cB{\mathscr{B}}
\def\cF{\mathscr{F}}
\def\cL{\mathscr{L}}
\def\cO{\mathscr{O}}
\def\cE{\mathscr{E}}
\def\cEN{\mathscr{E\!N}}
\def\cM{\mathscr{M}}
\def\cN{\mathscr{N}}
\def\cU{\mathscr{U}}
\def\cV{\mathscr{V}}
\def\cW{\mathscr{W}}
\def\cX{\mathscr{X}}
\def\cY{\mathscr{Y}}
\def\bmu{{\boldsymbol\mu}}
\def\k{\mathbf k}
\def\bmu{\bar\mu}
\def\bq{\mathbf q}
\def\lra{\longrightarrow}
\def\llra{\hbox to 10mm{\rightarrowfill}}
\def\lllra{\hbox to 15mm{\rightarrowfill}}
\def\llla{\hbox to 10mm{\leftarrowfill}}
\def\lllla{\hbox to 15mm{\leftarrowfill}}
\def\dra{\dashrightarrow}
\def\thra{\twoheadrightarrow}
\def\hra{\hookrightarrow}
\def\lhra{\ensuremath{\lhook\joinrel\relbar\joinrel\rightarrow}}
\newcommand{\lthra}{}% teste si deja defini
\DeclareRobustCommand{\lthra}{\relbar\joinrel\twoheadrightarrow}
\def\isom{\simeq}
\def\eps{\varepsilon}
\def\ie{\hbox{i.e.}}
\def\eg{\hbox{e.g.}}
\def\vide{\varnothing}
\def\emptyset{\varnothing}
\DeclareMathOperator{\isomdra}{\stackrel{{}_{\scriptstyle\sim}}{\dra}}
\DeclareMathOperator{\isomlra}{\stackrel{{}_{\scriptstyle\sim}}{\lra}}
\DeclareMathOperator{\isomto}{\isomlra}
\DeclareMathOperator{\Aut}{Aut}
\DeclareMathOperator{\Bs}{Base}
\DeclareMathOperator{\Cl}{Cl}
\DeclareMathOperator{\codim}{codim}
\DeclareMathOperator{\Coker}{Coker}
\DeclareMathOperator{\End}{End}
\DeclareMathOperator{\Tor}{Tor}
\DeclareMathOperator{\GL}{GL}
\DeclareMathOperator{\Gr}{\mathsf{Gr}}
\DeclareMathOperator{\LGr}{\mathsf{LGr}}
\DeclareMathOperator{\OGr}{\mathsf{OGr}}
\DeclareMathOperator{\CGr}{\mathsf{CGr}}
\DeclareMathOperator{\CKGr}{\mathsf{C}_K\mathsf{Gr}}
\DeclareMathOperator{\Fl}{\mathsf{Fl}}
\DeclareMathOperator{\Hom}{Hom}
\DeclareMathOperator{\Id}{Id}
\def\Im{\mathop{\rm Im}\nolimits}
\DeclareMathOperator{\Ker}{Ker}
\DeclareMathOperator{\lin}{\underset{\mathrm lin}{\sim}}
\DeclareMathOperator{\PGL}{PGL}
\DeclareMathOperator{\Pic}{Pic}
\DeclareMathOperator{\pr}{\mathsf{pr}}
\DeclareMathOperator{\fsl}{\mathfrak sl}
\DeclareMathOperator{\SSpec}{\mathbf{Spec}}
\DeclareMathOperator{\rank}{rank}
\DeclareMathOperator{\Sing}{Sing}
\DeclareMathOperator{\Span}{Span}
\DeclareMathOperator{\Sym}{\mathsf S}
\def\spe{\text{spe}}
\def\bw#1#2{\textstyle{\bigwedge\hskip-0.9mm^{#1}}\hskip0.2mm{#2}}
\def\sbw#1#2{{\bigwedge\hskip-0.9mm^{#1}}\hskip0.1mm{#2}}
\def\symv{\VV}
\def\Gm{\mathbb{G}_{\mathrm{m}}}
\newtheorem{lemm}{Lemma}[section]
\newtheorem{theo}[lemm]{Theorem}
\newtheorem{coro}[lemm]{Corollary}
\newtheorem{prop}[lemm]{Proposition}
\theoremstyle{remark}
\newtheorem{defi}[lemm]{Definition}
\newtheorem{rema}[lemm]{Remark}
\def\VV{\mathbb{V}}
\def\LL{\mathbb{L}}
\def\II{\mathbb{I}}
\def\BVV{\overline{\mathbb{V}}}
\def\BLL{\overline{\mathbb{L}}}
\def\BA{{\overline{A}}}
\def\BW{{\overline{W}}}
\def\id{\mathsf{id}}
\def\opp{\mathrm{opp}}
\def\ord{\mathrm{ord}}
\def\spe{\mathrm{spe}}
\newcommand{\cone}[1]{\mathsf{C}_{#1}}
\newcommand{\pcone}[1]{\mathsf{C}_{#1}^\circ}
\DeclareMathOperator{\Dis}{
%\mathsf
{Disc}}
\newcommand{\Xreg}{{X_{\rm sm}}}
\subjclass[2010]{14J45, 14E07, 14E08, 14J30, 14J35, 14J40, 14J50, 14J60}
\begin{document}
\title[Gushel--Mukai varieties]
{Gushel--Mukai varieties: classification and birationalities}

\author[O.~Debarre]{Olivier Debarre}
\address{Univ  Paris Diderot, \'Ecole normale su\-p\'e\-rieu\-re, PSL Research University, CNRS, D\'epar\-te\-ment Math\'ematiques et Applications\\45 rue d'Ulm, 75230 Paris cedex 05, France}
\email{{olivier.debarre@ens.fr}}
%\urladdr{\url{http://www.math.ens.fr/~debarre/}}

 \author[A. Kuznetsov]{Alexander Kuznetsov}
 \address{Algebra Section, Steklov Mathematical Institute,
  8 Gubkin str., Moscow 119991 Russia;
 The Poncelet Laboratory, Independent University of Moscow;
 Laboratory of Algebraic Geometry, National Research University Higher School of Economics, 7 Vavilova Str., Moscow, Russia, 117312 }
 \email{{\tt  akuznet@mi.ras.ru}}
 
\thanks{A.K. was partially supported by the Russian Academic Excellence Project ``5-100'', by the RFBR grants 15-01-02164 and 15-51-50045, and by the Simons foundation.}

\def\setminus{\smallsetminus}
\def\cong{\isom}
\newcommand{\An}{A}

\newcommand{\rem}[1]{{\color{blue}{#1}}}
\newcommand{\red}[1]{{\color{red}{#1}}}

\begin{abstract}
We perform a systematic study of Gushel--Mukai varieties---quadratic sections of linear sections of  cones over the Grassmannian $\Gr(2,5)$.
This class of varieties includes Clifford general curves of genus 6, Brill--Noether general polarized K3 surfaces of genus 6, prime Fano threefolds of genus 6, and their higher-dimensional analogues.

We establish an intrinsic characterization of normal Gushel--Mukai varieties in terms of their excess conormal sheaves, which leads to a new proof of the classification theorem of Gushel and Mukai.
We give a description of   isomorphism classes of Gushel--Mukai varieties and their automorphism groups in terms of linear algebraic data naturally associated to these varieties.

We carefully develop  the relation between Gushel--Mukai varieties and Eisenbud--Popescu--Walter sextics introduced earlier
by Iliev--Manivel and O'Grady. We describe explicitly all Gushel--Mukai varieties whose associated EPW sextics are isomorphic or dual
(we call them period partners or dual varieties respectively). Finally, we 
 show that in dimension 3 and higher,
period partners/dual varieties are always birationally isomorphic.
\end{abstract}

\maketitle

\section{Introduction}

This article is devoted to the investigation of the geometry of a class of varieties which we call Gushel--Mukai varieties, or GM varieties for short, which are  
 {\em dimensionally transverse intersections of 
a cone
 over the Grassmannian $\Gr(2,5)$ with   a linear space and a quadratic hypersurface,} all defined over a field $\k$ of characteristic zero.

  {\em Smooth}   GM varieties 
are    isomorphic either to   quadratic sections of a linear section of  $\Gr(2,5) \subset \P^9$ ({\em ordinary} GM varieties)
or to  double covers of a linear section of $\Gr(2,5)$ branched along a quadratic section 
({\em special} GM varieties). Their dimension  is at most 6. 

GM varieties  first appeared in the classification of  {complex} Fano threefolds: Gushel showed in \cite{gushel1983fano}  
that any smooth prime complex Fano threefold of degree 10 is a GM variety.   Mukai later extended in \cite{mukai1989biregular}   
Gushel's results to higher-dimensional smooth Fano varieties of coindex~3, degree~10, and Picard number~1,  to Brill--Noether general
polarized K3 surfaces of degree 10, and to Clifford general curves of genus~6 (see Section~\ref{sec23} for the definitions). 

In Section~\ref{section-classification}, we give an intrinsic characterization of normal GM varieties in terms of their   twisted excess conormal sheaf (Theorem~\ref{theorem:gm-intrinsic}). 
In particular, we show that the (rational) map from a GM variety to $\Gr(2,5)$ is given by the sections of this rank-2 sheaf, hence is canonical.
Developing this observation, we define {\em GM data} (Definition~\ref{defgm}) as sets of linear algebraic objects required to define a GM variety. 
We show that there is a functorial bijection  {(induced by an equivalence of appropriately defined groupoids)} between the set of all normal polarized GM varieties and a suitable set of   GM data (Theorem~\ref{theorem:gm-var-data}). 
We also describe the automorphism group  of a normal polarized GM variety in terms of the associated GM data (Corollary~\ref{corollary-aut-x-aut-data}).

Using this intrinsic characterization of GM varieties, we prove an extension of Mukai's classification results  {(Theorem~\ref{theorem:gushel-mukai})}:
any normal locally factorial complex projective variety~$X$ of dimension $n \ge 3$ 
with  terminal  singularities and $\codim (\Sing(X) )\ge 4$, together with an ample Cartier divisor class $H$ 
such that $K_X \lin -(n-2)H$, $H^n = 10$, and $\Pic(X) = \Z H$, is a 
 GM variety.

{In  contrast with the smooth case},
a singular GM variety may have arbitrarily high dimension; 
 besides the two  {smooth} types discussed above (ordinary and special), there are also quadric bundles of arbitrary dimensions
over a linear section of $\Gr(2,5)$, whose  discriminant locus is a quadratic section. 
However, if one restricts to  locally complete intersection (lci) GM varieties, one gets   
  the  ordinary and special types only. In particular, their dimension is again at most 6 
  and the theory becomes very much parallel to that of smooth GM varieties.

We end Section~\ref{section-classification} by studying     geometric properties of GM varieties.
We define the {\em Grassmannian hull} $M_X$ of a GM variety $X$ as the intersection of the cone over $\Gr(2,5)$ in which $X$ sits with the linear span of $X$, so that $X$ is a quadratic section of~$M_X$. 
We show that  {when $X$ is ordinary, $\dim (X) \ge 3$, and $\codim(\Sing(X)) \ge 4$, the scheme $M_X$ is smooth, and when $X$ is special with $\codim(\Sing(X)) \ge 4$, the scheme $M_X$ is a cone over a smooth variety~$M'_X$} (Proposition~\ref{lhull}). We also discuss  {the singularities of $M_X$ for smooth ordinary GM curves and surfaces}. 
We   study the special features of lci GM varieties, we explain the relations between the two types (ordinary and special), and we construct a (birational) involution on the class of lci GM varieties which exchanges ordinary and special varieties.

In Section~\ref{section-epw-sextics}, we discuss a relation between complex GM varieties and Eisenbud--Popescu--Walter (EPW) sextics (\cite{epw}, \cite{og1}).
An EPW sextic is a hypersurface of degree 6 in the projectivization $\PP(V_6)$ of a 6-dimensional vector space $V_6$ which depends on the choice of a Lagrangian subspace $\An \subset \bw3 V_6$. 
EPW sextics have many wonderful properties which were thoroughly investigated by O'Grady. 
A relation between  GM varieties and EPW sextics was discovered by Iliev--Manivel in~\cite{iliev-manivel}. 
They showed that if $V_6$ is the space of quadrics through an ordinary  5-dimensional GM variety $X$ in {the projective embedding defined by the ample generator $H$ of $\Pic(X)$},
one can construct a Lagrangian subspace $\An(X) \subset \bw3 V_6$ and a hyperplane $V_5(X) \subset V_6$; 
conversely, given a general Lagrangian subspace   $\An \subset \bw3 V_6$ and a general hyperplane $V_5 \subset V_6$, one can reconstruct a 5-dimensional GM variety $X_{\An,V_5}$.  

We suggest an extension of the Iliev--Manivel  construction which works for all lci GM varieties 
(of both types---ordinary and special---and in  all dimensions).
Formally, this is done by associating with an lci GM data another set which we call an  {extended} Lagrangian data.
This is a quadruple $(V_6,V_5,\An,A_1)$, where $\An \subset \bw3V_6$ is a Lagrangian subspace 
and $A_1$, which encodes the type of the GM data, is a Lagrangian subspace in a fixed 2-dimensional symplectic space.
In Theorem~\ref{theo-bijection}, we show that when the base field $\k$ is quadratically closed ($\k = \k^{1/2}$), there is a bijection between the set  of  isomorphism classes of lci GM data 
and the set of isomorphism classes of {extended} Lagrangian data.  

Concatenating this construction with the construction in Section \ref{section-classification}, we obtain  a diagram 

\begin{equation*}
\xymatrix@C+2em{
\xybox{(0,0.5)*+[F]{\parbox{6em}{\begin{center} {lci  GM varieties}\end{center}}}}\xybox{(0,0.5)}
\save +<16mm,0mm>\ar@{^(->}[r]^-{\text{Thm~\ref{theorem:gm-var-data}}} \restore&
\xybox{(0,0.5)*+[F]{\parbox{10em}{\begin{center}lci GM data\\$(W,V_6,V_5,L,\mu,\bq,\eps)$\end{center}}}}
\ar@{=}[r]^-{\text{\ Thm~\ref{theo-bijection}\ }} & 
\xybox{(0,0.5)*+[F]{\parbox{10em}{\begin{center}extended\\Lagrangian data\\$(V_6,V_5,\An,A_1)$ 
\end{center}}}}
}
\end{equation*}
where the objects in the boxes are taken  modulo isomorphisms.
Restricting ourselves to ordinary GM varieties simplifies the picture: 
$A_1$ becomes redundant, so we can restrict to triples $(V_6,V_5,\An)$ which we simply call Lagrangian data,   
the bijection  {becomes functorial and}   works over arbitrary fields, and 
the diagram above simplifies (see Theorem~\ref{theorem:merged}) to an  {\em inclusion} 
 \begin{equation*}
\xymatrix@C+1em{
\xybox{(0,0.5)*+[F]{\parbox{8em}{\begin{center}ordinary lci\\ {GM varieties}\end{center}}}}
\save +<20mm,0mm> \ar@{^(->}[r] \restore&
\xybox{(0,0.5)*+[F]{\parbox{10em}{\begin{center}Lagrangian data\\$(V_6,V_5,\An)$\end{center}}}}
}
\end{equation*}

The correspondence between lci GM varieties and  {extended} Lagrangian data has many remarkable properties.
One of the most impressive is a
criterion for the smoothness of the GM variety $X_{\An,A_1,V_5}$ constructed 
from an {extended} Lagrangian data $(V_6,V_5,\An,A_1)$, which shows in particular 
that if $\dim(X_{\An,A_1,V_5}) \ge 3$, smoothness is equivalent to an explicit property
of the Lagrangian subspace $\An\subset\bw3V_6$ (that it contains no {\em decomposable vectors;} see Section~\ref{section-epw-overview}) and does not depend on the hyperplane $V_5\subset V_6$ (Theorem \ref{theorem-singdec}).

We spend some time discussing  the structure of lci GM varieties associated
with the same Lagrangian subspace $\An \subset \bw3 V_6$, but with different choices of $A_1$ and    hyperplane $V_5 \subset V_6$.
As it was already mentioned, $A_1$ just encodes the type of the GM variety, and switching the type of $A_1$ results in 
switching the type of the   GM variety. 
We show     that for fixed~$ (\An,A_1)$, the dimension of $X_{\An,A_1,V_5}$ only depends on which stratum of the EPW stratification
of the space $\PP(V_6^\vee)$ the hyperplane $V_5$ belongs to (extending again the results of Iliev and Manivel). 

If 
 $\dim (X_{\An,A_1,V_5}) = \dim (X_{\An,A'_1,V_5'})$,
we say that the GM varieties $X_{\An,A_1,V_5}$ and $X_{\An,A'_1,V_5'}$ are {\em period partners.} One of the main results of this article
is that smooth period partners of dimension $ \ge 3$ are birationally isomorphic ({Theorem~\ref{proposition-birationality-3} and} Corollary~\ref{coro417}). 
In particular, no smooth GM varieties   of dimension  $ \ge 3$ are   birationally rigid (Corollary~\ref{coro418}).

We also introduce a notion of duality for GM varieties. Given a Lagrangian subspace $\An \subset \bw3 V_6$,
its orthogonal complement $\An^\perp \subset \bw3 V_6^\vee$ is also Lagrangian;    the choice of a line $V_1 \subset V_6$
(which can be considered as a hyperplane in $V_6^\vee$)  and an arbitrary choice of $A'_1$ 
allow  us to construct a GM variety $X_{\An^\perp,A'_1,V_1}$.
If $\dim( X_{A^\perp,A_1',V_1} )= \dim( X_{\An,A_1,V_5})$, we say that the GM varieties $X_{\An^\perp,A'_1,V_1}$ and $X_{\An,A_1,V_5}$ are dual.
Our second main result is that
smooth dual GM varieties of dimension  $ \ge 3$ are birationally isomorphic (Theorem~\ref{theorem-birationality-duals}).

The birational isomorphisms relating period partners are generalizations of conic transformations    and those 
relating dual varieties are generalizations of line transformations\break (see~\cite{dim}). In relation with birationalities for dual 
varieties, we define   another interesting geometric object associated with GM varieties---a special hypersurface of degree 4 
in $\Gr(3,6)$. As the EPW sextic, this
  hypersurface is defined  as a special Lagrangian
intersection locus and they play  similar roles. We call it the EPW quartic and suspect that it may have
interesting geometry. These quartics are   investigated in more details in~\cite{ikkr}.

For the reader's convenience, we gathered some of the material we need in the main body of this article in three appendices.
In Appendix~\ref{section-excess-conormal-sheaves}, we introduce excess conormal sheaves for intersections of quadrics,
discuss how they change under simple geometric operations (taking cones and hyperplane  and quadratic sections),
and compute them for the Grassmannians $\Gr(2,5)$ and $\Gr(2,6)$.
In Appendix~\ref{section-epw}, we recall the definition of EPW sextics and their main properties. 
Most of the results there are extracted from a series of articles of O'Grady.
In Appendix~\ref{section-lagrangians-and-quadrics}, we discuss the classical relation between the Lagrangian geometry of a symplectic vector space
and the projective duality of quadrics. We also define a notion of   isotropic reduction in this context which is very useful for the description
of the various quadratic fibrations associated with Gushel--Mukai varieties. 

One of the motivations for this research was an attempt to construct a moduli space for GM varieties.
The unified and extended constructions we introduce  here should be important ingredients
to attack the moduli problem. We plan to do this in a forthcoming article.

In another companion article \cite{DK}, we show that period partners of even dimensions have   isomorphic primitive middle Hodge structures and describe this Hodge structure in terms of the associated EPW sextic.

In a joint work \cite{KP} of the second author with Alexander Perry, we discuss derived categorical aspects of GM varieties.

\subsection{Notation and conventions}\label{notconv}
All schemes are separated   of finite type over a field $\k$.
For simplicity, we will always assume that $\k$ is a subfield of $\C$, although many of our results remain valid in positive characteristics.
 A $\k$-variety is a geometrically integral scheme (separated of finite type) over $\k$.
We usually abbreviate ``local(ly) complete intersection'' as ``lci''. 

We write $\lin$ for linear equivalence of divisors. 

Given a vector space $V$, we denote by $V^\vee$ the dual space and, given a vector subspace $U \subset V$, 
we denote by $U^\perp \subset V^\vee$ its orthogonal complement, \ie, the space of linear functions on $V$
vanishing on $U$.

Given a vector space $V$, we denote by $\PP(V)$ its projectivization, \ie, the variety of lines in $V$.
Given a line $V_1 \subset V$, we write $[V_1]$, or even $V_1$, for the corresponding point of $\PP(V)$.
More generally, we denote by $\Gr(k,V)$ the Grassmannian of $k$-dimensional subspaces in $V$ and, given
a subspace $V_k \subset V$, we write $[V_k]$, or even $V_k$, for the corresponding point of the Grassmannian.
 Analogously, we denote by $\Fl(k_1,\dots,k_r;V)$ the flag variety of chains of subspaces in $V$ of dimensions $k_1 < \dots < k_r$. 
Finally, given a vector bundle $\cE$ on a scheme $X$, we denote by $\PP_X(\cE)$ its projectivization, \ie,
the scheme of lines in fibers of $\cE$ over~$X$.
 
Let $W$ be a vector space and let $X \subset \PP(W)$ be a subscheme. 
For a point $x \in X$, we write $\T_{X,x}$ for the embedded tangent space of $X$ at $x$.
For any vector space $K$, we denote by $\cone{K}X \subset \P(W \oplus K)$ the cone over $X$
with vertex $\P(K)$  and by $\pcone{K}X$ the punctured cone, \ie, $\pcone{K}X = \cone{K}X \setminus \P(K)$.

Given   closed subschemes $X_1,X_2 \subset \P(W)$, we say that the intersection $X_1 \cap X_2$ is   {\sf dimensionally transverse}
if $\Tor_{>0}(\cO_{X_1},\cO_{X_2}) = 0$ (this condition is also known as {\sf $\Tor$-independence}). 
When  $X_1$ and $X_2$ are both Cohen--Macaulay, this condition is equivalent to 
\begin{equation*}
\codim_x(X_1 \cap X_2) = \codim_x(X_1) + \codim_x(X_2)
\end{equation*}
for any closed point $x \in X_1 \cap X_2$ (where the codimensions are in $\P(W)$).
When $X_2  \subset \P(W)$ is a hypersurface,
the  intersection $X_1 \cap X_2$ is dimensionally transverse if and only if no associated point of $X_1$ is contained in $X_2$.

\subsection{Acknowledgements.}
 This article owns much to the ideas and work of Kieran O'Grady,   Atanas Iliev, and Laurent Manivel.
We would also like to thank Olivier Benoist,  Alex Perry, and Yuri Prokhorov for their help 
and many interesting  discussions.

\section{{Geometry} of  Gushel--Mukai varieties}\label{section-classification}

\subsection{Intrinsic characterization of GM varieties}\label{ss:intrinsic-characterization}

In the introduction, we   defined a GM variety as a dimensionally transverse intersection of a cone  over $\Gr(2,5)$
with a linear subspace and a quadratic hypersurface. 
 
More precisely, let $V_5$ be a $\k$-vector space  of dimension 5  and consider the Pl\"ucker embedding $\Gr(2,V_5) \subset \P(\bw2V_5)$. Let $K$ be an arbitrary $\k$-vector space. Consider  the cone
\begin{equation*}
\CKGr(2,V_5) \subset \P(\bw2V_5 \oplus K)
\end{equation*}
with vertex $\P(K)$ 
 and choose a vector subspace $W \subset \bw2V_5 \oplus K$
 and a subscheme $Q \subset \P(W)$ defined by one quadratic equation (possibly zero). 

\begin{defi}\label{defigm}
The scheme
\begin{equation}\label{equation-x-from-data}
X = \CKGr(2,V_5) \cap \P(W) \cap Q.
\end{equation} 
is called    a {\sf GM intersection}.
A  GM intersection $X$ is a    {\sf GM variety} if $X$ is geometrically integral and
\begin{equation}\label{eq:dimx-dimw}
\dim(X) = \dim(W) - 5  \ge 1.
\end{equation} 
\end{defi}

When $X$ is a GM variety, $Q$ is  a quadratic hypersurface and $\CKGr(2,V_5) \cap \P(W) $ and $ Q$ are Cohen--Macaulay Gorenstein subschemes of $\P(W)$, hence   their intersection $X$ is  dimensionally transverse. 
In particular, a GM variety is always Cohen--Macaulay Gorenstein.

A GM variety $X$ has a canonical polarization, the restriction $H$  of the hyperplane class on~$\P(W)$; we will call $(X,H)$ a {\sf polarized GM variety.}
 
\begin{defi}\label{def:gm-iso}
An {\sf isomorphism of polarized GM varieties} between   $(X,H)$ and $(X',H')$ is a pair $(\phi,\psi)$, where $\phi\colon X \isomto X'$ is an isomorphism of varieties 
and $\psi\colon \cO_X(H) \isomto \phi^*\cO_{X'}(H')$   an isomorphism of line bundles. 
 {We consider the set of all polarized GM varieties as a groupoid, with this notion of   isomorphism.}
\end{defi}

We denote by $\Aut(X,H)$ the group of automorphisms of a polarized GM variety    and by $\Aut_H(X)$ its image in $\Aut(X)$. 
There is an exact sequence
\begin{equation}\label{eq:aut-x-h}
1 \to \Gm \to \Aut(X,H) \to \Aut_H(X) \to 1, 
\end{equation} 
where the subgroup $\Gm \subset \Aut(X,H)$ acts trivially on $X$ and by dilations on $\cO_X(H)$.

The definition of a GM variety is not intrinsic. 
The following theorem gives an intrinsic characterization, at least for normal varieties  {(note that a GM variety is normal as soon as it is non-singular in codimension 1)}. 
A key ingredient is the excess conormal sheaf, which is defined over $\k$, for any variety
which is an intersection of quadrics (see Appendix~\ref{section-excess-conormal-sheaves}).

Recall that a coherent sheaf $\cF$ is {\sf simple} if $\Hom(\cF,\cF) = \k$.

\begin{theo}\label{theorem:gm-intrinsic}\renewcommand{\theenumi}{\alph{enumi}}\renewcommand{\labelenumi}{\textup{(\theenumi)}}
A normal polarized projective variety $(X,H)$ of dimension $n \ge 1$  is a polarized GM variety if and only if all the following    conditions hold:
\begin{enumerate}
\item\label{item-degree-kx}
$H^n = 10$ and $K_X = -(n-2)H$; in particular, $X$ is Gorenstein;
\item\label{item-w} 
$H$ is very ample and the vector space 
\begin{equation*}
W := H^0(X,\cO_X(H))^\vee
\end{equation*}
has dimension $n + 5$;
\item\label{item-v6}
$X$ is an intersection of quadrics in $\P(W)$ and the vector space 
\begin{equation*}
V_6 := H^0(\P(W),\cI_X(2))\subset \Sym^2\!W^\vee  
\end{equation*}
of quadrics through $X$ has dimension~$6$;
\item\label{item-ux-simple} 
the twisted excess conormal sheaf $\cU_X := \cEN^\vee_X(2H)$ of $X$ in $\P(W)$ is simple.
\end{enumerate}
\end{theo}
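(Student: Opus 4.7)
The proof splits into the two directions of the equivalence. The ``only if'' direction is a verification from the construction in Definition~\ref{defigm}. Property~(a) follows from $\deg\Gr(2,V_5) = 5$ and standard behavior of degree and canonical class under taking a cone, a linear section, and a quadric section (the latter contributing $+2H$ to the canonical class, which then matches $-(n-2)H$ after adjunction). Property~(b) reduces to projective normality of $X$, which comes from the Koszul resolution of $X$ inside $\P(W)$ by the six defining quadrics. For~(c), the $5$-dimensional space of Pl\"ucker quadrics of $\Gr(2,V_5)$ pulls back to five independent quadrics through $X$, and together with the defining quadric $Q$ these produce a $6$-dimensional subspace of $H^0(\P(W),\cI_X(2))$; a cohomological computation using the Koszul complex shows this subspace exhausts $V_6$. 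Property~(d) then follows from the explicit computation of $\cU_X$ in Appendix~\ref{section-excess-conormal-sheaves}, which identifies it with the pullback via the Gushel map of the tautological rank-2 subbundle on $\Gr(2,V_5)$; the latter is simple because $V_5$ is $\GL(V_5)$-irreducible and the Grassmannian is reduced and connected.

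\textbf{Reconstruction.} For the converse, assume $(X,H)$ is normal and satisfies~(a)--(d), and set $W$ and $V_6$ as in~(b) and~(c). The strategy is to recover the Grassmannian structure directly from the rank-$2$ reflexive sheaf $\cU_X$. The formalism of Appendix~\ref{section-excess-conormal-sheaves} gives a canonical exact sequence expressing $\cU_X$ in terms of $V_6$, $W$, and the conormal sheaf of $X$ in $\P(W)$. Combining this with the numerical input~(a)--(b), the Gorenstein condition, and the Koszul complex of the quadrics in $V_6$, one computes the relevant cohomology of $\cU_X$: the goal is to establish vanishing of higher cohomology, the equality $h^0(X,\cU_X^\vee) = 5$, and the global generation of $\cU_X^\vee$. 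Setting $V_5^\vee := H^0(X,\cU_X^\vee)$, one then obtains a canonical Gushel morphism $\gamma \colon X \to \Gr(2,V_5)$ realizing $\cU_X$ as the pullback of the tautological subbundle.

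\textbf{Main obstacle.} The decisive step is to combine $\gamma$ with the embedding $X \hookrightarrow \P(W)$ to identify $X$ with a GM intersection. Concretely, one must show that the natural map $\bw2 V_5 \to W$ obtained from $\gamma$ has a canonical cokernel $K$ so that $X$ lies in $\cone{K}\Gr(2,V_5)\cap\P(W)$, and then that $X$ is cut out inside this linear section of the cone by a single residue quadric---equivalently, that $V_6$ decomposes as the $5$-dimensional subspace of pulled-back Pl\"ucker quadrics plus a one-dimensional complement. The simplicity hypothesis~(d) is crucial here: it rigidifies $\cU_X$ up to scalars, forbidding any decomposition of $\cU_X$ or any factorization of $\gamma$ through a proper subvariety of $\Gr(2,V_5)$, either of which would disturb the dimension count for $V_6$. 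Once the single residue quadric is isolated, dimensional transversality of the intersection with $\cone{K}\Gr(2,V_5)\cap\P(W)$ follows from Cohen--Macaulayness built into~(a), and condition~\eqref{eq:dimx-dimw} is matched by~(b), completing the realization of $X$ as a polarized GM variety in the sense of Definition~\ref{defigm}.
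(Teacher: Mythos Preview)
Your proposal has genuine gaps in both directions.

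\textbf{The Koszul error.} You repeatedly invoke ``the Koszul resolution of $X$ inside $\P(W)$ by the six defining quadrics''. But $X$ is \emph{not} a complete intersection of six quadrics: its degree is $10$, not $2^6 = 64$. The correct resolution~\eqref{eq:resolution-x} is obtained by tensoring the (non-Koszul) resolution of $\Gr(2,V_5)$ with the Koszul resolution of the single quadric $Q$; its terms involve $V_5$ and $V_5^\vee$, not just line bundles. This affects your arguments for~(b), (c), and the converse.

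\textbf{Simplicity in the forward direction.} Your argument for~(d) is that $\cU_X$ is the pullback of the tautological bundle on $\Gr(2,V_5)$, which is simple. But the pullback of a simple bundle along a non-dominant map need not be simple: if the Gushel map factored through a $\sigma$-plane, the pullback would split as $\cO \oplus \cO(-H)$. The paper's proof of~(d) is quite different: for $n=1$ it appeals to Mukai's theorem that the Gushel bundle on a Clifford-general genus-$6$ curve is stable, and for $n \ge 2$ it runs an induction on $n$ via restriction to hyperplane sections, using that a non-scalar endomorphism would force $\Hom(\cU_X,\cU_X(-kH)) \ne 0$ for all $k$.

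\textbf{The converse.} Your plan is to compute $h^0(X,\cU_X^\vee) = 5$ directly and then define $V_5$ as this space of sections. You give no mechanism for this computation, and in fact it is the heart of the matter. The excess conormal sequence only gives the embedding $\cU_X \hookrightarrow V_6 \otimes \cO_X$, i.e.\ six sections, not five. The paper's route is to use this to produce a map to $\Gr(2,V_6)$ (not $\Gr(2,V_5)$), then compare the excess conormal sequences of $X$ and of $\Gr(2,V_6)$ via Proposition~\ref{en-g26} to obtain a morphism $\lambda' \colon V_6 \otimes \cU_{X_{\rm sm}} \to L^{-2} \otimes \det(V_6) \otimes \cU_{X_{\rm sm}}$. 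Simplicity is used precisely here: it forces $\lambda'$ to be given by a single linear form $\lambda \in V_6^\vee$. One then argues $\lambda \ne 0$ (by ruling out both types of linear subspaces of $\Gr(2,V_6)$, using simplicity again and the degree condition $H^n = 10$), and sets $V_5 := \Ker(\lambda)$. This is the step your outline does not supply. Also, your map $\bw2 V_5 \to W$ goes the wrong way: the Gushel map gives $\mu \colon L \otimes W \to \bw2 V_5$, and $K$ is its \emph{kernel}, not a cokernel.
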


\begin{proof} 
We first prove that  conditions (a)--(d) are satisfied by normal GM varieties. Let $X$ be such a  variety, defined by~\eqref{equation-x-from-data}, 
with $\dim(X) = n$ and  $\dim(W) = n + 5$.

\eqref{item-degree-kx}
Since the degree of $\CKGr(2,V_5)$ is 5 and the degree of $Q$ is 2, the dimensional transversality implies that the degree of $X$ is 10.
 Let $\dim (K) = k$. The canonical class of $\CKGr(2,V_5)$ is $-(5+k)H$. On the other hand,
the codimension of $W$ in $\bw2V_5 \oplus K$ is $(10 + k) - (n + 5) = 5 + k - n$, hence the canonical class of $X$ is
\begin{equation*}
K_X = (-(5 + k) + (5 + k - n) + 2)H = -(n-2)H. 
\end{equation*}

\eqref{item-w}
The very ampleness of $H$ is clear. By~\eqref{eq:dimx-dimw}, it is enough to show  $H^0(X,\cO_X(H)) = W^\vee$.  
We use  the  resolution  
\begin{equation*}
0 \to \cO(-5) \to V_5^\vee \otimes \cO(-3) \to V_5 \otimes \cO(-2) \to \cO \to \cO_{\CKGr(2,V_5)} \to 0
\end{equation*}
of the cone $\CKGr(2,V_5) $  in $\P(\bw2V_5 \oplus K)$.
Restricting it to $\P(W)$, tensoring with the resolution $0 \to \cO(-2) \to \cO \to \cO_Q \to 0$ of the quadric $Q$,
and using the dimensional transversality of the intersection, we obtain the   resolution 
\begin{multline}\label{eq:resolution-x}
0 \to \cO(-7) \to (V_5^\vee \oplus \k) \otimes \cO(-5) \to 
(V_5 \otimes \cO(-4)) \oplus (V_5^\vee \otimes \cO(-3)) \to \\ \to 
(V_5 \oplus \k) \otimes \cO(-2) \to 
\cO \to 
\cO_X \to 0
\end{multline}
of $X$ in $\P(W)$.
Twisting it   by $\cO(1)$, we get $H^0(X,\cO_X(H)) = H^0(\P(W),\cO_{\P(W)}(1)) = W^\vee$
(since  {$\dim(W) = n + 5 \ge 6$}, the only other term that could contribute is the term $\cO(-6)$ at the very beginning;   the contribution is non-zero only for $n = 1$, 
but it actually contributes to $H^1(X,\cO_X(H))$). 

\eqref{item-v6}
The resolution~\eqref{eq:resolution-x} implies that $X$ is an intersection of quadrics. 
Furthermore, twisting the resolution by $\cO(2)$, we see that $H^0(\P(W),\cI_X(2H)) = V_5 \oplus \k$
is 6-dimensional. 

\eqref{item-ux-simple} To prove that $\cU_X$ is simple, we may assume  $\k=\C$.

Assume   $n =   1$. By~\eqref{item-degree-kx}, we have $\deg(K_X) = 10$, so $X$ is a smooth curve of genus~$6$.
Moreover, by~\eqref{item-w} and~\eqref{item-v6}, its canonical class $K_X=H$ is very ample and  its canonical model is an intersection
of quadrics, hence $X$ is neither hyperelliptic, nor trigonal, nor a plane quintic.
 Mukai constructs in~\cite[Section 5]{mukcg} 
a  stable vector bundle of rank 2 on $X$,
proves that it embeds $X$ into  the cone $\CGr(2,5)$ over $\Gr(2,5)$ with vertex a point, and that $X$ is an intersection 
$\CGr(2,5) \cap \P^5 \cap Q$.  A combination of Proposition~\ref{en-g25}
and Lemmas~\ref{lemma:excess-cone}, \ref{en-hplane} and~\ref{en-quadric} then shows that Mukai's bundle is isomorphic to 
the twisted excess conormal bundle~$\cU_X$, which is therefore stable, hence simple.

We finish the proof by induction on $n $:  assume   $n \ge2$ and let $X' \subset X$ be a dimensionally transverse 
normal hyperplane section of $X$, so that $X'$ is a GM variety of dimension $n-1$, and~$\cU_{X'}$ is simple by the induction hypothesis. 
Applying  the functor $\Hom(\cU_X,-)$ to the exact sequence
\begin{equation*}
0 \to \cU_X(-H) \to \cU_X \to \cU_X\vert_{X'} \to 0
\end{equation*}
 and using pullback-pushforward adjunction and the isomorphism~$\cU_{X'} \cong \cU_X\vert_{X'}$ (Lemma~\ref{en-hplane}), we obtain an exact sequence
\begin{equation*}
0 \to \Hom(\cU_X,\cU_X(-H)) \to \Hom(\cU_X,\cU_X) \to \Hom(\cU_{X'},\cU_{X'}).
\end{equation*}
If $\dim(\Hom(\cU_X,\cU_X)) > 1$,  the simplicity of $\cU_{X'}$ implies $\Hom(\cU_X,\cU_X(-H)) \ne 0$.
On the other hand, an analogous argument produces an exact sequence
\begin{equation*}
0 \to \Hom(\cU_X,\cU_X(-2H)) \to \Hom(\cU_X,\cU_X(-H)) \to \Hom(\cU_{X'},\cU_{X'}(-H))=0,
\end{equation*}
which implies $\Hom(\cU_X,\cU_X(-2H)) \ne 0$. Iterating the argument, we see that  for all $k \ge 0$,
we have $\Hom(\cU_X,\cU_X(-kH)) \ne 0$. This is possible only if    $\cU_X$ has zero-dimensional torsion.
But it has not  since, by \eqref{defent}, it is a subsheaf of   $V_6 \otimes \cO_X$  {and $X$ is integral}.

We now prove that   conditions (a)--(d) are also sufficient. 

Let $X$ be a normal projective variety with a Cartier divisor $H$ which satisfies  properties (a)--(d) of Theorem~\ref{theorem:gm-intrinsic}.
Consider the spaces $W$ and $V_6$ of respective dimensions $n + 5$ and $6$, defined by conditions~\eqref{item-w} and~\eqref{item-v6}.
Since $V_6$ is a space of quadrics in $W$, it comes with a map 
\begin{equation*}
\bq\colon V_6 \to \Sym^2\!W^\vee. 
\end{equation*}

Consider the twisted excess conormal sheaf $\cU_X$. Its restriction $\cU_{X_{\rm sm}}$  to the smooth locus $X_{\rm sm}$ of~$X$  is locally free of rank 2.
 Since the rank of the conormal sheaf $\cN^\vee_{\Xreg/\P(W)}$ is 4, we compute, using the  exact sequence \eqref{defent},
 \begin{eqnarray*}
\det(\cU_\Xreg) &\cong& \det(\cN_{\Xreg/\P(W)}(-2H)) \cong \det(\cN_{\Xreg/\P(W)})(-8H)   \\ &\cong&
\cO_\Xreg(- 8H - K_{\P(W)}\vert_\Xreg + K_\Xreg) \cong \cO_\Xreg(- 8H + (n+5)H - (n-2)H) \\&\cong& \cO_\Xreg(-H).
\end{eqnarray*}
Set $L := H^0(\Xreg,\bw{2}\cU_\Xreg(H))$; by the above isomorphism and the normality of $X$, this vector space is one-dimensional 
% \footnote{\red{We should mention somewhere that $X$ is connected. This follows from simplicity of the excess conormal sheaf, 
% but maybe for clarity it is better to add this to assumptions on $X$ in the theorem.} 
% \rem{We assumed in the theorem that $X$ is a variety, and that incudes being integral (see Section 1.1). 
% By the way, we should perhaps change that to ``geometrically integral'' (we implicitly use that when we say for example   ``we may assume $\k=\C$'').}}
and we have a canonical isomorphism
\begin{equation}\label{def-l}
L \otimes \bw{2}{\cU_\Xreg^\vee} = \cO_{X_{\rm sm}}(H).
\end{equation}
Dualizing the   embedding $\cU_\Xreg \hra V_6\otimes\cO_\Xreg$ and taking its wedge square, we get a surjection
$\bw{2}{V_6^\vee} \otimes \cO_\Xreg \thra \bw{2}{\cU_\Xreg^\vee}$. 
 Taking into account~\eqref{def-l}, we obtain a linear map
\begin{equation*}
L \otimes \bw{2}{V_6^\vee} \to H^0(\Xreg,\cO_\Xreg(H)) = H^0(X,\cO_X(H)) = W^\vee 
\end{equation*}
(we use again the normality of $X$  {to identify sections on $X$ and on  $\Xreg$})
 and by duality a linear map $\mu \colon L \otimes W \to \bw{2}{V_6}$. This map can be factored through an injection $\bmu\colon  L \otimes W \hra \bw{2}{V_6} \oplus K$
for some vector space $K$. The   maps
\begin{equation*}
\Xreg \hookrightarrow \P(W) \cong \P(L \otimes W) \hookrightarrow \P(\bw2V_6 \oplus K) \dashrightarrow \P(\bw2V_6)
\end{equation*}
and 
\begin{equation*}
\Xreg \to \Gr(2,V_6) \hookrightarrow \P(\bw2V_6), 
\end{equation*}
 {where the map $\Xreg \to \Gr(2,V_6)$ is induced by the embedding $\cU_\Xreg \hookrightarrow V_6 \otimes \cO_\Xreg$,}
are given by the same linear system, hence they agree. 
This shows $X \subset \cone{K}\!\Gr(2,V_6)$ and $X_{\rm sm} \subset \pcone{K}\!\Gr(2,V_6)$, so by Proposition~\ref{en-funct},
we have a commutative diagram
\begin{equation*}
\xymatrix@R=5mm@C=2.5mm
{
0 \ar[r] & (V_6\otimes\cU_{X_{\rm sm}})/\Sym^2\!\cU_{X_{\rm sm}} \ar[r] \ar[d] & \bw{2}{V_6} \otimes \cO_{X_{\rm sm}} \ar[r] \ar[d] & \det (V_6) \otimes \bmu^*\cN^\vee_{\Gr(2,V_6)}(2) \ar[r] \ar[d] & 0 \\
0 \ar[r] & L^{ -2} \otimes \det (V_6) \otimes \cU_{X_{\rm sm}} \ar[r] & L^{ -2} \otimes \det (V_6) \otimes V_6\otimes\cO_{X_{\rm sm}} \ar[r] & L^{ -2} \otimes \det (V_6) \otimes \cN^\vee_{X_{\rm sm}}(2) \ar[r] & 0,
}
\end{equation*}
where the top row comes from the pullback to $X_{\rm sm}$ of the excess conormal sequence of $\Gr(2,V_6)$ (see Proposition \ref{en-g26}) and we twisted everything by $\det (V_6)$. 
The left vertical arrow induces a morphism $\lambda'\colon V_6\otimes\cU_{X_{\rm sm}} \to L^{ -2} \otimes \det (V_6) \otimes \cU_{X_{\rm sm}}$. 

By condition~\eqref{item-ux-simple}, the sheaf $\cU_X$ is simple, hence $\cU_{X_{\rm sm}}$ is simple as well: $\cU_X$ is torsion-free,
 hence any endomorphism of $\cU_{X_{\rm sm}}$ extends to an endomorphism of $\cU_X$. Therefore,  $\lambda'$ is given 
by a linear form
\begin{equation*}
\lambda\colon V_6 \to L^{ -2} \otimes\det (V_6). 
\end{equation*}

Since  $\lambda'$ vanishes on $\Sym^2\!\cU_{X_{\rm sm}}$, 
the image of $\cU_{X_{\rm sm}}$ in $V_6\otimes\cO_{X_{\rm sm}}$ (via the sequence \eqref{defent}) is contained in $\Ker(\lambda)\otimes\cO_{X_{\rm sm}}$.
Moreover,  the middle vertical map in the diagram above is given by $v_1\wedge v_2 \mapsto \lambda(v_1)v_2 - \lambda(v_2)v_1$.

Let us show that the form $\lambda$ is non-zero. If $\lambda = 0$, 
the middle vertical map in the diagram is zero, which means 
that all the quadrics cutting out $\cone{K}\!\Gr(2,V_6)$ contain $\P(W)$, \ie, $\P(W) \subset \cone{K}\!\Gr(2,V_6)$. In other words,
$\P(W)$ is a cone  over $\P(W') \subset \Gr(2,V_6)$, with vertex   a subspace of $K$. The map $X_{\rm sm} \to \Gr(2,V_6)$
induced by $\cU_{X_{\rm sm}}$ therefore factors through $\P(W')$, \ie, the vector bundle $\cU_{X_{\rm sm}}$ is a pullback 
from $\P(W')$ of the restriction of the tautological bundle of $\Gr(2,V_6)$ to $\P(W')$. We show that this is impossible. 

There are two types of linear spaces on $\Gr(2,V_6)$: the first type corresponds to   2-dimensional subspaces containing a given vector
and the second type to those contained in a given 3-subspace $V_3 \subset V_6$. If $W'$ is of the first type,   the restriction of the tautological
bundle to $\P(W')$ is isomorphic to $\cO \oplus \cO(-1)$, hence $\cU_\Xreg \cong \cO_\Xreg \oplus \cO_\Xreg(-H)$. In particular, it is not simple,
which is a contradiction. If $W'$ is of the second type, the embedding $\cU_X \to V_6 \otimes \cO_X$ factors through
a subbundle $V_3 \otimes \cO_X \subset V_6 \otimes \cO_X$. Recall that $V_6$ is the space of quadrics passing through $X$ in $\P(W)$.
Consider the scheme-theoretic intersection $M$ of the quadrics corresponding to the vector subspace $V_3 \subset V_6$. Since the embedding of the excess conormal sheaf
factors through $V_3 \otimes \cO_X$, the variety $X$ is the complete intersection of~$M$ with the 3 quadrics corresponding to the quotient space $V_6/V_3$.
But   the degree of $X$ is then divisible by~8, which contradicts the fact that it is 10 by~\eqref{item-degree-kx}.

Thus $\lambda \ne 0$ and 
 $V_5 := \Ker(\lambda)$
 is a hyperplane in $V_6$. It fits in the  exact sequence
\begin{equation*}
0 \to V_5 \to V_6 \xrightarrow{\ \lambda\ } L^{ -2} \otimes\det (V_6) \to 0,
\end{equation*}
which induces a canonical isomorphism 
\begin{equation*}
  \varepsilon \colon \det (V_5) \xrightarrow{\ \sim\ } L^{\otimes 2}.
\end{equation*}
Moreover, the composition $\cU_X \hookrightarrow V_6\otimes\cO_X \xrightarrow{\ \lambda\ } L^{\otimes 2} \otimes \det (V_6) \otimes \cO_X$
vanishes on $X_{\rm sm}$, hence on $X$ as well. This shows that the embedding $\cU_X \hookrightarrow V_6 \otimes \cO_X$ factors through $V_5 \otimes \cO_X$.
  
We now replace $V_6$ with $V_5$ and repeat the above argument. We   get a linear map 
\begin{equation*}
\mu \colon  L \otimes W \to \bw{2}{V_5}
\end{equation*}
and an embedding $\bmu\colon  L \otimes W \hra \bw{2}{V_5} \oplus K$ which induce embeddings $X \subset \cone{K}\!\Gr(2,V_5)$ and $X_{\rm sm} \subset \pcone{K}\!\Gr(2,V_5)$.
The commutative diagram 
\begin{equation*}
\xymatrix@R=5mm{
0 \ar[r] & \cU_{X_{\rm sm}} \ar[r] \ar[d] & V_5 \otimes \cO_{X_{\rm sm}} \ar[r] \ar[d] & \bmu^*\cN^\vee_{\Gr(2,V_5)} \ar[r] \ar[d] & 0 \\
0 \ar[r] & \cU_{X_{\rm sm}} \ar[r] & V_6\otimes\cO_{X_{\rm sm}} \ar[r] & \cN^\vee_{X_{\rm sm}} \ar[r] & 0
}
\end{equation*}
of Proposition~\ref{en-funct} (we use  $\varepsilon$   to cancel out the twists by $\det (V_5^\vee)$ and by $L^2$ in the top and bottom rows) 
then shows that inside the space $V_6$ of quadrics cutting out $X$ in $\PP(W)$, the hyperplane $V_5$ is the space of quadratic equations of $\Gr(2,V_5)$, \ie,
of Pl\"ucker quadrics.

As the Pl\"ucker quadrics cut out
the cone $\CKGr(2,V_5)$  in $\P(\bw{2}{V_5} \oplus K)$,  they cut out 
$\CKGr(2,V_5) \cap \P(W)$ in $\P(W)$. Since $X$ is the intersection of 6 quadrics by condition~\eqref{item-v6}, 
we finally obtain 
\begin{equation}\label{des} 
X = \CKGr(2,V_5) \cap \P(W) \cap Q,
\end{equation}
where $Q$ is any non-Pl\"ucker quadric (corresponding to a point in $V_6 \setminus V_5$),
so $X$ is a GM variety.
\end{proof}

 {A   consequence of   Theorem~\ref{theorem:gm-intrinsic} is  that being a GM variety is a geometric property.
Recall that the base field $\k$ is by our assumption a subfield of $\C$. We denote by $X_\C$ the extension of scalars from $\k$ to $\C$,
and by $H_\C$ the induced polarization of $X_\C$. 

\begin{coro}\label{corollary:x-c}
A normal polarized variety $(X,H)$ is a polarized GM variety    if and only if $(X_\C,H_\C)$ is.
\end{coro}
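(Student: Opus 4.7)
The plan is to invoke the intrinsic characterization in Theorem~\ref{theorem:gm-intrinsic} and verify that each of its ingredients—normality, geometric integrality, and conditions (a)--(d)—is stable under the extension of scalars $\k \hookrightarrow \C$ in both directions. Since the theorem applies verbatim over any subfield of $\C$, including $\C$ itself, this would reduce the corollary to a short list of flat base-change verifications.

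First I would dispose of the standing hypotheses and conditions (a)--(c). Since $\k$ has characteristic zero, the map $\k \to \C$ is geometrically regular and faithfully flat, so normality is both preserved (by geometrically regular base change) and reflected (by faithfully flat descent); geometric integrality is built into the notion of a $\k$-variety and transfers automatically. For (a), intersection numbers and linear equivalence classes of Cartier divisors are invariant under flat base change, so $H^n = 10$ and $K_X \lin -(n-2)H$ iff the same identities hold on $X_\C$. For (b) and (c), very ampleness is preserved and reflected by faithfully flat base change, the formation of $H^0(X,\cO_X(H))$ and $H^0(\P(W),\cI_X(2))$ commutes with flat base change, and being cut out by quadrics is encoded by the surjectivity of a morphism of coherent sheaves, a property itself transferred in both directions.

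The only content-bearing step is condition (d). From the construction in Appendix~\ref{section-excess-conormal-sheaves}, the excess conormal sheaf $\cEN_X$, and therefore $\cU_X = \cEN^\vee_X(2H)$, is built functorially from the ideal sheaf $\cI_X \subset \cO_{\P(W)}$ and the evaluation map $V_6 \otimes \cO_{\P(W)}(-2) \to \cI_X$, all of which commute with flat base change; hence $\cU_{X_\C} \cong (\cU_X)_\C$. Base change for $\Hom$ between coherent sheaves on a proper scheme then yields
\begin{equation*}
\Hom(\cU_{X_\C},\cU_{X_\C}) \cong \Hom(\cU_X,\cU_X) \otimes_\k \C,
\end{equation*}
so $\cU_X$ is simple iff $\cU_{X_\C}$ is. No step presents a real obstacle; the only mild subtlety is confirming that the excess conormal sheaf construction of Appendix~\ref{section-excess-conormal-sheaves} is compatible with flat base change, which is immediate from its cohomological definition.
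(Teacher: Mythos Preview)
Your proof is correct and follows essentially the same approach as the paper: verify that the conditions of Theorem~\ref{theorem:gm-intrinsic} are stable under the base change $\k \hookrightarrow \C$. The paper's version is terser---it handles the forward direction directly from the definition and singles out the linear equivalence $K_X \lin -(n-2)H$ (via injectivity of $\Pic(X) \to \Pic(X_\C)$ for $X$ integral projective) as the only point requiring comment---but the substance is the same.
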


\begin{proof}
One direction is clear from the definition, so we only have to check that if $(X_\C,H_\C)$ is GM, so is $(X,H)$.
We show that if the conditions of Theorem~\ref{theorem:gm-intrinsic} are satisfied for $(X_\C,H_\C)$,   they are satisfied for $(X,H)$ as well.
The only property  for which this is not completely obvious is the equality $K_X = -(n-2)H$, but it follows   from the injectivity of the map $\Pic(X) \to \Pic(X_\C)$,
which holds since $X$ is integral and projective.
\end{proof}}

 {We  introduce some more terminology. Given a GM variety $X$ defined by~\eqref{equation-x-from-data}, the
  twisted excess conormal sheaf $\cU_X$ that was crucial for the proof of Theorem~\ref{theorem:gm-intrinsic} will be called its {\sf   Gushel sheaf}.
 As we   showed in the proof, the projection of $X$  from the   vertex $\P(K)$ of the cone $\cone{K}\Gr(2,V_5)$
defines a rational map $X \dashrightarrow \Gr(2,V_5)$  and the Gushel sheaf $\cU_X$ is isomorphic (on the smooth locus of $X$) to the pullback under this map of the tautological vector bundle on $\Gr(2,V_5)$.
The map $X \dashrightarrow \Gr(2,V_5)$ is thus determined by~$\cU_X$ and   is canonically associated with $X$.
We call this map {\sf the Gushel map} of $X$.}

\subsection{GM data}

In the course of the proof of Theorem~\ref{theorem:gm-intrinsic}, we associated with any   normal polarized  GM variety
  vector spaces $W$, $V_6$, $V_5$,   $L$, and   maps $\bq$, $\mu$,~$\varepsilon$. We axiomatize these as follows.

\begin{defi}\label{defgm}
A {\sf GM data $(W,V_6,V_5,L,\mu,\bq,\eps)$ of dimension $n$} (over $\k$) consists of
\begin{itemize}
\item a $\k$-vector space $W$ of dimension $n + 5$,
\item a $\k$-vector space $V_6$ of dimension $6$,
\item a  {$\k$}-hyperplane $V_5 \subset V_6$,
\item a $\k$-vector space $L$  {of dimension $1$}, 
\item a $\k$-linear map $\mu \colon  {L \otimes}W \to \bw2{V_5}$,  
\item a $\k$-linear map $\bq \colon  V_6 \to \Sym^2\!W^\vee$,
\item a  $\k$-linear isomorphism $\eps\colon  \det(V_5) \to L^{\otimes 2}$,
\end{itemize}
such that 
the following diagram commutes
\begin{equation}\label{equation-mu-q-plucker}
\vcenter
{\xymatrix@R=5mm@C=40pt@M=7pt
{
\,V_5\, \ar@{^{(}->}[r] \ar[d]_-{\varepsilon}^<<[right]{\sim}& V_6 \ar[d]^-{\bq} \\
{L^{\otimes 2} \otimes} \bw{4}{V_5^\vee} \ar[r]^-{\Sym^2\!\mu^\vee} & \Sym^2\!W^\vee.
}}
\end{equation}
 {In other words, $\bq(v)(w_1,w_2) = \eps(v \wedge \mu(w_1) \wedge \mu(w_2))$ for all $v \in V_5$ and $w_1,w_2 \in W$.}
\end{defi}

\begin{defi}\label{def:gm-data-iso}
An {\sf isomorphism of GM data} between GM data $(W,V_6,V_5,L,\mu,\bq,\eps)$ and\break $(W',V'_6,V'_5,L',\mu',\bq',\eps')$ is a triple of $\k$-linear isomorphisms 
$\varphi_W\colon W \to W'$, $\varphi_V\colon V_6 \to V'_6$, and $\varphi_L\colon L \to L'$,
such that 
\begin{equation*}
\varphi_V(V_5) = V'_5 
, \qquad
{\eps' \circ \bw5{\varphi_V} = \varphi_L^{\otimes 2}\circ  \eps,} 
\end{equation*}
and the following diagrams commute
\begin{equation*}
 \xymatrix{
 V_6 \ar[d]_{\varphi_V} \ar[r]^-{\bq} & \Sym^2\!W^\vee  \\
V'_6 \ar[r]^-{\bq'} & \Sym^2\!W'^\vee \ar[u]_{\Sym^2\!\varphi_W^\vee}
}
\quad\quad\quad
\quad\quad\quad
\xymatrix{
 L \otimes W \ar[d]_{{\varphi_L \otimes }\varphi_W} \ar[r]^-{\mu} & \bw2V_5 \ar[d]^{\sbw2\varphi_V}  \\
L' \otimes W' \ar[r]^-{\mu'} & \bw2V'_5 .
} 
\end{equation*}
 In particular, the automorphism group 
 of a GM data $(W,V_6,V_5,L,\mu,\bq,\eps)$ is
the subgroup of $\GL(W) \times \GL(V_6) \times \Gm$ of elements $(g_W,g_V,g_L)$
 such that
\begin{equation*}
 g_V(V_5) = V_5,
\qquad 
\det(g_V|_{V_5}) = g_L^2,
\qquad
(\Sym^2\!g_W^\vee) \circ \bq \circ g_V = \bq,
\qquad 
(\bw2g_V) \circ \mu = \mu \circ (g_L \otimes g_W).
 \end{equation*}
 {We consider the set of all polarized GM data as a groupoid, with this notion of   isomorphism.} 
  \end{defi}

\begin{lemm}\label{lemma:from-x-to-data}
The collection $(W,V_6,V_5,L,\mu,\bq,\eps)$ associated by Theorem~\textup{\ref{theorem:gm-intrinsic}} with a  normal polarized  GM variety $(X,H)$ is a GM data. 
The association 
\begin{equation*}
(X,H) \longmapsto (W,V_6,V_5,L,\mu,\bq,\eps) 
\end{equation*}
is a  {fully faithful} functor from the groupoid of  {normal} polarized GM varieties to the groupoid of GM data,
\ie, any isomorphism of polarized GM varieties induces an isomorphism of the associated GM data  {and vice versa}.
 {Moreover, this association} works in families.
\end{lemm}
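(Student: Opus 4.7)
The plan is to verify in turn that the collection is a GM data, that the association is a functor, and that it is fully faithful; the ``in families'' assertion is then formal. The only non-trivial data axiom to check is the commutativity of diagram~\eqref{equation-mu-q-plucker}. This is in essence contained in the proof of Theorem~\ref{theorem:gm-intrinsic}, where it was shown that $V_5 \subset V_6$ is the subspace of Pl\"ucker quadrics for the Gushel map $X \dashrightarrow \Gr(2,V_5)$. Unravelling the definitions of $\mu$ (the dual of the map $L \otimes \bw{2}{V_5^\vee} \to W^\vee$ deduced from $\cU_\Xreg \hookrightarrow V_5 \otimes \cO_\Xreg$) and of $\eps$ (the isomorphism $\det V_5 \cong L^{\otimes 2}$ arising from the exact sequence defining $\lambda$), the quadric $\bq(v) \in \Sym^2\!W^\vee$ attached to $v \in V_5$ evaluates as $(w_1,w_2) \mapsto \eps(v \wedge \mu(w_1) \wedge \mu(w_2))$, which is precisely what the diagram asserts.

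For functoriality, observe that each ingredient of the GM data is given by a canonical cohomological recipe from $(X,H)$: $W = H^0(X,\cO_X(H))^\vee$, $V_6 = H^0(\P(W),\cI_X(2))$, $L = H^0(\Xreg,\bw{2}\cU_\Xreg(H))$, together with the tautological maps $\bq$, $\mu$, $\eps$. An isomorphism $(\phi,\psi)$ of polarized GM varieties therefore induces canonical isomorphisms $\varphi_W$, $\varphi_V$, $\varphi_L$; the hyperplane $V_5 \subset V_6$ is preserved because it is intrinsically the kernel of the form $\lambda$ built from $\cU_X$ and $L$; the compatibilities of $(\varphi_W,\varphi_V,\varphi_L)$ with $\bq$, $\mu$, and $\eps$ then follow from naturality.

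For full faithfulness, the crucial input is condition~(c) of Theorem~\ref{theorem:gm-intrinsic}, namely that $X$ is the scheme-theoretic common zero locus of the quadrics $\bq(V_6) \subset \Sym^2\!W^\vee$ in $\P(W)$, polarized by the restriction of $\cO_{\P(W)}(1)$. Given an isomorphism $(\varphi_W,\varphi_V,\varphi_L)$ of GM data, the compatibility $(\Sym^2\!\varphi_W^\vee) \circ \bq' \circ \varphi_V = \bq$ transports this description from $X$ to $X'$, so $\P\varphi_W$ restricts to $\phi \colon X \isomto X'$, and $\psi$ is the tautological identification of $(\P\varphi_W)^*\cO_{\P(W')}(1)$ with $\cO_{\P(W)}(1)$. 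That the two constructions on hom-sets are mutually inverse is a matter of unwinding definitions: $(\phi,\psi)$ recovers $\varphi_W$, which in turn forces $\varphi_V$ via its action on $V_6 \subset \Sym^2\!W^\vee$, and then $\varphi_L$ is uniquely pinned down by the compatibility with $\mu$, using that $\mu \ne 0$.

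The step I expect to require the most care is this last reconstruction of $\varphi_L$: one has to check that the scalar encoded in $\varphi_L$ is rigidly determined by the rest of the isomorphism of GM data, and that no additional compatibility (such as with $\eps$) creates an obstruction or redundancy. This boils down to the non-triviality of the Gushel map and the fact that $\eps$ is an isomorphism, and so it goes through. Finally, the relative version is automatic: pushforward of $\cO_X(H)$ and of $\cI_X(2)$ along a flat family, the relative excess conormal sheaf, and the construction of the relative form $\lambda$ all commute with arbitrary base change, so the functor extends verbatim to families.
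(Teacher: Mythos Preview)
Your proposal is correct and follows essentially the same approach as the paper: you verify the GM data axiom by identifying $V_5$ with the Pl\"ucker quadrics (as in the proof of Theorem~\ref{theorem:gm-intrinsic}), deduce functoriality from the canonicity of each ingredient, obtain full faithfulness from the fact that $X$ is scheme-theoretically cut out by $\bq(V_6)$ in $\P(W)$, and pass to families by replacing global sections with pushforwards. Your treatment of the inverse map on hom-sets (reconstructing $\varphi_V$ and $\varphi_L$ from $(\phi,\psi)$) is in fact more explicit than the paper's, which simply observes that $\varphi_W$ restricts to the required isomorphism of polarized varieties.
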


\begin{proof}
In the course of the proof of Theorem~\ref{theorem:gm-intrinsic}, we showed that the subspace $V_5 \subset V_6$ of quadrics through $X$
cuts out $\CKGr(2,V_5) \cap \P(W) $ in $ \P(W)$. Thus these quadrics are restrictions to $\P(W)$ of the Pl\"ucker quadrics. This is equivalent
to the commutativity of~\eqref{equation-mu-q-plucker}, so the constructed data is a GM data.

Let $(\phi,\psi)$ be an isomorphism of polarized GM varieties between $(X,H)$ and $(X',H')$. We denote by $(W',V'_6,V'_5,L',\mu',\bq',\eps')$ the GM data for $X'$.
The isomorphism $ (\phi,\psi)$ induces an isomorphism between $W$ and $W'$, and also between $V_6$ and $V_6'$. Moreover,
it induces an isomorphism between the excess conormal sheaves of $X$ and $X'$, and an isomorphism between $L$ and $L'$,
compatible with the isomorphisms~\eqref{def-l}. These isomorphisms are compatible 
with the hyperplanes $V_5$ and with the maps $\bq$, $\mu$, and $\varepsilon$, hence provide an isomorphism of the associated GM data.
Moreover, the composition of isomorphisms of polarized GM varieties corresponds to the composition of the corresponding
isomorphisms of GM data. This proves the functoriality.

 {Conversely, an isomorphism of GM data includes an isomorphism $\varphi_W \colon \P(W) \to \P(W')$ which induces an isomorphism  of polarized GM varieties between $X$ and $X'$.
This proves that the functor is fully faithful.}

Finally, given a flat 
family $\cX \to S$ of normal projective varieties with a Cartier divisor~$H$ on~$\cX$, such that each fiber
of the family satisfies the conditions of Theorem~\ref{theorem:gm-intrinsic}, the same  construction provides  vector bundles $\cW$, $\cV_6$, $\cV_5$,  $\cL$ on $S$
(in the definition of  {$\cV_6$, $\cW$, and~$\cL$,}
% the first two, 
one should replace global sections with   pushforwards to $S$)
and   maps $\bq\colon \cV_6 \to \Sym^2\!\cW^\vee$, $\mu\colon \cL \otimes \cW \to \bw2\cV_5$, and $\varepsilon\colon \det(\cV_5) \to \cL^{\otimes 2}$.
This shows that the association works in families.
\end{proof}

The following lemma characterizes the image of the functor.
Let $(W,V_6,V_5,L,\mu,\bq,\eps)$ be a GM data. 
For each vector $v \in V_6$, we have a quadratic form $\bq(v) \in \Sym^2\!W^\vee$ and we denote by~$Q(v) \subset \PP(W)$ the subscheme it defines (a quadratic hypersurface when $\bq(v)\ne 0$).

\begin{lemm}\label{lemma:from-data-to-x}
If $(W,V_6,V_5,L,\mu,\bq,\eps)$ is a GM data of dimension $n$,
\begin{equation}\label{equation-x-intersection-quadrics}
X = X(W,V_6,V_5,L,\mu,\bq,\eps) := \bigcap_{v \in V_6} Q(v)   \subset \P(W) 
\end{equation}
is a GM intersection of dimension $ \ge n$. It is a GM variety if and only if $X$ is integral of dimension $n$.
\end{lemm}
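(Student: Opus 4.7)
The plan is to exhibit $X$ in the form \eqref{equation-x-from-data} by using $\mu$ to embed $\P(W)$ into the ambient space of a cone over $\Gr(2,V_5)$, and then using the commutative diagram \eqref{equation-mu-q-plucker} to identify the quadrics $Q(v)$ with $v \in V_5$ as the pullbacks of the Plücker quadrics cutting out that cone. The remaining equations $Q(v)$ with $v \in V_6 \smallsetminus V_5$ contribute a single quadratic section, giving precisely the shape of a GM intersection.

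First, I would pick a splitting of the sequence $0 \to \Ker(\mu) \to L \otimes W \xrightarrow{\mu} \Im(\mu) \to 0$ to produce an injection $\bmu \colon L \otimes W \hookrightarrow \bw2{V_5} \oplus K$ extending $\mu$, where $K := \Ker(\mu)$. Since $\dim(L) = 1$, this yields a linear embedding $\P(W) \cong \P(L \otimes W) \hookrightarrow \P(\bw2{V_5} \oplus K)$. Recall that $\Gr(2,V_5) \subset \P(\bw2{V_5})$ is cut out by the Plücker quadrics indexed by $V_5$, namely the forms $\alpha \mapsto v \wedge \alpha \wedge \alpha$ with values in $\det(V_5)$, and that these same quadrics, trivially extended, cut out the cone $\CKGr(2,V_5)$ in $\P(\bw2{V_5} \oplus K)$. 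Via the isomorphism $\eps\colon \det(V_5) \xrightarrow{\sim} L^{\otimes 2}$, the commutativity of \eqref{equation-mu-q-plucker} is precisely the statement that for every $v \in V_5$, the quadric $\bq(v)$ on $W$ is the pullback along $\bmu$ of the Plücker quadric attached to $v$. Taking intersections over $V_5$ then gives
\[
\bigcap_{v \in V_5} Q(v) \;=\; \CKGr(2,V_5) \cap \P(W)
\]
inside $\P(\bw2{V_5} \oplus K)$.

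Next, I would pick any $v_0 \in V_6 \smallsetminus V_5$ and set $Q := Q(v_0) \subset \P(W)$ (a quadric hypersurface, or all of $\P(W)$ if $\bq(v_0) = 0$). Since $\bq$ is linear in its $V_6$-argument, the full intersection $\bigcap_{v \in V_6} Q(v)$ is obtained by intersecting the previous display with this single $Q$, yielding $X = \CKGr(2,V_5) \cap \P(W) \cap Q$, which is exactly a GM intersection in the sense of Definition~\ref{defigm}. For the dimension, $\CKGr(2,V_5)$ has codimension $3$ in $\P(\bw2{V_5} \oplus K)$, so the standard lower bound for intersections in projective space gives $\dim(\CKGr(2,V_5) \cap \P(W)) \ge (n+4) - 3 = n+1$, and intersecting with $Q$ drops the dimension by at most $1$, so $\dim(X) \ge n$.

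For the final assertion, the ``only if'' direction is built into Definition~\ref{defigm}. For the ``if'' direction, once $X$ has the form just described and is (geometrically) integral of dimension $n = \dim(W)-5 \ge 1$, all conditions in the definition of a GM variety are met. The main obstacle in the whole proof is the first step: one has to carefully track the twists by $L$ and by $\det(V_5^\vee)$ encoded by $\eps$ in diagram \eqref{equation-mu-q-plucker} in order to verify that $\Sym^2\mu^\vee$ really does send $V_5$ into the space of pulled-back Plücker quadrics. Once this identification is pinned down, the rest of the argument is bookkeeping.
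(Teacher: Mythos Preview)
Your proof is correct and follows essentially the same approach as the paper: lift $\mu$ to an embedding $\bar\mu\colon L\otimes W \hookrightarrow \bw2V_5 \oplus K$ with $K=\Ker(\mu)$, use diagram~\eqref{equation-mu-q-plucker} to identify the $Q(v)$ for $v\in V_5$ with the restricted Pl\"ucker quadrics cutting out $\CKGr(2,V_5)\cap\P(W)$, and intersect with one further quadric $Q(v_0)$ for $v_0\notin V_5$. Your explicit codimension count for the bound $\dim(X)\ge n$ is a small addition over the paper, which leaves this implicit.
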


\begin{proof}
Lift the map $\mu\colon L \otimes W \to \bw2V_5$ to an embedding $\bar\mu\colon L \otimes W \hookrightarrow \bw2V_5 \oplus K$ for some~$K$
(for example, one can take $K = \Ker(\mu)$).
It follows from the commutativity of~\eqref{equation-mu-q-plucker} that for $v \in V_5$,
the quadrics $Q(v)$ are the restrictions to $\PP(W) = \P(L \otimes W)$ of the cones (with vertex~$\P(K)$) over the Pl\"ucker quadrics 
in $\P(\bw2V_5)$ with respect to the embedding $\bmu$. Since the intersection of   the Pl\"ucker quadrics
is $\Gr(2,V_5)$, this implies
\begin{equation}\label{dess}
\bigcap_{v \in V_5} Q(v) = \CKGr(2,V_5) \cap \PP(W) 
\end{equation}
 and, for any $v\in V_6\setminus V_5$, we have
\begin{equation*} 
X = \CKGr(2,V_5) \cap \P(W) \cap Q(v).
\end{equation*} 
This is a GM intersection, which is a GM variety if and only $X$ is integral of dimension $n$.\end{proof}

\begin{theo}\label{theorem:gm-var-data}
The constructions of Lemmas~\textup{\ref{lemma:from-x-to-data}} and~\textup{\ref{lemma:from-data-to-x}} are mutually inverse
and define  {an equivalence of groupoids} between
\begin{itemize}
\item  the  {groupoid} of all normal polarized GM varieties $(X,H)$ of dimension~$n$ 
 and  
\item the  {groupoid} of all GM data $(W,V_6,V_5,L,\mu,\bq,\eps)$ of dimension $n$ such that  {the GM intersection} $X(W,V_6,V_5,L,\mu,\bq,\eps)$ is $n$-dimensional and normal.
\end{itemize}
This equivalence {induces an injection from the set of isomorphism classes of normal polarized GM varieties into the set of isomorphism classes of GM data and} also  works in families.
\end{theo}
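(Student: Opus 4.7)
The plan is to show that the two constructions --- Lemma~\ref{lemma:from-x-to-data} (from varieties to data) and Lemma~\ref{lemma:from-data-to-x} (from data to varieties) --- are mutually inverse on objects, up to canonical isomorphism. Combined with the fully faithfulness of the forward functor already established in Lemma~\ref{lemma:from-x-to-data}, this yields the claimed equivalence of groupoids; the injection on isomorphism classes and the statement about families will then follow formally.

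One direction is essentially immediate. Starting with a normal polarized GM variety $(X,H)$ and passing through its associated GM data $(W,V_6,V_5,L,\mu,\bq,\eps)$, the reconstructed scheme is $\bigcap_{v \in V_6} Q(v) \subset \PP(W)$; by Theorem~\ref{theorem:gm-intrinsic}\eqref{item-v6}, $V_6 = H^0(\PP(W),\cI_X(2))$ is precisely the space of quadratic equations of $X$, so this intersection coincides scheme-theoretically with $X$. The other direction requires more work. Given a GM data whose GM intersection $X$ is normal of dimension $n$, Lemma~\ref{lemma:from-data-to-x} makes $(X,H)$ into a polarized GM variety, so Theorem~\ref{theorem:gm-intrinsic} applies and produces an associated data $(W',V_6',V_5',L',\mu',\bq',\eps')$. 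I would construct canonical isomorphisms piece by piece: the restriction map $W \to H^0(X,\cO_X(H))^\vee = W'$ is an isomorphism since both spaces have dimension $n+5$ (by the hypothesis on $W$ and by Theorem~\ref{theorem:gm-intrinsic}\eqref{item-w}); the assignment $v \mapsto Q(v)$ gives a map $V_6 \to H^0(\PP(W),\cI_X(2)) = V_6'$ which is an isomorphism because both sides are $6$-dimensional and the map is injective (otherwise $X$ would be cut out by fewer than $6$ linearly independent quadrics in $\PP(W)$, contradicting Theorem~\ref{theorem:gm-intrinsic}\eqref{item-v6}); the hyperplanes $V_5$ and $V_5'$ match by comparing~\eqref{dess} in the proof of Lemma~\ref{lemma:from-data-to-x} with the analogous identification established in the proof of Theorem~\ref{theorem:gm-intrinsic}; and $L$, $\bq$, $\eps$ correspond to their primed analogues via~\eqref{def-l} and the commutativity of~\eqref{equation-mu-q-plucker}.

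I expect the most delicate point to be the identification of $\mu$ with $\mu'$: the original $\mu$ is encoded in the embedding $X \hookrightarrow \cone{K}\Gr(2,V_5) \cap \PP(W)$ built into the starting data, whereas $\mu'$ is reconstructed intrinsically from the twisted excess conormal (Gushel) sheaf of $X$. Compatibility rests on the fact that the Gushel sheaf is canonically the pullback of the tautological subbundle on $\Gr(2,V_5)$ under the rational map $X \dashrightarrow \Gr(2,V_5)$ determined by $\mu$; this was implicitly proved in Theorem~\ref{theorem:gm-intrinsic} via the commutative diagram involving the excess conormal sequence of $\Gr(2,V_5)$ restricted to $X$, and running the same computation in reverse yields the required compatibility. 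Finally, the injection on isomorphism classes is formal from the equivalence of groupoids, and for a flat family $\cX \to S$ the same sheaf-theoretic constructions globalize as noted at the end of Lemma~\ref{lemma:from-x-to-data}, with flat base change ensuring the compatibility of all the natural identifications above.
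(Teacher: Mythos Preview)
Your proposal is correct and follows the same approach as the paper's own proof, which is even terser (it simply refers back to equations~\eqref{des} and~\eqref{dess} for one direction and asserts the other without detail). One small point: your stated reason for the injectivity of $V_6 \to V_6'$ is not quite enough---a scheme cut out by fewer than six quadrics may still lie on six linearly independent quadrics, since the ideal they generate need not be saturated in degree~$2$. The clean fix is to invoke the exactness of the resolution~\eqref{eq:resolution-x}: twisting by $\cO(2)$ and taking cohomology shows directly that the map $V_5 \oplus \k \to \Sym^2\!W^\vee$ (which is $\bq$ after identifying $V_6 \cong V_5 \oplus \k v_0$) is injective with image exactly $H^0(\PP(W),\cI_X(2))$.
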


\begin{proof}
Let $(X,H)$ be a GM variety and let $(W,V_6,V_5,L,\mu,\bq,\eps)$ be the associated GM data. We saw during the proof of Theorem~\ref{theorem:gm-intrinsic} that one can we can write $X$ as  in~\eqref{des}. Comparing with~\eqref{dess}, we obtain $X(W,V_6,V_5,L,\mu,\bq,\eps) \cong X$.
Conversely, start with a GM data $(W,V_6,V_5,L,\mu,\bq,\eps)$ and set $X: = X(W,V_6,V_5,L,\mu,\bq,\eps)$. Applying to $X$
the construction of Theorem~\ref{theorem:gm-intrinsic},  one gets the original GM data back.
Functoriality and the fact that the equivalence works in families are clear from the construction.
\end{proof}

\begin{rema}\label{remark:cone-kernel}
The proof of Lemma~\ref{lemma:from-data-to-x} shows that if $X$ is a normal polarized GM variety, one can always assume   $K = \Ker(\mu)$.
Thus a normal GM variety with GM data $(W,V_6,V_5,L,\mu,\bq,\eps)$ can be written as
\begin{equation*}
X = \cone{\Ker(\mu)}(\Gr(2,V_5) \cap \P(\mu(W))) \cap Q(v)
\end{equation*}
for any $v \in V_6 \setminus V_5$.
\end{rema}

Recall that $\Aut(W,V_6,V_5,L, \mu,\bq,\eps) $ is a subgroup of $ \GL(W) \times \GL(V_6) \times \Gm$.

\begin{coro}\label{corollary-aut-x-aut-data}
Let $(X,H)$ be a normal polarized GM variety, with associated GM data $(W,V_6,V_5,L, \mu,\bq,\eps) $. 
 {There is an isomorphism 
 \begin{equation*}
\Aut(X,H) \cong \Aut(W,V_6,V_5,L,\mu,\bq,\eps)б
\end{equation*}
of algebraic groups
 and} an exact sequence
\begin{equation*}
1 \to \Gm \to \Aut(W,V_6,V_5,L,\mu,\bq,\eps) \to \Aut_H(X) \to 1.
\end{equation*}
\end{coro}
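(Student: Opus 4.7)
The plan is to extract the corollary directly from the groupoid equivalence in Theorem~\ref{theorem:gm-var-data}, together with the previously established sequence~\eqref{eq:aut-x-h}. Since an equivalence of groupoids, when specialized to a single object, induces an isomorphism of automorphism groups, most of the work has already been done.

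First I would unwind what the equivalence of groupoids says at the level of a single object. The fully faithful functor from Lemma~\ref{lemma:from-x-to-data} sends $(X,H)$ to $(W,V_6,V_5,L,\mu,\bq,\eps)$, and full faithfulness restricted to the one object $(X,H)$ yields a bijection $\Aut(X,H) \xrightarrow{\sim} \Aut(W,V_6,V_5,L,\mu,\bq,\eps)$ of abstract groups, compatible with composition. Explicitly, an automorphism $(\phi,\psi)$ of $(X,H)$ induces automorphisms $\varphi_W$ of $W = H^0(X,\cO_X(H))^\vee$, $\varphi_V$ of $V_6 = H^0(\P(W),\cI_X(2))$, and $\varphi_L$ of $L = H^0(\Xreg,\bw{2}\cU_\Xreg(H))$ in a canonical way, and conversely an isomorphism of GM data induces an automorphism $\varphi_W$ of $\P(W)$ preserving $X = X(W,V_6,V_5,L,\mu,\bq,\eps)$ along with the twist data on the polarization.

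Second I would upgrade this bijection to an isomorphism of algebraic groups. Both sides are naturally closed subgroups of algebraic groups: $\Aut(X,H)$ sits inside the automorphism scheme of the pair $(X,\cO_X(H))$, while $\Aut(W,V_6,V_5,L,\mu,\bq,\eps)$ is cut out by polynomial equations inside $\GL(W)\times \GL(V_6)\times \Gm$. The fact that the equivalence of Theorem~\ref{theorem:gm-var-data} works in families means that both functors agree on $S$-points for every $\k$-scheme $S$; by Yoneda, this identification is an isomorphism of group schemes.

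Finally, the exact sequence is obtained by combining this isomorphism with~\eqref{eq:aut-x-h}. Under the isomorphism $\Aut(X,H) \cong \Aut(W,V_6,V_5,L,\mu,\bq,\eps)$, the central subgroup $\Gm \subset \Aut(X,H)$ acting trivially on $X$ by dilation on $\cO_X(H)$ corresponds to the subgroup $\{(\id_W,\id_{V_6},t)\,:\,t\in\Gm\}$, which is easily seen to lie in $\Aut(W,V_6,V_5,L,\mu,\bq,\eps)$ (the map $\bq$ is unchanged, the map $\mu$ is rescaled by $t$ from $L$ but $\bw2V_5$ is unchanged, which is compatible only after checking; scaling $L$ by~$t$ rescales $\mu$ by~$t$ and $\eps$ by~$t^2$, all permissible since both $\mu$ and $\eps$ are fixed up to the automorphisms of the source). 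The quotient then maps isomorphically to $\Aut_H(X)$ by~\eqref{eq:aut-x-h}. The only mild subtlety is pinning down how the central $\Gm$ sits inside the triple $(g_W,g_V,g_L)$, but this is a direct verification using the defining constraints $\det(g_V|_{V_5})=g_L^2$ and the compatibilities with $\mu$ and $\bq$; no deeper obstacle arises.
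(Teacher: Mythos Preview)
Your approach is the same as the paper's (which simply says the corollary follows from Theorem~\ref{theorem:gm-var-data} and~\eqref{eq:aut-x-h}), and the core argument is correct: full faithfulness gives the isomorphism of automorphism groups, the ``works in families'' clause upgrades it to an isomorphism of group schemes, and transporting~\eqref{eq:aut-x-h} along this isomorphism yields the exact sequence.

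There is one concrete slip in your final paragraph, though it does not damage the proof. You claim the central $\Gm$ corresponds to $\{(\id_W,\id_{V_6},t)\}$, but this triple does \emph{not} satisfy the constraint $\det(g_V\vert_{V_5}) = g_L^2$ from Definition~\ref{def:gm-data-iso} unless $t^2=1$. The actual image of the dilation $(\id_X,t\cdot\id_{\cO_X(H)})$ under the functor involves nontrivial scalars on $W$ and $V_6$ as well, since $W=H^0(X,\cO_X(H))^\vee$ and $V_6\subset\Sym^2\!W^\vee$ are both rescaled when $\psi$ is. Fortunately you never use this explicit description: the exact sequence is obtained purely by pushing~\eqref{eq:aut-x-h} through the group isomorphism, so you can simply drop the attempted identification (and the hedging sentence about ``the only mild subtlety'') without loss.
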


\begin{proof}
Follows from Theorem~\ref{theorem:gm-var-data} and~\eqref{eq:aut-x-h}.
 \end{proof}

\subsection{Mildly singular GM varieties}\label{sec23}

In this section, we give a simpler intrinsic characterization of mildly singular GM varieties which will allow us to extend the Gushel--Mukai classification.

We  say that a smooth projective   curve $X$   of genus 6 is {\sf Clifford general}   if $X_\C$ is neither hyperelliptic, 
nor trigonal, nor a plane quintic.

\begin{prop}\label{gm-curves-surfaces}
A smooth projective curve is a GM curve if and only if it is a Clifford general curve of genus $6$.
 Equivalently, its canonical model  is an intersection of 6 quadrics in~$\P^5$.  
\end{prop}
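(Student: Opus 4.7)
The plan is to reduce both directions to classical facts about canonical curves of genus $6$ together with Mukai's embedding theorem from \cite{mukcg}, with Theorem~\ref{theorem:gm-intrinsic} handling the forward direction cleanly.

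First I would settle the ``equivalently'' clause on its own. A smooth non-hyperelliptic curve $X$ of genus $6$ embeds canonically in $\P^{g-1}=\P^5$ and is projectively normal by Noether's theorem, so Riemann--Roch applied to $2K_X$ yields
\[
h^0(\P^5,\cI_X(2))=\binom{7}{2}-(3g-3)=21-15=6.
\]
By the Enriques--Petri--Babbage theorem, the canonical ideal is generated in degree $2$ precisely when $X$ is neither hyperelliptic, nor trigonal, nor a plane quintic, that is, precisely when $X$ is Clifford general. Hence a smooth curve is Clifford general of genus $6$ if and only if its canonical model is scheme-theoretically cut out by $6$ quadrics in $\P^5$.

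For the forward implication, let $(X,H)$ be a GM curve, so $n=1$. Theorem~\ref{theorem:gm-intrinsic}\eqref{item-degree-kx} gives $K_X\lin H$ and $H^1=10$, hence $g(X)=6$ and $H$ is the canonical class. Part~\eqref{item-w} forces $H$ very ample (so $X$ is non-hyperelliptic) with $\dim W=6=h^0(K_X)$, and part~\eqref{item-v6} presents the canonical model as an intersection of $6$ quadrics in $\P^5$. Combined with the first step, this yields Clifford generality. For the converse, Mukai's construction in \cite[Section~5]{mukcg} applies directly: given a Clifford general curve of genus $6$, he produces a stable rank-$2$ vector bundle $\cE$ on $X$ whose global sections embed the canonical model into the cone $\CGr(2,5)$ (with vertex a point) and realize it as a dimensionally transverse intersection $\CGr(2,5)\cap\P^5\cap Q$. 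This is precisely a GM intersection in the sense of Definition~\ref{defigm}, with $\dim W=6=n+5$ and $X$ geometrically integral of dimension $1$, so $X$ is a GM curve.

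The argument is essentially a translation of vocabulary together with one nontrivial citation. The only substantive input is Mukai's embedding theorem, and this seems unavoidable: some extrinsic construction must produce the map $X\to\Gr(2,5)$, and Mukai's stable bundle is the natural source. Everything else is a Riemann--Roch dimension count and a direct appeal to Theorem~\ref{theorem:gm-intrinsic}.
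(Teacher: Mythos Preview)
Your proposal is correct and takes essentially the same approach as the paper: the paper's proof is the one-liner ``This follows from Theorem~\ref{theorem:gm-intrinsic} combined with the Enriques--Babbage theorem,'' and you have simply unpacked it. The forward direction reads off conditions \eqref{item-degree-kx}--\eqref{item-v6} of Theorem~\ref{theorem:gm-intrinsic} exactly as you do, and for the reverse direction the Mukai citation you invoke is already embedded in the proof of Theorem~\ref{theorem:gm-intrinsic}\eqref{item-ux-simple} for the base case $n=1$, so the content is identical.
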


\begin{proof}
This follows from Theorem~\ref{theorem:gm-intrinsic} combined with the Enriques--Babbage theorem (\cite[Section~III.3]{acgh}).
\end{proof}

 {An analogous characterization  holds for GM surfaces.} 
Following Mukai (\cite[Definition~3.8]{mukai1995new} (with a misprint in the English translation),  \cite[Definition~10.1]{johnsen2004k3},  \cite[Definition~1.3]{glt}),
we say that a complex polarized K3 surface $(S,H)$   is {\sf Brill--Noether  general} if   
\begin{equation*}
h^0(S,D) h^0(S,H-D) < h^0(S,H) 
\end{equation*}
for all divisors $D$ on $S$ not linearly equivalent to $0$ or $H$.  When $H^2=10$, this is equivalent, by~\cite[Proposition  10.5]{johnsen2004k3}, to the fact that $|H|$ contains a Clifford general smooth curve
(by \cite{gl}, all smooth curves in $|H|$ are then Clifford general).

 \begin{prop}\label{gm-surfaces}
A smooth projective surface $X$ is a GM surface if and only if $X_\C$   is a Brill--Noether general polarized $K3$ surface of degree~$10$.
\end{prop}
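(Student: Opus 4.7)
My approach is to apply the intrinsic characterization of Theorem~\ref{theorem:gm-intrinsic} with $n = 2$, in the same spirit as Proposition~\ref{gm-curves-surfaces} does in dimension one. By Corollary~\ref{corollary:x-c}, I may reduce immediately to $\k = \C$.

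For the ``only if'' direction, let $(X,H)$ be a GM surface. Theorem~\ref{theorem:gm-intrinsic}(a) gives $K_X \sim 0$ and $H^2 = 10$, while the resolution~\eqref{eq:resolution-x} specialized to $n = 2$ yields $h^1(X,\cO_X) = 0$; thus $X$ is a smooth projective surface with trivial canonical class and vanishing irregularity, i.e., a K3 surface of degree $10$. A smooth hyperplane section $C \in |H|$ is a dimensionally transverse intersection of the GM variety $X$ with a hyperplane, hence is itself a GM curve by the very definition~\eqref{equation-x-from-data}, so by Proposition~\ref{gm-curves-surfaces} it is Clifford general of genus~$6$. The criterion cited just before the proposition (Proposition~10.5 of~\cite{johnsen2004k3}) then implies that $(X,H)$ is Brill--Noether general.

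For the converse, suppose $(X,H)$ is a Brill--Noether general complex K3 surface with $H^2 = 10$; I will verify conditions (a)--(d) of Theorem~\ref{theorem:gm-intrinsic}. Condition~(a) is immediate from $K_X = 0$ and $H^2 = 10$. For (b), Brill--Noether generality ensures that $|H|$ contains a Clifford general (in particular non-hyperelliptic and non-trigonal) smooth curve, so Saint-Donat's classical theorem on polarized K3 surfaces implies that $H$ is very ample; Riemann--Roch then yields $h^0(X,H) = H^2/2 + 2 = 7$, so $\dim W = 7 = n + 5$. For (c), Saint-Donat's theorem further shows that the homogeneous ideal of $X \subset \P(W) = \P^6$ is generated by quadrics, while Kodaira vanishing and Riemann--Roch give $h^0(X,2H) = 2H^2 + 2 = 22$, so the space of quadrics through $X$ has dimension $\dim \Sym^2\!W^\vee - h^0(X,2H) = 28 - 22 = 6$. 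Finally, for (d), choose a smooth Clifford general curve $C \in |H|$; then by Lemma~\ref{en-hplane}, $\cU_X\vert_C \cong \cU_C$ is Mukai's stable rank-$2$ bundle on $C$ (from the curve case in the proof of Theorem~\ref{theorem:gm-intrinsic}), which is in particular simple. The induction argument from that same proof---applied to the restriction sequence
\begin{equation*}
0 \to \cU_X(-H) \to \cU_X \to \cU_X\vert_C \to 0,
\end{equation*}
iterated with further twists by $-H$, and combined with the torsion-freeness of $\cU_X$ as a subsheaf of $V_6 \otimes \cO_X$---then propagates simplicity from $C$ to $X$.

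The main obstacle will be to quote Saint-Donat's results at exactly the level of generality required: both the very ampleness of $H$ and the quadric generation of the homogeneous ideal must be ensured by Brill--Noether generality alone (via the existence of a Clifford general hyperplane section, which rules out the exceptional divisor classes in Saint-Donat's classification). Once these classical K3 facts are properly invoked, the remainder is a mechanical application of Theorem~\ref{theorem:gm-intrinsic} combined with the curve case of Proposition~\ref{gm-curves-surfaces}.
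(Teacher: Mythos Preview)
Your argument is correct. The ``only if'' direction coincides with the paper's. For the converse, the paper does not verify (a)--(d) directly: it forward-references Theorem~\ref{theorem:gushel-mukai}, whose engine is Proposition~\ref{proposition:hyperplane-criterion}. That proposition also checks conditions (a)--(d) of Theorem~\ref{theorem:gm-intrinsic}, but obtains (b) and (c) by lifting projective normality and quadric generation from the GM curve $C \in |H|$ to $X$ via Iskovskikh's Lemmas~(2.9)--(2.10) of~\cite{iskovskih1977fano}, rather than by invoking Saint-Donat on the K3 side. The verification of~(d) is literally the same in both approaches. Your route is self-contained for surfaces and avoids the forward reference, at the price of an input (Saint-Donat) specific to K3 surfaces; the paper's route treats all $n \ge 2$ uniformly in one induction. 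One detail worth making explicit in your write-up: the count $h^0(\cI_X(2)) = 28 - 22$ presupposes surjectivity of $\Sym^2\!W^\vee \to H^0(X,2H)$, i.e., $2$-normality, which is also part of Saint-Donat's package (projective normality) and should be named alongside quadric generation.
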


\begin{proof}
By Corollary~\ref{corollary:x-c}, we may assume $\k = \C$.
If $X$ is a smooth GM surface, we have $K_X = 0$ by Theorem~\ref{theorem:gm-intrinsic}(a), and the resolution~\eqref{eq:resolution-x} implies   $H^1(X,\cO_X) = 0$, hence $X$ is a K3 surface.
Moreover, a general hyperplane section of $X$ is a GM curve, hence a Clifford general curve of genus~6, hence $X$ is Brill--Noether general.
The other direction is proved in Theorem~\ref{theorem:gushel-mukai} below.
\end{proof}

If $X$ is a GM variety, any integral hyperplane section of $X$ is also a GM variety.
We are going to show that the converse is also true under some additional assumptions.
We start with the following result.

\begin{lemm}\label{lemma:gm-projectively-normal}
If $(X,H)$ is a normal polarized GM variety, it is projectively normal, \ie, the canonical maps 
\begin{equation*}
\Sym^m\! H^0(X,\cO_X(H)) \to H^0(X,\cO_X(mH))
\end{equation*}
are surjective for all $m \ge 0$.
\end{lemm}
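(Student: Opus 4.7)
The plan is to read projective normality off the explicit locally free resolution~\eqref{eq:resolution-x} of $\cO_X$ inside $\P(W)$ that was constructed in the proof of Theorem~\ref{theorem:gm-intrinsic}. By Theorem~\ref{theorem:gm-intrinsic}(b), $W^\vee \isomto H^0(X,\cO_X(H))$, hence $\Sym^m W^\vee = H^0(\P(W),\cO_{\P(W)}(m))$, and surjectivity of the map in the statement is equivalent to the vanishing
$$H^1(\P(W),\cI_X(mH)) = 0 \qquad \text{for all } m \ge 0.$$
For $m = 0$ the vanishing is immediate from the integrality of $X$, so one may assume $m \ge 1$.

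Truncating~\eqref{eq:resolution-x} yields a four-step resolution of $\cI_X$ by direct sums of line bundles on $\P(W)$, with terms
$$\Phi_1 = (V_5\oplus\k)\otimes\cO(-2),\quad
\Phi_2 = (V_5\otimes\cO(-4))\oplus(V_5^\vee\otimes\cO(-3)),\quad
\Phi_3 = (V_5^\vee\oplus\k)\otimes\cO(-5),\quad
\Phi_4 = \cO(-7).$$
Introducing the two syzygy sheaves $K_1 := \Ker(\Phi_1 \to \cI_X)$ and $K_2 := \Ker(\Phi_2 \to \Phi_1)$, one obtains three short exact sequences whose long exact cohomology sequences (twisted by $m$) give connecting maps
$$H^1(\cI_X(m)) \longrightarrow H^2(K_1(m)) \longrightarrow H^3(K_2(m)) \longrightarrow H^4(\Phi_4(m)) = H^4(\cO(m-7)).$$

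Since $n\ge 1$, one has $\dim \P(W) = n+4 \ge 5$, so by the standard vanishing for line bundle cohomology on projective space, $H^i(\P(W),\cO(k)) = 0$ for all $k \in \Z$ and every $1\le i\le 4$. This kills every ``neighbouring'' term $H^{i}(\Phi_j(m))$ appearing in the three long exact sequences as well as the final group $H^4(\cO(m-7))$. Consequently each of the three connecting maps above is an isomorphism and the rightmost group vanishes, proving $H^1(\P(W),\cI_X(m)) = 0$ as desired.

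The argument is essentially a mechanical hypercohomology chase once the resolution is in hand, so no real obstacle arises; the only point that must be verified is that the cohomological range $1 \le i \le 4$ relevant to the chase fits inside the vanishing window of $\P(W)$, which is exactly where the hypothesis $\dim X = n \ge 1$ (equivalently $\dim\P(W) \ge 5$) enters.
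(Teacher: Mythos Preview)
Your proof is correct and follows essentially the same approach as the paper: both arguments twist the resolution~\eqref{eq:resolution-x} and use that on $\P(W)$ of dimension $n+4\ge 5$ the intermediate cohomology groups $H^i(\cO(k))$ vanish for $1\le i\le 4$. The paper phrases the chase slightly differently (working with the full resolution of $\cO_X$ rather than passing to $\cI_X$ and naming the syzygy sheaves), but the content is identical.
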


\begin{proof}
Take any integer $m$ and consider the twist \begin{multline*}
0 \to \cO_{\P(W)}(m-7) \to 
\cO_{\P(W)}(m-5)^{\oplus 6} \to 
\cO_{\P(W)}(m-4)^{\oplus 5} \oplus \cO_{\P(W)}(m-3)^{\oplus 5} \to \\ \to 
\cO_{\P(W)}(m-2)^{\oplus 6} \to 
\cO_{\P(W)}(m) \to 
\cO_X(mH) \to 0  
\end{multline*}
by $\cO(m)$ of the resolution~\eqref{eq:resolution-x}.
We want to show that the map on global sections induced by the rightmost map is surjective. 
This holds because, $\P(W)$ being a projective space of dimension $n + 4 \ge 5$,
 the spaces
$H^1(\P(W),\cO_{\P(W)}(m-2))$, $
H^2(\P(W),\cO_{\P(W)}(m-3))$, $ 
H^2(\P(W),\cO_{\P(W)}(m-4)) $, $   
H^3(\P(W),\cO_{\P(W)}(m-5)) $, $  
H^4(\P(W),\cO_{\P(W)}(m-7))$ all vanish.
\end{proof}

\begin{prop}\label{proposition:hyperplane-criterion}
Let $(X,H)$ be a normal polarized  variety of dimension $n \ge 2$ such that  $K_X = -(n-2)H$  and $H^1(X,\cO_X) = 0$.
If there is a hypersurface $X' \subset X$ in the linear system $|H|$ such that $(X',H\vert_{X'}) $ is a normal polarized GM variety,   $(X,H)$ is also a polarized GM variety.
\end{prop}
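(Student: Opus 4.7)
The plan is to verify the four conditions (a)--(d) of Theorem~\ref{theorem:gm-intrinsic} for $(X,H)$ by leveraging the corresponding properties of $X'$. Condition (a) is immediate: $K_X = -(n-2)H$ is given, while $H^n = (H|_{X'})^{n-1} = 10$ by Theorem~\ref{theorem:gm-intrinsic}(a) for the GM variety $X'$. The dimension statements in (b) and (c) amount to cohomological bookkeeping: applying the short exact sequences
\begin{equation*}
0\to\cO_X(mH)\to\cO_X((m+1)H)\to\cO_{X'}((m+1)H|_{X'})\to 0
\end{equation*}
for $m=0,1$, combined with $H^1(X,\cO_X)=0$, the vanishing $H^1(X',\cO_{X'}(H|_{X'}))=0$ read off from the resolution~\eqref{eq:resolution-x} of $X'$, and the projective normality of $X'$ (Lemma~\ref{lemma:gm-projectively-normal}), one finds $h^0(X,\cO_X(H))=n+5$, hence $\dim W=n+5$, and $\dim V_6=6$ for $V_6:=\ker(\Sym^2 W^\vee\to H^0(X,\cO_X(2H)))$.

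For the remaining geometric content of (b) and (c), the linear system $|H|$ is base-point-free (combine a lift from the very ample linear system on $X'$ with the linear form $t$ cutting out $X'$ in $X$), so it defines a morphism $\phi\colon X\to\P(W)$; this is finite by ampleness of $H$ and birational onto its image since $\phi|_{X'}$ coincides with the GM closed embedding of $X'$ in $\P(W')\subset\P(W)$. Setting $Y:=\bigcap_{v\in V_6} Q(v)$, we have $\phi(X)\subseteq Y$. The restriction $V_6\to V'_6$ is injective---any kernel element factors as $t\ell$ for some $\ell\in W^\vee$, and its vanishing on the integral variety $X$ spanning $\P(W)$ forces $\ell=0$---hence an isomorphism by dimension count, so $Y\cap\P(W')=X'=\phi(X)\cap\P(W')$ scheme-theoretically. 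The connectedness of $Y$ (Fulton--Hansen, since $\dim Y\ge n\ge 2$) together with a Hilbert polynomial comparison obtained by inductively restricting to $\P(W')$ then forces $\phi(X)=Y$. Finally, a local analysis near $X'$, where $\phi$ is already an isomorphism, upgrades the finite birational morphism $\phi\colon X\to Y$ to an isomorphism, simultaneously yielding very ampleness of $H$ and the intersection-of-quadrics property.

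Condition (d) follows verbatim from the inductive argument in the proof of Theorem~\ref{theorem:gm-intrinsic}(d): by Lemma~\ref{en-hplane}, the Gushel sheaf restriction $\cU_X|_{X'}\cong\cU_{X'}$ is simple since $X'$ is GM, and iterating $\Hom(\cU_X,-)$ on the restriction sequence $0\to\cU_X(-H)\to\cU_X\to\cU_{X'}\to 0$ propagates simplicity to $\cU_X$, using that $\cU_X$ is torsion-free as a subsheaf of $V_6\otimes\cO_X$ on the integral $X$. The main obstacle is the identification $\phi(X)=Y$ combined with upgrading $\phi$ to an isomorphism---the delicate step requiring the connectedness/Hilbert polynomial argument and the local normality analysis near the distinguished hyperplane section $X'$.
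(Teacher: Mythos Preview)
Your treatment of conditions (a) and (d), as well as the dimension counts $\dim W = n+5$ and $\dim V_6 = 6$, matches the paper's approach. The divergence---and the difficulty---lies in establishing that $H$ is very ample and that $X$ is cut out by its quadrics.

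The paper's route here is short and classical: since $X'$ is projectively normal (Lemma~\ref{lemma:gm-projectively-normal}), Iskovskikh's \cite[Lemma~(2.9)]{iskovskih1977fano} shows $X$ is projectively normal; very ampleness then follows from Mumford \cite[p.~38]{mum}, and the intersection-of-quadrics property from \cite[Lemma~(2.10)]{iskovskih1977fano}. In fact your cohomological computation of $\dim V_6$ already contains the degree-$2$ case of Iskovskikh's lifting argument (surjectivity of $\Sym^2 W^\vee \to H^0(X,2H)$); iterating it in all degrees is exactly the content of \cite[Lemma~(2.9)]{iskovskih1977fano}, and this is what you should do instead.

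Your alternative geometric argument has real gaps. First, the invocation of Fulton--Hansen for connectedness of $Y$ is not justified: the six quadrics in $\P^{n+4}$ have expected intersection dimension $n-2$, so for $n=2$ the standard connectedness theorems do not apply, and knowing that the \emph{actual} dimension is $\ge n$ does not help. Second, the ``Hilbert polynomial comparison obtained by inductively restricting to $\P(W')$'' tacitly assumes that $t$ is a non-zerodivisor on $\cO_Y$, i.e.\ that $Y$ has no associated component inside $\P(W')$; this would follow if $Y$ were Cohen--Macaulay of dimension $n$, but that is precisely what is not yet known. Third, the ``local analysis near $X'$'' does not upgrade $\phi$ to an isomorphism: knowing $\phi|_{X'}$ is an isomorphism does not force $\phi$ to be an isomorphism in a neighborhood of $X'$, and even if it did, the non-isomorphism locus of the finite birational map $\phi\colon X\to Y$ could be a finite set of points disjoint from $X'$. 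What you actually need is that $Y$ is normal, which again is circular.
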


\begin{proof}
Let us verify the conditions of Theorem~\ref{theorem:gm-intrinsic}. Condition \eqref{item-degree-kx} is true by our assumptions, since $H^n = H\cdot H^{n-1} =(H\vert_{X'})^{n-1}=10$.
Furthermore,  $X'$ is projectively normal by Lemma~\ref{lemma:gm-projectively-normal}. 
Using~\cite[Lemma~(2.9)]{iskovskih1977fano}, we conclude that $X$ is also projectively normal.
Since $H$ is ample,   it is very ample by~\cite[p.~38]{mum}. The  exact sequence
\begin{equation*}
0 \to \cO_X \to \cO_X(H) \to \cO_{X'}(H) \to 0
\end{equation*}
with the assumption $H^1(X,\cO_X)=0$ imply   
\begin{equation*}
h^0(X,\cO_X(H)) = h^0(X',\cO_{X'}(H)) + 1 = n + 5. 
\end{equation*}
This proves~\eqref{item-w}, and  \cite[Lemma~(2.10)]{iskovskih1977fano} proves~\eqref{item-v6}. Finally, since $\cU_{X'}$ is simple (because $X'$ is a GM variety),
the argument from the proof of Theorem~\ref{theorem:gm-intrinsic}\eqref{item-ux-simple} proves that $\cU_X$ is also simple.  
\end{proof}

\begin{theo}\label{theorem:gushel-mukai}
Let $X$ be a normal locally factorial complex 
projective variety of dimension $n \ge 1$,
with  terminal  singularities and $\codim (\Sing(X) )\ge 4$, together with an ample Cartier divisor $H$ 
such that $K_X \lin -(n-2)H$ and $H^n = 10$. If we assume that
\begin{itemize}
\item when $n\ge 3$, we have $\Pic(X) = \Z H$;
\item when $n= 2$, the surface $X $ is a Brill--Noether general K3 surface;
 \item when $n= 1$, the genus-$6$ curve $X $ is Clifford general;
 \end{itemize}
 then $X$ is a GM variety.
\end{theo}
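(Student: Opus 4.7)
My plan is to induct on $n$, using Proposition~\ref{proposition:hyperplane-criterion} as the engine: to conclude that $(X,H)$ is a polarized GM variety, it suffices to exhibit a normal hyperplane section $X' \in |H|$ that is itself a polarized GM variety of dimension $n-1$, together with the vanishing $H^1(X,\cO_X)=0$.

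The base case $n=1$ is precisely Proposition~\ref{gm-curves-surfaces}. For $n=2$, the Brill--Noether generality of the K3 surface $X$, via the characterization recalled in Section~\ref{sec23}, guarantees that $|H|$ contains a smooth Clifford general curve $X'$ of genus~$6$, which is a GM curve by the $n=1$ case; combined with $K_X \lin 0$ and the vanishing $H^1(X,\cO_X)=0$ for any K3 surface, Proposition~\ref{proposition:hyperplane-criterion} completes this case.

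For $n\ge 3$, I first note that $H^1(X,\cO_X)=0$: since $-K_X = (n-2)H$ is ample and $X$ has rational singularities (being terminal and Gorenstein), Kawamata--Viehweg vanishing applies. I then take a general $X' \in |H|$. Bertini's theorem yields the normality of $X'$; adjunction gives $K_{X'}\lin -(n-3)H\vert_{X'}$ and $(H\vert_{X'})^{n-1}=10$; and the conditions on the singularities of $X'$ (terminal, with singular locus of codimension $\ge 4$) are inherited from those of $X$ by standard Bertini--adjunction arguments, using that $\Sing X' = X' \cap \Sing X$ for generic $X'$. For $n\ge 4$, so that $\dim X' \ge 3$, the Grothendieck--Lefschetz theorem for Picard groups --- in the form valid for our codimension hypothesis on $\Sing X$ --- gives $\Pic X' = \Z H\vert_{X'}$, so the inductive hypothesis applies to $X'$ and we are done.

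The critical case is $n=3$, where $X'$ is a smooth K3 surface and I must establish its Brill--Noether generality rather than merely its Picard rank. Concretely, I would take a further general smooth curve section $C \in |H\vert_{X'}|$ --- a smooth genus~$6$ curve by adjunction --- and argue that the Picard rank~$1$ hypothesis on $X$ precludes $C$ from being hyperelliptic, trigonal, or a plane quintic, making $C$ Clifford general. The Green--Lazarsfeld criterion recalled in Section~\ref{sec23} then upgrades this to Brill--Noether generality of $X'$, closing the induction. The most delicate point in the entire argument is precisely this reduction through a K3 hyperplane section, as it is the unique place where the hypothesis must descend through a codimension where the Picard rank may jump (a Noether--Lefschetz-type phenomenon), rather than through a higher-dimensional Lefschetz step that is essentially topological.
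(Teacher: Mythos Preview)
Your overall strategy---induction on $n$ via Proposition~\ref{proposition:hyperplane-criterion}---is exactly the paper's. But two steps are not justified and are where the paper invests real work.

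First, you invoke Bertini and ``standard Bertini--adjunction arguments'' to produce a good $X'\in|H|$, but you never establish that $|H|$ is even base-point-free, let alone very ample. A~priori $H$ is only ample, and Bertini gives nothing. The paper handles this in Lemma~\ref{fgmhyp}: non-emptiness of $|H|$ comes from Alexeev, terminality of a general member from Mella, and then a delicate argument (cutting down to a threefold and using the classification of Mori--Mukai and \cite[Theorem~2.9]{mel}) shows $\Bs|H|\cap\Sing X=\vide$, whence $\codim(\Sing X')\ge4$ and, via projective normality going back up from the threefold, very ampleness of $H$. Relatedly, for $n\ge4$ you need local factoriality of $X'$, not just $\Pic(X')=\Z H'$; the paper gets both at once from the Ravindra--Srinivas isomorphism $\Cl(X)\isomto\Cl(X')$.

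Second, your $n=3$ step is only a hope: you say ``the Picard rank~1 hypothesis on $X$ precludes $C$ from being hyperelliptic, trigonal, or a plane quintic,'' but a $g^1_3$ on $C$ does not obviously produce a divisor class on $X$ contradicting $\Pic(X)=\Z H$. The paper's argument is different and concrete: by \cite[Corollary~4.1.13]{ip}, a smooth prime Fano threefold of degree~10 is an intersection of quadrics; this property passes to any linear section, so a general curve section $C$ is a canonical genus-6 curve cut out by quadrics, hence Clifford general by Enriques--Babbage, and then $X'$ is Brill--Noether general. You correctly flagged this as the delicate point, but the mechanism you propose is not the one that works.
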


To prove the theorem, we   verify that a general hyperplane section of $X$ satisfies the same conditions,
then   use   induction on $n$. 

\begin{lemm}\label{fgmhyp}
Let $X$ be a normal complex projective  variety   of dimension $n\ge 3$ 
with 
terminal singularities, $\codim (\Sing(X) )\ge 4$, and with an ample Cartier divisor $H$ such that $H^n = 10$ and 
 $K_X \lin -(n-2)H$. 
 
 The linear system $|H|$ is very ample and a   general $X'\in |H|$ satisfies the same conditions: 
$X'$ is normal with  terminal singularities, $\codim (\Sing(X') )\ge 4$ and, if  $H':=H\vert_{X'}$, we have $(H')^{n-1} = 10$ and 
 $K_{X'} \lin -(n-3)H'$. 
 \end{lemm}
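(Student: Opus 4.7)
My plan is to separate the lemma into two parts: first the very ampleness of $|H|$, then the verification of the singularity, codimension, degree, and canonical class conditions for a general member $X'\in|H|$. The second part is essentially a Bertini-plus-adjunction package; the serious work is concentrated in the first part.

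\textbf{Very ampleness of $|H|$ (the main obstacle).} Since $-K_X=(n-2)H$ with $n\ge 3$, $X$ is a Gorenstein Fano variety with terminal singularities. Terminal singularities are rational (in particular Cohen--Macaulay), so Kawamata--Viehweg vanishing gives $H^i(X,mH)=0$ for every $i>0$ and every $m\ge 3-n$. My plan is to combine this with existing results on Fano varieties with Gorenstein terminal singularities to conclude that $H$ is already very ample. For $n\ge 4$, the fact that $-K_X=(n-2)H$ is a sufficiently positive multiple of $H$ makes base-point-freeness of $|H|$ a standard consequence of the Kawamata--Shokurov base-point-free theorem, and one then upgrades to very ampleness either by induction (using that the restriction to a general member of $|H|$ is a Fano variety of the same type in lower dimension, and that very ampleness can be detected on hyperplane sections together with $h^0$ growing) or by citing Mella/Shin/Fujita-type very ampleness results for Gorenstein Fano varieties of large index. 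The truly delicate case is $n=3$, where $H=-K_X$ is the fundamental divisor of a Gorenstein Fano threefold of genus~$6$ with terminal singularities; here the very ampleness of the anticanonical system is a theorem due to Mukai/Mella (a terminal Gorenstein extension of Iskovskikh), and I would simply invoke it. This step is what I expect to be the main obstacle, because the statement is genuinely global and cannot be reduced to a local transversality argument.

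\textbf{Singularity conditions for $X'$.} Once $|H|$ is very ample, I apply Bertini to the embedding by $|H|$: a general $X'\in|H|$ is smooth at every smooth point of $X$, so $\Sing(X')\subseteq \Sing(X)\cap X'$. Because $|H|$ is very ample and $\codim_X(\Sing(X))\ge 4$, a general hyperplane section meets every irreducible component of $\Sing(X)$ properly, so
\[
\codim_{X'}(\Sing(X)\cap X')\ \ge\ \codim_X(\Sing(X))\ \ge\ 4.
\]
For normality I use that $X$, being Gorenstein with terminal (hence rational) singularities, is Cohen--Macaulay; then $X'$, as a Cartier divisor on a Cohen--Macaulay scheme, is Cohen--Macaulay, so it satisfies Serre's $S_2$; the codimension bound just proved gives $R_1$, and $R_1+S_2$ yields normality. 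Finally, that $X'$ again has terminal singularities for generic $X'\in|H|$ follows from the classical result (Reid) that a general member of a base-point-free linear system on a terminal variety has terminal singularities; very ampleness lets me apply this.

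\textbf{Numerical data.} The remaining formulas are adjunction and intersection theory: $(H')^{n-1}=H^{n-1}\cdot X'=H^{n-1}\cdot H=H^n=10$, and since $X'$ is a Cartier divisor in the class $H$ on the Gorenstein variety $X$,
\[
K_{X'}\ \lin\ (K_X+X')\big|_{X'}\ \lin\ \bigl(-(n-2)H+H\bigr)\big|_{X'}\ \lin\ -(n-3)H'.
\]
Combining these three paragraphs yields the statement of the lemma; the only truly non-formal input is the very ampleness in the first paragraph.
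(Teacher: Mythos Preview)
Your plan correctly isolates very ampleness of $|H|$ as the hard part, but the argument you sketch for it does not go through. The Kawamata--Shokurov base-point-free theorem only gives freeness of $|mH|$ for $m\gg 0$; it says nothing about $|H|$ itself, regardless of how large the index $n-2$ is. Your proposed induction (``very ampleness can be detected on hyperplane sections together with $h^0$ growing'') is circular as stated: to cut down to a hyperplane section satisfying the same hypotheses you first need $|H|$ to be non-empty and its general member to be normal with terminal singularities---which is precisely the second half of the lemma. And in the base case $n=3$ the statement carries no Picard-rank hypothesis, so you cannot simply invoke results on prime Fano threefolds; the Picard-rank $\ge 2$ case (a $(3,1)$-divisor in $\P^3\times\P^1$) has to be treated separately.

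The paper reverses the order you propose: it establishes the singularity conditions on a general $X'\in|H|$ first and deduces very ampleness last. Non-emptiness of $|H|$ comes from Alexeev, normality and terminality of a general member from Mella. Since $H^1(X,\cO_X)=0$, the system $|H|$ restricts to $|H'|$ and the base locus is preserved under successive sections; cutting down to a threefold $Y$ and invoking Mella's classification of bad base points, one shows $\Sing(X)\cap\Bs(|H|)=\emptyset$, whence Bertini gives $\codim(\Sing(X'))\ge 4$ and $Y$ is smooth. Only then is very ampleness proved: $(Y,H_Y)$ is projectively normal (via Iskovskikh--Prokhorov when $\Pic(Y)=\Z H_Y$, via the Mori--Mukai classification otherwise), projective normality is lifted step by step back to $(X,H)$ using Iskovskikh's extension lemma, and very ampleness follows by Mumford's criterion. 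Your Bertini-and-adjunction package for the ``easy'' half is fine; the gap is the route to very ampleness.
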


\begin{proof} 
By \cite[Proposition~1-1]{alexeev1991theorems}, the linear system $|H|$ is non-empty and  by \cite[Remark 2.6]{mel}, a general   
$X'\in |H|$ is normal with terminal singularities. Moreover, $(H')^{n-1}=10$ and, by the adjunction formula, 
\begin{equation*}
K_{X'} = (K_X + H)\vert_{X'} \lin -(n-3)H\vert_{X'}.
\end{equation*}
Since $X$ has terminal singularities, we have  $H^1(X,\cO_X)=0$ by Kawamata--Viehweg vanishing,
hence the linear 
series $|H'|$ is just the restriction  of $|H|$ to $X'$ and the base loci of $|H|$ and $|H'|$ are the same.
Taking successive linear sections, we arrive at a linear section $Y$ of dimension 3 with 
terminal singularities, $K_Y\lin -H_Y$, and $H_Y^3=10$.

Assume some $x\in   \Sing(X)$ is in the base locus of $ |H| $. That point $x$ is then on $X'$,  and, since a Cartier divisor 
is necessarily singular at a singular point of the ambient variety, it is also singular on $X'$. Repeating that argument, 
we see that $x$ is singular on the threefold $Y$, and still a base point of $|H_Y|=|H|\vert_Y$. Again, all divisors in $|H_Y|$ are singular 
at $x$, hence we are in one of the two   cases described in~\cite[Theorem 2.9]{mel}. Since  $H_Y^3\ne 10$ in both of these cases, 
we get a contradiction. Therefore, $\Sing(X)\cap \Bs(|H|)=\vide$ hence, by Bertini's theorem, $\Sing(X')$ still has codimension 
at least 4 in $X'$ and eventually, $Y$ is smooth.
  
If $\Pic(Y)=\Z H_Y$, the pair $(Y, H_Y)$ is projectively normal by \cite[Corollary~4.1.13]{ip}. 

If not, we use the Mori--Mukai classification of smooth Fano threefolds $Y$ with Picard number $ \ge 2$  (\cite{momu}, \cite[Chapter 12]{ip}) and see 
that there is only one family with  {anticanonical} degree~10: $Y$ must be a divisor of bidegree $(3,1)$ in $\P^3\times \P^1$ and the pair $(Y, H_Y)$ is again projectively normal.

A repeated use of  {\cite[Lemma~2.9]{iskovskih1977fano}} then implies that in all cases, $(X,H)$ is projectively normal, hence $H$ is very ample by~\cite[p.~38]{mum}.
\end{proof}
 
\begin{lemm}\label{lemma-fgm-hyperplane}
Let $(X,H)$ be a polarized complex variety of dimension $n\ge 2$   which satisfies the hypotheses of Theorem~\textup{\ref{theorem:gushel-mukai}}. 
A general element of $|H|$ then satisfies the same properties.
 \end{lemm}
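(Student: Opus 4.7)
The plan is to check the hypotheses of Theorem~\ref{theorem:gushel-mukai} in dimension $n-1$ for a general $X' \in |H|$, splitting the verification according to the three ranges $n \ge 4$, $n = 3$, and $n = 2$, since the characteristic condition in each range is different. In every case $n \ge 3$, Lemma~\ref{fgmhyp} already supplies the geometric backbone: a general $X'$ is normal with terminal singularities, $\codim(\Sing(X')) \ge 4$, $H' := H\vert_{X'}$ is an ample Cartier divisor with $(H')^{n-1} = 10$ and $K_{X'} \lin -(n-3)H'$, and $H$ is very ample so $X'$ is irreducible by Bertini. The case $n = 2$ is immediate: $X$ is a smooth Brill--Noether general K3 surface of degree~$10$, $H$ is very ample, a general $X'\in |H|$ is a smooth curve of genus~$6$ by Bertini, and the equivalence recalled before Proposition~\ref{gm-surfaces} (based on~\cite{gl}) guarantees that such a curve is Clifford general.

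For $n \ge 4$, the outstanding conditions are that $X'$ be locally factorial with $\Pic(X') = \Z H'$. I would deduce these from a Grothendieck--Lefschetz type theorem for ample divisors, applied in a form adapted to locally factorial varieties whose singular locus has codimension $\ge 4$---which is exactly what the codimension hypothesis of Theorem~\ref{theorem:gushel-mukai} ensures. Such a statement provides an isomorphism $\Pic(X) \xrightarrow{\sim} \Pic(X')$ and preserves local factoriality for a general $X'$, so the equality $\Pic(X) = \Z H$ yields $\Pic(X') = \Z H'$.

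The case $n = 3$ is the most delicate and I expect it to be the main obstacle. Terminality in dimension~$2$ forces $X'$ smooth, and adjunction gives $K_{X'} \lin 0$. Kawamata--Viehweg vanishing on the terminal Gorenstein Fano threefold $X$ yields $H^1(X,\cO_X) = 0$, while Serre duality combined with $K_X \lin -H$ gives $H^2(X,\cO_X(-H)) = 0$; feeding both into the cohomology of $0 \to \cO_X(-H) \to \cO_X \to \cO_{X'} \to 0$ produces $H^1(X',\cO_{X'}) = 0$, so $X'$ is a K3 surface of degree~$10$. For Brill--Noether generality, the characterization cited before Proposition~\ref{gm-surfaces} reduces matters to producing a Clifford general smooth curve in $|H'|$, and this is the hard step. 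I would argue it by first invoking a Moishezon--Noether--Lefschetz theorem for the terminal locally factorial Fano threefold $X$ with $\Pic(X)=\Z H$ to conclude that a very general $X'$ satisfies $\Pic(X')=\Z H'$, and then by applying K3 Brill--Noether theory (see e.g.\ \cite[Proposition~10.5]{johnsen2004k3}) to deduce that on a Picard rank-$1$ K3 surface of degree~$10$, every smooth curve in $|H'|$ has gonality~$4$ and is neither hyperelliptic, trigonal, nor a plane quintic, hence is Clifford general.
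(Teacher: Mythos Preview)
Your proposal is essentially correct and, for $n=2$ and $n\ge 4$, follows the paper's strategy. For $n\ge 4$ the paper makes your ``Grothendieck--Lefschetz type theorem'' precise by invoking the Ravindra--Srinivas result \cite[Theorem~1]{rs} that restriction induces an isomorphism $\Cl(X)\isomto\Cl(X')$; working with class groups rather than Picard groups yields local factoriality of $X'$ for free, since $\Pic(X')\hookrightarrow\Cl(X')\cong\Cl(X)=\Pic(X)=\Z H$ forces equality throughout. Your formulation (``preserves local factoriality'') is vaguer on this point.

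For $n=3$ your route genuinely differs from the paper's. First note that the hypothesis $\codim(\Sing(X))\ge 4$ already forces $X$ smooth, so you are dealing with a smooth prime Fano threefold. Your plan---Noether--Lefschetz to obtain $\Pic(X')=\Z H'$ for \emph{very general} $X'$, then K3 Brill--Noether theory---works, but requires an additional openness argument to pass from ``very general'' to ``general'' (Brill--Noether generality is open in families, as it amounts to the existence of a Clifford general smooth curve in $|H'|$, and Clifford generality is open among genus-$6$ curves). The paper instead observes that such an $X$ is an intersection of quadrics by \cite[Corollary~4.1.13]{ip}; this property passes to any smooth hyperplane section $X'$ and then to a general curve $C\in|H'|$, and a canonical genus-$6$ curve that is an intersection of quadrics is Clifford general by Enriques--Babbage. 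This is shorter, applies to \emph{every} smooth $X'\in|H|$ rather than only very general ones, and avoids both the Noether--Lefschetz machinery and the openness step.
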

 
\begin{proof} 
Assume first $n>3$. 
By Lemma \ref{fgmhyp},  we need only to prove that a general $X'\in |H|$  is locally factorial and that $\Pic(X')$ is generated 
by $H':=H\vert_{X'}$. Let $\Cl(X)$ be the group of Weil divisors on $X$, modulo linear equivalence. 
Since $|H|$ is very ample (Lemma \ref{fgmhyp}), restriction of divisors induces, by~\cite[Theorem~1]{rs}, an isomorphism
\begin{equation*}
\Cl(X)\isomlra \Cl(X').
\end{equation*}
Since $X$ is normal and locally factorial, the canonical inclusion $\Z H=\Pic(X)\hra \Cl(X)$ is an isomorphism. 
It follows that the canonical inclusion $\Pic(X')\hra \Cl(X')$ is also an isomorphism, \ie, $X'$ is locally factorial
and $\Pic(X')$ is generated by $H'$. 
 
When $n=2$, $X$ is a smooth Brill--Noether general K3 surface with a polarization $H$ of degree 10,
hence a general element of $|H|$ is a smooth Clifford general curve of genus 6.

When $n=3$, $X$ is a smooth Fano threefold with $\Pic(X) \cong \Z$. By \cite[Corollary~4.1.13]{ip}, it is an intersection of quadrics. 
Any smooth hyperplane section $S$ of $X$ is a degree-10 smooth K3 surface which is still an intersection of quadrics.
A general hyperplane section   of $S$ is still an intersection of quadrics, hence is
 a Clifford general curve. This proves that $S$ is Brill--Noether general.  \end{proof}

We can now prove Theorem~\ref{theorem:gushel-mukai}.

\begin{proof}[Proof of Theorem~\textup{\ref{theorem:gushel-mukai}}]
We use induction on $n$.  The case  $n = 1$ was  proved in Proposition~\ref{gm-curves-surfaces}, so we  assume $n \ge 2$.
A   general hyperplane section $X'$ of $X$ has the same properties
by Lemma~\ref{lemma-fgm-hyperplane}, hence is a GM variety by the induction hypothesis. On the other hand,
we have $H^1(X,\cO_X) = 0$:  this follows from  {Kawamata--Viehweg}  
vanishing for $n \ge 3$, and from the fact that  $X$ is a K3 surface for $n = 2$. By Proposition~\ref{proposition:hyperplane-criterion}, we conclude that $X$ is a GM variety.
 \end{proof}

\begin{rema}\label{remt7}
If we relax the conditions on the  Picard group or on the singularities  of $X$, the conclusion of Theorem~\ref{theorem:gushel-mukai} may be false.  
Indeed, a general divisor of bidegree $(3,1)$ in~$\P^3\times\P^1$ is an example of a smooth Fano threefold of coindex 3 and degree 10 which is not
an intersection of quadrics. Further  counterexamples can be found in~\cite{przhyjalkowski2005hyperelliptic}:  one is the threefold~$T_7$ in Theorem 1.6 
of that article, which can be defined as the anticanonical image of the blow up of a quartic double solid in a line 
(\cite[Example 1.11]{przhyjalkowski2005hyperelliptic}). It has one ordinary double point hence has terminal Gorenstein singularities, 
but is not locally  factorial, because it is the image of a non-trivial small contraction. Its Picard number is 1 and   $|-K_{T_7}|$ is 
very ample of degree~10 and projective dimension 7, but $T_7$ is not an intersection of quadrics.  In particular, the conclusion of
Theorem~\ref{theorem:gushel-mukai} is definitely false for it.
\end{rema}

\subsection{Grassmannian hulls}\label{subsec-grhulls}

We fix some more terminology and notation.
Given a  normal polarized GM variety $(X,H)$ of dimension $n$, we consider the     GM data $(W,V_6, V_5,L,\mu,\bq,\eps)$
 constructed  in  Lemma~\ref{lemma:from-x-to-data} 
and define the {\sf Grassmannian hull} of $X$ by
\begin{equation*}
M_X := \bigcap_{v \in V_5} Q(v) = \cone{K}\!\Gr(2,V_5) \cap \P(W),
\end{equation*}
 {where the second equality is~\eqref{dess}.}
Since $\dim(X)=n$, this intersection is dimensionally transverse and  $M_X$ is a variety of degree 5 and dimension $n+1$, defined over the field~$\k$.
Moreover, by Lemma~\ref{lemma:from-data-to-x}, we have
\begin{equation*}
X = M_X \cap Q(v) 
\end{equation*}
 for any $v \in V_6 \setminus V_5$. Since $X$ is   irreducible, so is $M_X$.

When $\mu $ is not injective, the Grassmannian hull is   a cone.
It is   convenient  to consider also the dimensionally transverse intersection
\begin{equation*}
M'_X := \Gr(2,V_5) \cap \P(\mu (W)).
\end{equation*}
 {In terms of $M'_X$, the structure of a GM variety can be described as follows.

\begin{lemm}\label{lemma:gushel-map}
Let $X$ be a GM variety with associated   GM data $(W,V_6, V_5,L,\mu,\bq,\eps)$. Set $K := \Ker(\mu)$ and  $k := \dim(K)$, choose $v \in V_6 \setminus V_5$ and set $Q := Q(v)$.

If $k = 0$, we have  $X = M'_X \cap Q$. 

If $k > 0$, 
let $\tilde{X}$ be the blow up of $X$ with center   $\P(K) \cap Q \subset X$. The map $\mu$ then induces a regular map 
$\tilde{X} \xrightarrow{\ \tilde{\mu}\ } M'_X$
which is generically a $(k - 1)$-dimensional quadric bundle if $\P(K) \not\subset Q$, or a $\P^{k-1}$-bundle if $\P(K) \subset Q$.
\end{lemm}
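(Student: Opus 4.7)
The plan is to realize $\tilde\mu$ as the resolution of indeterminacy of the linear projection from the vertex $\P(K)$, and then read off the generic fiber from elementary linear algebra on each ruling of the cone $\cone{K}(M'_X)$. By Remark~\ref{remark:cone-kernel}, we have $X = \cone{K}(M'_X) \cap Q$ with $\P(K)$ the vertex. When $k = 0$, the cone is trivial, so $\cone{K}(M'_X) = M'_X$ and the equality $X = M'_X \cap Q$ is immediate.

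Assume now $k \ge 1$. The linear projection $\pi\colon \P(W) \dashrightarrow \P(\mu(W))$ from $\P(K) = \P(\Ker\mu)$ restricts to a regular morphism $\pcone{K}(M'_X) \to M'_X$ whose fibers are the affine $k$-spaces $\P(\langle \tilde v\rangle + K) \setminus \P(K)$, where $\tilde v \in \mu(W)$ lifts $[v] \in M'_X$. Restricting to $X$ produces a rational map $X \dashrightarrow M'_X$; since $\P(K) \subset \cone{K}(M'_X)$, its indeterminacy locus is $\P(K) \cap X = \P(K) \cap Q$. Blowing up this locus resolves the projection and produces the desired morphism $\tilde\mu\colon \tilde X \to M'_X$.

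To identify the generic fiber, fix a general $[v] \in M'_X$ with a lift $\tilde v$ and set $\Pi := \P(\langle \tilde v\rangle + K) \cong \P^k$; this projective space lies entirely in $\cone{K}(M'_X)$ and contains $\P(K)$ as a hyperplane. The fiber over $[v]$ of the rational map $X \dashrightarrow M'_X$ is $(\Pi \setminus \P(K)) \cap Q$, and the fiber of $\tilde\mu$ is its closure in $\tilde X$, obtained as the strict transform of $\Pi \cap Q$ under the blow up of $\P(K) \cap Q$. If $\P(K) \not\subset Q$, then $Q|_\Pi$ is a proper quadric hypersurface in $\Pi$ of dimension $k-1$, producing a $(k-1)$-dimensional quadric bundle. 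If $\P(K) \subset Q$, then $Q|_\Pi$ is a quadric in $\Pi$ containing the hyperplane $\P(K)$, hence splits as $\P(K) \cup H'$ for some hyperplane $H' \subset \Pi$, and the strict transform under blow up of $\P(K)$ is exactly $H' \cong \P^{k-1}$, giving the projective bundle structure.

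The point requiring the most care is the final case: one must verify that for general $[v]$ the second hyperplane $H'$ is genuinely distinct from $\P(K)$, so that the fiber is a reduced $\P^{k-1}$ rather than a non-reduced double hyperplane. I would deduce this from the irreducibility of $X$ built into Definition~\ref{defigm}, since otherwise $X$ would contain a non-reduced component supported on $\P(K)$, contradicting integrality.
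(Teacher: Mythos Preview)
Your proof is correct and follows essentially the same approach as the paper: both resolve the linear projection from the vertex $\P(K)$ by blowing up, identify $\tilde X$ with the proper transform of $X$ inside the $\P^k$-bundle obtained by blowing up the cone $\cone{K}(M'_X)$ at its vertex, and then read off the generic fiber as $\Pi\cap Q$ for the ruling $\Pi\cong\P^k$, splitting into the two cases according to whether $\P(K)\subset Q$. Your final paragraph, arguing via integrality of $X$ that the residual hyperplane $H'$ is generically distinct from $\P(K)$, is a point of care the paper leaves implicit under the word ``generically.''
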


\begin{proof}
The first part is evident, since $M'_X = M_X$ when $K = 0$.
For the second part, note that the blow up of the cone $M_X = \cone{K}(M'_X)$ at its vertex $\P(K)$ is a $\P^k$-bundle over $M'_X$.
The blow up of $X$ along $X \cap \P(K) = \P(K) \cap Q(v)$ is the same as the proper preimage of $X$.
In particular, it comes with a regular map $\tilde{X} \to M'_X$.

If $\P(K) \not\subset Q$, the proper preimage coincides with the total preimage, hence its fibers are intersections $\P^k \cap Q$, \ie, generically, they are quadrics of dimension $k-1$.
If $\P(K) \subset Q$,   the general fiber of the total preimage of $X$ is a quadric $Q \cap \P^k$ which contains the hyperplane  $\P(K) = \P^{k-1}$. 
Therefore, the general fiber of the proper preimage is the residual hyperplane.
\end{proof}

Later on, we will give more details for the case $k = 1$ and $\P(K) \not\subset Q$.

In   this article, we are mostly interested in the case of smooth GM varieties.
However, for many questions, the condition $\codim(\Sing(X)) \ge 4$ is almost as good as smoothness.
Keeping in mind possible applications of singular GM varieties, we keep the above assumption as long as possible.}

\begin{lemm}\label{p218} 
Let $(W,V_6,V_5,L,\mu,\bq,\eps)$ be a GM data of dimension $n$. If the associated GM intersection~\eqref{equation-x-intersection-quadrics} is a GM variety $X$ of dimension $n$  with $\codim(\Sing(X)) \ge 4$, we have
\begin{equation*}
\dim (\mu (W)) \ge \min\{7,n+4\}.
\end{equation*}
\end{lemm}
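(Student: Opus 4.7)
The plan is to bound $k := \dim\Ker(\mu)$, since $\dim\mu(W) = n+5-k$ and the desired inequality is equivalent to $k \le \max\{1,n-2\}$. The case $k\le 1$ is automatic, so the content lies in ruling out large $k\ge 2$. The key intermediate claim I would prove is that every closed point of $\P(K)\cap X$ is singular on $X$; once this is granted, the hypothesis $\codim(\Sing(X))\ge 4$ combined with a dimension count on $\P(K)\cap X$ will force $k\le n-2$.

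To establish the singularity claim, I would compute the differentials at a point $x=[w_0]\in\P(K)$ of the generators of the quadratic ideal of $X\subset\P(W)$. For any $v\in V_5$, the GM compatibility~\eqref{equation-mu-q-plucker} reads $\bq(v)(w_1,w_2)=\eps(v\wedge\mu(w_1)\wedge\mu(w_2))$; since $\mu(w_0)=0$, this forces $\bq(v)(w_0,w)=0$ for every $w\in W$, so the differential of every Pl\"ucker quadric vanishes identically at $w_0$. Only the residual quadric $\bq(v_0)$ for $v_0\in V_6\setminus V_5$ (unique up to scaling and adding a Pl\"ucker quadric) can contribute a nonzero linear form, so
\begin{equation*}
\dim T_x X \ge (n+4) - 1 = n+3 > n = \dim X,
\end{equation*}
hence $x\in\Sing(X)$. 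This is the heart of the argument, and it depends in an essential way on the Pl\"ucker defining relation~\eqref{equation-mu-q-plucker}; everything else will be a direct dimension count.

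The dimension count is then immediate. Since $\P(K)\subset M_X$ (it is the vertex of the cone) and $X = M_X\cap Q(v_0)$ for any $v_0\in V_6\setminus V_5$ by Lemma~\ref{lemma:from-data-to-x}, we have $\P(K)\cap X = \P(K)\cap Q(v_0)$, which is either all of $\P(K)$ (if $\bq(v_0)|_K=0$) or a quadric hypersurface in $\P(K)$ (otherwise). In both cases its dimension is at least $k-2$ whenever $k\ge 2$ (the estimate can be checked after base change to $\C$, which is legitimate by Corollary~\ref{corollary:x-c}). Combined with $\P(K)\cap X\subset\Sing(X)$ and the hypothesis $\dim\Sing(X)\le n-4$, we obtain $k-2\le n-4$, hence $k\le n-2$, so that $\dim\mu(W)=n+5-k\ge 7$. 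Together with the trivial bound $\dim\mu(W)\ge n+4$ in the case $k\le 1$, this yields $\dim\mu(W)\ge\min\{7,n+4\}$, as required.
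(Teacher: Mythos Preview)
Your proof is correct and follows essentially the same approach as the paper. The paper phrases the key singularity claim geometrically---$\P(K)$ is the vertex of the cone $\cone{K}\Gr(2,V_5)\cap\P(W)$, hence lies in its singular locus, so $\P(K)\cap Q(v)\subset\Sing(X)$---whereas you verify it by directly computing that the differentials of the Pl\"ucker quadrics vanish at points of $\P(K)$ via~\eqref{equation-mu-q-plucker}; the ensuing dimension count is identical.
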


\begin{proof}
As in the proof of Lemma~\ref{lemma:from-data-to-x}, we can write $X$ as in \eqref{equation-x-from-data}  {with} $K=\Ker(\mu )$. Then,
 $\P(K)  $ is contained in the singular locus of $\CKGr(2,V_5) \cap \P(W)$,
hence $\P(K) \cap Q(v)$  is contained in the singular locus of $X$. It has dimension at least $(n+5)-r-2 = n+3-r$, where  $r:=\rank (\mu )$. Thus the condition $\codim(\Sing(X)) \ge 4$ implies that
either $n + 3 - r < 0$, or $n + 3 - r \le n - 4$. The first is equivalent to $r \ge n + 4$ and the second to $r \ge 7$.
 \end{proof}

\begin{prop}\label{lhull}
Let $X$ be a GM variety of dimension $n$  {with $\codim(\Sing(X)) \ge 4$}. 
If $n \ge 3$ or $\Ker(\mu ) \ne 0$, the variety
$M'_X$ is  smooth.
Otherwise \textup{(}if $n \le 2$ and $\Ker(\mu) = 0$\textup{)},  $M'_X $ is equal to $M_X $ and has finitely many rational double points. 
\end{prop}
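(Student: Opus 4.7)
The approach is to propagate singularities of $M'_X$ through the cone structure $M_X = \cone{K}(M'_X)$ and the quadric section $X = M_X \cap Q(v)$, where $K = \Ker(\mu)$ and $v \in V_6 \setminus V_5$. The two basic ingredients are
\begin{equation*}
\Sing(M_X) \supseteq \cone{K}(\Sing(M'_X)) \cup \P(K), \qquad \Sing(X) \supseteq X \cap \Sing(M_X).
\end{equation*}
The first inclusion holds because a cone is smooth away from its vertex exactly over the smooth locus of its base; the second because $X$ is a Cartier divisor on $M_X$, so at any $x \in X \cap \Sing(M_X)$ one has $\dim T_x X \ge \dim T_x M_X - 1 > \dim M_X - 1 = \dim X$.

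For the smoothness claim ($n \ge 3$ or $K \ne 0$), I would argue by contradiction, starting from a singular point $y \in M'_X$. The key geometric input is that tangencies of hyperplanes to $\Gr(2,V_5) \subset \P(\bw{2}V_5)$ are never isolated: via the Hodge-dual identification $\P(\bw{2}V_5)^\vee \cong \P(\bw{3}V_5)$, a hyperplane tangent to $\Gr(2,V_5)$ at a point corresponds to a nonzero three-form $\alpha \in \bw{3}V_5$, and it is tangent to $\Gr(2,V_5)$ along the entire $\P^3$ of two-planes containing the annihilator line $\ell_\alpha := \{v \in V_5 : v \wedge \alpha = 0\}$. Consequently, if $\P(\mu(W))$ is tangent to $\Gr(2,V_5)$ at $y$, then $\Sing(M'_X)$ contains the nontrivial linear intersection of $\P(\mu(W))$ with such a $\P^3$, giving $d := \dim \Sing(M'_X) \ge 1$; more generally, the number and configuration of independent tangent hyperplanes of $\Gr(2,V_5)$ containing $\P(\mu(W))$ controls how large $d$ must be. Setting $k := \dim K$, the inclusions above yield $\dim \Sing X \ge d + k - 1$. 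Combined with $\codim \Sing X \ge 4$ (i.e.\ $\dim \Sing X \le n - 4$) and the upper bound $k \le \max\{n-2, 1\}$ from Lemma~\ref{p218}, a careful analysis of each admissible $(n, k)$ pair produces a contradiction with $d \ge 1$.

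For the remaining case ($n \le 2$ and $K = 0$), we have $M'_X = M_X$; moreover, since $\dim X = n \le 2$, the hypothesis $\codim \Sing X \ge 4$ forces $X$ to be smooth. If $M_X$ were singular at a point $y \in Q$, then $y$ would be a singular point of $X$, contradicting smoothness; hence $\Sing M_X \cap Q = \emptyset$. Since $Q$ is a quadric hypersurface in $\P(W) \cong \P^{n+4}$ with $n + 4 \ge 5$, every positive-dimensional subscheme of $\P(W)$ meets $Q$, so $\dim \Sing M_X \le 0$ and $\Sing M_X$ is finite. To identify these as rational double points, I would invoke the classification of $(n+1)$-dimensional linear sections of $\Gr(2,5) \subset \P^9$: for $n = 1$ these are (possibly singular) del Pezzo quintic surfaces, and for $n = 2$ they are (possibly singular) Fano threefolds of type $V_5$; in both families, any degeneration with finite singular locus is classically known to have only Du Val (ADE) singularities.

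The main difficulty lies in the first part: making the tangency-propagation estimate quantitatively sharp enough so that the lower bound on $\dim\Sing(M'_X)$, and hence on $\dim\Sing X$, forces a contradiction for every value of $k$ allowed by Lemma~\ref{p218}, and not only in the extreme case $k = \max\{n-2, 1\}$.
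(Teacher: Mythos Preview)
Your overall strategy---propagate singularities of $M'_X$ through the cone and the quadric cut to contradict $\codim(\Sing X)\ge 4$---is exactly the paper's. But the key geometric input is misstated. A hyperplane $H_\omega\subset\P(\bw2V_5)$, viewed as a skew form $\omega\in\bw2V_5^\vee$, is tangent to $\Gr(2,V_5)$ at $[U_2]$ if and only if $U_2\subset\Ker\omega$; so the tangency locus is $\Gr(2,\Ker\omega)$, which is empty when $\omega$ has rank $4$ and is a $\P^2$ when $\omega$ has rank $2$. In particular, a hyperplane whose associated $3$-form has a one-dimensional annihilator is \emph{not} tangent at all: your ``$\P^3$ of two-planes through $\ell_\alpha$'' lies in $H_\alpha$ but consists of smooth points of the section. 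What the paper actually proves is the clean dichotomy
\[
M'_X\ \text{smooth}\qquad\text{or}\qquad \codim_{M'_X}\Sing(M'_X)\le 3,
\]
obtained by running through $\dim W_0=10,9,8,\le 7$ and using $\Sing(M')=\P(W_0)\cap\bigcup_{\omega\in W_0^\perp\setminus\{0\}}\Gr(2,\Ker\omega)$. Once you have this, the contradiction is immediate with no case-by-case analysis on $(n,k)$: if $M'_X$ is singular, the cone over $\Sing(M'_X)$ has codimension $\le 3$ in $M_X$, hence its intersection with $Q(v)$ has codimension $\le 3$ in $X$, unless $Q(v)$ misses it entirely; ampleness then forces $K=0$ and $\Sing(M'_X)$ finite, whence $\dim M'_X\le 3$ and $n\le 2$. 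This is the sharpening you flagged as the main difficulty, and it comes for free once the tangency description is corrected.

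For the rational double points, ``classically known'' hides a real step. A quintic surface in $\P^5$ can be a cone over a smooth elliptic quintic, whose vertex is a simple elliptic singularity, not Du Val. The paper rules this out by observing that if $M_X$ were a cone with vertex $w$, it would sit inside $\Gr(2,V_5)\cap\T_{\Gr(2,V_5),w}$, a $4$-dimensional cone spanning a $\P^6$; since $M_X$ spans $\P^5$, it would be a hyperplane section of that cone, hence at least $3$-dimensional, contradicting $\dim M_X=2$. Only after excluding the cone case can one invoke the del Pezzo classification (Dolgachev) to conclude that the isolated singularities are rational double points; the case $n=2$ then follows by taking a general hyperplane section through a singular point.
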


\begin{proof}
We begin with a  {useful} observation. Let $W_0 \subset \bw2{V_5}$ be a linear subspace. Assume $M' := \Gr(2,V_5) \cap \P(W_0)$
is a dimensionally transverse intersection.  
Viewing  elements of the orthogonal complement $W_0^\perp \subset \bw2{V_5}^\vee$ as skew-symmetric forms on $V_5$, one has   
(see, \eg, \cite[Corollary~1.6]{pv}) 
\begin{equation}\label{esing}
\Sing(M')=\P(W_0)\ \cap\hskip-3mm \bigcup_{\omega\in W_0^\bot\setminus\{0\}}\hskip-4mm \Gr(2,\Ker(\omega )).
\end{equation}
 If $\dim (W_0) = 10$, then $M' = \Gr(2,V_5)$ is smooth. 
If $\dim (W_0) = 9$, either a generator $\omega$ of~$W_0^\bot$ has rank 2 and $\Sing(M')$ is a 2-plane, or else $M'$ is smooth. If $\dim (W_0) = 8$, either
  some  $\omega\in W_0^\bot$ has rank 2, in which case $\Gr(2,\Ker(\omega ))$ is a 2-plane contained in the hyperplane~$\omega^\bot$, whose intersection 
 with $\P(W_0)$ therefore contains a line along which $M'$ is singular, or else $M'$ is smooth.  Finally, if $\dim(W_0) \le 7$, we have $\dim (M') \le 3$. 
 It follows that in all cases,   either $M'$ is smooth or $\codim(\Sing(M')) \le 3$.

Now let $X$ be a GM variety such that  $M'_X$ is singular. By the above observation,  we have $\codim(\Sing(M'_X)) \le 3$. 
Since $X = \cone{\Ker(\mu )}M'_X \cap Q(v )$ for any $v \in V_6 \setminus V_5$ (see Remark~\ref{remark:cone-kernel})
and $\codim(\Sing(X)) \ge 4$,
the only possibility   is that $Q(v )$ does not intersect the cone over~$\Sing(M'_X)$.
As $Q(v )$ is an ample hypersurface, this is only possible if $\Ker(\mu )=0$ and $\Sing(M'_X)$ is finite.
In this case, $M'_X = M_X$ has dimension 2 or 3, hence 
$n\le 2$, so we are in the second case of the proposition.

It remains to show that in that case, the only singularities of $ M_X$ are rational double points.
If $n = 1$,  we have $\dim (W) = 6$ and $M_X = \Gr(2,V_5) \cap \P(W)$ is a normal  surface (it is lci  with finite singular locus). The restriction map 
$H^0(\P(W),\cO_{\P(W)}(1)) \to H^0(M_X,\cO_{M_X}(1))$ is bijective, since its composition with the
injection $ H^0(M_X,\cO_{M_X}(1))\to  H^0(X,\cO_X(1))$ is. It follows that  $M_X$ is linearly normal. 
If it is a cone with vertex $w$, it is contained in $\Gr(2,V_5)\cap \T_{\Gr(2,V_5),w}$ (intersection with a tangent space), 
which is a 4-dimensional cone with vertex $w$ and span $\T_{\Gr(2,V_5),w} \cong \PP^6$. Since $M_X$ spans the 5-dimensional $\P(W)$, 
it is a hyperplane section of this cone, but this  contradicts the fact that $M_X$ is a surface. 
 
 So the quintic surface $M_X$ is not a cone,
hence it is a del Pezzo surface (\cite[Proposition~8.1.6 and Definition~8.1.5]{dol}).  This implies that the singularities of $M_X$ are rational double points (\cite[Theorem 8.1.11]{dol}). 

When $n=2$, the singular points of the threefold $M_X$ are rational double points, since a general hyperplane section through such a point  has this property, by the case $n=1$.
\end{proof}

\begin{rema}
Given a  {general} {\em singular}
 linear section $M'$ of $\Gr(2,V_5)$ of dimension $n+1\ge 4$, one checks that 
a general quadratic section of $M'$   is singular 
in codimension 3 and   its singularities are canonical but not terminal.
 \end{rema}

The following elementary result
 is useful for checking that Grassmannian hulls are smooth dimensionally transverse intersections.

\begin{prop}\label{p12}
Let $W_0 \subset \bw2{V_5}$ be a linear subspace.
Set $M:= \Gr(2,V_5) \cap \P(W_0)$ and  $M^\sharp:= \Gr(2,V_5^\vee) \cap \P(W_0^\bot)$. 
Then $M$ is a smooth dimensionally transverse intersection if and only if $M^\sharp$ has the same property. 
In that case, if moreover both $M$ and $M^\sharp$ are non-empty, one has $\dim (M) + \dim (M^\sharp) = 2$.
\end{prop}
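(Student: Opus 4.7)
The plan is to reduce the statement to a single \emph{symmetric} incidence condition between points of $M$ and points of $M^\sharp$, obtained by computing the tangent space to $\Gr(2,V_5)$ at a point and then dualizing.

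First, I would perform the key linear-algebraic calculation: at a point $[U]\in\Gr(2,V_5)$, the affine tangent space is $\hat T_{[U]}\Gr(2,V_5) = U\wedge V_5 \subset \bw 2{V_5}$, a $7$-dimensional subspace. An elementary basis computation identifies its annihilator in $\bw 2{V_5^\vee}$ as the $3$-dimensional subspace $\bw 2{U^\perp}$, where $U^\perp\subset V_5^\vee$ is the $3$-dimensional annihilator of $U$. Since every nonzero skew form on the $3$-dimensional space $U^\perp$ has rank $2$, the projectivization $\P(\bw 2{U^\perp})$ coincides with the $\beta$-plane $\Gr(2,U^\perp)\subset\Gr(2,V_5^\vee)$.

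Next, because $\Gr(2,V_5)$ and $\P(W_0)$ are both smooth, $M$ is smooth and dimensionally transverse at $[U]\in M$ if and only if the tangent spaces meet transversally, i.e., $\hat T_{[U]}\Gr(2,V_5) + W_0 = \bw 2{V_5}$. By orthogonality, this is equivalent to $\bw 2{U^\perp} \cap W_0^\perp = 0$, or projectively, to $\P(W_0^\perp)\cap\Gr(2,U^\perp) = \emptyset$. Unpacking, the failure of smooth dimensional transversality at $[U]$ is equivalent to the existence of $[U']\in\Gr(2,V_5^\vee)\cap\P(W_0^\perp) = M^\sharp$ with $U'\subset U^\perp$, that is, $U\subset(U')^\perp$. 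This incidence condition on a pair $([U],[U'])\in M\times M^\sharp$ is manifestly symmetric under the duality $V_5\leftrightarrow V_5^\vee$, so applying the same infinitesimal analysis at $[U']\in M^\sharp$ yields precisely the same obstruction. This proves the first assertion.

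Finally, the dimension formula is a routine count: if both $M$ and $M^\sharp$ are non-empty and smooth dimensionally transverse, then (setting $d:=\dim W_0$) the expected dimensions $\dim M = d-4$ and $\dim M^\sharp = (10-d)-4 = 6-d$ must both be nonnegative, forcing $4\le d\le 6$, and $\dim M + \dim M^\sharp = 2$. The only step requiring real computation is the identification $(U\wedge V_5)^\perp = \bw 2{U^\perp}$, which is elementary; the main conceptual input is the observation that the infinitesimal obstruction to smoothness of $M$ sits geometrically \emph{inside} $M^\sharp$, which makes the symmetry transparent and is the ``main obstacle'' only in the sense that seeing it is the whole content of the proof.
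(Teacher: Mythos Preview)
Your proof is correct and follows essentially the same approach as the paper's: both identify the failure of smooth dimensional transversality at $[U]\in M$ with the existence of a point of $M^\sharp$ lying in the annihilator of the tangent space, and then invoke the symmetry of this incidence condition. The only difference is cosmetic---you compute the annihilator $(U\wedge V_5)^\perp=\bw2{U^\perp}\subset\Gr(2,V_5^\vee)$ explicitly, whereas the paper packages the same computation as the statement that the projective dual of $\Gr(2,V_5)$ is $\Gr(2,V_5^\vee)$.
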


\begin{proof}
The scheme $M$ is a smooth dimensionally transverse intersection at a point $w\in M$ if and only if $\T_{\Gr(2, V_5) , w}$ and $ \P(W_0)$ span $\P(\bw{2}{V_5})$. 
This is not the case exactly when there exists a hyperplane $H\subset \P(\bw{2}{V_5})$ containing both $\T_{\Gr(2, V_5) , w}$ and $ \P(W_0)$.

The pair $(w,H)$ then belongs to the incidence variety of $\Gr(2, V_5)$, whose projective dual is $\Gr(2, V_5^\vee)$. 
It follows that the point $H^\bot\in \P(\bw{2}{V_5^\vee})$   is in $\Gr(2, V_5^\vee)$ and that the hyperplane $w^\bot$ 
contains $\T_{\Gr(2, V_5) , H^\bot}$. Since $ w\in \P(W_0)$, we also have $w^\bot \supset \P(W_0^\bot)$, so that   
$M^\sharp= \Gr(2,V_5^\vee) \cap \P(W_0^\bot)$ is not a smooth dimensionally transverse intersection at $H^\bot $.
\end{proof}

%KEEP
%{When both $M$ and $M^\sharp$ are proper intersections and have finite singular loci, the proof above yields a bijection between $\Sing(M)$ and $\Sing(M^\sharp)$.}

\begin{rema}
Let $X$ be a GM variety  of dimension $n \ge 3$ with $\codim(\Sing(X)) \ge 4$. By Lemma~\ref{p218}, we have $\dim (\mu(W)) \ge 7$. 
By Proposition~\ref{lhull},
$M'_X$ is  smooth of dimension $6-\codim (\mu(W))\ge 3$. 
Proposition~\ref{p12} therefore says that $(M'_X)^\sharp $ is empty (\ie, {the space} $\mu(W)^\perp \subset \bw2{V_5^\vee}$ contains no forms of rank 2). 
\end{rema}

For  {smooth} GM curves, $(M'_X)^\sharp $ is never empty. For  {smooth} GM surfaces, $(M'_X)^\sharp $ may be non-empty. 
We finish this subsection with a discussion of these two cases.

Let $X$ be a  {smooth} GM curve. As we will explain in more details in Section \ref{section-lci-gm}, 
 either $M'_X$ is a smooth elliptic quintic curve in $\P(\mu(W))=\P^4$, there is a double cover $X\to M'_X$ (so that $X$ is bielliptic),   
$M_X$ is the cone over $M'_X$, and $(M'_X)^\sharp$ is isomorphic to $M'_X$,
 or else $M_X=M'_X$ is a   quintic del Pezzo surface  in $\P(\mu(W))=\P^5$.
 
Mukai remarked (\cite[Theorem 5.1.(2)]{mukcg}) that $(M'_X)^\sharp$  has a simple geometric interpretation as the set of all $g^1_4$ on $X$. This allows us to 
 recover a classical result without appealing to the description of del Pezzo quintics as blow ups of the projective plane  (compare with \cite[Proposition 4]{sb}). 
 
\begin{prop}\label{coro224}
Let $X$ be a  {smooth} complex
GM curve. The Grassmannian hull $M_X$ is smooth, \ie,  $X$ is the intersection of a {\em smooth} 
 quintic del Pezzo surface with a quadric, if and only if $X$ has exactly $5$ distinct $g^1_4$.
\end{prop}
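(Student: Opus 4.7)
The plan is to combine Proposition~\ref{p12} with the identification, recalled just above the proposition, of the scheme $(M'_X)^\sharp$ with the set of $g^1_4$'s on $X$ (\cite[Theorem~5.1.(2)]{mukcg}).

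For a smooth GM curve, $\dim(W)=6$, so $M_X$ has dimension $2$ and degree $5$. First I dispose of the case $\Ker(\mu)\ne 0$: by Proposition~\ref{lhull}, $M_X$ is then the cone $\cone{\Ker(\mu)}M'_X$ over the quintic curve $M'_X\subset\P^4$, hence is singular at its vertex. So $M_X$ smooth forces $\mu$ injective, and then $M_X=M'_X=\Gr(2,V_5)\cap\P^5$ is a quintic surface in $\P^5$, smooth if and only if it is a smooth dimensionally transverse intersection.

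With $\mu$ injective, $\mu(W)^\bot\subset\bw2{V_5^\vee}$ has dimension $4$, so $(M'_X)^\sharp=\Gr(2,V_5^\vee)\cap\P(\mu(W)^\bot)$ is the intersection of $\Gr(2,V_5^\vee)\subset\P^9$ with a $\P^3$. Since $\Gr(2,V_5^\vee)$ has dimension $6$ and degree $5$ in $\P^9$, a dimension count forces this intersection to be non-empty, and when it is dimensionally transverse it is zero-dimensional of length $5$. By Proposition~\ref{p12}, $M'_X$ is a smooth dimensionally transverse intersection of dimension $2$ if and only if $(M'_X)^\sharp$ is a smooth dimensionally transverse intersection of dimension $0$, which by the length computation is equivalent to $(M'_X)^\sharp$ consisting of exactly $5$ distinct reduced points.

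Combining with Mukai's interpretation now gives both directions. If $M_X$ is smooth, the above chain yields $5$ distinct points in $(M'_X)^\sharp$, hence $5$ distinct $g^1_4$'s on $X$. Conversely, if $X$ has exactly $5$ distinct $g^1_4$'s, then $(M'_X)^\sharp$ is a reduced $0$-dimensional scheme of length $5$; this rules out the case $\Ker(\mu)\ne 0$ (in which $\mu(W)^\bot$ would have dimension at least $5$, making $(M'_X)^\sharp$ positive-dimensional with infinitely many points), forces $\mu$ to be injective, and then Proposition~\ref{p12} yields smoothness of $M_X=M'_X$. The only subtle point is reading Mukai's statement scheme-theoretically, so that ``$5$ distinct $g^1_4$'s'' corresponds precisely to reducedness of $(M'_X)^\sharp$; with this in hand, the argument reduces to a direct application of Proposition~\ref{p12}.
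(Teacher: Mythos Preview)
Your proof is correct and follows the same strategy as the paper: reduce to Proposition~\ref{p12} and then invoke Mukai's identification of $(M'_X)^\sharp$ with the $g^1_4$'s on~$X$. The only difference is in how the length-$5$ fact is justified: the paper quotes the Brill--Noether statement that $W^1_4(X)$ has length~$5$ whenever it is finite, whereas you obtain length~$5$ directly from $\deg\Gr(2,V_5^\vee)=5$ once you know the intersection is zero-dimensional. Either route works, and in fact your degree argument makes your closing worry unnecessary: you do \emph{not} need Mukai's bijection to be scheme-theoretic. Once you know $(M'_X)^\sharp$ has exactly $5$ closed points, it is $0$-dimensional, hence (as $\Gr(2,V_5^\vee)$ is Cohen--Macaulay of degree~$5$) has length~$5$, hence is reduced; Proposition~\ref{p12} then applies. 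So you may simply delete the hedging in your last sentence.
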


\begin{proof} 
As we just explained, $X$ is the intersection of a  smooth
quintic del Pezzo surface with a quadric if and only if $M'_X$ is a smooth (quintic) surface. By Proposition \ref{p12}, this is the case if and only if $(M'_X)^\sharp$ is a smooth scheme of dimension 0. On the other hand,  
 the scheme $W^1_4(X)$ parametrizing the $g^1_4$ on $X$ is either infinite, 
  in which case $X$ is bielliptic, or it has length~5. The proposition therefore follows from Mukai's remark   
 (\cite[Theorem~5.1.(2)]{mukcg}) that there is a bijection between the sets $(M'_X)^\sharp$ and $W^1_4(X)$.
 \end{proof}

 Let $(X,H)$ be a  {smooth} GM surface. As in the case of curves, 
 $M'_X$ may be singular. By Proposition \ref{lhull}, this may only happen when $\mu$ is injective. By Proposition \ref{p12}, $M_X=M'_X$ is then a singular threefold and 
 $(M'_X)^\sharp$ is non-empty. 
This has strong  consequences on $X$. 

More precisely, one can show that $(M'_X)^\sharp $ is non-empty if and only if the Gushel bundle   can be written over $\C$ as an extension  
\begin{equation*}
0 \to \cO_{X_\C}(-h_2) \to \cU_{X_\C} \to \cO_{X_\C}(-h_1) \to 0,
\end{equation*}
where $h_1$ and $h_2$ are divisors on ${X_\C}$ such that
\begin{equation*}
h_1^2 = 0,
\qquad
h_1h_2 = 4,
\qquad
h_2^2 = 2, \qquad h_1 + h_2 = H 
\end{equation*}
(compare with \cite[Lemma 1.5]{glt}). In particular, the   Picard number of ${X_\C}$ must be 
$>1$.

\subsection{Locally complete intersection GM varieties}\label{section-lci-gm}

In the rest of the article, we mostly deal with GM varieties which are lci.
This condition can be   described in terms of GM data.

\begin{defi}\label{def225}
A GM data $(W,V_6,V_5,L,\mu,\bq,\eps)$ is called {\sf lci} if
\begin{itemize}
\item either $\Ker(\mu ) = 0$, in which case   the data is called {\sf ordinary};
\item or $\dim(\Ker(\mu ) ) = 1$ and the point of $\P(W)$ corresponding to the kernel subspace $\Ker(\mu ) \subset W$ 
does not lie on the quadric $Q(v)$ for any $v \in V_6 \setminus V_5$, in which case   the data is called {\sf special}.
\end{itemize}
\end{defi}

\begin{prop}\label{proposition-lci-data}
A  {normal} GM variety   is lci if and only if its canonically associated GM data is lci.
\end{prop}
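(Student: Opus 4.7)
The plan is to analyze the scheme-theoretic description $X = \cone{K}M'_X \cap Q(v)$ from Remark~\ref{remark:cone-kernel}, where $K = \Ker(\mu)$, $M'_X = \Gr(2,V_5) \cap \P(\mu(W))$, and $v \in V_6 \setminus V_5$ is arbitrary. A key preliminary is that $M'_X$ is always lci, being a dimensionally transverse linear section of the smooth Grassmannian $\Gr(2,V_5)$; moreover, it is non-degenerate in $\P(\mu(W))$ since its homogeneous ideal is generated by the restrictions of the five Pl\"ucker quadrics (an ideal generated by forms of degree $\geq 2$ contains no linear form). The affine cone $\widehat{M'_X}$ at its vertex, however, is never lci: $M'_X$ is non-degenerate of codimension $3$ and degree $5$ in its linear span, while any non-degenerate complete intersection of quadrics of codimension $3$ has degree at least $2^3 = 8$. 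This degree-based obstruction is the crux of the converse direction.

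For the implication $(\Leftarrow)$, both cases of the definition are direct. If $K = 0$ (ordinary), then $M_X = M'_X$ is lci and $X$ is a Cartier divisor in it. If $\dim K = 1$ with $\P(K) \not\subset Q(v)$ (special), the vertex $p = \P(K)$ lies off $Q(v)$, so $X \cap \P(K) = \emptyset$; hence $X$ sits inside the punctured cone $\pcone{K}M'_X = M_X \setminus \{p\}$, which is an $\A^1$-bundle over the lci scheme $M'_X$ via projection from $p$, and is therefore itself lci, with $X$ being a Cartier divisor inside it.

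For the implication $(\Rightarrow)$, I would argue by contrapositive. If the data is not lci, either $\dim K \geq 2$, in which case the positive-dimensional $\P(K)$ meets the hypersurface $Q(v)$, or $\dim K = 1$ with $\P(K) \subset Q(v)$; either way $\P(K) \cap X = \P(K) \cap Q(v)$ is non-empty. Pick any $p$ in this intersection. Dehomogenizing one $K$-coordinate at $p$ produces local coordinates in which the Pl\"ucker equations defining $M_X$ depend only on the $\mu(W)$-coordinates, so that $M_X$ is formally isomorphic at $p$ to the product $\widehat{M'_X} \times \A^{\dim K - 1}$, which fails to be lci because its first factor does.

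It then remains to verify that $X = M_X \cap Q(v)$ itself fails to be lci at $p$, which I would handle by a case analysis on the linear part of the local equation $q$ of $Q(v)$. When $p$ is a smooth point of $Q(v)$, the formal implicit function theorem eliminates one variable and reduces $\cO_{X,p}$ to a formal power series ring over either $\cO_{\widehat{M'_X},0}$ (if the linear part of $q$ involves a $K$-coordinate) or over $\cO_{\widehat{M'_X \cap H},0}$ (if it involves only $\mu(W)$-coordinates), for an appropriate hyperplane $H \subset \P(\mu(W))$. In both subcases, the same degree-$5$ argument applied either to $M'_X$ or to its hyperplane section $M'_X \cap H$ (which remains non-degenerate because its ideal in $H$ is still generated by quadrics) rules out the affine cone being lci. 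When $p$ is a singular point of $Q(v)$, no elimination is possible and $I_{X,p}$ only needs more local generators. The main obstacle is executing this final case analysis carefully, in particular verifying the non-degeneracy of $M'_X \cap H$ in the subcase where one eliminates a $\mu(W)$-coordinate.
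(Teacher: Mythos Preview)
Your overall strategy agrees with the paper's, and both your $(\Leftarrow)$ direction and the opening of $(\Rightarrow)$---locating $p\in\P(\Ker\mu)\cap Q(v)$ and showing that $M_X$ is not lci there via the degree-$5$ obstruction---are correct. The gap is in the last step, deducing that $X=M_X\cap Q(v)$ itself is not lci at $p$.

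The paper does this in one line: since $X$ is a dimensionally transverse intersection of the Cohen--Macaulay cone $\CKGr(2,V_5)$ with $\P(W)$ and $Q(v)$, the cutting equations form a regular sequence on the local ring of the cone at any $p\in X$, and one invokes the standard fact that a Noetherian local ring $R$ is a complete intersection if and only if $R/(f)$ is, for any nonzerodivisor $f\in\mathfrak m_R$. This immediately gives that $X$ is lci at $p$ if and only if the cone is, i.e., if and only if $p\notin\P(K)$. No case analysis is needed.

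Your case analysis is an attempt to prove this preservation fact by hand, and it does not close in the subcase where you eliminate a $\mu(W)$-coordinate. After a linear change making the linear part of $q$ equal to $y_m$, the solution $y_m=g$ has order $\ge 2$, but $g$ still depends on all remaining coordinates---including, when $\dim K\ge 2$, the $K$-coordinates $x$, through the quadratic part of $q$. The resulting ring $k[[x,y']]/\bigl(F_i(y',g)\bigr)$ is therefore \emph{not} the local ring at the vertex of any affine cone over a hyperplane section of $M'_X$: the higher-order terms of $g$ destroy the graded (cone) structure, and there is no reason for an isomorphism with $\hat\cO_{\widehat{M'_X\cap H},0}[[x]]$ to exist for any $H$. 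Your flagged obstacle (non-degeneracy of $M'_X\cap H$) is thus not the real difficulty. The singular-$Q(v)$ case, which you dismiss, has the same problem in sharper form. Replacing the entire case analysis by the nonzerodivisor criterion above closes the gap and recovers the paper's short proof.
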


\begin{proof}
Since $\Gr(2,V_5)$ is not a complete intersection, the non-lci locus of the cone $\CKGr(2,V_5)$ is precisely its vertex $\P(K)$.
Since a GM variety $X$ is a dimensionally transverse intersection~\eqref{equation-x-from-data}, it is lci if and only if
it does not contain points of $\P(K)$, \ie, if and only if $X \cap \P(K) = \P(\Ker(\mu )) \cap Q(v)$ is empty. The latter condition
can be rephrased as $\dim(\Ker(\mu )) \le 1$ and, in   case of   equality, 
  the corresponding point is not in $Q(v)$.
\end{proof}

 {The next lemma will  usually be applied to smooth GM varieties, but as usual we prove it under a weaker assumption.
In this form, it is a partial converse to Theorem \ref{theorem:gushel-mukai}.

\begin{lemm}\label{lemma:pic-gm}
Let $(X,H)$ be an lci polarized complex GM variety of dimension $n \ge 3$. 
If $\codim (\Sing(X) )\ge 4$, the variety $X$ is locally factorial and $\Pic(X)=\Z H$. 
In particular, the polarization $H$ is the unique GM polarization on $X$, hence $\Aut_H(X) = \Aut(X)$.
\end{lemm}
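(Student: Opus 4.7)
The plan is to prove local factoriality, then $\Pic(X) = \Z H$ by induction on $n$, and finally the automorphism assertion as a consequence.

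First, since $X$ is lci and $\codim(\Sing X) \ge 4 \ge 2$, the variety $X$ is Cohen--Macaulay and regular in codimension one, hence normal. I would then invoke Grothendieck's parafactoriality theorem (SGA~2, Exp.~XI, Cor.~3.14): at every point $x \in \Sing X$, the local ring $\cO_{X,x}$ is a complete intersection of depth at least $4$, hence parafactorial; combined with the unique factorization of $\cO_{X,x}$ for $x \notin \Sing X$ (Auslander--Buchsbaum), this forces $X$ to be locally factorial, whence $\Pic(X) = \Cl(X)$.

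For the Picard computation, the base case $n = 3$ is smooth by the codimension hypothesis. In the ordinary case, Proposition~\ref{lhull} provides a smooth quintic del Pezzo fourfold $M_X \supset X$ with $\Pic(M_X) = \Z H$ by Lefschetz applied to $\Gr(2,V_5)$. Since $X$ is a smooth ample Cartier divisor in the smooth fourfold $M_X$, a second application of the Lefschetz hyperplane theorem gives $\Pic(X) = \Pic(M_X) = \Z H$. In the special case, $X \to M'_X$ is a double cover of a smooth quintic del Pezzo threefold branched along a smooth K3 surface (a quadric section of $M'_X$), and the equality $\Pic(X) = \Z H$ follows by pulling back from $\Pic(M'_X) = \Z H'$ once a Hodge-theoretic computation---using Kodaira vanishing on $M'_X$ and the simple-connectedness of the branch K3---shows that the antiinvariant part of $H^2(X,\Q)$ under the covering involution vanishes; alternatively, one may appeal to the Iskovski\u{\i}--Mukai classification of smooth prime Fano threefolds of degree~$10$.

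For the inductive step $n \ge 4$, I would take a general hyperplane section $X' := X \cap H' \in |H|$. By Bertini, $\Sing(X') = \Sing(X) \cap H'$, so $X'$ is integral, normal, lci, of dimension $n - 1 \ge 3$, with $\codim_{X'}(\Sing X') \ge 4$, and is itself a GM variety (being a hyperplane section of one). By the induction hypothesis, $\Pic(X') = \Z(H|_{X'})$. Since $X$ is normal with $X' \subset X$ an ample Cartier divisor and $\dim X \ge 4$, the Grothendieck--Lefschetz theorem for Picard groups, in the version valid for our mildly singular $X$ (the formal Lefschetz theorem of SGA~2, Exp.~X--XII), gives that restriction $\Pic(X) \to \Pic(X')$ is an isomorphism, hence $\Pic(X) = \Z H$. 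For the final assertions: any ample class in $\Pic(X) = \Z H$ has the form $kH$ with $k > 0$, and the constraint $(kH)^n = 10$ forces $k = 1$, so $H$ is the unique GM polarization; applied to $\phi^*H$ for $\phi \in \Aut(X)$, this gives $\Aut(X) = \Aut_H(X)$. The main technical obstacle is the base case $n = 3$ in the special setting, where the Hodge-theoretic analysis of the branched double cover requires some care; a secondary point is that the formulation of Grothendieck--Lefschetz used in the inductive step must accommodate our normal but singular $X$, which is where the hypothesis $\codim(\Sing X) \ge 4$ becomes essential.
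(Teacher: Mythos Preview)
Your proposal is correct and follows the same overall architecture as the paper's proof---local factoriality from SGA~2, Exp.~XI, then induction on $n$ via Grothendieck--Lefschetz---but your treatment of the base case $n=3$ is genuinely different. The paper handles both types at once: $X$ is a smooth Fano threefold of degree~$10$ which is an intersection of quadrics, and the Mori--Mukai classification shows that the \emph{unique} smooth Fano threefold of degree~$10$ with Picard number $\ge 2$ (a divisor of bidegree $(3,1)$ in $\P^3\times\P^1$) is \emph{not} an intersection of quadrics; hence $\Pic(X)=\Z H$. Your Lefschetz argument through the smooth Grassmannian hull $M_X$ for the ordinary case is valid and has the advantage of avoiding the classification entirely. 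For the special case, your Hodge-theoretic claim is also correct---the antiinvariant part of $H^2(X,\Q)$ is $H^1(M'_X,\Omega^1_{M'_X}(\log D)\otimes L^{-1})$, which one shows vanishes via the residue sequence and Kodaira--Nakano vanishing on $M'_X$---but the ingredient you name (``simple-connectedness of the branch K3'') is not quite the right one; what is actually used is Kodaira vanishing on the K3 for $L^{-1}|_D$. Your alternative via the classification is essentially the paper's argument.

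In the inductive step the paper invokes only \emph{injectivity} of $\Pic(X)\to\Pic(X')$ (SGA~2, Exp.~XII, cor.~3.6), which already suffices since $H$ restricts to the generator of $\Pic(X')$; your claim of an isomorphism is stronger than needed and slightly more delicate to justify in the singular setting, so you may as well weaken it.
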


\begin{proof}
By \cite[Exp.~XI, cor.~3.14]{sga2}, $X$ is locally factorial.
It remains to show that $\Pic(X) = \Z H$. 
Assume first $n = 3$, so that $X$ is smooth.
By Theorem~\ref{theorem:gm-intrinsic}, $X$ is a Fano variety of degree~10 and is an intersection of quadrics.
As we already mentioned in the proof of Lemma~\ref{fgmhyp}, the only Fano threefold of degree 10 and Picard number greater than 1 is not an intersection of quadrics.
 This proves the claim for $n = 3$.
When $n>3$, we proceed by induction: a general hyperplane section $X'$ of $X$ satisfies the same properties and restriction induces an injection $\Pic(X)\hra \Pic(X')$ (\cite[Exp.~XII, cor.~3.6]{sga2}). 
\end{proof}
}

For any lci GM variety $X$, the Gushel sheaf $\cU_X$ is locally free {(we will call it {\sf the Gushel bundle}) and the Gushel map $X \to \Gr(2,V_5)$ is regular}.
 Moreover,  

\begin{itemize}
\item either $\mu \colon W \to \bw2{V_5}$ is injective, the Gushel map 
is a closed embedding, we have $M_X = M'_X$, and~\eqref{equation-x-from-data}  can be rewritten 
  as
\begin{equation*}
X = \Gr(2,V_5) \cap \P(W) \cap Q(v),
\end{equation*}
so that $X$ is a quadratic section of $M_X$.
These GM varieties will be called {\sf ordinary}.
For these varieties,  
\begin{equation*}
\dim (X) = \dim (W) - 5 \le \dim( \bw2{V_5} )- 5 = 5.
\end{equation*}
\item or else $\Ker(\mu)$ has dimension 1, the Gushel map induces a double covering $X \thra M'_X$, and $M_X = \cone{}M'_X$. 
These GM varieties will be called {\sf special}. For these varieties, 
\begin{equation*}
\dim (X) = \dim (W) - 5 \le \dim( \bw2{V_5} )+1- 5 = 6.
\end{equation*}
A special GM variety   comes with a canonical involution---the involution of the double covering $X \thra M'_X$. 
\end{itemize}

We make a useful observation. Let $(W,V_6,V_5,L,\mu,\bq,\eps)$ be an lci GM data, set
\begin{equation}
W_1 := \Ker(\mu) \subset W
 ,\qquad
W_0 := W/W_1,
\end{equation}
and let $\mu_0 \colon W_0 \hookrightarrow \bw2 V_5$ be the embedding induced by $\mu$.

\begin{prop}\label{l228}
For any lci GM data $(W,V_6,V_5,L,\mu,\bq,\eps)$, there is a unique splitting of the canonical exact sequence
$0 \to W_1 \to W \to W_0 \to 0$, \ie, a direct sum decomposition
\begin{equation}
W = W_0 \oplus W_1,
\end{equation}
such that the map $\bq$ decomposes as
\begin{equation}
\bq = \bq_0 + \bq_1,
\qquad\text{where}\quad 
\bq_0\colon V_6 \to \Sym^2\!W_0^\vee
\quad\text{and}\quad
\bq_1\colon V_6 \to \Sym^2\!W_1^\vee.
\end{equation}
 \end{prop}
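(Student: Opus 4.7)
The plan is to reduce immediately to the special case of Definition~\ref{def225}. In the ordinary case, $W_1 = 0$, and the canonical exact sequence splits trivially with $\bq_0 = \bq$ and $\bq_1 = 0$, so we may assume $\dim(W_1) = 1$ and fix a generator $w_1 \in W_1 = \Ker(\mu)$. The real content of the statement is then to pin down a canonical direct complement $W_0 \subset W$.

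The main input we would exploit is the commutativity of diagram~\eqref{equation-mu-q-plucker} evaluated at $w_1$: for every $v \in V_5$ and $w \in W$, one has
\begin{equation*}
\bq(v)(w_1,w) = \eps(v \wedge \mu(w_1) \wedge \mu(w)) = 0,
\end{equation*}
so the linear form $\bq(v)(w_1,\cdot) \in W^\vee$ vanishes identically for $v \in V_5$. By contrast, the lci hypothesis from Definition~\ref{def225} states that $[w_1] \notin Q(v)$ for any $v \in V_6 \setminus V_5$, which means $\bq(v)(w_1,w_1) \ne 0$ and hence $\bq(v)(w_1,\cdot)$ is a nonzero linear form. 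Combining the two, if $v' = \lambda v + v_0$ with $\lambda \ne 0$ and $v_0 \in V_5$, then $\bq(v')(w_1,\cdot) = \lambda\,\bq(v)(w_1,\cdot)$, so the hyperplane
\begin{equation*}
W_0 := \Ker\bigl(\bq(v)(w_1,\cdot)\bigr) \subset W
\end{equation*}
is independent of the choice of $v \in V_6 \setminus V_5$ and does not contain $w_1$. This produces the direct sum decomposition $W = W_0 \oplus W_1$ splitting the canonical exact sequence.

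It then remains to verify that $\bq$ decomposes accordingly and that the splitting is uniquely characterized by this property. For every $v \in V_6$ and $w_0 \in W_0$, the pairing $\bq(v)(w_0,w_1)$ vanishes---for $v \in V_5$ by the first observation above, for $v \in V_6 \setminus V_5$ by the very construction of $W_0$, and in general by linearity---so $\bq(v)$ has no mixed $W_0$--$W_1$ component and decomposes as $\bq_0(v) + \bq_1(v)$ with $\bq_0(v) \in \Sym^2\!W_0^\vee$ and $\bq_1(v) \in \Sym^2\!W_1^\vee$. For uniqueness, any other splitting $W = W_0' \oplus W_1$ compatible with $\bq$ in this sense forces $\bq(v)(w_0',w_1) = 0$ for all $v \in V_6 \setminus V_5$ and $w_0' \in W_0'$, whence $W_0' \subset W_0$ and equality follows by a dimension count. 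We anticipate no substantive obstacle: the whole argument is forced by diagram~\eqref{equation-mu-q-plucker} together with the pointwise non-vanishing $[w_1] \notin Q(v)$ for $v \in V_6 \setminus V_5$.
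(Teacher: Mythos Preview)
Your proof is correct and follows essentially the same approach as the paper: define $W_0$ as the kernel of $\bq(v)(w_1,\cdot)$ for $v\in V_6\setminus V_5$, use the commutative diagram~\eqref{equation-mu-q-plucker} to see that this linear form vanishes for $v\in V_5$ (hence the hyperplane is independent of the choice of $v$), and conclude. Your treatment is slightly more explicit about uniqueness, but the underlying argument is the same.
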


\begin{proof} 
If the data is ordinary, we have $W_1 = 0$, $W_0 = W$ and there is nothing to prove.
If $W_1  $ is non-zero, it is 1-dimensional. Let $w_1 \in W_1$ be any non-zero vector
and choose an arbitrary $v \in V_6\setminus V_5$. Since the data is lci, the quadric $Q(v)$ does not pass through the point $w_1 \in W_1$, hence the linear form $\bq(v)(w_1,-) \in W^\vee$ does not vanish on $w_1$  and   gives a   decomposition
$W = W_0 \oplus W_1$ such that $\bq(v) \in \Sym^2\!W_0^\vee \oplus \Sym^2\!W_1^\vee \subset \Sym^2\!W^\vee$. 
By~\eqref{equation-mu-q-plucker}, we have $\bq(V_5) \subset \Sym^2\!W_0^\vee$, hence the   decomposition of $W$ does not depend on the choice of $v$ and the image of $\bq$ is contained in  $\Sym^2\!W_0^\vee \oplus \Sym^2\!W_1^\vee \subset \Sym^2\!W^\vee$. 
We let $\bq_0$ and $\bq_1$ be the summands of $\bq$ corresponding to this direct sum decomposition.
\end{proof}

\begin{rema}\label{remark-nonlci-decomposition}
One could prove the same result as in Proposition~\ref{l228}, replacing the lci condition by the assumption
that,  for $v \notin V_5$, the restriction to $W_1 := \Ker(\mu) \subset W$ of the quadratic form $\bq(v)$
is non-degenerate (this assumption does not depend on the choice of $v$). The proof is the same---$W_0$ is defined as the orthogonal of $W_1$ in $W$ with respect to $\bq(v)$.
\end{rema}

\begin{coro}\label{corollary-aut-x-gl6}
Let $(X,H)$ be a  {normal} lci polarized   GM variety. The kernel of the canonical morphism  $\Aut_H(X) \to \PGL(V_6)$  is
trivial if $X$ is ordinary, and is generated by the canonical involution of $X$ if $X$ is special.   
 \end{coro}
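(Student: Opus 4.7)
The plan is to translate the statement via Corollary~\ref{corollary-aut-x-aut-data} into a question about GM data and then exploit the canonical decomposition from Proposition~\ref{l228}. Identifying $\Aut(X,H)$ with $\Aut(W,V_6,V_5,L,\mu,\bq,\eps)$, one quickly computes that the central $\Gm$ of~\eqref{eq:aut-x-h} is the subgroup $\{(\alpha\cdot\id_W,\alpha^{-2}\cdot\id_{V_6},\alpha^{-5})\mid\alpha\in\k^\times\}$, so $\Aut_H(X)$ is the quotient by this $\Gm$. The canonical morphism of the statement is then induced by the projection $(g_W,g_V,g_L)\mapsto g_V$, and its kernel is the image of those data automorphisms with $g_V=\lambda\cdot\id_{V_6}$ for some $\lambda\in\k^\times$, modulo this central $\Gm$.

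I would next analyze such triples using the direct sum decomposition $W=W_0\oplus W_1$ with $W_1=\Ker(\mu)$ and the matching decomposition $\bq=\bq_0+\bq_1$ of Proposition~\ref{l228}; the uniqueness of the splitting in \loc\ shows that both are preserved by any data automorphism. With $g_V=\lambda\cdot\id$, the $\mu$-compatibility of Definition~\ref{def:gm-data-iso} reads $\mu\circ(g_L\otimes g_W)=\lambda^2\mu$; since $\mu$ is injective on $W_0$, this forces $g_W|_{W_0}=\beta\cdot\id_{W_0}$ with $\beta g_L=\lambda^2$, and one writes $g_W|_{W_1}=\gamma\cdot\id_{W_1}$ (automatic as $\dim W_1\le 1$). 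Applying the $\bq$-compatibility separately to the two summands of $\bq$, one extracts $\beta^2=\lambda^{-1}$ from the $\Sym^2\!W_0^\vee$-component, and---this is the key use of the lci hypothesis, which ensures $\bq_1(v)\ne 0$ for $v\in V_6\setminus V_5$---also $\gamma^2=\lambda^{-1}$, so $\gamma=\pm\beta$.

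The conclusion then splits according to the type. In the ordinary case $W_1=0$, so $g_W$ is a scalar and the triple lies in the central $\Gm$, whence the kernel is trivial. In the special case, the sign $\gamma=+\beta$ again gives a central triple, while $\gamma=-\beta$ gives a non-central one which, after rescaling by the central $\Gm$ (take $\alpha=\beta^{-1}$), is represented by $(\id_{W_0}\oplus(-\id_{W_1}),\id_{V_6},1)$. To finish, I would identify this linear involution on $W$ with the double-cover involution of $X\to M'_X$ from Lemma~\ref{lemma:gushel-map}: writing $w=w_0+tw_1$ with $w_0\in W_0$ and $w_1$ a generator of $W_1$, and using the vanishing of cross-terms, one has $\bq(v)(w_0+tw_1)=\bq_0(v)(w_0)+t^2\bq_1(v)(w_1)$, so the two points of the fiber of $X\to M'_X$ over $[w_0]$ are the pair $t=\pm t_0$, swapped exactly by $\id_{W_0}\oplus(-\id_{W_1})$.

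The main obstacle is the extraction of the constraint $\gamma^2=\beta^2$ in the special case: it genuinely requires both summands of the canonical decomposition of $\bq$ from Proposition~\ref{l228} and the lci nondegeneracy of $\bq_1$ on $W_1$. The remaining steps---the center computation, the $\mu$- and $\bq$-compatibility bookkeeping, and the geometric identification of the involution on $\P(W)$ with the double-cover involution via Lemma~\ref{lemma:gushel-map}---are essentially routine.
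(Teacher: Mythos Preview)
Your proposal is correct and follows essentially the same route as the paper: translate to GM data via Corollary~\ref{corollary-aut-x-aut-data}, use that $g_V$ scalar forces $g_W$ to be scalar on $W_0$ (you via the $\mu$-compatibility, the paper via the embedding $W_0\subset\bw2V_5$), then in the special case use the canonical splitting of Proposition~\ref{l228} and the $\bq$-compatibility to get $\gamma=\pm\beta$. Your version is a bit more explicit---you compute the central $\Gm$ and spell out the identification of $\id_{W_0}\oplus(-\id_{W_1})$ with the covering involution---whereas the paper argues directly that a scalar $g_W$ acts trivially on $\P(W)$ and simply asserts the identification with the canonical involution; but the substance is the same.
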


\begin{proof}
By Corollary~\ref{corollary-aut-x-aut-data}, the group $\Aut_H(X)$ is a quotient of the group
\begin{equation*}
\Aut(W,V_6,V_5,L,\mu,\bq,\eps) \subset \GL(W)\times\GL(V_6) \times \Gm
\end{equation*}
of automorphisms of its associated GM data.
Such an automorphism $g = (g_W,g_V,g_L)$  maps to the identity in $\PGL(V_6)$ if its component $g_V$ is scalar.
Its actions on $V_5$, on $\bw2V_5$, and hence on $W_0 \subset \bw2 V_5$, are then also scalar. For ordinary varieties 
$X \subset \PP(W_0)$,   the action of such a $g$ on $X$ is therefore trivial.

For special varieties, since the direct sum decomposition $W = W_0 \oplus W_1$ is canonical, it is preserved by $g_W$, 
and since $W_1$ is 1-dimensional,  $g_W$ acts by multiplication on each summand, by scalars 
$t_0$ and $t_1$. Then $g$ acts  by multiplication by $t_0^{-2}$ on $\Sym^2\!W_0^\vee$ and by $t_1^{-2}$ on $\Sym^2\!W_1$.
Since the map $\bq\colon V_6 \to \Sym^2\!W_0^\vee \oplus \Sym^2\!W_1^\vee$ commutes with the action of $g$, 
and the action of $g$ on $V_6$ is scalar, we get $t_0^{-2} = t_1^{-2}$. Thus $t_1 = \pm t_0$, hence 
either $g$ acts on $\PP(W)$ identically, or by a reflection with respect to $\PP(W_0)$, \ie, by the canonical
involution of $X$.
 \end{proof}

For a special GM variety $X$ of dimension $n \ge 2$, the branch locus $X'$ of the Gushel map $X \thra M'_X$ is the dimensionally transverse 
intersection of $M'_X$ with a quadric, so it is an ordinary GM variety.
This leads to an important birational operation on the set of all GM varieties: 
interchanging ordinary and special varieties. We formulate this operation on the level of GM data.

\begin{lemm}\label{lemma-ord-spe-data}
If $(W,V_6,V_5,L,\mu,\bq,\eps)$ is a special lci GM data,   $(W_0,V_6,V_5,L,\mu_0,\bq_0,\eps)$ is an ordinary lci GM data. 

Conversely, let $(W_0,V_6,V_5,L,\mu_0,\bq_0,\eps)$ be an ordinary lci GM data. Let~$W_1$ be a $1$-dimensional vector space,
choose an isomorphism $V_6/V_5 \cong \Sym^2\!W_1^\vee$, and let $\bq_1$ be the composition
$V_6 \to V_6/V_5 \to \Sym^2\!W_1^\vee$. Then $(W_0 \oplus W_1,V_6,V_5,L,\mu_0,\bq_0+\bq_1,\eps)$
is a special lci GM data.

If the field $\k$ is quadratically closed, \ie, if $\k = \k^{1/2}$, these operations define a bijection between the sets of isomorphism classes of special and ordinary GM data respectively.
\end{lemm}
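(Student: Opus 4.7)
The plan is to verify that each of the two constructions produces a valid lci GM data, and then analyze when the resulting operations are mutually inverse on isomorphism classes; the quadratic-closedness hypothesis enters only at the last step.

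For the forward direction (special to ordinary), Proposition~\ref{l228} already furnishes the canonical splitting $W = W_0 \oplus W_1$ and the decomposition $\bq = \bq_0 + \bq_1$, with $\bq_1|_{V_5} = 0$ by~\eqref{equation-mu-q-plucker}. It remains to check that $(W_0, V_6, V_5, L, \mu_0, \bq_0, \eps)$ satisfies the axioms of Definition~\ref{defgm}, where $\mu_0$ denotes the restriction of $\mu$ to $L \otimes W_0$. The only non-trivial axiom is the commutativity of~\eqref{equation-mu-q-plucker}: restricting the defining identity for the special data to vectors in $W_0$ gives exactly the identity for $(W_0, \mu_0, \bq_0)$. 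The ordinary lci condition is immediate from $\Ker(\mu_0) = 0$.

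For the reverse direction (ordinary to special), diagram~\eqref{equation-mu-q-plucker} for the enlarged data reduces to the ordinary one: for $v \in V_5$ we have $\bq_1(v) = 0$ since $v$ maps to $0$ in $V_6/V_5$, and the extended $\mu$ factors through the projection $W_0 \oplus W_1 \twoheadrightarrow W_0$. For the lci special condition, pick $0 \ne w_1 \in W_1$; for any $v \in V_6 \setminus V_5$, the image of $v$ in the one-dimensional quotient $V_6/V_5 \cong \Sym^2 W_1^\vee$ is a non-zero quadratic form on the line $W_1$, hence does not vanish on $w_1$, so $w_1 \notin Q(v)$.

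The substantive point, and the main obstacle, is the bijection claim. The composition \emph{ordinary $\to$ special $\to$ ordinary} is the identity on the nose, since Proposition~\ref{l228} recovers $W_0$, $\mu_0$, and $\bq_0$ canonically from the constructed special data. The other composition is where the hypothesis $\k = \k^{1/2}$ enters: passing from a special data to its ordinary quotient discards the isomorphism $V_6/V_5 \cong \Sym^2 W_1^\vee$ that determines $\bq_1$, and reconstructing it requires an arbitrary such choice. Two choices differ by a scalar $\lambda \in \k^\times$; I would exhibit an isomorphism of GM data between the two reconstructions by rescaling $W_1$ via a scalar $t \in \k^\times$ satisfying $t^2 = \lambda$, so that the induced action on $\Sym^2 W_1^\vee$ is multiplication by $t^{-2}$ and absorbs $\lambda$, while the rest of the isomorphism can be taken to be the identity. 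The existence of such a $t$ is precisely the hypothesis $\k = \k^{1/2}$; without it, one instead obtains a $\k^\times/(\k^\times)^2$-torsor of special isomorphism classes lying over each ordinary one, and the bijection fails.
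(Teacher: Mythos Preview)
Your proof is correct and follows essentially the same approach as the paper's: the paper dismisses the first two parts as obvious and, for the bijection, exhibits exactly the rescaling isomorphism $g_V = \id_V$, $g_W = \id_{W_0} + \sqrt{t}\,\id_{W_1}$, $g_L = \id_L$ that you describe in words. Your version is considerably more detailed (and your closing remark on the $\k^\times/(\k^\times)^2$-torsor is a nice addition), but the argument is the same.
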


\begin{proof}
The first part is obvious. For the second part, note that different choices of an isomorphism $V_6/V_5 \cong \Sym^2\!W_1^\vee$ produce isomorphic GM data 
(via the isomorphism defined by $g_V = \id_V$, $g_W = \id_{W_0} + \sqrt{t}\,\id_{W_1}$, $g_L = \id_L$ for an appropriate $t \in \k$).
\end{proof}

When $\k = \k^{1/2}$, this bijection can be interpreted as a bijection between isomorphism classes of special and ordinary lci GM  {varieties}.
We will denote by $X_\ord$ the ordinary lci GM intersection associated with a given special lci GM  {variety} $X$  
and by $X_\spe$ the special lci GM intersection associated with a given ordinary lci GM  {variety} $X$. 
We define the {\sf opposite $X_\opp$} of an lci GM  {variety} $X$ by 
\begin{equation}\label{eq:opposite-gm}
X_\opp := 
\begin{cases}
X_\ord & \text{if $X$ is special},\\
X_\spe  & \text{if $X$ is ordinary}.
\end{cases}
\end{equation}

%%KEEP
%\begin{rema}
%It is tempting to interpret the above bijection as an isomorphism of appropriate moduli spaces.
%However, this is not so easy to do. First, it is quite hard to show that GM varieties 
%admit a moduli space. Second, the choice of an isomorphism $V_6/V_5 \cong \Sym^2\!W_1^\vee$
%can be thought of as an extracting a square root from the 1-dimensional vector space $V_6/V_5$. 
%When performed in a family (with $V_5$ and $V_6$ understood as vector bundles on the base scheme),
%this operation is both obstructed and multivalued. So, it is more natural to expect that 
%the moduli space of special GM varieties is the root stack associated with a line bundle
%on the moduli space of ordinary GM bundles.
%\end{rema}

% \begin{rema}\label{remark-lci-canonical-lift}
% Special lci GM data have a canonical lift. Indeed, take $K = \Ker(\mu_0)\subset W$ and define $\mu:W \to \bw2{V_5} \oplus K$ 
% to be the sum of the map $\mu_0$ on $W_0$ and of the identity map on $K$. 
% \end{rema}

\section{GM varieties, Lagrangian data, and EPW sextics}\label{section-epw-sextics}

 {Eisenbud--Popescu--Walter (EPW) sextics are special hypersurfaces of degree~6 in $\P(V_6)$ which can be constructed from  Lagrangian subspaces $A \subset \bw3V_6$.}
The definition and   main properties of EPW sextics can be found in Appendix~\ref{section-epw}. 
A relation between GM varieties and EPW sextics was found in~\cite{iliev-manivel}.
In this section, we develop the approach of Iliev and Manivel and extend their construction
to include both ordinary and special varieties.

\subsection{The discriminant locus
}\label{section-epw-from-fgm}

The easiest way to relate an EPW sextic to a GM variety is via the discriminant locus.
Let $(X,H)$ be a normal polarized GM variety of dimension~$n$ as in Definition~\ref{defigm}. By Theorem \ref{theorem:gm-intrinsic}, 
   the space 
$V_6 $ of quadrics  in $\P(W)$ containing~$X$ is $6$-dimensional. We define $\widetilde\Dis(X)$ as  
the subscheme of $\P(V_6)$ of {\em singular} quadrics containing~$X$. It is either $\P(V_6)$ or 
a hypersurface of degree $\dim(W) = n + 5$, and in the latter case, the multiplicity in $\widetilde\Dis(X)$ 
of the hyperplane $\P(V_5)$ of (restrictions of) Pl\"ucker quadrics is  {at least} the corank of a general such quadric, 
which is at least $\dim(W)-6=n-1$. We define the {\sf discriminant locus} $\Dis(X)$  as follows: 
if $\widetilde\Dis(X) = \P(V_6)$, we set $\Dis(X)=\P(V_6)$; otherwise 
(and we will see in Corollary \ref{coro311} that this is almost always the case), 
we set
\begin{equation}
\Dis(X) :=  \widetilde\Dis(X) -(n-1) \P(V_5).  
\end{equation}
This is a sextic hypersurface   in $\P(V_6)$.

\begin{theo}[Iliev--Manivel]\label{ilma}
Let $X$ be a general \textup{(}smooth and ordinary\textup{)} complex GM variety of dimension $ n \in\{3,4,5\}$. The discriminant locus $\Dis(X)\subset \P(V_6)$ is an EPW sextic.
\end{theo}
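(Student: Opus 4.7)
The plan is to construct a canonical Lagrangian subspace $A(X) \subset \bw3 V_6$ from the GM data of $X$ and to identify the EPW sextic $Y_{A(X)}$ with the discriminant locus $\Dis(X)$. Since $X$ is ordinary, the map $\mu\colon W \to \bw2 V_5$ from its GM data is injective (Definition~\ref{def225}), so I may identify $W$ with $\mu(W) \subset \bw2 V_5$. I write $W^\perp \subset \bw3 V_5$ for the annihilator of $W$ under the wedge pairing $\bw2 V_5 \otimes \bw3 V_5 \to \bw5 V_5 \cong \det V_5$ (using $\eps$ to trivialize $\det V_5 \cong L^{\otimes 2}$).

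To construct $A(X)$, pick any $v_0 \in V_6 \setminus V_5$, which induces a direct-sum decomposition $\bw3 V_6 = \bw3 V_5 \oplus (v_0 \wedge \bw2 V_5)$, and choose a lift $\tilde\bq(v_0)\colon W \to \bw3 V_5$ of the quadric $\bq(v_0)\colon W \to W^\vee$ (well-defined modulo maps $W \to W^\perp$). I set
\[
A(X) \;:=\; W^\perp \,+\, \bigl\{\,\tilde\bq(v_0)(\eta) - v_0 \wedge \eta \,:\, \eta \in W\,\bigr\} \;\subset\; \bw3 V_6,
\]
a $10$-dimensional subspace of the $20$-dimensional space $\bw3 V_6$. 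The symmetry of $\bq(v_0)$ together with the orthogonality $W^\perp \wedge W = 0$ in $\bw5 V_5$ imply $A(X) \wedge A(X) = 0$ in $\bw6 V_6$, so $A(X)$ is Lagrangian. A direct calculation shows that $A(X)$ is independent of the choice of $v_0$ (the shift $v'' \wedge \eta$ induced in the lift $\tilde\bq(v_0)$ when $v_0$ is replaced by $v_0 + v''$ with $v'' \in V_5$ exactly compensates the corresponding change of splitting) and of the lift of $\bq(v_0)$ (different lifts differ by maps into $W^\perp$, which is already part of $A(X)$).

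Next I show $Y_{A(X)} = \Dis(X)$. For $v \in V_6 \setminus V_5$, write $v = v_0 + v'$ with $v' \in V_5$. Expanding a generic element of $v \wedge \bw2 V_6$ in the $v_0$-splitting and matching it with a generic element of $A(X)$, I will verify that $A(X) \cap (v \wedge \bw2 V_6) \neq 0$ if and only if there is a non-zero $\eta \in W$ with $\tilde\bq(v_0)(\eta) + v' \wedge \eta \in W^\perp$. Using $\bq(v_0)(\eta,\eta') = \eps(\tilde\bq(v_0)(\eta) \wedge \eta')$ and the Pl\"ucker formula $\bq(v')(\eta,\eta') = \eps(v' \wedge \eta \wedge \eta')$ (consequence of~\eqref{equation-mu-q-plucker}), this translates to $\eta \in \ker(\bq(v)|_W)$, i.e., to $Q(v)$ being singular. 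Hence $Y_{A(X)}$ and $\widetilde\Dis(X)$ coincide on $\P(V_6) \setminus \P(V_5)$. Since for general $v \in V_5$ the Pl\"ucker quadric $q_v|_W$ has corank $\dim(v \wedge V_5 \cap W) = n - 1$ (generic intersection of a $4$-plane and an $(n+5)$-plane in $\bw2 V_5$), the hyperplane $\P(V_5)$ appears in $\widetilde\Dis(X)$ with multiplicity exactly $n - 1$, so $\Dis(X)$ is a degree-$6$ hypersurface; as it agrees with the degree-$6$ hypersurface $Y_{A(X)}$ on a dense open subset, the two coincide as schemes.

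The generality assumption plays two roles: it guarantees that $\widetilde\Dis(X) \neq \P(V_6)$ (so $\Dis(X)$ is a genuine hypersurface, not the whole of $\P(V_6)$), and that $A(X)$ contains no decomposable trivector, so that $Y_{A(X)}$ is an EPW sextic in the strict sense of Appendix~\ref{section-epw}. The main obstacle will be the careful sign bookkeeping in the verification of the Lagrangian property and in matching the singularity condition on $Q(v)$ with the Lagrangian intersection condition for $A(X)$, together with the check that $A(X)$ does not depend on the auxiliary choices of $v_0$ and of the lift of $\bq(v_0)$.
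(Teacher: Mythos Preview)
Your argument is correct and is essentially the approach taken in the paper (following Iliev--Manivel): your explicit formula for $A(X)$ coincides with the Lagrangian produced by the inverse construction of Theorem~\ref{theo-bijection} (compare with the sequence~\eqref{eq:ax-kernel}), and your verification that $Y_{A(X)}$ and $\Dis(X)$ agree on $\P(V_6)\setminus\P(V_5)$ is exactly the content of~\eqref{kernel-qv} and Proposition~\ref{proposition-kernel-nonplucker}. One small imprecision: the equality $\operatorname{corank}(q_v|_W)=\dim(v\wedge V_5\cap W)$ is only an inclusion of $v\wedge V_5\cap W$ into the kernel in general (equality requires the extra generic check $v\wedge W\cap W^\perp=0$ in $\bw3V_5$), but this does not affect the conclusion since $\Dis(X)$ has degree~$6$ by definition once $\widetilde\Dis(X)\ne\P(V_6)$.
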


The result is proved  in~\cite[Proposition 2.1 and Lemma 2.2]{iliev-manivel} for general GM fivefolds by explicitly
constructing   a Lagrangian   $A \subset \bw{3}{V_6}$ such that $\Dis(X) = Y_A$ (see also \cite[Proposition~2.18]{og2}).
This construction   extends to general GM varieties of   dimensions~$3$, $4$, or~$5$ (\cite[Proposition 2.4]{iliev-manivel}).
In the next section, we present a version of this construction
which works better in families and allows us to treat at the same time both 
ordinary and special GM varieties. In particular, we give in   Proposition~\ref{proposition-kernel-nonplucker}  an extension of Theorem  \ref{ilma}.

For the time being, we examine the discriminant locus of  GM curves and relate the discriminant locus of a GM variety to that of its opposite as defined in~\eqref{eq:opposite-gm}.

\begin{prop}\label{disccurve}
Let $X$ be a  {smooth} GM curve. The discriminant locus $\Dis(X)$ is a reduced sextic hypersurface and 
\begin{itemize}
\item if $X$ is ordinary,  $ \Dis(X)$ is  geometrically integral and normal;
\item if $X$ is special \textup{(}\ie, bielliptic\textup{)},  $ \Dis(X)$ is the union of the Pl\"ucker hyperplane $\P(V_5)$ and a geometrically integral quintic hypersurface.
\end{itemize}
\end{prop}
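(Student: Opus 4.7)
The plan is to compute the discriminant polynomial $\det\bq\colon V_6\to\k$, which has degree $\dim W=6$, and determine its factorization. By Corollary~\ref{corollary:x-c} we may assume $\k=\C$, and since $n-1=0$ we have $\Dis(X)=\widetilde\Dis(X)$ as soon as this polynomial is not identically zero.

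First I would verify that $\det\bq\not\equiv 0$, \ie\ that some quadric through $X$ is smooth. This follows from a Bertini-type argument: the six-dimensional linear system of quadrics through $X$ is base-point-free away from $X$ (since $X$ is cut out by quadrics by Theorem~\ref{theorem:gm-intrinsic}\eqref{item-v6}), so a general member is smooth away from $X$, and deformation along the smooth curve $X$ then produces members that are smooth along $X$ as well. Hence $\Dis(X)$ is an honest sextic hypersurface in $\P(V_6)$.

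Next I would factor $\det\bq$ according to the type of $X$. In the \emph{ordinary} case the map $\mu\colon W\hookrightarrow\bw2{V_5}$ is injective, and the Pl\"ucker quadric $Q(v)$ attached to $v\in V_5$ has four-dimensional kernel $v\wedge V_5\subset\bw2{V_5}$; since $\dim W+4=10=\dim\bw2{V_5}$, for generic $v$ the intersection $W\cap(v\wedge V_5)$ is zero and $Q(v)\vert_W$ is non-singular, so $\P(V_5)\not\subset\Dis(X)$ and $\Dis(X)$ is the full sextic $\{\det\bq=0\}$. In the \emph{special} case, Proposition~\ref{l228} gives a canonical splitting $W=W_0\oplus W_1$ with $W_1=\Ker(\mu)$ and $\bq=\bq_0+\bq_1$, where $\bq_1\colon V_6\to\Sym^2 W_1^\vee\cong\k$ is a non-zero linear form with kernel exactly $V_5$. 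The block form yields $\det\bq(v)=\bq_1(v)\cdot\det\bq_0(v)$, so $\Dis(X)=\P(V_5)+D'$ as divisors, where $D'=\{\det\bq_0=0\}$ is a quintic. A parallel dimension count applied to the $5$-dimensional $W_0\subset\bw2{V_5}$ shows $\det\bq_0\vert_{V_5}\not\equiv 0$, so $\P(V_5)$ and $D'$ are distinct components.

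Reducedness then reduces to producing a quadric of corank exactly one at a general point of $\Dis(X)$ (respectively $D'$), at which $\det\bq$ (resp.\ $\det\bq_0$) vanishes to first order only. The main obstacle is the integrality of the sextic $\Dis(X)$ in the ordinary case and of the quintic $D'$ in the special case. Following the Iliev--Manivel philosophy of Theorem~\ref{ilma}, I would construct a Lagrangian subspace $A\subset\bw3{V_6}$ from the GM data of $X$ and identify the relevant discriminant component with an EPW-type hypersurface cut out by $A$. The integrality and normality of such a hypersurface for a Lagrangian with no decomposable vectors are the standard EPW-sextic properties recalled in Appendix~\ref{section-epw}; smoothness of the GM curve $X$ is precisely the input needed to verify this non-degeneracy of $A$. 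In the ordinary case this yields geometric integrality and normality of $\Dis(X)$; in the special case it yields geometric integrality of $D'$, while the extra factor $\P(V_5)$ is consistent with the statement (and precludes normality of the full $\Dis(X)$, which is not claimed). Carrying out the Lagrangian construction for GM curves---especially for special ones, where the Iliev--Manivel setup needs adaptation along the lines of the splitting $W=W_0\oplus W_1$---is the delicate part of the argument; once this is in hand, Steps~1--2 and reducedness are essentially bookkeeping.
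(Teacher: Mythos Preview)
Your factorization argument in the special case (via the block form from Proposition~\ref{l228}) is correct and is essentially what the paper does to split off the hyperplane $\P(V_5)$. But the heart of the matter---integrality and normality of the sextic (ordinary case) and of the residual quintic (special case)---is where your approach has a genuine gap.

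You propose to invoke the standard EPW results for a Lagrangian $A$ with \emph{no decomposable vectors}, asserting that smoothness of $X$ guarantees this. That is false for GM curves. By Proposition~\ref{lemma-wx}(a) we have $\dim(A\cap\bw3{V_5})=5-\dim(X)=4$ (ordinary) or $5$ (special), and the intersection $\P(A\cap\bw3{V_5})\cap\Gr(3,V_5)$ is nonempty in both cases: it is a degree-$5$ zero-dimensional scheme in the ordinary strongly smooth case, and a smooth elliptic quintic \emph{curve} in the special case (see Remark~\ref{rem39}). So $A$ always contains decomposable vectors; the ``no decomposable vectors'' criterion characterizes GM varieties of dimension~$\ge 3$ (and strongly smooth ordinary surfaces), not curves. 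In particular, for bielliptic curves $\Theta_A$ is one-dimensional, so even the weaker hypothesis of Theorem~\ref{theorem-ogrady-stratification}(a) fails, and indeed $Y_A$ is \emph{not} integral there.

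There is also a structural issue: in the paper, Proposition~\ref{disccurve} is logically \emph{prior} to the identification $\Dis(X)=Y_{A(X)}$; it is used (via Proposition~\ref{proposition-kernel-nonplucker}) to establish that identification for curves in Corollary~\ref{coro311}. So routing the proof of Proposition~\ref{disccurve} through EPW-sextic properties would be circular in the paper's order.

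The paper's actual proof is entirely different and does not touch the Lagrangian picture. It computes when the Zariski tangent space to the discriminant hypersurface is too large (corank-$1$ quadrics with vertex on $X$, or quadrics of rank~$\le 4$) and then appeals to the Arbarello--Harris analysis \cite{ah} of the rank-$\le 4$ locus of quadrics through a canonical genus-$6$ curve to bound the singular locus of $\Dis(X)$ to dimension~$2$, which yields integrality and normality. For the special case, the same reference identifies the intersection $\P(V_5)\cap D$ as an integral quintic in $\P(V_5)$ (quadrics singular along the elliptic quintic $M'_X$), forcing $D$ itself to be integral.
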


\begin{proof}We may assume $\k=\C$.
 A local calculation shows that   the Zariski tangent space to $\widetilde\Dis(X)$ at a point corresponding to a quadric  $Q$ containing  $X $ has dimension 
  {greater than~4}
 if and only if
\begin{itemize}
\item[a)] either $Q$ has corank $1$ but its vertex is on $X$;
\item[b)]  or else  $Q$ has corank $\ge 2$  (hence  rank $\le 4$).
\end{itemize}

{\em Assume first that $X$ is  ordinary.} In  case a), $Q$ must  {be a Pl\"ucker quadric,}
because otherwise, $X$ would be equal to $M_X\cap Q$ 
and would be singular at the vertex of $Q$. But \hbox{corank-1} quadrics with vertex at $w\in X$ are in one-to-one correspondence with smooth quadrics containing 
the projection of $M_X$ from $w$. This projection is  {a del Pezzo surface of degree 4, hence is} the base locus of a pencil of quadrics in $\P^4$.  {Therefore,} these quadrics form an open subset in a ruled surface
  with base $X$.

We now examine case b). The locus   of quadrics of rank $\le 4$ containing  $X$ was  studied extensively in \cite{ah}. It contains
 five 2-planes (one for each $g^1_4$ on~$X$, counted with multiplicities;
 \cite[(5.10)]{ah})
  of  Pl\"ucker quadrics and one more irreducible surface which comes from 
the singular points of the theta divisor of the Jacobian of $X$, \ie, from the $g^1_5$ on $X$. 
%KEEP
%By \cite{tei}, it is an irreducible surface.

All in all, this proves that the singular locus of $ \Dis(X)$ has dimension 2. It follows that $ \Dis(X)$ is an integral normal sextic hypersurface.

{\em Assume now that $X$ is  bielliptic.} In case a), $Q$ must again  be a Pl\"ucker quadric.
It is then  also singular   at the vertex of $M_X$, hence cannot be of corank~1. So case a) is impossible.

 Pl\"ucker quadrics
are all singular at the vertex and have general corank~1, hence $\P(V_5)$ is a reduced component of $ \Dis(X)$; write   $\Dis(X)=\P(V_5)+D$, 
where $D$  is a quintic hypersurface not containing $\P(V_5)$. Then $ \Sing( \Dis(X))\cap \P(V_5)=\P(V_5)\cap D$ corresponds to singular quadrics   containing the quintic 
elliptic curve $M'_X$  and this was proved in \cite[p.~170]{ah} to be an integral quintic hypersurface.
%KEEP
%\footnote{This is also  shown in \cite{adhpr}, Corollary 15 and Proposition 24, where the singular locus of  $\P(V_5)\cap  \Dis(X)$ is proven to be  a quintic elliptic surface scroll.}
 It follows that $D$ is also an integral quintic hypersurface. 
\end{proof}

\begin{lemm}\label{lemma-disc-ord-spe}
Let $X$ be a special lci GM variety with associated ordinary variety $X_\ord$. 
Then $\Dis(X) = \Dis(X_\ord)$.
\end{lemm}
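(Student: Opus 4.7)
The plan is to reduce the computation of $\Dis(X)$ to that of $\Dis(X_\ord)$ using the canonical block decomposition of the family of quadrics provided by Proposition~\ref{l228}. Since $X$ is special and lci, that proposition gives $W = W_0 \oplus W_1$ with $W_1 = \Ker(\mu)$   one-dimensional, together with the decomposition $\bq = \bq_0 + \bq_1$, where $\bq_0 \colon V_6 \to \Sym^2\!W_0^\vee$ and $\bq_1 \colon V_6 \to \Sym^2\!W_1^\vee$. In particular, for every $v \in V_6$, the symmetric form $\bq(v)$ on $W$ is block-diagonal with respect to the decomposition $W = W_0 \oplus W_1$, with blocks $\bq_0(v)$ and $\bq_1(v)$.

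First, I would analyze the scalar map $\bq_1$. Fixing a generator of $W_1$ identifies $\Sym^2\!W_1^\vee$ with $\k$, so $\bq_1$ becomes a linear form on $V_6$. By Proposition~\ref{l228}, $\bq_1$ vanishes on $V_5$; and since the data is special lci, $\bq_1(v) \neq 0$ for every $v \in V_6 \setminus V_5$ (the point $\P(W_1) = \P(\Ker(\mu))$ lies on no such $Q(v)$). Hence $\bq_1$ is, up to a non-zero scalar, an equation of the Pl\"ucker hyperplane $\P(V_5) \subset \P(V_6)$.

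Next, by block-diagonality, the determinant of $\bq(v)$ as a polynomial on $V_6$ factors as
\begin{equation*}
\det(\bq(v)) = \det(\bq_0(v)) \cdot \bq_1(v).
\end{equation*}
If $\det(\bq_0)$ is identically zero, then so is $\det(\bq)$ and both sides of the claimed equality are $\P(V_6)$; otherwise, this identity of polynomials on $V_6$ translates into the equality of divisors
\begin{equation*}
\widetilde\Dis(X) \;=\; \widetilde\Dis(X_\ord) + \P(V_5)
\end{equation*}
on $\P(V_6)$.

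Finally, since $\dim(X) = n$ and $\dim(X_\ord) = n - 1$, the definition of $\Dis$ subtracts $(n-1)\P(V_5)$ from $\widetilde\Dis(X)$ and $(n-2)\P(V_5)$ from $\widetilde\Dis(X_\ord)$. Combining these with the divisor identity above gives
\begin{equation*}
\Dis(X) = \widetilde\Dis(X) - (n-1)\P(V_5) = \widetilde\Dis(X_\ord) - (n-2)\P(V_5) = \Dis(X_\ord),
\end{equation*}
as required. The argument is essentially a bookkeeping exercise once Proposition~\ref{l228} is invoked; the only subtlety worth double-checking is the identification of $\bq_1$ with a defining equation of $\P(V_5)$, which relies on both parts of the lci condition (vanishing on $V_5$ and non-vanishing elsewhere).
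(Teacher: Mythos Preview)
Your proof is correct and follows essentially the same approach as the paper: both invoke the block decomposition $\bq = \bq_0 + \bq_1$ from Proposition~\ref{l228} and observe that $\bq_1$ vanishes exactly on $V_5$, so passing from $X_\ord$ to $X$ shifts the discriminant by one copy of $\P(V_5)$. The only cosmetic difference is that the paper phrases this via rank additivity (the rank of $\bq(v)$ equals that of $\bq_0(v)$ on $V_5$ and increases by $1$ off $V_5$), whereas you use the equivalent determinant factorization $\det(\bq(v)) = \det(\bq_0(v))\cdot\bq_1(v)$; your formulation is arguably more direct for reading off the divisor identity $\widetilde\Dis(X) = \widetilde\Dis(X_\ord) + \P(V_5)$.
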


\begin{proof}
Let $(V_6,V_5,L,W,\mu,\bq,\eps)$ be the GM data associated with $X$. 
Let $W = W_0 \oplus W_1$ be the canonical direct sum decomposition of Proposition~\ref{l228}
and let $\bq = \bq_0 + \bq_1$ be the decomposition of $\bq$. By Lemma~\ref{lemma-ord-spe-data}, 
the ordinary GM variety $X_\ord$ is determined by $\bq_0\colon V_6 \to \Sym^2\!W_0^\vee$. In particular,
the rank of $\bq(v)$ is the sum of the ranks of $\bq_0(v)$ and $\bq_1(v)$. By Lemma~\ref{lemma-ord-spe-data},
the map $\bq_1$ vanishes on $V_5$ and induces an isomorphism $V_6/V_5 \cong \Sym^2\!W_1^\vee$. Therefore, 
  the rank of $\bq(v)$ equals the rank of $\bq_0(v)$ for Pl\"ucker quadrics, and increases by~1
for non-Pl\"ucker quadrics, which means that the corank of Pl\"ucker quadrics increases by 1, and the corank
of non-Pl\"ucker quadrics stays the same. This shows the lemma.
\end{proof}

\subsection{GM data and Lagrangian data}\label{section-fgm-from-lagrangian}

In this  section we construct, following~\cite{iliev-manivel},
  a bijection between the set of lci GM data and the set of what we call   (extended) Lagrangian data.

Consider a 6-dimensional vector space $V_6$ and endow the space $\bw3V_6$ with the canonical $\det(V_6)$-valued symplectic form given by   wedge product.

\begin{defi}\label{defi-lagrangian-data-ordinary}
A {\sf Lagrangian data} is a collection $(V_6,V_5,A)$, where 
\begin{itemize}
\item $V_6$ is a $\k$-vector space of dimension $6$,
\item $V_5 \subset V_6$ is  a hyperplane, 
\item $A \subset \bw3V_6$ is a Lagrangian subspace.
\end{itemize}
An isomorphism of Lagrangian data between $(V_6,V_5,A)$ and $(V'_6,V'_5,A')$ is a linear  isomorphism $\varphi\colon V_6 \isomto V'_6$ such that $\varphi(V_5) = V'_5$ and $(\bw3\varphi)(A) = A'$.
\end{defi}

A natural extension of the Iliev--Manivel construction gives a bijection between the set of isomorphism classes of Lagrangian data and the set of isomorphism classes of ordinary GM data.
Using the bijection of Lemma~\ref{lemma-ord-spe-data}, one can also use Lagrangian data to parameterize all special GM data. However, to deal simultaneously with ordinary and special
GM varieties, one needs a generalization of the Iliev--Manivel construction. We suggest such a generalization; it uses   an additional Lagrangian subspace which encodes 
the type of a GM variety.

To lighten the notation (and for forward compatibility), we set
\begin{equation}\label{def-ll}
L := (V_6/V_5)^\vee 
\end{equation} 
and endow the space $\k \oplus L$ with the canonical $L$-valued symplectic form.
Note that the group $\GL(L) \cong \Gm$ acts naturally on the Lagrangian Grassmannian $\LGr(\k \oplus L) \cong \P^1$.

\begin{defi}\label{defi-lagrangian-data-extended}
An {\sf extended Lagrangian data} is a collection $(V_6,V_5,\An,A_1)$, where $(V_6,V_5,\An)$ is a Lagrangian data and
 $A_1 \subset \k \oplus L$ is a  Lagrangian  subspace.

An {\sf  isomorphism of extended Lagrangian data} between $(V_6,V_5,\An,A_1)$ and $(V'_6,V'_5,\An',A'_1)$ is an isomorphism 
between $(V_6,V_5,A)$ and $(V'_6,V'_5,A')$ and an element $t \in \Gm$ such that $t(A_1) = A'_1$.
\end{defi}

The $\GL(L)$-action on  {the line} $\LGr(\k \oplus L) \cong \P^1$ has three orbits.
To simplify the notation, we choose a coordinate on this line such that 
\begin{itemize}
\item the subspace $\k \subset \k \oplus L$ corresponds to the point $\{ \infty \} \in \P^1$;
\item the subspace $L \subset \k \oplus L$ corresponds to the point $\{ 0 \} \in \P^1$.
\end{itemize}
The points $\{0\}$ and $\{\infty\}$ are two of the $\GL(L)$-orbits  and the point $\{1\} \in \P^1$ is in  the third orbit.
 {To simplify the notation, we will sometimes write $A_1 = \infty$, $A_1 = 0$, and $A_1 = 1$ instead of $A_1 = \k$, $A_1 = L$, and $A_1 \ne \k,L$, respectively.}

\begin{theo}\label{theo-bijection}
 {For any field $\k$, there is a functor \textup{(}defined in the proof\textup{)} from the groupoid of lci GM data to the groupoid of Lagrangian data.
It induces a bijection between the set of isomorphism classes of ordinary GM data and the set of isomorphism classes of Lagrangian data.

If $\k$ is quadratically closed, the functor extends to a bijection between the set of isomorphism classes of lci GM data and the set of isomorphism classes of extended Lagrangian data with $A_1 \ne   \infty$.}
\end{theo}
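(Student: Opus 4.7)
The plan is to construct explicit functors in both directions between lci GM data and (extended) Lagrangian data, verify they are mutually inverse on isomorphism classes in the ordinary case, and then promote this to the lci case over quadratically closed fields via the extra parameter $A_1$.

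The first task is to analyze the symplectic geometry of $\bw{3}{V_6}$ induced by the hyperplane $V_5 \subset V_6$. Setting $L = (V_6/V_5)^\vee$, one has a short exact sequence
\begin{equation*}
0 \to \bw{3}{V_5} \to \bw{3}{V_6} \to L^\vee \otimes \bw{2}{V_5} \to 0,
\end{equation*}
and the $\det V_6$-valued wedge symplectic form, combined with the canonical identification $\det V_6 \cong L^\vee \otimes \det V_5$ and with $\eps\colon \det V_5 \isomto L^{\otimes 2}$, identifies the outer terms as symplectic duals. Thus $\bw{3}{V_5}$ is a distinguished Lagrangian, and every Lagrangian transverse to it in any fixed splitting is the graph of a self-dual map $L^\vee \otimes \bw{2}{V_5} \to \bw{3}{V_5}$.

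I would then construct the functor as follows. Given an lci GM data $(W, V_6, V_5, L, \mu, \bq, \eps)$, use Proposition~\ref{l228} to decompose $W = W_0 \oplus W_1$ and $\bq = \bq_0 + \bq_1$. The embedding $\mu_0\colon L \otimes W_0 \hookrightarrow \bw{2}{V_5}$ transposes, via the star isomorphism $\bw{2}{V_5} \cong L^{\otimes 2} \otimes \bw{3}{V_5}^\vee$ induced by $\eps$, to a surjection $\bw{3}{V_5} \twoheadrightarrow L \otimes W_0^\vee$. The quadratic form $\bq_0$ then provides the symmetric bilinear pairing needed to define a graph-type Lagrangian $A \subset \bw{3}{V_6}$ relative to a chosen splitting. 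The crucial observation is that the commutative diagram~\eqref{equation-mu-q-plucker} translates exactly into the isotropy condition for $A$; no further compatibility is required. Conversely, given a Lagrangian data $(V_6, V_5, A)$, one recovers an ordinary GM data by letting $L \otimes W$ be the image of $A$ under the projection to $L^\vee \otimes \bw{2}{V_5}$ (twisted by $L^{\otimes 2}$), taking $\mu$ as the tautological inclusion, and reading off $\bq_0$ from the symmetric component of $A$ relative to a splitting. Unwinding the identifications shows that the two constructions are mutually inverse and natural in the data, yielding the bijection for ordinary data over an arbitrary field $\k$.

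For the extended version, I would read off $A_1$ from the residual component $(W_1, \bq_1)$. By Proposition~\ref{l228} and the proof of Lemma~\ref{lemma-ord-spe-data}, $\bq_1$ vanishes on $V_5$ and hence descends to a map $V_6/V_5 = L^\vee \to \Sym^2 W_1^\vee$; after choosing a generator of the one-dimensional space $W_1$, this becomes a nonzero element of $L$, and one sets $A_1 \subset \k \oplus L$ to be the corresponding rank-one Lagrangian, which is automatically distinct from both $\k$ and $L$. The case $W_1 = 0$ (ordinary) corresponds to $A_1 = L$, and the value $A_1 = \k$ is excluded precisely because it would force $\bq_1 = 0$ on a nonzero $W_1$, violating the lci condition of Definition~\ref{def225}. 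The ambiguity in the choice of a generator for $W_1$ multiplies $\bq_1$ by a square, so over a quadratically closed field $\k = \k^{1/2}$ it collapses and the induced map on isomorphism classes is a bijection.

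The principal technical obstacle is checking that the constructed subspace $A$ is really isotropic with respect to the wedge symplectic form; this is where the entire structure of the data enters. The verification reduces, via the star isomorphism and the identification $\det V_6 \cong L^\vee \otimes \det V_5$, to the Pl\"ucker compatibility~\eqref{equation-mu-q-plucker}, but the bookkeeping is delicate and must be performed carefully to ensure that the construction is independent of auxiliary splittings. Once this is done, functoriality and compatibility in families both follow from the naturality of the constructions involved.
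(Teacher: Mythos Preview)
Your proposal follows essentially the same route as the paper: both exploit the Lagrangian--quadratic correspondence (the paper formalizes it in Appendix~\ref{section-lagrangians-and-quadrics}, Proposition~\ref{lemma-lagrangian-quadratic} and Lemma~\ref{lemma-quadric-lagrangian}) to pass between a quadratic form on $W_0$ and a Lagrangian $A\subset\bw3V_6$ relative to the decomposition $\bw3V_6 = \bw3V_5 \oplus (v_0\wedge\bw2V_5)$, and both treat $A_1$ as encoding the type of the datum.

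Two points where your sketch is looser than the paper and would need tightening. First, the word ``graph'' is misleading: in general $A\cap\bw3V_5 = W_0^\perp \ne 0$, so $A$ is not the graph of a map $L^\vee\otimes\bw2V_5\to\bw3V_5$ but rather a Lagrangian with prescribed kernel and image under the two projections; the paper handles this by invoking the general correspondence of Proposition~\ref{lemma-lagrangian-quadratic} rather than a naive graph construction. Second, your claim that diagram~\eqref{equation-mu-q-plucker} ``translates exactly into the isotropy condition for $A$'' is not quite right. Isotropy of $A$ is equivalent to the \emph{symmetry} of the single form $\bq_0(v_0)$; the role of~\eqref{equation-mu-q-plucker} is separate---it forces the restriction $\bq\vert_{V_5}$ to consist of Pl\"ucker quadrics, and in the inverse direction it is what guarantees $f_2\circ f_1=0$ in the paper's monad~\eqref{equation-ax-monad}. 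The paper's monad presentation of $A$ (as $f_3(\Ker f_2)$) also makes independence of the choice of $v_0$ manifest, whereas in your outline this independence is asserted but still needs an argument.
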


\begin{rema} 
The construction of GM data can  also be applied to Lagrangian data with $A_1 = \infty$; 
however, the GM data obtained from this is not lci, and the lci condition is crucial for the inverse construction, since we need the canonical decomposition $W = W_0 \oplus W_1$
of Proposition~\ref{l228}. 
\end{rema}

Before describing the construction, we need  one more piece of notation.
The canonical projection 
\begin{equation*}
\lambda \colon V_6 \to V_6/V_5 = L^\vee 
\end{equation*}
defines, by the Leibniz rule, for  all $p\ge 1$, maps $\lambda_p \colon \bw{p}{V_6} \to \bw{p-1}{V_5} \otimes  {L^\vee}$. 
They fit into exact sequences  
 \begin{equation*}
 0 \to \bw{p}{V_5} \hookrightarrow \bw{p}{V_6} \xrightarrow{\ \lambda_p\ } \bw{p-1}{V_5} \otimes L^\vee \to 0.
\end{equation*}

\begin{proof}[Proof of Theorem~\textup{\ref{theo-bijection}}]
 {We start by explaining how to construct a GM data from an extended Lagrangian data.
After that, we explain the inverse construction and discuss its functoriality.}

Let $(V_6,V_5,A,A_1)$ be an extended Lagrangian data. 
 We   use  the notation~\eqref{def-ll} and  we choose an arbitrary isomorphism
\begin{equation*}
\eps \colon \det(V_5) \xrightarrow{\ \sim\ } L^2.
\end{equation*}
Consider the   maps 
\begin{alignat*}{3}
&\An && \lhra \bw3 V_6 && \stackrel{\lambda_3}\lthra \bw2 V_5 \otimes L^\vee ,\\
&A_1 && \lhra \k \oplus L && \stackrel{\pr_1}\lthra \k.
\end{alignat*}
Let $W_0$ and $W_1$ be their respective images, 
 so that we have  canonical factorizations
\begin{alignat*}{3}
&\An && \lthra W_0 && \stackrel{\mu_0}\lhra \bw2 V_5 \otimes L^\vee ,\\
&A_1 && \lthra W_1 && \stackrel{\mu_1}\lhra \k 
\end{alignat*}
(defining the maps $\mu_0$ and $\mu_1$). We set
\begin{equation*}
W = W_0 \oplus W_1,
\qquad 
\mu = \mu_0 \oplus \mu_1.
\end{equation*}
We have by definition
\begin{equation*}
\Ker(\An \thra W_0) = \An \cap \bw3 V_5,
\qquad
\Ker(A_1 \thra W_1) = A_1 \cap L.
\end{equation*}
It remains to define the map $\bq\colon V_6 \to \Sym^2\! W^\vee$. For this, we first define
\begin{align*}\label{equation-tilde-q}
&\tilde\bq_0 \colon V_6 \otimes \Sym^2\!\An \to \k,
&&
\tilde\bq_0(v)(\xi_1,\xi_2) := - \eps(\lambda_4(v \wedge \xi_1) \wedge \lambda_3(\xi_2)),\\
&\tilde\bq_1 \colon V_6 \otimes \Sym^2\!A_1\to \k,
&&
\tilde\bq_1(v)(x_1,x_1',x_2,x'_2) := \lambda(v)x_1x'_2.
\end{align*}
It is not immediately clear that $\tilde\bq_0(v)$ and $\tilde\bq_1(v)$ are symmetric in their arguments; 
we will show  later that this follows from the Lagrangian property of $\An$ and $A_1$, so the above definition makes sense.

For each $v \in V_6$ and $i \in \{0,1\}$, the kernel of the quadratic form $\tilde\bq_i(v)$ contains  the kernel of the projections 
$A \twoheadrightarrow W_0$ and $A_1 \twoheadrightarrow W_1$.
Indeed, if $\xi_2 \in \An \cap \bw3V_5$, we have $\lambda_3(\xi_2) = 0$, hence $\tilde\bq_0(v)(\xi_1,\xi_2) = 0$ for any $\xi_1 \in \An$.
Analogously, if $(x_1,x'_1) \in L$, \ie, $x_1 = 0$, then $\tilde\bq_1(v)(x_1,x'_1,x_2,x'_2) = 0$ for any $(x_2,x'_2) \in A_1$.
This means that $\tilde\bq_i(v)$, considered as a map $\Sym^2\!A \to \k$ or $\Sym^2\!A_1 \to \k$, factors through $\Sym^2\! W_i$ 
and thus defines a quadratic form $\bq_i(v) \in \Sym^2\! W_i^\vee$. We finally define
\begin{equation*}
\bq = \bq_0 + \bq_1 \colon V_6 \to \Sym^2\!W_0^\vee \oplus \Sym^2\!W_1^\vee \subset \Sym^2\!W^\vee.
\end{equation*}
Let us show that $(W,V_6,V_5,L,\mu,\bq,\eps)$ is a GM data. It only remains to check
that $\bq$ is symmetric (a fact which we already used a couple of times) and that  the relation~\eqref{equation-mu-q-plucker} holds.

For the symmetry of  $\bq$, we use the Lagrangian-quadratic correspondence (see~Appendix~\ref{section-lagrangians-and-quadrics}).
We set $\symv = \symv_0 \oplus \symv_1 := \bw3V_6 \oplus (\k \oplus L)$, a $\Z/2$-graded   vector space endowed with the $L$-valued symplectic form
\begin{equation}\label{symplectic-form}
\omega((\xi,x,x'),(\eta,y,y')) = \eps(\lambda_6(\xi\wedge\eta)) + yx' - xy'.
\end{equation}
Take   $v \notin V_5$, so that $\lambda(v) \ne 0$, and consider the   $\Z/2$-graded Lagrangian direct sum decomposition
\begin{equation}\label{dsd}
\symv = 
\bigl( \bw3{V_5}  \oplus L \bigr) \oplus \bigl((v\wedge\bw2{V_5}) \oplus \k \bigr).
\end{equation}
Since $\hat{A} := \An \oplus A_1 \subset \symv$ is   another $\Z/2$-graded Lagrangian subspace, 
the corresponding quadratic form {(as defined in Proposition~\ref{lemma-lagrangian-quadratic})} evaluated on elements $(\xi_1,x_1,x'_1),(\xi_2,x_2,x'_2) \in \hat{A}$ is
 $\omega(\pr_1(\xi_1,x_1,x'_1),\pr_2(\xi_2,x_2,x'_2))$ (its symmetry follows   from the Lagrangian property of~$\hat{A}$;
see the proof of Proposition~\ref{lemma-lagrangian-quadratic}). For the decomposition~\eqref{dsd}, the projections are given by
\begin{equation*}
\begin{array}{rcll}
\pr_1(\xi_1,x_1,x'_1) & = &(\lambda(v)^{-1}\lambda_4(v \wedge \xi_1),x'_1) & \in (\bw3{V_5}) \oplus L,\\
\pr_2(\xi_2,x_2,x'_2) & =& (\lambda(v)^{-1}v \wedge \lambda_3(\xi_2),x_2) & \in (v\wedge \bw2{V_5}) \oplus \k.
\end{array}
\end{equation*}
Substituting these into~\eqref{symplectic-form}, 
we obtain the form $\tilde\bq_0 + \tilde\bq_1$ up to a rescaling by~$\lambda(v)$.
It follows that for $v \notin V_5$, the forms $\tilde\bq_i(v)$  and $\bq_i(v)$  are symmetric. By continuity, the same
is true for $v \in V_5$. On the other hand, for $v \in V_5$ (so that $\lambda(v) = 0$), we have   
$\tilde\bq_1(v) = 0$ and 
\begin{equation*}
\tilde\bq_0(v)(\xi_1,\xi_2) = 
-\eps(\lambda_4(v\wedge\xi_1)\wedge\lambda_3(\xi_2)) =
\eps(v\wedge \lambda_3(\xi_1)\wedge\lambda_3(\xi_2)).
\end{equation*}
By definition of $\mu$, this implies  
\begin{equation*}
\bq(v)(w_1,w_2) = \bq_0(v)(w_1,w_2) = \eps(v \wedge \mu_0(w_1) \wedge \mu_0(w_2)),
\end{equation*}
which proves~\eqref{equation-mu-q-plucker} (and gives   another proof of the symmetry of $\bq(v)$ for $v \in V_5$).

Let us  show also that this GM data is lci. We have $W_1 = 0$ if $A_1 = L$, and   $W_1 = A_1$ otherwise.
But the quadratic form $\tilde\bq_1$ on $A_1$ is non-trivial unless $A_1 = L$ or $A_1 = \k$. So the only case
where $W_1 \ne 0$ and $\bq_1 = 0$ simultaneously (the non-lci case) is the case $A_1 = \k$, which is excluded from our consideration.

Finally,   if we rescale $\eps$ by $t \in \k^\times$, the map $\bq_0$   will be also rescaled by $t$
 while the other data will not change. 
But the action of the element $(\varphi_V,\varphi_W,\varphi_L) \in \GL(V_6) \times \GL(W) \times \GL(L)$ defined by
 $\varphi_V = t\,\id_V$,
$\varphi_W = \id_{W_0} + \sqrt{t}\,\id_{W_1}$,
$\varphi_L = t^2\id_L$
 precisely realizes such a rescaling (this is the only place where we use the assumption  {$\k   =  \k ^{1/2}$};
note however that it is unnecessary if $W_1 = 0$, \ie, for ordinary GM data).
This means that different choices of $\eps$ produce  {isomorphic} GM data.

We now explain the inverse construction. Let $(W,V_6,V_5,L,\mu,\bq,\eps)$ be a GM data.
Let $W = W_0 \oplus W_1$ and $\bq = \bq_0 + \bq_1$ be the canonical direct sum decompositions of Proposition~\ref{l228}.
Choose an arbitrary embedding  
\begin{equation*}
\mu_1 \colon W_1 \hookrightarrow \k ,
\end{equation*}
  consider the maps 
\begin{equation}\label{equation-ax-monad}
{\xymatrix@R=6mm
@C=4mm{
V_5 \otimes W \otimes L \ar[rr]^-{f_1} && \bw3{V_5}  \oplus L \oplus (V_6\otimes W \otimes L) \ar[rr]^-{f_2} \ar[d]^{f_3} && W^\vee \otimes L \\
&& \bw3 V_6 \oplus (\k \oplus L) = \symv 
}}
\end{equation}
defined by
\begin{eqnarray*}
 f_1(v\otimes w \otimes l) &=& (-v \wedge \mu_0(l \otimes w), 0, v\otimes w \otimes l),\\
 f_2(\xi,x',v\otimes w \otimes l)(w') &=& \eps(\xi \wedge \mu_0(w')) + \mu_1(w') \otimes x' + \bq(v)(w,w') \otimes l,\\
 f_3(\xi,x',v\otimes w \otimes l) & =& (\xi + v\wedge \mu_0(l \otimes w), l\otimes \lambda(v)\mu_1(w), x'),
\end{eqnarray*}
(note that $\lambda(v) \in V_6/V_5 = L^\vee$, so $l \otimes \lambda(v) \in \k$), and define
\begin{equation*}
\hat{A} := f_3(\Ker(f_2)) \subset \symv.
\end{equation*}
We will check below that $\hat{A} \subset \symv$ is a graded Lagrangian subspace.

 We introduce   gradings on   the terms of~\eqref{equation-ax-monad} as follows. On the leftmost and the rightmost terms,
the grading is induced by the direct sum decomposition $W =W_0 \oplus W_1$; in the middle column,  
$\bw3V_5 \oplus (V_6\otimes W_0\otimes L)$ and $\bw3V_6$  are the even parts, while 
$L \oplus (V_6 \otimes W_1 \otimes L)$ and~$\k \oplus L$ are the odd parts. Then the gradings are preserved by all the maps.
In particular, $\hat{A}$ is a direct sum $\hat{A} = \An \oplus A_1$ with $\An \subset \symv_0 = \bw3V_6$ and $A_1 \subset \symv_1 = \k \oplus L$.

 Note that $f_2\circ f_1 = 0$ and $f_3 \circ f_1 = 0$. The second equality is obvious and   the first follows from 
$\bq(v)(w,w') = \eps(v\wedge\mu(w)\wedge\mu(w'))$
for all $v \in V_5$, $w,w' \in W$,  a reformulation of~\eqref{equation-mu-q-plucker}.
These equalities imply   
\begin{equation*}
\hat{A} = f_3\bigl(\Ker(\Coker(f_1) \xrightarrow{\ f_2\ } W^\vee)\bigr) .
\end{equation*}
Since the third component of $f_1$ is the natural embedding $V_5\otimes W \otimes L \to V_6 \otimes W \otimes L$, choosing a vector $v_0 \in V_6\setminus V_5$
splits this map, and we can rewrite $\hat{A}$ as the vector space fitting into the exact sequence
\begin{equation}\label{eq:ax-kernel}
0 \to\hat{A} \to (\bw3{V_5} \oplus L) \oplus (v_0\otimes W \otimes L) \to W^\vee \otimes L \to 0,
\end{equation}
which coincides with the exact sequence \eqref{eqlem},  {written for the Lagrangian decomposition~\eqref{dsd}}. 
Moreover, the embedding of $\hat{A}$ into $\symv$ induced by $f_3$
coincides with the embedding discussed in the  line after~\eqref{eqlem}.
So Lemma~\ref{lemma-quadric-lagrangian} proves that $\hat{A}$ is Lagrangian.

Finally, we   check how   the constructed extended Lagrangian data depends on the choice of~$\mu_1$.
The even part (with respect to the $\Z/2$-grading) of~\eqref{equation-ax-monad} does not depend on this choice, hence the same is true for $\An$. 
 {This shows that the constructed Lagrangian data $(V_6,V_5,A)$ does not depend on choices and is functorial (an isomorphism of GM data induces an isomorphism of the associated Lagrangian data).}

On the other hand, a simple 
computation shows that $A_1 = L$ (\ie, corresponds to the point $0 \in \P^1$) if and only if $W_1 = 0$; 
moreover, $A_1 = \k$ (\ie, corresponds to the point $\infty \in \P^1$) if and only if $W_1 \ne 0$ and $\bq_1 = 0$ 
(which is the non-lci case); otherwise, $A_1$ corresponds to a point of $\P^1 \setminus \{0,\infty\}$. 
From this, it is clear that different choices of $\mu_1$ produce isomorphic extended Lagrangian data.

The two constructions we explained are mutually inverse---this follows from Lemma~\ref{lemma-quadric-lagrangian}.
This proves the existence of a bijection (if the field $\k$ is  {quadratically} closed) between the sets of isomorphism classes of lci GM data and extended Lagrangian data.

It remains to show that for any field $\k$, we have a bijection between the sets of isomorphism classes of ordinary GM data and Lagrangian data.
For this, note that the conditions $A_1 = L$ and $W_1 = 0$ are equivalent {and that the map from the set of isomorphism classes of Lagrangian data to the set of isomorphism classes of GM data is well defined for any field~$\k$.
The latter follows from the observation that}
when $W_1 = 0$, a rescaling of $\varepsilon$ by $t \in \k^\times$ can be realized by an automorphism $\varphi_V = t$, $\varphi_W = 1$, and $\varphi_L = t^2$ of GM data, 
 {hence the isomorphism class of the obtained GM data does not depend on the choice of $\varepsilon$.}
 \end{proof}

 {\begin{rema}
The functor between the groupoid of ordinary GM data and the groupoid of ordinary GM data defined in Theorem~\ref{theo-bijection} is not an equivalence.
First, it takes an automorphism $(g_V,g_W,g_L) = (1,-1,-1)$ to the identity automorphism of the associated Lagrangian data  hence is not faithful;
second, to lift an automorphism $g_V$ of the Lagrangian data associated with a given ordinary GM data $(W,V_6,V_5,L,\mu,\bq,\varepsilon)$ to an automorphism of that GM data,
one has to extract a square root of $\det(g_V\vert_{V_5})$ (to define $g_L$).
\end{rema}}

\begin{rema}
One can generalize Theorem \ref{theo-bijection} by weakening the lci assumption for GM data as in Remark~\ref{remark-nonlci-decomposition}, but one then has 
to modify  further the definition of Lagrangian data. To be more precise, fix a vector space $K$,
consider the symplectic space $\symv_1(K) := K \oplus (K^\vee\otimes L)$ with the canonical $L$-valued symplectic form
(with the  {isomorphism} relation for Lagrangian subspaces $A_1 \subset \symv_1(K)$ induced by the natural action 
of the group $\GL(K)$ on $\symv_1(K)$) and define $K$-Lagrangian data as quadruples $(V_6,V_5,\An,A_1)$, 
where $\An \in \LGr(\symv_0)$ and $A_1 \in \LGr(\symv_1(K))$. Then, there is a bijection between 
 the set of   {isomorphism} classes of $K$-Lagrangian data such that $A_1 \cap K = 0$ and  the set of  {isomorphism} classes
of GM data such that   $\bq(v)\vert_{\Ker(\mu)}$ is   non-degenerate for (all) $v \notin V_5$, and   $\dim(\Ker(\mu)) \le \dim (K)$.
\end{rema}

\subsection{GM varieties and Lagrangian data}

As explained in the introduction, the constructions of Theorems~\ref{theorem:gm-var-data} and~\ref{theo-bijection} can be combined as follows.

\begin{theo}\label{theorem:merged}
With each normal lci polarized GM variety $(X,H)$, one can associate  a canonical \textup{(}functorially depending on $(X,H)$\textup{)}  
Lagrangian data $(V_6,V_5,A)$, and also an extended Lagrangian data $(V_6,V_5,A,A_1)$ with $A_1$ defined up to a $\Gm$-action.

Conversely, with any Lagrangian data, one can associate a canonical \textup{(}functorially depending on the data\textup{)} ordinary GM intersection, 
and also, if the field $\k$ is quadratically closed,  a special GM intersection defined up to an isomorphism.

These two constructions are mutually inverse.
\end{theo}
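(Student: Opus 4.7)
The plan is to simply concatenate the two equivalences established earlier: Theorem~\ref{theorem:gm-var-data} relates normal polarized GM varieties to GM data, and Theorem~\ref{theo-bijection} relates lci GM data to (extended) Lagrangian data. First I would verify that the equivalence of Theorem~\ref{theorem:gm-var-data} restricts to an equivalence between the subgroupoid of normal \emph{lci} polarized GM varieties and the subgroupoid of \emph{lci} GM data in the sense of Definition~\ref{def225}. This follows directly from Proposition~\ref{proposition-lci-data}, which characterizes the lci condition on a GM variety intrinsically in terms of $\Ker(\mu)$ and the non-vanishing of $\bq(v)$ on $\Ker(\mu)$ for any $v\in V_6\setminus V_5$, properties that are preserved by any isomorphism of GM data.

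Next I would compose this restricted equivalence with the functor of Theorem~\ref{theo-bijection}. For the forward direction (from varieties to data), given $(X,H)$ normal lci polarized GM, Theorem~\ref{theorem:gm-var-data} associates a GM data $(W,V_6,V_5,L,\mu,\bq,\eps)$ functorially, and then the construction in the second half of the proof of Theorem~\ref{theo-bijection} produces a Lagrangian subspace $A\subset\bw3V_6$ and, after a choice of $\mu_1\colon W_1\hookrightarrow\k$, a Lagrangian subspace $A_1\subset\k\oplus L$. As noted in the proof of Theorem~\ref{theo-bijection}, the space $A$ is independent of the choice of $\mu_1$, so the Lagrangian data $(V_6,V_5,A)$ is canonically attached to $(X,H)$. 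Different choices of $\mu_1$ differ by a non-zero scalar, which acts by a $\Gm$-action on $A_1$, giving the announced ambiguity in the extended Lagrangian data.

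For the converse direction, given a Lagrangian data $(V_6,V_5,A)$, the first half of the proof of Theorem~\ref{theo-bijection} (applied with $A_1=L$, corresponding to the ordinary case, which works over any field) produces an ordinary lci GM data, and Lemma~\ref{lemma:from-data-to-x} then yields an ordinary GM intersection $X(W,V_6,V_5,L,\mu,\bq,\eps)$ given as in~\eqref{equation-x-intersection-quadrics}; this association is manifestly functorial. If $\k=\k^{1/2}$, one may instead pick any $A_1\ne\k,L$, which by Theorem~\ref{theo-bijection} produces a special lci GM data up to isomorphism, hence a special GM intersection well defined up to isomorphism via Lemma~\ref{lemma:from-data-to-x}. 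Equivalently, one can apply Lemma~\ref{lemma-ord-spe-data} to the ordinary GM data already constructed.

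Finally, I would check that these two constructions are mutually inverse. On the level of GM data $\leftrightarrow$ Lagrangian data this is precisely the content of Theorem~\ref{theo-bijection}, and on the level of polarized GM varieties $\leftrightarrow$ GM data this is Theorem~\ref{theorem:gm-var-data}. The main subtlety, and the only real step that requires care, is keeping track of the ambiguities: in the special case, both the lift of $A_1$ from $\P^1\setminus\{0,\infty\}$ to $\LGr(\k\oplus L)$ and the choice of isomorphism $\eps\colon\det(V_5)\isomto L^{\otimes 2}$ introduce a $\Gm$-worth of indeterminacy, and one must verify that these two are compatibly absorbed into the $\Gm$-action on $A_1$ indicated in the statement; this is exactly the rescaling argument at the end of the proof of Theorem~\ref{theo-bijection} using the automorphism $(\varphi_V,\varphi_W,\varphi_L)=(t,\id_{W_0}+\sqrt t\,\id_{W_1},t^2\id_L)$ of GM data, which is the reason the square-rootedness hypothesis on $\k$ is needed in the special case but not in the ordinary one.
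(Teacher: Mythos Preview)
Your proposal is correct and follows essentially the same approach as the paper: concatenate Theorem~\ref{theorem:gm-var-data} with Theorem~\ref{theo-bijection}, tracking the choice of $\varepsilon$ (and $\mu_1$) to account for the $\Gm$-ambiguity in $A_1$ and the need for quadratic closedness in the special case. Your version is slightly more explicit (e.g.\ invoking Proposition~\ref{proposition-lci-data} to justify restricting the equivalence of Theorem~\ref{theorem:gm-var-data} to the lci subgroupoids), but there is no substantive difference in strategy.
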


\begin{proof}
Starting with $(X,H)$, we  apply Theorem~\ref{theorem:gm-var-data} to obtain a GM data, and    Theorem~\ref{theo-bijection} 
to obtain  {a} Lagrangian data $(V_6,V_5,A)$ which depends functorially on $(X,H)$. Moreover, we saw in the proof of Theorem~\ref{theo-bijection} that the orbit of~$A_1$ only depends on the type of~$X$.

Conversely, assume a Lagrangian data is given. 
We apply Theorem~\ref{theo-bijection} and obtain a GM data.
The choice of $\varepsilon$ involved in the construction affects~$\bq_0$, the even part of the family of quadrics cutting out $X$ in $\P(W)$, by scalar multiplication.
In particular, the ordinary GM intersection associated with this data does not depend on this choice.
However, the associated special GM intersection is uniquely determined if the field is quadratically closed, but only up to isomorphism.
For functoriality, note that given an isomorphism of Lagrangian data, one can choose $\varepsilon$ and $\varepsilon'$ in a compatible way;   
the construction of Theorem~\ref{theo-bijection}   then provides an isomorphism of the corresponding GM data, which by Theorem~\ref{theorem:gm-var-data}   gives an isomorphism of the corresponding GM varieties.

It is clear that the constructions are mutually inverse.
\end{proof}

Using this, one can give a criterion for   GM varieties to be isomorphic and   describe the automorphism group of a GM variety in terms of the associated Lagrangian data.
 Consider the group 
\begin{equation}\label{pgl-av}
\PGL(V_6)_{\An,V_5} := \{ g \in \PGL(V_6) \mid  (\bw3g)(\An) = \An   ,\  g(V_5) = V_5\}
\end{equation}
of automorphisms of $\P(V_6)$ stabilizing $\An$ and $V_5$.

\begin{coro}\label{corollary:isom-gm}
Let $(X,H)$  and $(X',H')$ be normal lci polarized GM varieties,   with  corresponding Lagrangian data
$(V_6,V_5,\An)$ and $(V_6',V_5',\An')$. 
\begin{itemize}
\item[\rm (a)]
Any isomorphism $\phi\colon (X,H) \isomto (X',H')$ induces an isomorphism 
$V_6  \isomto V_6'$ 
which takes     $V_5 $ to $V_5'$ and
${\An}$ to~${\An'}$. 
\item[\rm (b)]
Conversely, 
if either $X$ and $X'$ are both ordinary, or they are both special and $\k$ is quadratically closed,
every isomorphism 
$V_6  \isomto V_6'$ that takes $V_5 $ to $V_5'$ and
${\An}$ to~${\An'}$ is induced
by an isomorphism between $(X,H)$ and $(X',H')$. 
\item[\rm (c${}'$)]
If $(X,H)$ is ordinary, $\Aut_H(X) \isom \PGL(V_6)_{\An,V_5}$.
\item[\rm (c${}''$)]
If $(X,H)$ is special, there is an exact sequence
\begin{equation*}
0 \to \Z/2 \to \Aut_H(X) \to \PGL(V_6)_{\An,V_5} \to 1.
\end{equation*}
\end{itemize}
\end{coro}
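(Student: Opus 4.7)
The overall plan is to deduce the four assertions by composing the equivalence of groupoids in Theorem~\ref{theorem:gm-var-data} (normal polarized GM varieties $\leftrightarrow$ GM data) with the functor of Theorem~\ref{theo-bijection} (GM data $\to$ Lagrangian data), and then reading off the automorphism statements using Corollaries~\ref{corollary-aut-x-aut-data} and~\ref{corollary-aut-x-gl6}.

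For part (a), applying Theorem~\ref{theorem:gm-var-data} to the isomorphism $\phi \colon (X,H) \isomto (X',H')$ produces an isomorphism $(\varphi_W,\varphi_V,\varphi_L)$ of the associated GM data, so that $\varphi_V(V_5) = V_5'$; the functoriality of the Lagrangian data construction in Theorem~\ref{theo-bijection} then forces $\bw3\varphi_V(A) = A'$. For part (b), I run this in reverse: the isomorphism $\varphi_V$ of Lagrangian data lifts by Theorem~\ref{theo-bijection} to an isomorphism of the corresponding GM data (this works over any field in the ordinary case; in the special case one uses the hypothesis $\k = \k^{1/2}$ to construct $\varphi_L$ satisfying $\varphi_L^{\otimes 2}\circ \eps = \eps'\circ \bw5\varphi_V$), and Theorem~\ref{theorem:gm-var-data} converts this back into the required isomorphism of polarized GM varieties.

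For parts (c$'$) and (c$''$), Corollary~\ref{corollary-aut-x-aut-data} gives the exact sequence $1\to\Gm\to\Aut(W,V_6,V_5,L,\mu,\bq,\eps)\to\Aut_H(X)\to 1$. The central $\Gm$ acts on $V_6$ by a scalar (for $g_W = s\,\id_W$, compatibility with $\bq$ forces $g_V = s^2\,\id_{V_6}$), so projection to $\GL(V_6)$ followed by $\GL(V_6)\twoheadrightarrow\PGL(V_6)$ descends to a homomorphism $\Aut_H(X) \to \PGL(V_6)$; its kernel is the one described by Corollary~\ref{corollary-aut-x-gl6}, namely trivial in the ordinary case and generated by the canonical involution in the special case. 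Part (a) places its image inside $\PGL(V_6)_{A,V_5}$, and the surjectivity onto this subgroup is proved by the same construction as in part (b), starting from any lift $\tilde g \in \GL(V_6)$ of a given element of $\PGL(V_6)_{A,V_5}$.

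The main technical obstacle is the reconstruction of $\varphi_L$ in the special case, which requires extracting a square root of $\det(\tilde g|_{V_5})$. In (c$''$), the lift $\tilde g$ may be rescaled freely by $c\in\k^\times$, and such a rescaling multiplies this determinant by $c^5$; since $\gcd(2,5)=1$ gives $(\k^\times)^2(\k^\times)^5 = \k^\times$, a suitable choice of $c$ always makes the determinant a square, so (c$''$) holds over an arbitrary field as stated. In (b), by contrast, one is forced to work with a genuine linear isomorphism $\varphi_V$ with no freedom to rescale, which is precisely the point where the quadratic closure hypothesis intervenes.
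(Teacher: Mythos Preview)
Your approach is essentially the paper's own: both deduce (a) and (b) by composing Theorem~\ref{theorem:gm-var-data} with Theorem~\ref{theo-bijection} (packaged in the paper as Theorem~\ref{theorem:merged}), and both obtain (c$'$)/(c$''$) by combining the image computation from (a)--(b) with the kernel computation of Corollary~\ref{corollary-aut-x-gl6}.

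Your final paragraph, however, adds something the paper glosses over. The paper derives the surjectivity of $\Aut_H(X) \to \PGL(V_6)_{A,V_5}$ directly from~(b), but~(b) in the special case is stated only under the hypothesis $\k = \k^{1/2}$, whereas (c$''$) carries no such hypothesis. Your rescaling argument---replacing a lift $\tilde g \in \GL(V_6)$ by $c\tilde g$ and using $\gcd(2,5)=1$ to arrange that $\det(c\tilde g|_{V_5}) = c^5\det(\tilde g|_{V_5})$ is a square over any field---is exactly what is needed to justify surjectivity in (c$''$) without the quadratic-closure assumption, and is a genuine clarification of the paper's terse proof. (The same trick, incidentally, handles the square-root issue for~(c$'$) that is flagged in the Remark following Theorem~\ref{theo-bijection}.)
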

\begin{proof}
Part (a) follows from Theorem~\ref{theorem:merged} and part (b) is also explained in the proof of this theorem.
 This implies that the image of the morphism $\Aut_H(X) \to \PGL(V_6)$ is $\PGL(V_6)_{\An,V_5}$  and (c${}'$) and  (c${}''$) then follow from
Corollary~\ref{corollary-aut-x-gl6}.
\end{proof}

When we use Theorem~\ref{theorem:merged} for a normal lci (polarized) GM variety~$X$, we denote by $W$, $V_6$, $V_5$, $\An$, $A_1$, etc.,  the associated vector spaces;
to emphasize the dependence of the data on the original variety, we sometimes write $W(X)$, $V_6(X)$, $V_5(X)$, $\An(X)$, $A_1(X) $, etc.
 
Conversely, given an extended Lagrangian data $(V_6,V_5,\An,A_1)$, we denote by $X_{\An,A_1,V_5}$
the corresponding GM intersection (usually the vector space $V_6$ will be fixed, so we exclude it from the notation). 
Sometimes, we will write $X_{A,0,V_5}$ for the GM intersection corresponding to~$A_1 = L$, 
$X_{A,1,V_5}$ for the GM intersection corresponding to the choice $A_1 \ne L$ and $A_1 \ne\k$, and 
$X_{A,\infty,V_5}$ for the GM intersection corresponding to $A_1 = \k$. 
The meaning of this notation is explained by the following lemma.

\begin{lemm}\label{lemma-dimx-ax0}
Let $(V_6,V_5,\An)$ be a Lagrangian data  such that $X := X_{\An,0,V_5}$, with the choice  $A_1 = L$, is a  GM variety.
Then $X$ is ordinary, $X_{\An,1,V_5}  {\cong X_\spe}$ is the double covering of~$M_X$ branched along~$X$ (hence is a special GM variety), 
and $X_{\An,\infty,V_5}$ is the cone over $X$ with vertex a point.
\end{lemm}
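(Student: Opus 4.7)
The plan is to unwind the construction of GM data from extended Lagrangian data given in the proof of Theorem~\ref{theo-bijection}, keeping careful track of what depends on~$\An$ alone versus what depends on the choice of~$A_1$. The key observation is that for any of the three choices $A_1 = 0, 1, \infty$, the construction produces the same spaces $W_0$, maps $\mu_0$, and quadratic family $\bq_0$, as these depend only on $\An$ (and on the fixed data $V_5, L, \varepsilon$). What varies is the $W_1$-part, specifically whether $W_1 = 0$ and, when $W_1 = \k$, whether the induced form~$\bq_1$ is trivial.

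The case $A_1 = 0$ (that is, $A_1 = L$) is immediate: we have $W_1 = \pr_1(A_1) = 0$, so $W = W_0$ and the GM data is ordinary by Definition~\ref{def225}. Since by hypothesis $X = X_{\An,0,V_5}$ is already a GM variety, this proves the first assertion. For the case $A_1 = \infty$ (that is, $A_1 = \k$), we have $W_1 = \k$, but the defining formula $\tilde\bq_1(v)(x_1,x'_1,x_2,x'_2) = \lambda(v)\,x_1 x'_2$ vanishes identically on $A_1 \otimes A_1$ because $x'_1 = x'_2 = 0$ on $A_1 = \k$. Thus $\bq_1 = 0$ and $\bq$ factors through $\Sym^2 W_0^\vee$, so the equations defining $X_{\An,\infty,V_5}$ in $\P(W_0 \oplus W_1)$ are independent of the $W_1$-coordinate. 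It follows that $X_{\An,\infty,V_5}$ is the cone over $X$ with vertex the point $\P(W_1)$.

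The main content is the case $A_1 = 1$. Here any generator $(x_1,x'_1)$ of $A_1$ satisfies $x_1 x'_1 \ne 0$ (since $A_1 \ne 0,\infty$), so after choosing a coordinate $w_1$ on $W_1 \cong \k$ we read off $\bq_1(v) = c\,\lambda(v)\,w_1^2$ with $c \ne 0$. Fix any $v_0 \in V_6 \setminus V_5$. The Plücker quadrics $\bq(v) = \bq_0(v)$ for $v \in V_5$ cut out the cone $\cone{W_1}M_X$ in $\P(W_0 \oplus W_1)$ (by~\eqref{dess} applied to $W_0$, tensored with the trivial factor $W_1$), while the single non-Plücker quadric $\bq_0(v_0) + c\,\lambda(v_0)\,w_1^2 = 0$ carves out $X_{\An,1,V_5}$ inside this cone. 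The linear projection $\P(W_0 \oplus W_1) \dashrightarrow \P(W_0)$ from the point $\P(W_1)$ therefore restricts to a finite degree-two map $X_{\An,1,V_5} \to M_X$, whose branch locus is the zero locus on $M_X$ of the $w_1$-discriminant $\bq_0(v_0)$, which is exactly $X$. By the description of the opposite construction in Lemma~\ref{lemma-ord-spe-data}, this double cover is precisely $X_\spe$.

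The only real subtlety in this plan is the bookkeeping: one must verify that $\tilde\bq_1$ descends correctly to each of the three orbits of~$A_1$, and match the abstract direct sum decomposition $W = W_0 \oplus W_1$ with the concrete projection-from-a-point geometry in $\P(W_0 \oplus W_1)$. Once this is in place, each of the three assertions follows by a direct reading of the equations; no further geometric input beyond Theorem~\ref{theo-bijection} and Lemma~\ref{lemma-ord-spe-data} is needed.
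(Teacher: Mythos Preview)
Your proof is correct and follows essentially the same approach as the paper: unwind the construction of Theorem~\ref{theo-bijection} for each of the three orbits of $A_1$, observing that the $W_0$-part is fixed while the $W_1$-part is either trivial ($A_1=0$), nonzero with $\bq_1$ an isomorphism ($A_1=1$, whence Lemma~\ref{lemma-ord-spe-data} gives $X_\spe$), or nonzero with $\bq_1=0$ ($A_1=\infty$, giving a cone). The paper's version is terser—it invokes Lemma~\ref{lemma-ord-spe-data} directly for the $A_1=1$ case without writing out the double-cover geometry—but your added detail is accurate and harmless.
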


\begin{proof} 
If $A_1 = L$, we have $W_1 = \Im(A_1 \to \k) = 0$, hence $X$ is ordinary.
If $A_1 \ne L $ and $A_1 \ne  \k$, the space $W_1$ is 1-dimensional and $\bq_1$ induces an isomorphism $V_6/V_5 \cong \Sym^2W_1^\vee$.
By Lemma~\ref{lemma-ord-spe-data}, we have $X_{\An,1,V_5} = X_\spe$.
 
Finally, if $A_1 = \k$, the space $W_1$ is $1$-dimensional  but $\bq_1 = 0$. 
Therefore, $X_{\An,\infty,V_5}$ is cut out in the cone over~$M_X$ by one equation $\bq_0(v) = 0$ (where $v$ is any vector in $V_6 \setminus V_5$) hence is the cone over $X$.
\end{proof}

The construction of a GM intersection $X_{\An,A_1,V_5}$ 
is quite involved. However, some of its geometrical properties   can be read off directly from $\An$, $A_1$, and $V_5$.
The next proposition explains this.

\begin{prop}\label{lemma-wx}
Let $X = X_{\An,A_1,V_5}$ be the GM intersection associated with an lci  Lagrangian data $(V_6,V_5,\An,A_1)$ \textup{(}in particular, $A_1 \ne \infty$\textup{)}. 
\begin{itemize}
\item[\rm (a)]
We have
\begin{align*}
W(X)_0 &= \An/(\An \cap \bw3V_5), &
W(X)_1 &= A_1/(A_1 \cap L), \\
W(X)_0^\perp &= \An \cap \bw3V_5, &
W(X)_1^\perp &= A_1 \cap L.
\end{align*} 
\item[\rm (b)]
For $v \in V_6\setminus V_5$, we have
\begin{equation}\label{kernel-qv}
 \Ker (\bq(v)) = \An \cap (v \wedge \bw2V_5).
\end{equation}
\item[\rm (c)]
If $X$ is a GM variety, we have
\begin{equation}\label{eq22}
\dim (X) = 
\begin{cases}
5 - \dim(\An \cap \bw3V_5),  & \text{if } {A_1 = 0},\\
6 - \dim(\An \cap \bw3V_5),  & \text{if } {A_1 = 1}.
\end{cases}
\end{equation}
\end{itemize}
 \end{prop}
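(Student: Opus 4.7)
Part~(a) is an immediate reformulation of the construction in the proof of Theorem~\ref{theo-bijection}: the maps $\lambda_3\colon \An \to \bw{2}{V_5}\otimes L^\vee$ and $\pr_1\colon A_1 \to \k$ factor as $\An\twoheadrightarrow W_0\hookrightarrow \bw{2}{V_5}\otimes L^\vee$ and $A_1\twoheadrightarrow W_1\hookrightarrow \k$, with respective kernels $\An\cap\bw{3}{V_5}$ and $A_1\cap L$. Interpreting $W_0^\perp$ and $W_1^\perp$ as the kernels of these surjections, the four identities are tautological.

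For part~(b), I will fix $v\in V_6\setminus V_5$, choose a splitting $V_6 = V_5\oplus\k v$, and write every $\xi\in\bw{3}{V_6}$ as $\xi=\xi_0 + v\wedge\xi_1$ with $\xi_0\in\bw{3}{V_5}$, $\xi_1\in\bw{2}{V_5}$. A short computation yields $\lambda_3(\xi) = \lambda(v)\xi_1$ and $\lambda_4(v\wedge\xi) = \lambda(v)\xi_0$, so the defining formulas unfold as
\[
\tilde\bq_0(v)(\xi,\eta) = -\lambda(v)^2\eps(\xi_0\wedge\eta_1)
\quad\text{and}\quad
\omega(\xi,\eta) = \lambda(v)\eps(\xi_1\wedge\eta_0 - \xi_0\wedge\eta_1).
\]
The Lagrangian condition $\omega|_{\An\times\An}=0$ then gives the symmetry $\eps(\xi_0\wedge\eta_1) = \eps(\xi_1\wedge\eta_0)$ for $\xi,\eta\in\An$, and characterizes $\Ker(\bq_0(v))$: the class of $\xi\in\An$ in $W_0$ lies in the kernel iff $\xi_0$ annihilates $\pi_1(\An) := \lambda(v)^{-1}\lambda_3(\An)\subset\bw{2}{V_5}$ under the perfect pairing $\bw{3}{V_5}\otimes\bw{2}{V_5}\to\bw{5}{V_5}\xrightarrow{\eps}L^{\otimes 2}$. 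Applying the same Lagrangian identity to a test element $\eta\in\bw{3}{V_5}\subset\bw{3}{V_6}$ against all $\xi\in\An$, I see that this annihilator coincides with $\An^\perp\cap\bw{3}{V_5} = \An\cap\bw{3}{V_5}$. Hence $\xi_0\in\An$, and replacing $\xi$ with $\xi - \xi_0 = v\wedge\xi_1$ produces a representative of the same class in $\An\cap v\wedge\bw{2}{V_5}$; the converse inclusion is obvious.

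To finish~(b), I will show that $W_1$ contributes nothing to the kernel. If $A_1=L$, then $W_1=0$; otherwise $A_1\cap L=0$, so $W_1=A_1$ is spanned by some $(x_1,x'_1)$ with $x_1\ne 0$, and the lci hypothesis $A_1\ne \k$ forces $x'_1\ne 0$. The formula $\tilde\bq_1(v)(x_1,x'_1,y_1,y'_1) = \lambda(v)x_1y'_1$ is then non-degenerate on $W_1$ for $v\notin V_5$, so $\Ker(\bq(v))\subset W_0$; since $\bw{3}{V_5}\cap v\wedge\bw{2}{V_5} = 0$, the projection $\An\twoheadrightarrow W_0$ is injective on $\An\cap v\wedge\bw{2}{V_5}$, giving the identification $\Ker(\bq(v)) = \An\cap v\wedge\bw{2}{V_5}$. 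Part~(c) will then follow at once from (a) and the relation $\dim X = \dim W - 5$ of Definition~\ref{defigm}: $\dim W_0 = 10 - \dim(\An\cap\bw{3}{V_5})$, while $\dim W_1\in\{0,1\}$ according as $A_1=0$ or $A_1=1$. The only delicate step is the Lagrangian calculation in~(b)---specifically the identification of the annihilator of $\lambda_3(\An)$ inside $\bw{3}{V_5}$ with $\An\cap\bw{3}{V_5}$---which is the precise place where the Lagrangian hypothesis on $\An$ enters essentially.
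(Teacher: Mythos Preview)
Your proof is correct. Parts~(a) and~(c) are essentially identical to the paper's argument. For part~(b), you take a different but valid route: rather than invoking the abstract Lagrangian--quadratic correspondence of Proposition~\ref{lemma-lagrangian-quadratic} (which the paper quotes, having already identified $\bq(v)$ with the quadric $Q_2^{\hat A}$ associated to the decomposition~\eqref{dsd} in the proof of Theorem~\ref{theo-bijection}), you unfold the definition of $\tilde\bq_0(v)$ in coordinates adapted to the splitting $V_6 = V_5 \oplus \k v$ and extract the kernel by a direct pairing argument. The paper's approach is shorter because the general kernel formula~\eqref{equation-kernel-qs} does all the work at once: $\Ker(\tilde\bq(v)) = (\hat A\cap\LL_1)\oplus(\hat A\cap\LL_2)$ with $\LL_2 = (v\wedge\bw2V_5)\oplus\k$, and the lci hypothesis $A_1\ne\k$ kills the odd summand. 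Your approach has the merit of being self-contained---it does not rely on Appendix~\ref{section-lagrangians-and-quadrics}---and it makes transparent exactly where the Lagrangian condition on $A$ enters (namely in identifying the annihilator of $\lambda_3(A)$ inside $\bw3V_5$ with $A\cap\bw3V_5$, which is just the statement $A^\perp = A$ intersected with $\bw3V_5$). Both arguments are equally rigorous; the paper's is more conceptual, yours more explicit.
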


\begin{proof}
(a) The first line     is just the definition of $W_0$ and $W_1$ in the proof of Theorem~\ref{theo-bijection}. 
For the second line, note that $W(X) = W(X)_0 \oplus W(X)_1$ is the image of the Lagrangian subspace $\hat{A} = \An \oplus A_1$ 
by the projection to the second summand of the   direct sum decomposition~\eqref{dsd}. Hence its annihilator is 
the intersection of $\hat{A}$ with the first summand.

(b) By Proposition~\ref{lemma-lagrangian-quadratic}, the kernel of the quadratic form $\tilde\bq(v)$ on $\hat{A}$ is 
the direct sum $\bigl(\hat{A} \cap (\bw3V_5 \oplus L)\bigr) \oplus \bigl(\hat{A} \cap ((v \wedge \bw2V_5) \oplus \k)\bigr)$. The quadratic form $\bq(v)$ is induced by 
taking the quotient with respect to the first summand, hence its kernel is the second summand.
Since  $A_1 \ne \k$ by the lci assumption, it is equal to the right   side of~\eqref{kernel-qv}. 

(c) If $X$ is a GM variety, we have $\dim(X) = \dim(W(X)) - 5$.
On the other hand, by part~(a),
we have
\begin{equation*}
\dim(W(X)_0) = 10 - \dim(\An \cap \bw3V_5),
\qquad 
\dim(W(X)_1) = 1 - \dim(A_1 \cap L).
\end{equation*}
Combining these, we get \eqref{eq22}. 
\end{proof}

In the next proposition, we discuss the relation between the Lagrangian data of a GM variety $X$ and that of a hyperplane section.
Such a hyperplane section is given by a linear function on the space $W(X) = W(X)_0 \oplus W(X)_1$. Moreover, by Proposition~\ref{lemma-wx}(a),
we have an identification $W(X)_0^\vee = \bw3V_5/(A \cap \bw3V_5)$.
Any linear function on $W(X)_0$ can therefore be lifted to an element of $\bw3V_5$. 

\begin{prop}\label{proposition:hyperplane-section}
Let $X$ be a normal lci GM variety and let $(V_6,V_5,A)$ be the corresponding Lagrangian data.
Let $X' \subset X$ be a hyperplane section of $X$ which is also normal and lci and let $\eta_0 \in \bw3V_5$ be a lift of the even part of the equation of $X'$.
The Lagrangian data of $X'$ is then isomorphic to $(V_6,V_5,A(X'))$ and
\begin{itemize}
\item[\rm(a)] if $X'$ has the same type as $X$, we have $A(X') = (A \cap \eta_0^\perp) \oplus \k \eta_0$; 
\item[\rm(b)] if $X$ is special and $X'$ is ordinary, we have $A(X') = (A \cap \eta^\perp) \oplus \k \eta$ for some $\eta \in A \oplus \k\eta_0$;
\item[\rm(c)] if $X$ is special and $X' = X_\ord$, we have $A(X') = A$.
\end{itemize}
In particular, $\dim(A \cap A(X')) = 9$, unless $X$ is special and $X' = X_\ord$.
\end{prop}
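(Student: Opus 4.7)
The plan is to identify $(V_6, V_5)$ for $X'$ with those for $X$, then compute $A(X')$ case by case using the Lagrangian--quadratic correspondence of Theorem~\ref{theo-bijection} together with the uniqueness in Theorem~\ref{theorem:merged}. The first ingredient is that restriction of quadrics along $\PP(W(X')) \hookrightarrow \PP(W(X))$ induces a canonical isomorphism $V_6(X) \isomto V_6(X')$ sending $V_5(X)$ to $V_5(X')$. Indeed, no non-zero quadric through $X$ is divisible by the linear form $h$ defining $\PP(W(X'))$, because $X$ is non-degenerate in $\PP(W(X))$ by Theorem~\ref{theorem:gm-intrinsic}(b); both $V_6$'s being six-dimensional, the restriction is thus an isomorphism, and restrictions of Pl\"ucker quadrics are again Pl\"ucker quadrics.

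Case (c) is direct: when $X' = X_\ord$, the defining equation of $X_\ord \subset X$ lies in $W(X)_1^\vee$, so its even part vanishes and any lift $\eta_0$ sits in $W(X)_0^\perp = A \cap \bw3 V_5$; by Lemma~\ref{lemma-ord-spe-data} together with Theorem~\ref{theo-bijection}, we have $A(X_\ord) = A$. For case (a), where the hyperplane is purely even, I set $A' := (A \cap \eta_0^\perp) \oplus \k \eta_0$. Proposition~\ref{lemma-wx}(a) combined with the perfectness of the pairing $\bw3 V_5 \otimes \bw2 V_5 \to \det V_5$ gives $\eta_0 \notin A$, so $A'$ is $10$-dimensional; it is Lagrangian since $\bw6 V_5 = 0$, $A \wedge A = 0$, and $\eta_0 \wedge (A \cap \eta_0^\perp) = 0$ by construction. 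The key computation is $\lambda_3(A \cap \eta_0^\perp) = \lambda_3(A) \cap \eta_0^\perp = W(X)_0 \cap \eta_0^\perp = W(X')_0$ (using the decomposition $\bw3 V_6 = \bw3 V_5 \oplus v_0 \wedge \bw2 V_5$), and the corresponding matching of $\mu$ and $\bq$ follows by choosing lifts in $A \cap \eta_0^\perp$. Uniqueness in Theorem~\ref{theorem:merged} applied to the GM data associated with $(V_6, V_5, A', A_1(X))$ then gives $A(X') = A'$.

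Case (b) is the main obstacle: both $h_0$ and $h_1$ are non-zero, $X$ is special while $X'$ is ordinary, and the even--odd coupling imposed by the hyperplane forces a non-trivial adjustment. My plan is to identify $W(X') \cong W(X)_0$ via the projection $(w_0, w_1) \mapsto w_0$ (possible since $h_1 \ne 0$) and to compute the restricted quadric $\bq(X')(v)(w_0, w_0') = \bq_0(v)(w_0, w_0') + c(v) h_0(w_0) h_0(w_0')$, where the rank-one correction $c(v) h_0 \otimes h_0$ is determined by $h_1$ and the $W(X)_1$-component of $\bq_1(v)$. Inverting the Lagrangian--quadratic correspondence then shows that $A(X')$ is the graph of a linear map $A \to \k \eta_0$ of the form $\xi \mapsto C(\xi \wedge \eta_0)\eta_0$, with $C \in \k^\times$ determined by $c$. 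Rewriting this graph as $(A \cap \eta^\perp) \oplus \k \eta$ yields $\eta = \xi_0 + C(\xi_0 \wedge \eta_0)\eta_0 \in A \oplus \k \eta_0$ for any $\xi_0 \in A$ with $\xi_0 \wedge \eta_0 \ne 0$; the Lagrangian property is preserved because $\xi \wedge \eta_0 = -\eta_0 \wedge \xi$ for $\xi \in \bw3 V_6$.

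The dimension statement then follows immediately: in cases~(a) and~(b), the vector $\eta$ (respectively $\eta_0$) lies outside $A$, so $A \cap A(X') = A \cap \eta^\perp$ is a hyperplane in the Lagrangian $A$, of dimension $9$; in case~(c), $A(X') = A$ has dimension $10$.
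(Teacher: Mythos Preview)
Your argument is correct but takes a genuinely different, more computational route from the paper. The paper proves the single containment $A \cap \eta_0^\perp \subset A(X')$ by comparing the defining exact sequences~\eqref{eq:ax-kernel} for $A$ and $A(X')$ in a commutative diagram, and then observes that any Lagrangian containing the $9$-dimensional isotropic subspace $A \cap \eta_0^\perp$ lies in $(A \cap \eta_0^\perp)^\perp = A \oplus \k\eta_0$, hence has the form $(A \cap \eta^\perp) + \k\eta$ for some $\eta \in A \oplus \k\eta_0$; this handles~(b) immediately, and~(a) follows from the extra observation $\eta_0 \in W(X')_0^\perp = A(X') \cap \bw3V_5$, while~(c) is Lemma~\ref{lemma-dimx-ax0}. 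You instead compute $A(X')$ from scratch in each case: in~(a) by verifying that the Lagrangian data $(V_6,V_5,(A\cap\eta_0^\perp)\oplus\k\eta_0,A_1(X))$ reproduces the GM data of $X'$ and invoking uniqueness, and in~(b) by tracking the rank-one perturbation $\bq_0(v)+c(v)h_0\otimes h_0$ through the Lagrangian--quadratic correspondence to exhibit $A(X')$ as the graph of $a \mapsto C(a\wedge\eta_0)\eta_0$. The paper's route is shorter and unified (one containment plus elementary symplectic linear algebra), whereas your computation in~(b) is heavier but has the virtue of producing an explicit formula for $\eta$ in terms of the data $(h_0,h_1,\bq_1)$; your reduction $A\cap\eta^\perp = A\cap\eta_0^\perp$ is exactly what makes the two descriptions agree.
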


\begin{proof}
Let us show   $A \cap \eta_0^\perp \subset A(X')$. Consider the  commutative diagram
\begin{equation*}
\xymatrix@C=2em{
0 \ar[r] & 
A(X') \ar[r] &
\bw3V_5 \oplus W(X')_0 \ar[rr]^-{(\mu_0,\bq(v_0))} \ar@{^{(}->}[d]^\alpha && 
W(X')_0^\vee \ar[r] & 
0 
\\
0 \ar[r] & 
A \ar[r] &
\bw3V_5 \oplus W(X)_0 \ar[rr]^-{(\mu_0,\bq(v_0))} && 
W(X)_0^\vee \ar[r] \ar@{->>}[u] & 
0,
}
\end{equation*}
where the rows are the exact sequences~\eqref{eq:ax-kernel} defining $A$ and $A(X')$ respectively, $v_0 \in V_6 \setminus V_5$ is a fixed vector,
and we omit for simplicity the $L$-factors. The image of $A \cap \eta_0^\perp$ under the left bottom arrow
is contained in $\bw3V_5 \oplus (W(X)_0 \cap \eta_0^\perp)$, hence in the image of $\alpha$.
Therefore, by the commutativity of the diagram, 
it is in the kernel of the top right arrow. This proves   $A \cap \eta_0^\perp \subset A(X')$  and in particular $\dim(A(X') \cap A) \ge 9$.

Any Lagrangian subspace $A'$ 
 {containing $A \cap \eta_0^\perp$ can be written as $(A \cap \eta^\perp) \oplus \k \eta$ for some $\eta \in A \oplus \k\eta_0$. This proves (b).
Furthermore, if $X'$ has the same type as $X$, the odd part of the equation of $X'$ is zero, hence $\eta_0 \in W(X')_0^\perp = A(X') \cap \bw3V_5 \subset A(X')$.
Thus $\eta_0 \subset A(X')$, hence $\eta = \eta_0$  and (a) follows. Finally, (c) was already proved in Lemma~\ref{lemma-dimx-ax0}.}
\end{proof}

\subsection{Strongly smooth GM varieties}

We say that a subspace $A \subset \bw3V_6$ {\sf contains no decomposable vectors} if (see Section \ref{section-epw-overview})
\begin{equation*}
\P(A) \cap \Gr(3,V_6) = \emptyset.
\end{equation*}
The crucial Theorem~\ref{theorem-singdec} below shows among other things that when a Lagrangian subspace~$\An$ contains no decomposable vectors, 
the GM intersections $X_{\An,A_1,V_5}$ defined at the end of Section~\ref{section-fgm-from-lagrangian} are smooth GM varieties 
for all choices of $A_1 \ne \k$ and all choices of hyperplanes~$V_5 \subset V_6$.

Recall that for any GM intersection~$X$, we  {defined}
$M_X  = \cone{K}\Gr(2,V_5) \cap  \P(W)$ and    $M'_X = \Gr(2,V_5) \cap \P(W_0)$ (Section~\ref{subsec-grhulls}).

\begin{defi}\label{definition-strongly-smooth}
A GM variety  $X$ is   {\sf strongly smooth} if both intersections $X$ and $M'_X$ are   dimensionally transverse and smooth.
\end{defi}

By Proposition~\ref{lhull},
any smooth GM variety is   strongly smooth except perhaps if it is an ordinary GM surface or an ordinary GM curve.
Note also that $X$ is strongly smooth if and only if $X_\opp$ is strongly smooth (unless $X$ is a special GM curve, in which case 
  $X_\opp = X_\ord$ is just not defined).

\begin{theo}\label{theorem-singdec} 
Assume $\k=\C$.
Let $(V_6,V_5,\An,A_1)$ be an extended Lagrangian data with  {$A_1 \ne \infty$} and let $X:= X_{\An,A_1,V_5}$ be the corresponding GM intersection.
The following conditions are equivalent:
\begin{enumerate}
\item[\rm (i)]
the GM intersection $X $ is a strongly smooth GM variety;
\item[\rm (ii)]
we have  $\dim  ( A  \cap  \bw3{V_5}) + \dim(A_1 \cap L) \le 5$, 
the decomposable vectors in $\An$ are all in~$\bw{3}{V_5}$, and  $\P(\An\cap \bw{3}{V_5})\cap \Gr(3,V_5)$     is a smooth dimensionally transverse  intersection in  $\P( \bw{3}{V_5})$.
\end{enumerate}
 In particular, if  {$\dim(A \cap \bw3V_5) \le 3$, the GM intersection $X $ is a smooth GM variety}
  if and only if $\An$ contains no decomposable vectors. 
\end{theo}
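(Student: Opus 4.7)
The plan is to translate the two smoothness requirements in (i)---smoothness of the Grassmannian hull $M'_X$, and smoothness of $X$ itself---into algebraic conditions on the Lagrangian data $(V_6, V_5, A, A_1)$, using the dictionary between GM data and Lagrangian data from Theorem~\ref{theo-bijection} together with the explicit formulas of Proposition~\ref{lemma-wx}. The first two conditions of (ii) are essentially a dimension count and a projective-duality translation, respectively; the real work is the third.

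First, the dimension count. By Proposition~\ref{lemma-wx}(a,c), we have $\dim(W_0) = 10 - \dim(A \cap \bw{3}{V_5})$ and $\dim(W_1) = 1 - \dim(A_1 \cap L)$, so $\dim(X) = \dim(W)-5$ equals either $5-\dim(A\cap\bw{3}{V_5})$ (ordinary case, $A_1=0$) or $6-\dim(A\cap\bw{3}{V_5})$ (special case, $A_1=1$). The requirement $\dim X \geq 1$ from Definition~\ref{defigm} is then precisely the first condition of (ii). Next, the smoothness of $M'_X$. By Proposition~\ref{lemma-wx}(a) and the identification $\eps\colon \det V_5 \isomto L^{\otimes 2}$, the annihilator $W_0^\perp$ corresponds to $A \cap \bw{3}{V_5}$ under the canonical isomorphism $(\bw{2}{V_5}\otimes L^\vee)^\vee \cong \bw{3}{V_5}\otimes L^{-1}$. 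This identification carries $\Gr(2,V_5^\vee) \subset \PP(\bw{2}{V_5^\vee})$ to $\Gr(3,V_5) \subset \PP(\bw{3}{V_5})$, so by Proposition~\ref{p12}, $M'_X = \Gr(2,V_5) \cap \PP(W_0)$ is a smooth dimensionally transverse intersection if and only if $\Gr(3,V_5) \cap \PP(A \cap \bw{3}{V_5})$ is, which is the third condition of (ii).

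The key step is the smoothness of $X$ itself. The special case reduces to the ordinary one: by Lemma~\ref{lemma-dimx-ax0}, a special GM variety $X_{A,1,V_5}$ is the double cover of $M'_X$ branched along $X_\ord := X_{A,0,V_5} = M'_X \cap Q(v_0)$, so such an $X$ is smooth exactly when both $M'_X$ and $X_\ord$ are smooth; moreover $X$ and $X_\ord$ share the same Lagrangian $A$. Assuming $M'_X$ is smooth, a point $x = [v_1 \wedge v_2] \in X_\ord$ is a singular point of $X_\ord$ if and only if $\T_{M'_X,x} \subset \T_{Q(v_0),x}$. Using the formula $\tilde\bq_0(v)(\xi_1,\xi_2) = -\eps(\lambda_4(v\wedge \xi_1) \wedge \lambda_3(\xi_2))$ from the proof of Theorem~\ref{theo-bijection}, together with Proposition~\ref{lemma-wx}(b) identifying $\Ker\bq(v_0) = A \cap (v_0 \wedge \bw{2}{V_5})$, a direct local computation on the tangent space $\T_{M'_X,x} = (v_1\wedge V_5 + v_2 \wedge V_5)\cap W_0$ shows that this tangency happens if and only if some $v \wedge v_1 \wedge v_2 \in A$ with $v \in V_6 \smallsetminus V_5$. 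Such elements are exactly the decomposable vectors of $A$ that do not lie in $\bw{3}{V_5}$; varying $v_0 \in V_6 \smallsetminus V_5$ exhausts them, so their absence is equivalent to the smoothness of $X$ off the singular locus of $M'_X$. Combined with Steps~1 and~2 this yields (i)~$\Leftrightarrow$~(ii).

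The main obstacle is the tangent-space computation just described: one must verify that the only mechanism by which $X_\ord$ becomes singular at a smooth point of $M'_X$ is the presence of a decomposable vector in $A \setminus \bw{3}{V_5}$ projecting to that point, with no stray degeneration of $\bq(v_0)$ creating additional singularities. This is where the Lagrangian structure does its work, via the explicit quadratic form above. The final clause of the theorem---that when $\dim(A \cap \bw{3}{V_5}) \le 3$, strong smoothness of $X$ is equivalent to $A$ containing no decomposable vectors---then follows immediately: under this hypothesis $\PP(A \cap \bw{3}{V_5})$ has projective dimension $\le 2$, so its expected intersection with $\Gr(3,V_5) \subset \PP^9$ has dimension $\le 2 + 6 - 9 = -1$, and smooth dimensional transversality reduces to emptiness, i.e.\ to the absence of decomposable vectors in $A \cap \bw{3}{V_5}$, which together with the second condition in (ii) gives exactly the absence of decomposable vectors in~$A$.
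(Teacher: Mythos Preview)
Your overall architecture matches the paper's: the dimension inequality is Proposition~\ref{lemma-wx}(c), the smoothness of $M'_X$ is Proposition~\ref{p12} applied to $W_0^\perp = A\cap\bw3V_5$, and the core of the argument is linking singularities of $X$ at smooth points of $M'_X$ to decomposable vectors in $A\setminus\bw3V_5$. The reduction of the special case to the ordinary one via the double cover (Lemma~\ref{lemma-dimx-ax0}) is a harmless variation; the paper instead works uniformly with $X=\cone{\Ker\mu}M'_X\cap Q(v)$.

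The gap is in what you call a ``direct local computation.'' You assert that the tangency $\T_{M'_X,x}\subset\T_{Q(v_0),x}$ at $x=[v_1\wedge v_2]$ holds iff $v\wedge v_1\wedge v_2\in A$ for \emph{some} $v\in V_6\setminus V_5$, but you do not explain why the witness $v$ is allowed to differ from the fixed $v_0$ defining $X$. The mechanism is a conormal-space argument, not a brute computation on $(v_1\wedge V_5+v_2\wedge V_5)\cap W_0$: since $M'_X$ is smooth and cut out by the Pl\"ucker quadrics, the conormal space of $M'_X$ at $x$ inside $\P(W_0)$ is spanned by the polar forms $\bq(v')(x,-)$ for $v'\in V_5$; the tangency therefore forces $\bq(v_0)(x,-)=\bq(v')(x,-)$ for some $v'\in V_5$, i.e.\ $x\in\Ker\bq(v_0-v')$, and then~\eqref{kernel-qv} yields $(v_0-v')\wedge v_1\wedge v_2\in A$. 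Your phrase ``varying $v_0\in V_6\setminus V_5$ exhausts them'' suggests you may have had in mind the (easier) converse direction---choosing $v_0=v$ so that $x$ lies in the vertex of $Q(v)$---but the forward direction genuinely requires producing a \emph{new} $v$ from the tangency, and that is the step you skipped. A second, smaller omission: the final clause of the theorem speaks of \emph{smooth}, not strongly smooth, so you also need Proposition~\ref{lhull} to pass from smoothness of $X$ to strong smoothness when $\dim X\ge 3$.
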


\begin{proof}
Assume that condition (ii) holds. By Proposition~\ref{lemma-wx}, we   have   $W_0^\perp = \An \cap \bw{3}{V_5}$, and 
$\P(W_0^\perp) \cap \Gr(3,V_5)$ is   a smooth dimensionally transverse intersection. By Proposition~\ref{p12},  so is $M'_X $. 
Assume now, by contradiction, that  for $v\in V_6 \setminus V_5$, the GM intersection $X_{\An,A_1,V_5} = \cone{\Ker(\mu )} M'_X \cap Q(v)$  is not dimensionally transverse or is singular.
In both cases, there exists $w \in \cone{\Ker(\mu )} M'_X \cap Q(v)$ such that the tangent space $\T_{Q(v),w}$ contains~$\T_{M'_X,w}$.

As $A_1 \ne \k$, we know by the proof of Lemma~\ref{lemma-dimx-ax0} that $w$ cannot be the vertex of the cone. Since $M'_X$ is smooth, $\T_{M'_X,w}$ is therefore the intersection of the tangent spaces to Pl\"ucker quadrics, 
hence $\T_{Q(v),w}$ coincides with $\T_{Q(v'),w}$ for some $v' \in V_5$. Therefore, for some $t \in \k$, the quadric $Q(v+tv')$ is singular at $w$ and, replacing $v$ with $v + tv'$, we may assume that $w$ is in the kernel of $\bq(v)$. By~\eqref{kernel-qv},
this kernel is  $\An \cap (v\wedge \bw2{V_5})$; 
 therefore,  $v \wedge \mu (w) \in \An$.
Since $\mu (w) \in M'_X$, we have $\mu (w) \in \Gr(2,V_5)$, so $v\wedge\mu (w)$ is a decomposable vector in $\An$ which is not in $\bw{3}{V_5}$,  contradicting (ii). 
Therefore, $X_{\An,A_1,V_5}$ is a smooth GM variety (its dimension is $\ge 1$ because of the condition  {$\dim  ( A  \cap  \bw3{V_5}) + \dim(A_1 \cap L) \le 5$}).

Conversely, assume that condition (i) holds. Since $\dim (X) \ge 1$, 
we have  $\dim  ( A  \cap  \bw3{V_5}) + \dim(A_1 \cap L) \le 5$ by Proposition~\ref{lemma-wx}. Assume, again by contradiction, that $\An$ contains a decomposable vector 
not in $\bw{3}{V_5}$. This vector can be written as $v\wedge v_1\wedge v_2$, where $v \in V_6\setminus V_5$ and $v_1,v_2 \in V_5$.
As this vector is both in $\An$ and in $v \wedge \bw2{V_5}$, 
it is by~\eqref{kernel-qv} in the kernel of~$\bq(v)$.
On the other hand, $v_1\wedge v_2 \in \Gr(2,V_5)$ and, as $\lambda_3(v\wedge v_1\wedge v_2) = \lambda(v)v_1 \wedge v_2$, it is also in~$\P(W_0)$. 
So $v_1 \wedge v_2$ is in $M'_X$ and is a singular point of  {the quadric}~$Q(v)$.
Hence it is a singular point of $X = \cone{\Ker(\mu)}M'_X \cap Q(v)$, contradicting~(i).
 Finally, $\P(\An \cap \bw3V_5) \cap \Gr(3,V_5)$ is a smooth dimensionally transverse intersection by Proposition~\ref{p12},  {since} $M'_X = \P((\An \cap \bw3V_5)^\perp) \cap \Gr(2,V_5)$ is such by definition of strong smoothness. 

When $\dim (X_{\An,A_1,V_5}) \ge 3$, smoothness is equivalent to strong smoothness by Proposition~\ref{lhull} 
and $\dim(\An \cap \bw3V_5) \le 3$ by~\eqref{eq22}, hence the intersection $\P(\An\cap\bw3V_5) \cap \Gr(3,V_5)$
is dimensionally transverse if and only if it is empty. Condition (ii) is therefore equivalent to the absence of decomposable vectors in $\An$. 
\end{proof}

\begin{rema}\label{rem39}
Let $X$ be a  strongly smooth complex GM variety, with associated Lagrangian subspace $\An   = \An(X)$.
{Set $\ell:=\dim (\An \cap \bw{3}{V_5})$.}
 Formula~\eqref{eq22} then  reads
\begin{equation*}
\ell = 
\begin{cases} 
5 - \dim(X) &\text{ if $X$ is ordinary;}\\ 
6 - \dim(X) &\text{ if $X$ is special.}
\end{cases}
\end{equation*}
 Theorem \ref{theorem-singdec} implies the following. 
\begin{itemize}
\item 
If $\ell\le 3$
{(\ie, $\dim(X) \ge 3$ or $X$ is an ordinary GM surface)}, $\An$ contains no decomposable vectors. 
By Theorem~\ref{theorem-ogrady-stratification}, 
 the condition $\dim (\An\cap \bw{3}{V'_5})\le3$  holds for all hyperplanes $V'_5\subset V_6$.
\item 
If $\ell=4$
{(\ie, $X$ is an ordinary GM curve or a special GM surface)}, $\An$ contains exactly~5 decomposable vectors  and they   are all in  $\P(\bw{3}{V_5})$. 
\item 
If $\ell=5$ 
 {(\ie, $X$ is a special (bielliptic) curve)}, 
the decomposable vectors in $\P(\An)$ form a smooth elliptic quintic curve contained in $\P(\bw{3}{V_5})$.  
\end{itemize}
\end{rema}

\subsection{EPW sextics}

For a GM variety $X$, we defined in Section~\ref{section-epw-from-fgm} the discriminant locus $\Dis(X) \subset \P(V_6)$, which   is     either a sextic hypersurface or the whole space.
We now give a unified proof of a generalization of  Theorem~\ref{ilma} of Iliev and Manivel, proving that the  schemes $\Dis(X)$ and $Y_{A(X)}$ are equal.
 
\begin{prop}\label{proposition-kernel-nonplucker}
 {Let $X$ be a normal lci polarized GM variety and let $(V_6,V_5,A)$ be the corresponding Lagrangian data.
If either $Y_{\An}$ or $\Dis(X)$ is a reduced hypersurface, we have $Y_{\An} = \Dis(X)$.}
\end{prop}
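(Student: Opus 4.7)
My plan is to compare $\Dis(X)$ and $Y_A$ on the open complement $U:=\P(V_6)\setminus \P(V_5)$, using Proposition~\ref{lemma-wx}(b) for the set-theoretic equality and the Lagrangian--quadratic correspondence of Appendix~\ref{section-lagrangians-and-quadrics} to promote it to an equality of scheme structures.

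First, for any $v\in V_6\setminus V_5$, Proposition~\ref{lemma-wx}(b) gives $\Ker(\bq(v))=A\cap (v\wedge \bw{2}{V_5})$. Since $v\notin V_5$ and $v\wedge v=0$, the direct-sum decomposition $V_6=V_5\oplus\langle v\rangle$ yields $v\wedge \bw{2}{V_6}=v\wedge \bw{2}{V_5}$, so this kernel also equals $A\cap (v\wedge \bw{2}{V_6})$. This is exactly the condition defining the EPW sextic $Y_A$ as recalled in Appendix~\ref{section-epw}, so $[v]\in Y_A$ iff $Q(v)$ is singular iff $[v]\in \widetilde\Dis(X)$; as $\widetilde\Dis(X)$ coincides with $\Dis(X)$ on $U$, I obtain the set-theoretic identity $Y_A\cap U=\Dis(X)\cap U$.

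Next, I plan to refine this to an equality of scheme structures on $U$. The key observation is that the choice of $v\notin V_5$ determines a graded Lagrangian decomposition of $\bw{3}{V_6}$ of the form~\eqref{dsd}, and the family of quadratic forms $\bq(v)$ built from $A$ in the proof of Theorem~\ref{theo-bijection} is precisely the family obtained from $A$ by isotropic reduction with respect to this decomposition. Since the EPW equation $f_A\in H^0(\P(V_6),\cO(6))$ admits an analogous determinantal description via the same correspondence, comparing the two yields a factorization $\det\bq(v)=c\,\ell(v)^{n-1}f_A(v)$ on $U$, where $c\in \k^\times$ and $\ell$ is a linear form cutting out $V_5$. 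Combined with the definition $\Dis(X)=\widetilde\Dis(X)-(n-1)\P(V_5)$, this gives $\Dis(X)=Y_A$ as effective divisors on $\P(V_6)$, and the reducedness hypothesis upgrades the identification to an equality of subschemes; equivalently, once the set-theoretic equality on $U$ is in hand, the difference $\Dis(X)-Y_A$ is supported on $\P(V_5)$, hence of the form $c'\P(V_5)$ with $c'\in \Z$, and the common degree $6$ together with reducedness forces $c'=0$.

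The hard part will be the scheme-theoretic step: Proposition~\ref{lemma-wx}(b) identifies only the kernels of the two families, hence their supports, so matching the multiplicities requires recognizing the quadratic form $\bq(v)$ constructed in Theorem~\ref{theo-bijection} as the isotropic reduction of $A\subset \bw{3}{V_6}$ with respect to the Lagrangian decomposition determined by $v$. Once this identification is in place, the agreement of equations on $U$ becomes a tautology from the EPW construction.
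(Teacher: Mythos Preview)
Your first paragraph is exactly the paper's proof: Proposition~\ref{lemma-wx}(b) gives $\Ker(\bq(v))=A\cap(v\wedge\bw2V_5)$, which for $v\notin V_5$ equals $A\cap(v\wedge\bw2V_6)$, so $Y_A$ and $\Dis(X)$ coincide as sets on $U=\P(V_6)\setminus\P(V_5)$. The paper then finishes in one sentence: each of $Y_A$ and $\Dis(X)$ is by construction either all of $\P(V_6)$ or a sextic hypersurface, so once they agree set-theoretically on the dense open~$U$ and one of them is reduced, the other is also a (reduced) sextic and they are equal.

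Your second and third paragraphs are therefore unnecessary for the proposition as stated. One correction to your ``equivalently'' clause: set-theoretic equality on $U$ alone does \emph{not} imply that the divisor $\Dis(X)-Y_A$ is supported on $\P(V_5)$; that requires equality \emph{as divisors} on $U$, which is precisely what your proposed factorization $\det\bq(v)=c\,\ell(v)^{n-1}f_A(v)$ would establish. And once that factorization is in hand, the reducedness hypothesis is no longer needed, since both sides already have degree~$6$ and $c'=0$ is automatic. So your route, if carried through, would in fact prove the stronger unconditional equality $Y_A=\Dis(X)$ as Cartier divisors, at the cost of the extra Lagrangian--quadratic bookkeeping; the paper instead trades that work for the reducedness hypothesis and a one-line degree argument.
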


\begin{proof}
We have $\Ker (\bq(v)) = \An \cap (v \wedge \bw2{V_5})$ by (\ref{kernel-qv}), hence, by definition of $Y_A$ (see Definition~\ref{definition-epw}),
 $Y_{\An}$ and $\Dis(X_{\An,A_1,V_5})$ coincide as sets on the complement of $\P(V_5)$. 
On the other hand, each of $Y_{\An}$ and $\Dis(X_{\An,A_1,V_5})$ is either a sextic hypersurface  or  $\P(V_6)$.
If one is a reduced hypersurface,  so is the other, and they are equal.
\end{proof}

The following result was proved in \cite[Proposition~2.18]{og3}  when $X$ is a smooth ordinary fivefold (it is then automatically strongly smooth).

\begin{coro}\label{coro311}
Let $X$ be a strongly smooth complex GM variety. 
The subschemes $Y_{A(X)}$ and $\Dis(X)$ of $\P(V_6)$ are  equal and they are reduced sextic hypersurfaces. They are also integral and normal, unless $X$ is a bielliptic curve, in which case they are the union of a hyperplane and an integral quintic hypersurface.
\end{coro}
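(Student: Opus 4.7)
The plan is to combine Proposition~\ref{proposition-kernel-nonplucker} with the explicit curve analysis of Proposition~\ref{disccurve} and the structural results for EPW sextics recalled in Appendix~\ref{section-epw}. By Proposition~\ref{proposition-kernel-nonplucker}, once either $Y_{A(X)}$ or $\Dis(X)$ is known to be a reduced sextic hypersurface, the two schemes coincide; so all three assertions of the corollary---the equality, the reducedness, and the integrality/normality (resp.\ hyperplane-plus-quintic) description---will collapse into a single case analysis.

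I would run this analysis via the invariant $\ell := \dim(A(X)\cap \bw{3}{V_5})$ classified in Remark~\ref{rem39}. When $\ell\le 3$---which, by~\eqref{eq22}, covers every strongly smooth $X$ with $\dim(X)\ge 3$ as well as every ordinary GM surface---Remark~\ref{rem39} states that the Lagrangian $A(X)$ contains no decomposable vectors. In that regime, O'Grady's theorem recalled in Appendix~\ref{section-epw} asserts that $Y_{A(X)}$ is an integral normal sextic hypersurface; applying Proposition~\ref{proposition-kernel-nonplucker} we then transfer the equality and normality to $\Dis(X)$.

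The remaining cases concern the small-dimensional strata. When $\ell=4$, $X$ is either an ordinary GM curve or a special GM surface. For ordinary GM curves, Proposition~\ref{disccurve} directly gives that $\Dis(X)$ is a geometrically integral normal sextic. For special GM surfaces, Lemma~\ref{lemma-disc-ord-spe} gives $\Dis(X)=\Dis(X_\ord)$; since strong smoothness of $X$ implies that the branch locus $X_\ord\subset M'_X$ of the Gushel double cover is a smooth ordinary GM curve, we reduce again to the previous case. When $\ell=5$, $X$ is a bielliptic curve, and the second half of Proposition~\ref{disccurve} delivers precisely the decomposition $\Dis(X)=\P(V_5)\cup D$ with $D$ integral of degree five; Proposition~\ref{proposition-kernel-nonplucker} then transfers this structure to $Y_{A(X)}$, establishing the ``unless'' clause.

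The main obstacle I foresee is invoking the correct form of the EPW statement in the regime $\ell\le 3$: one must check that ``$A$ contains no decomposable vectors'' is exactly the hypothesis under which $Y_A$ is shown to be integral and normal in Appendix~\ref{section-epw}. This should be a direct citation rather than a new argument, and the only other minor point---that $X_\ord$ is actually a smooth ordinary GM curve when $X$ is a strongly smooth special GM surface---follows from standard etale behaviour of the Gushel double cover on the complement of its branch locus, combined with Proposition~\ref{gm-curves-surfaces}.
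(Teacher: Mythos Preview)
Your proposal is correct and follows essentially the same route as the paper's proof: both run the case analysis on $\ell=\dim(A(X)\cap\bw3{V_5})$, invoke Theorem~\ref{theorem-ogrady-stratification} for $\ell\le 3$, and use Proposition~\ref{disccurve} together with Lemma~\ref{lemma-disc-ord-spe} for $\ell\in\{4,5\}$. The only cosmetic difference is that, for the smoothness of $X_\ord$ when $X$ is a strongly smooth special GM surface, the paper relies on the observation (just after Definition~\ref{definition-strongly-smooth}) that $X$ is strongly smooth if and only if $X_\opp$ is, rather than on an \'etale-locus argument.
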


\begin{proof}
As in Remark \ref{rem39}, let us set $\ell:=\dim (A(X) \cap \bw{3}{V_5})$ (this is $5-\dim(X)$ if $X$ is ordinary, and $6-\dim(X)$ if $X$ is special).

If $\ell\le 3$, we saw in Remark \ref{rem39} that  {$A(X)$ contains no decomposable vectors, hence}~$Y_{A(X)}$ is an integral normal sextic hypersurface 
whose singularities are described in Theorem~\ref{theorem-ogrady-stratification}. By Proposition \ref{proposition-kernel-nonplucker}, it is equal to $\Dis(X)$.

 If $\ell=4$ and $X$ is an ordinary GM curve, $\Dis(X)$ is a reduced  hypersurface  (Proposition~\ref{disccurve}). 
By Proposition \ref{proposition-kernel-nonplucker} again, it is equal to $Y_{A(X)_0}$. If $X$ is a special GM surface 
then, by Lemma~\ref{lemma-disc-ord-spe}, its discriminant locus equals $\Dis(X_\ord)$ which is an ordinary GM curve, and the previous argument applies.

Finally, if $\ell = 5$, then $X$ is a special (\ie, bielliptic) GM curve, and the statements follow from Proposition~\ref{disccurve} 
 {in the same way.}
\end{proof}

\begin{rema}\label{rem312}
Let $X$ be a    strongly smooth complex GM curve, with Pl\"ucker hyperplane $V_5 := V_5(X) \subset V_6$ and associated Lagrangian $\An = A(X)_0$.

{\em When $X$ is ordinary,   $V_5 $ is the only point of $Y^{\ge 4}_{\An^\bot}$.}  
Indeed,
$\P(\An)$ contains exactly 5 decomposable vectors $\bw3{V_{3,1}},\dots,\bw3{V_{3,5}}$, 
which are all in $\bw3{V_5}$ (Remark~\ref{rem39}). Moreover, by \cite[(5.10)]{ah}   or \cite[(2.3.2)]{og6}, 
we have $V_5=V_{3,i}+V_{3,j}$ for all $i\ne j$.
 Assume that $V'_5\subset V_6$ is another hyperplane in $Y^{\ge 4}_{\An^\bot}$, so that  $\dim(\An\cap\bw3{V'_5})\ge 4$. 
The points of the scheme $M':=\P(\An\cap\bw3{V'_5})\cap \Gr(3,V'_5)$ must be among $V_{3,1},\dots,V_{3,5}$. This implies $V'_5=V_5$, unless 
  $M' $ is a single point, say $w:=V_{3,1}$, with multiplicity 5. Since $M'$ 
is a dimensionally transverse intersection, one   checks, using its Koszul resolution  in $\Gr(3,V'_5)$  and Kodaira vanishing, that its linear span is $\P(\An\cap\bw3{V'_5})$. 
 This implies $\P(\An\cap\bw{3}{V'_5}) \subset \T_{\Gr(3,V'_5),w}$. But    the 3-plane $\P(\An\cap\bw{3}{V'_5})$ must then meet 
the 4-dimensional Schubert cycle $\Gr(3,V'_5) \cap \T_{\Gr(3,V'_5),w}  = \cone{}(\PP^2\times\PP^1) $ along a curve, which consists of decomposable vectors 
in $\P(\An)$. This is a contradiction. 

{\em When $X$ is special (\ie, bielliptic),   $V_5 $ is the only point of $Y^{\ge 5}_{\An^\bot}$.}  
Indeed, the set $\Theta_{\An}$ of decomposable vectors in $\P(\An)$ is a smooth elliptic curve contained in $\bw3{V_5}$ (Remark~\ref{rem39}).
If $V'_5\subset V_6$ is another point of   $Y^{\ge 5}_{\An^\bot}$, the scheme $\P(\An\cap\bw3{V'_5})\cap \Gr(3,V'_5)$ has everywhere dimension $\ge 1$ and is   contained in $\Theta_{\An}$. These two schemes are thus equal,   hence  contained in $\Gr(3,V_5)\cap \Gr(3,V'_5)=\Gr(3,V_5\cap V'_5) $. Since the linear span of $\Theta_{\An}$ has dimension 4, this implies $V_5=V'_5$.

By  {Corollary}~\ref{corollary:isom-gm}(b), the curve   $X$  {is} determined (up to isomorphism) by $A(X)$ and the hyperplane $V_5\subset V_6$.  In both cases, it is therefore uniquely determined  by $A(X)$. 
In fact,  {\em $X$ is also uniquely determined (up to isomorphism) by the sextic $Y_{A(X)}$.} When $X$ is  special, this is   because $\P(V_5)$ is the only degree-1 component of $Y_{A(X)}$ (Propositions \ref{disccurve} and \ref{proposition-kernel-nonplucker}).
When $X$ is ordinary,   the singular locus of $Y_{A(X)}$ is the union of 5 planes (one for each decomposable vector in $A$; see Remark \ref{rem39} and   \eqref{singya}), 
an integral surface $S$ coming from the singular locus of the theta divisor of the Jacobian of  $X$  (see \cite[Theorem~1.4]{ah}; one checks that it has degree~30 and is contained in $Y^{\ge 2}_{A(X)}$) 
and another irreducible surface of degree 10, also contained in~$Y^{\ge 2}_{A(X)}$.
Moreover, the intersection of $S $ with each of these 5 planes is a plane sextic curve whose normalization is $X$ (this follows from \cite[footnote~1, p.~174]{ah}).  
 \end{rema}

 When $\k = \C$, we can restate the results of  {Corollary}~\ref{corollary:isom-gm} in terms of the EPW sextic~$Y_{\An(X)}$.
The group $\PGL(V_6)_{\An,V_5}$ was defined in~\eqref{pgl-av}; we define also
\begin{equation}\label{pgl-avv}
\PGL(V_6)_{Y_{\An},V_5} :=  \{ g \in \PGL(V_6) \mid   g  (Y_{\An}) = Y_{\An}\ , g (V_5) = V_5 \}. 
\end{equation}  
 
\begin{prop}\label{corollary-aut-x-aut-a}
Let $(X,H)$  and $(X',H')$ be  {normal} lci complex polarized GM varieties,   with  corresponding Lagrangian data
$(V_6,V_5,\An)$ and $(V_6',V_5',\An')$. 
\begin{itemize}
\item[\rm (a)]
Any isomorphism $\phi\colon (X,H) \isomto (X',H')$ induces an isomorphism 
$V_6  \isomto V_6'$ 
which takes     $V_5 $ to $V_5'$ and
$Y_{\An}$ to~$Y_{\An'}$. 
\item[\rm (b)] 
If $X$  is smooth and  $X$ and $X'$ have   same type   and same dimension $\ge 3$,   
every isomorphism 
$\varphi \colon V_6  \isomto V_6'$ that takes $V_5 $ to $V_5'$ and
$Y_{\An}$ to~$Y_{\An'}$ is induced
by an isomorphism between $(X,H)$ and $(X',H')$. 
\item[\rm (c)]
 If $X$ is smooth of dimension $\ge 3$, we have $\PGL(V_6)_{\An,V_5} = \PGL(V_6)_{Y_{\An},V_5}$ 
 and the group $\Aut(X)$ is finite.
\item[\rm (d)]
If $X$  {is smooth of} dimension $\ge 3$, the group $\Aut(X)$ is trivial if $X$ is very general  ordinary   and $\Aut(X) \cong \Z/2$ if $X$ is  very general special.
 \end{itemize}
\end{prop}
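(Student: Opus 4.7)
The plan is to deduce all four parts from Corollary~\ref{corollary:isom-gm} combined with the reconstruction theorem for EPW sextics recalled in Appendix~\ref{section-epw}: when a Lagrangian subspace $\An \subset \bw{3}{V_6}$ contains no decomposable vectors, the sextic $Y_\An \subset \P(V_6)$ determines $\An$ uniquely among Lagrangians in $\bw{3}{V_6}$.

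For~(a), Corollary~\ref{corollary:isom-gm}(a) provides a linear isomorphism $V_6 \isomto V_6'$ sending $V_5$ to $V_5'$ and $\An$ to $\An'$, and the $\GL(V_6)$-equivariance of the construction $\An \mapsto Y_\An$ automatically sends $Y_\An$ to $Y_{\An'}$. For~(b), since $X$ is smooth of dimension $\ge 3$, Remark~\ref{rem39} gives $\dim(\An \cap \bw{3}{V_5}) \le 3$ and shows that $\An = \An(X)$ contains no decomposable vectors; the locus of decomposable vectors in $\An$ matches under $\varphi$ with the corresponding locus in $\An'$, so $\An'$ also contains no decomposable vectors. By the reconstruction theorem, $\varphi$ must send $\An$ to $\An'$, and Corollary~\ref{corollary:isom-gm}(b), together with the equal-type hypothesis, then lifts $\varphi$ to an isomorphism of polarized GM varieties $(X,H) \isomto (X',H')$.

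For~(c), the equality $\PGL(V_6)_{\An,V_5} = \PGL(V_6)_{Y_\An,V_5}$ follows from~(b) applied to $X' = X$: any linear automorphism preserving $Y_\An$ must preserve~$\An$. By Lemma~\ref{lemma:pic-gm}, $\Aut(X) = \Aut_H(X)$, so by the exact sequences of Corollary~\ref{corollary:isom-gm}(c${}'$)(c${}''$), $\Aut(X)$ embeds, up to a kernel of order at most $2$, into~$\PGL(V_6)_{\An,V_5}$. To conclude finiteness I would show that the Lie algebra $\{\varphi \in \mathfrak{sl}(V_6) : (\bw{3}{\varphi})(\An) \subseteq \An,\ \varphi(V_5) \subseteq V_5\}$ vanishes when $\An$ contains no decomposable vectors; equivalently, one can verify $H^0(X,T_X) = 0$ for smooth GM varieties of dimension $\ge 3$ directly from the resolution~\eqref{eq:resolution-x} of $\cO_X$ in $\P(W)$ together with standard vanishing on projective space.

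For~(d), a dimension count settles the matter: with $V_6$ fixed, the variety of pairs $(V_5, \An)$ has dimension $5 + \dim\LGr(\bw{3}{V_6}) = 5 + 55 = 60$, while $\dim\PGL(V_6) = 35$. The $\PGL(V_6)$-action on this parameter space is generically free on the open locus of Lagrangians with no decomposable vectors (by the infinitesimal rigidity statement used in~(c)), so for very general $(V_5, \An)$ the stabilizer $\PGL(V_6)_{\An,V_5}$ is trivial. Combined with Corollary~\ref{corollary:isom-gm}(c${}'$)(c${}''$), this yields $\Aut(X) = 1$ for very general ordinary $X$ and $\Aut(X) \cong \Z/2$ for very general special $X$. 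The hardest step is part~(c): specifically, the two inputs from O'Grady's theory---the reconstruction of $\An$ from $Y_\An$ and the infinitesimal rigidity of Lagrangians with no decomposable vectors---are by now standard consequences of the material in Appendix~\ref{section-epw}, but both are genuinely nontrivial and require care.
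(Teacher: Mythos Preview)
Parts~(a) and~(b) match the paper's argument. One quibble in~(b): the sentence ``the locus of decomposable vectors in $\An$ matches under $\varphi$ with the corresponding locus in $\An'$'' is circular, since you do not yet know that $\bw3\varphi$ takes $\An$ to $\An'$. This is harmless, because the reconstruction result you invoke (Proposition~\ref{prop33}(b) in the paper) already deduces that $\An'$ has no decomposable vectors directly from the isomorphism $Y_{\An} \cong Y_{\An'}$, via the structure of $\Sing(Y_{\An'})$.

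For~(c), your route via $H^0(X,T_X)=0$ is legitimate, but as the paper itself remarks immediately after the proposition, these are ``lengthy computations'' carried out in \cite{dim} and \cite{dims}; they are not a quick consequence of the resolution~\eqref{eq:resolution-x}. The paper instead observes that $\PGL(V_6)_{\An,V_5} \subset \PGL(V_6)_{\An}$ and invokes Proposition~\ref{autoepw}(a): O'Grady showed that a Lagrangian with no decomposable vectors is a GIT-stable point of $\LGr(\bw3V_6)$, hence has finite stabilizer in $\PGL(V_6)$. This is considerably shorter.

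Part~(d) has a genuine gap. Infinitesimal rigidity (vanishing of the Lie algebra of the stabilizer, equivalently $H^0(X,T_X)=0$) only tells you that every stabilizer is \emph{finite}; it does not imply that the generic stabilizer is \emph{trivial}. A faithful action of a connected reductive group can have a nontrivial finite generic stabilizer---for instance, $\PGL_2$ acting on binary quartics has generic stabilizer $(\Z/2)^2$. Your dimension count is compatible with generic freeness but does not establish it. The paper instead appeals to Proposition~\ref{autoepw}(b), whose proof passes through the hyperk\"ahler fourfold $\wtilde{Y}_{\An}$: one lifts an automorphism of $Y_{\An}$ to $\wtilde{Y}_{\An}$ and uses the injectivity of $\Aut(\wtilde{Y}_{\An}) \to \Aut(H^2(\wtilde{Y}_{\An},\Z))$ (a nontrivial input going back to Beauville) together with the fact that for very general $\An$ the induced action on $h^\perp \subset H^2$ must be scalar. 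Some argument of this depth is required; a pure dimension count will not suffice.
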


\begin{proof}
(a) follows from  {Corollary}~\ref{corollary:isom-gm}  {and the definition of $Y_{\An}$}. 

(b)  By Theorem~\ref{theorem-singdec}, $\An$ contains no decomposable vectors and 
by Proposition \ref{prop33}(b), we have $(\bw3\varphi)(\An ) = \An'$.  Item (b) then follows from  {Corollary}~\ref{corollary:isom-gm}.
 
(c)   The equality
 $\PGL(V_6)_{\An,V_5} = \PGL(V_6)_{Y_{\An},V_5}$ is proved by the argument in part (b).
Furthermore, $\Aut_H(X) = \Aut(X)$ when $X$ is smooth and $\dim (X) \ge 3$ 
 {(Lemma~\ref{lemma:pic-gm})}.
Therefore,  to show that $\Aut(X)$ is finite, it is enough by  {Corollary}~\ref{corollary:isom-gm}(c) to show that $\PGL(V_6)_{\An,V_5}$ is finite. 
But the latter is a subgroup of the group $\PGL(V_6)_{\An}$ defined in \eqref{pgla} and that group is finite by Proposition~\ref{autoepw}(a).
 
(d)  It is enough to show that $\PGL(V_6)_{\An} = \Aut(Y_{\An})$ is trivial for a very general Lagrangian subspace $\An \subset \bw3V_6$. This  is Proposition \ref{autoepw}(b).
 \end{proof}

Lengthy  computations   also show directly that for any smooth complex GM variety $X$ of dimension $\ge 3$, one has $H^0(X,T_X)=0$ 
(these computations were done in \cite[Theorem 3.4]{dim} for ordinary GM threefolds  and in \cite[Proposition 4.1]{dims} for   GM fourfolds). 
We recover the fact that the automorphism group of $ X $, being discrete, is finite, as asserted in the proposition.

\subsection{Period partners}

As we saw in the previous  section, many properties of a GM variety depend only 
on $\An(X)$---the even part of the corresponding Lagrangian subspace. More evidence will come further on.

\begin{defi}\label{pepa}
 {Normal} lci GM varieties $X_1$ and $X_2$ are {\sf period partners} if 
$\dim (X_1) = \dim (X_2)$ and there exists an isomorphism $\varphi \colon V_6(X_1) \isomto V_6(X_2)$ 
such that $(\bw3\varphi)(A(X_1)) = A(X_2)$. 
\end{defi}

\begin{rema}\label{rem326}
The last condition in the definition implies $\varphi(Y_{A(X_1)}) = Y_{A(X_2)}$. 
Conversely, assume that the lci   GM varieties $X_1$ and $X_2$ are such that $\dim (X_1) = \dim (X_2)\ge 3$, that
there exists an isomorphism $\varphi \colon V_6(X_1) \isomto V_6(X_2)$ such that $\varphi(Y_{A(X_1)}) = Y_{A(X_2)}$,   
and that $X_1$ is smooth. Proposition \ref{prop33} then shows that  $X_1$ and $X_2$ are period partners and 
that $X_2$ is also smooth by Theorem~\ref{theorem-singdec}.
\end{rema}

Period partners may have different type---an ordinary GM variety may be a period partner of a special GM variety.

\begin{rema}
The name ``period partners'' suggests a relation with  the period map. In the article \cite{DK}, we   show that indeed,   smooth GM varieties of the same dimension are period partners if and only
if they 
 are in the same fiber of an  appropriately defined  period map from the moduli space of GM varieties  to
  an appropriate period domain.
\end{rema}

A major difficulty in realizing the program of the remark is the construction of the moduli space of GM varieties.
While this space is not available, we can formulate a naive description of the fiber of the period map as follows. 
The dual EPW stratification  {is} defined in ~\eqref{dualYA} and the group $\PGL(V_6)_{\An}$ is defined in~\eqref{pgla}.

\begin{theo}\label{lemma-isoclasses-pp}
Let $X$  be a smooth GM variety of dimension $n \ge 3$ defined over a quadratically closed field, with corresponding Lagrangian data $(V_6,V_5,\An)$.
There is a natural bijection (defined in the proof) between the set of isomorphism classes of period partners of $X$ and the set 
$\bigl(Y_{\An^\bot}^{5-n} \sqcup Y_{\An^\bot}^{6-n}\bigr)/\PGL(V_6)_{\An}$.
\end{theo}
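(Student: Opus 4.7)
The plan is to construct a map $\Phi$ from the set of isomorphism classes of period partners of $X$ into the target quotient, and to verify it is a bijection. The source of the disjoint union is Proposition~\ref{lemma-wx}(c): an $n$-dimensional ordinary lci GM variety with Lagrangian data $(V_6,V_5,\An)$ satisfies $\dim(\An\cap\bw3V_5)=5-n$, whereas a special one satisfies $\dim(\An\cap\bw3V_5)=6-n$. Moreover, if $V_5^\bot\subset V_6^\vee$ denotes the line annihilating $V_5$, the canonical contraction pairing identifies $\bw3V_5\subset\bw3V_6$ with the annihilator of $V_5^\bot\wedge\bw2V_6^\vee\subset\bw3V_6^\vee$ (both have dimension $10$, and the pairing vanishes on the product). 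This yields the equality $\dim(\An\cap\bw3V_5)=\dim(\An^\bot\cap(V_5^\bot\wedge\bw2V_6^\vee))$, so the EPW stratum $Y_{\An^\bot}^k\subset\P(V_6^\vee)$ is precisely the locus of hyperplanes $V_5\subset V_6$ with $\dim(\An\cap\bw3V_5)=k$.

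Given a period partner $X'$, choose an isomorphism $\varphi\colon V_6\isomto V_6(X')$ with $(\bw3\varphi)(\An)=\An(X')$, as provided by Definition~\ref{pepa}, and set $\Phi(X'):=[\varphi^{-1}(V_5(X'))]$. By the identification above combined with Proposition~\ref{lemma-wx}(c), this point lies in $Y_{\An^\bot}^{5-n}$ if $X'$ is ordinary and in $Y_{\An^\bot}^{6-n}$ if $X'$ is special. Two valid choices of $\varphi$ differ by composition with an element of $\PGL(V_6)_{\An}$, so the class $\Phi(X')$ is well-defined in the quotient, and isomorphic period partners give the same class. For injectivity, if $\Phi(X_1')=\Phi(X_2')$, then after replacing $\varphi_1$ by $\varphi_1\circ g^{-1}$ for a suitable $g\in\PGL(V_6)_{\An}$ one can arrange $\varphi_1^{-1}(V_5(X_1'))=\varphi_2^{-1}(V_5(X_2'))$; the composition $\varphi_2\varphi_1^{-1}$ then sends $\An(X_1')$ onto $\An(X_2')$ and $V_5(X_1')$ onto $V_5(X_2')$. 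The disjointness of the strata forces $X_1'$ and $X_2'$ to share a type, so Corollary~\ref{corollary:isom-gm}(b)—invoked using the quadratic closure of $\k$ for the special case—produces an isomorphism $X_1'\isom X_2'$.

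For surjectivity, given $[V_5']\in Y_{\An^\bot}^{5-n}$ (resp.\ $Y_{\An^\bot}^{6-n}$), set $X':=X_{\An,0,V_5'}$ (resp.\ $X_{\An,1,V_5'}$) and verify that it is a smooth GM variety of dimension $n$ via Theorem~\ref{theorem-singdec}. Since $X$ is smooth of dimension $n\ge 3$, Remark~\ref{rem39} gives $\ell\le 3$ and hence that $\An$ contains no decomposable vectors. Consequently $\P(\An\cap\bw3V_5')\cap\Gr(3,V_5')\subset\P(\An)\cap\Gr(3,V_6)=\varnothing$ is vacuously a smooth dimensionally transverse intersection, and the inequality $\dim(\An\cap\bw3V_5')+\dim(A_1\cap L)\le 5$ of Theorem~\ref{theorem-singdec}(ii) holds trivially (the first term is at most $3$ and the second at most~$1$). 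Theorem~\ref{theorem-singdec} therefore guarantees that $X'$ is a strongly smooth GM variety, of dimension $n$ by Proposition~\ref{lemma-wx}(c); by construction $\Phi(X')=[V_5']$.

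The one point requiring care throughout is that an ordinary period partner is never isomorphic to a special one, so that the disjoint union in the target genuinely parametrizes distinct isomorphism classes. For smooth GM varieties of dimension $\ge 3$, the polarization is uniquely determined by the underlying variety (Lemma~\ref{lemma:pic-gm}), the associated GM data is then unique up to isomorphism by Theorem~\ref{theorem:gm-var-data}, and the type (encoded by whether $\dim\Ker(\mu)$ equals $0$ or $1$) is manifestly an invariant of that data. The main obstacle is therefore not geometric but notational: checking that the various identifications between hyperplanes in $V_6$, lines in $V_6^\vee$, and points of EPW strata are respected by the construction and by the ambient $\PGL(V_6)_{\An}$-action.
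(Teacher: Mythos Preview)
Your argument is correct and follows essentially the same route as the paper: identify the stratum via~\eqref{eq22} (your Proposition~\ref{lemma-wx}(c)) and the dual description~\eqref{dualYA}, use Corollary~\ref{corollary:isom-gm}(b) for well-definedness and injectivity, and Theorem~\ref{theorem-singdec} for surjectivity. The paper compresses this into a few lines by invoking Proposition~\ref{corollary-aut-x-aut-a} in place of Corollary~\ref{corollary:isom-gm}, but the substance is identical.
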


This theorem gives a precise answer to  the question asked at the end of \cite[Section~4.5]{iliev-manivel}.

\begin{proof}
By Proposition~\ref{corollary-aut-x-aut-a}, an lci GM variety $X'$ is a period partner of $X$ 
if and only if there is an isomorphism $\varphi\colon V_6(X') \isomto V_6$ such that $(\bw3\varphi)(\An(X')) = \An$,
and $\varphi(V_5(X')) \in Y_{\An^\bot}^{5-n}$ if $X'$ is ordinary, or
$\varphi(V_5(X')) \in Y_{\An^\bot}^{6-n}$ if $X'$ is special (by \eqref{eq22}, this  ensures the equality of dimensions).
Since   such an $X'$ is automatically a smooth GM variety by Theorem~\ref{theorem-singdec}, and the isomorphism 
$\varphi$ is defined up to the action of $\PGL(V_6)_{\An}$, this proves the lemma.
\end{proof}

In Section~\ref{section-birationalities}, we show that  over $\C$, period partners are always birationally isomorphic. 
In the   article~\cite{DK}, we show that they have the same primitive Hodge structure in the middle cohomology  (in case of even dimensions). 
Finally, in the joint work  {\cite{KP}} of the second author with Alexander Perry, we  {discuss a relation between derived categories of period partners}.

\subsection{Duality}

We introduce here a notion of duality for GM varieties. It is similar in flavor
to the notion of period  partnership  and we will see later that dual varieties share
many geometric  properties, as do  period partners.

\begin{defi}\label{dual}
 {Normal} lci GM varieties $X $ and $X'$ are called {\sf dual} if 
$\dim (X ) = \dim (X')$
and there exists an isomorphism $\varphi \colon V_6(X ) \isomto V_6(X')^\vee$ 
such that $(\bw3\varphi)(A(X )) = A(X' )^\perp$.
\end{defi}

The last condition in the definition implies $\varphi(Y_{A(X )}) = Y_{A(X')^\perp}$.
Conversely, as in Remark~\ref{rem326}, if $\dim (X) = \dim (X')\ge 3$, if there exists an isomorphism $\varphi \colon V_6(X ) \isomto V_6(X')^\vee$ 
such that $\varphi(Y_{A(X )}) = Y_{A(X')^\perp}$, and if $X $ is smooth,   $X'$ is also  smooth and dual to $X $.

Duality is a symmetric relation. Moreover, if $X'$ and $X''$ are both dual to $X$, they are period partners. Analogously,
if $X'$ is dual to $X$ and $X''$ is a period partner of $X'$ then $X''$ is dual to $X$.

One can give a description of all possible duals of a given GM variety analogous 
to the description of the set of all period partners.

\begin{theo}\label{lemma-isoclasses-dual}
Let $X$ be a smooth GM variety of dimension $n \ge 3$ defined over a quadratically closed field, with  corresponding Lagrangian data $(V_6,V_5,\An)$.
There is a natural bijection between the set of isomorphism classes of its dual varieties  and the set 
$\left(Y_{\An}^{5-n} \sqcup Y_{\An}^{6-n}\right)/\PGL(V_6)_{\An}$.
\end{theo}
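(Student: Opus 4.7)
The proof runs parallel to that of Theorem~\ref{lemma-isoclasses-pp}, transporting everything across the duality $(V_6, A) \leftrightarrow (V_6^\vee, A^\perp)$. Given a dual variety $X'$ of $X$, Definition~\ref{dual} supplies an isomorphism $\varphi\colon V_6 \isomto V_6(X')^\vee$ with $(\bw3\varphi)(A) = A(X')^\perp$. The dual $\varphi^\vee\colon V_6(X') \isomto V_6^\vee$ then identifies the Lagrangian data of $X'$ with $(V_6^\vee, \bar V_5, A^\perp)$, where $\bar V_5 := \varphi^\vee(V_5(X')) \subset V_6^\vee$ is a hyperplane. Taking orthogonal complement converts it into a line $V_1 := \bar V_5^\perp \subset V_6$, \ie, a point of $\P(V_6)$. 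This defines the candidate forward map $X' \mapsto V_1$.

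Applying Proposition~\ref{lemma-wx}(c) to the transported data gives $\dim(A^\perp \cap \bw3{\bar V_5}) = 5-n$ in the ordinary case and $=6-n$ in the special case. Combining this with the EPW characterization of strata in $\P(V_6)$---namely, for a line $V_1 \subset V_6$, one has $V_1 \in Y_A^k$ iff $\dim(A^\perp \cap \bw3{V_1^\perp}) = k$ (the dual analogue of the relation $[V_5] \in Y_{A^\perp}^k \Leftrightarrow \dim(A \cap \bw3 V_5) = k$ exploited in Remark~\ref{rem312} and Theorem~\ref{lemma-isoclasses-pp})---places $V_1$ in $Y_A^{5-n}$ when $X'$ is ordinary and in $Y_A^{6-n}$ when $X'$ is special. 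Since $\varphi$ is determined by the isomorphism class of $X'$ only up to composition with an element of $\PGL(V_6)_A$, the resulting orbit of $V_1$ under $\PGL(V_6)_A$ is well defined.

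Conversely, given $V_1 \in Y_A^{5-n} \sqcup Y_A^{6-n}$, one forms the Lagrangian data $(V_6^\vee, V_1^\perp, A^\perp)$, chooses $A_1 = 0$ if $V_1 \in Y_A^{5-n}$ and $A_1 = 1$ if $V_1 \in Y_A^{6-n}$, and applies Theorem~\ref{theorem:merged} to build an lci GM intersection $X'$ of dimension $n$ that is dual to $X$ by construction. Smoothness of $X'$ follows from Theorem~\ref{theorem-singdec}, provided $A^\perp$ contains no decomposable vectors; this last property is inherited from the absence of decomposable vectors in $A$ (which holds since $X$ is smooth of dimension $\ge 3$) via the symmetric properties of EPW sextics under the $A \leftrightarrow A^\perp$ duality recalled in Appendix~\ref{section-epw}. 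The two constructions are visibly mutually inverse modulo the $\PGL(V_6)_A$-action, yielding the claimed bijection. The main subtlety is precisely the above translation of the stratum $Y_A^k$ into a dimensional condition on $A^\perp \cap \bw3{V_1^\perp}$; once that is in hand, the argument is formally identical to the period-partner proof after swapping $A$ with $A^\perp$ and $V_6$ with $V_6^\vee$.
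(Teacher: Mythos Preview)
Your proof is correct and follows exactly the approach the paper intends: the paper's own proof is a single sentence (``The proof is identical to that of Theorem~\ref{lemma-isoclasses-pp}''), and you have simply unwound what ``identical'' means once one replaces $(V_6,A,V_5)$ by $(V_6^\vee,A^\perp,V_1^\perp)$. Your key translation $V_1 \in Y_A^k \Leftrightarrow \dim(A^\perp \cap \bw3{V_1^\perp}) = k$ is precisely~\eqref{dualYA} applied with $A$ replaced by $A^\perp$, and the absence of decomposable vectors in $A^\perp$ is the fact recorded in Section~\ref{depw}.
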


\begin{proof}
The proof is identical to that of Theorem~\ref{lemma-isoclasses-pp}.
 \end{proof}

Let $X$ and $X'$ be   dual  smooth GM varieties. Set $V_6 := V_6(X)$ and choose an isomorphism
$V_6(X') \cong V_6^\vee$. The hyperplane $V_5 := V_5(X) \subset V_6$ gives a line $V_5^\perp \subset V_6^\vee$ 
and conversely, the hyperplane $V'_5 := V_5(X') \subset V_6^\vee$ gives a line ${V'_5}^\perp \subset V_6$.
Since $V_6$ parameterizes quadrics cutting out $X$, and $V_6^\vee$ parameterizes those cutting out $X'$, these lines define quadrics
\begin{equation*}
Q(X) \subset \P(W(X')) \subset \P(\bw2V'_5 \oplus \k)
\quad\text{and}\quad
Q(X') \subset \P(W(X)) \subset \P(\bw2V_5 \oplus \k).
\end{equation*}
 {Consider also the quadrics
\begin{equation*}
Q_0(X) := Q(X) \cap \P(\bw2V'_5) \subset \P(\bw2V'_5)
\quad\text{and}\quad
Q_0(X') := Q(X') \cap \P(\bw2V_5) \subset \P(\bw2V_5).
\end{equation*}} 

\begin{prop}\label{prop326}
In the above setup, assume that the line ${V'_5}^\perp$ is not contained in the hyperplane $V_5 \subset V_6$.
Then the duality between $V_6$ and $V_6^\vee$ induces a duality between the spaces $V'_5$ and $V_5$ under which  the quadrics   {$Q_0(X)$ and $Q_0(X')$}
are projectively dual.
\end{prop}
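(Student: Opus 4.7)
The plan is to derive the projective duality by an explicit linear-algebra computation based on the description of the quadrics $Q(v_0)$ and $Q(v_0')$ via the Lagrangian-quadratic correspondence of Appendix~\ref{section-lagrangians-and-quadrics}, applied to $A = A(X) \subset \bw{3}{V_6}$ and $A^\perp = A(X')^\perp \subset \bw{3}{V_6^\vee}$. First, the hypothesis $(V_5')^\perp \not\subset V_5$ yields direct sum decompositions
\begin{equation*}
V_6 = V_5 \oplus (V_5')^\perp, \qquad V_6^\vee = V_5' \oplus V_5^\perp;
\end{equation*}
the canonical pairing $V_6 \otimes V_6^\vee \to \k$ then restricts to a non-degenerate pairing on $V_5 \otimes V_5'$, which gives the duality $V_5' \cong V_5^\vee$ claimed in the proposition and an induced duality $\bw{2}{V_5'} \cong (\bw{2}{V_5})^\vee$ that makes the statement of projective duality meaningful.

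I would then choose generators $v_0 \in (V_5')^\perp$ and $v_0' \in V_5^\perp$ normalized by $\langle v_0, v_0' \rangle = 1$. Since $v_0 \notin V_5$ and $v_0' \notin V_5'$, one has $Q(X') = Q(v_0)$ and $Q(X) = Q(v_0')$, and the decompositions
\begin{equation*}
\bw{3}{V_6} = \bw{3}{V_5} \oplus (v_0 \wedge \bw{2}{V_5}), \qquad \bw{3}{V_6^\vee} = \bw{3}{V_5'} \oplus (v_0' \wedge \bw{2}{V_5'})
\end{equation*}
are Lagrangian for the wedge-product symplectic form. A direct calculation using the choice of $v_0, v_0'$ shows that the canonical pairing $\bw{3}{V_6} \otimes \bw{3}{V_6^\vee} \to \k$ vanishes on the cross products $(v_0 \wedge \bw{2}{V_5}) \otimes \bw{3}{V_5'}$ and $\bw{3}{V_5} \otimes (v_0' \wedge \bw{2}{V_5'})$, while restricting to perfect pairings on the two diagonal summands, compatibly with the dualities of the previous step. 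The Lagrangian-quadratic correspondence then identifies $Q_0(X')$ with the quadric on $\bw{2}{V_5}$ cut out by the quadratic form obtained by descending $A$ through these decompositions, and symmetrically for $Q_0(X)$ on $\bw{2}{V_5'}$.

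The main step is the following computation, which I would first carry out in the generic case $A \cap \bw{3}{V_5} = 0$: write $A$ as the graph of a linear map $f \colon \bw{2}{V_5} \to \bw{3}{V_5}$, so that the Lagrangian condition on $A$ translates into the symmetry of the bilinear form $b(\eta_1, \eta_2) := f(\eta_1) \wedge \eta_2 \in \det(V_5)$ on $\bw{2}{V_5}$, which cuts out $Q_0(X')$. Writing the condition $v_0' \wedge \eta' + \zeta' \in A^\perp$ explicitly via the orthogonality relations above yields $\eta' = -f^\vee(\zeta')$, where $f^\vee \colon \bw{3}{V_5'} \to \bw{2}{V_5'}$ is the transpose of $f$ under the dualities of the first step. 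Hence $A^\perp$ is the graph of $-(f^\vee)^{-1}$, and the bilinear form $b'$ cutting out $Q_0(X)$ on $\bw{2}{V_5'}$ is the inverse of $b$ (up to a sign that can be absorbed into the normalization of $\varepsilon$ or of $\langle v_0, v_0' \rangle$), which is precisely the statement that $Q_0(X)$ and $Q_0(X')$ are projectively dual.

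The main obstacle is the degenerate case $A \cap \bw{3}{V_5} \ne 0$, in which $f$ is not defined on all of $\bw{2}{V_5}$ and the two quadrics become cones. I plan to handle this using the isotropic reduction mechanism of Appendix~\ref{section-lagrangians-and-quadrics}: after quotienting the symplectic space $\bw{3}{V_6}$ by the isotropic subspace $A \cap \bw{3}{V_5}$ and restricting to its perpendicular, one recovers a reduced pair of Lagrangians to which the generic argument applies, and projective duality of quadrics in the general sense (relating a cone in $\PP(U)$ with vertex $\PP(K)$ to a quadric in $\PP(K^\perp) \subset \PP(U^\vee)$) is preserved under this reduction.
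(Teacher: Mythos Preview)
Your approach is correct and shares the same skeleton as the paper's proof: both identify $Q_0(X')$ and $Q_0(X)$ as the quadrics $Q_2^{A}$ and $Q_2^{A^\perp}$ associated with the Lagrangian decompositions $\bw{3}{V_6} = \bw{3}{V_5} \oplus (v_0 \wedge \bw{2}{V_5})$ and $\bw{3}{V_6^\vee} = \bw{3}{V_5'} \oplus (v_0' \wedge \bw{2}{V_5'})$, and both verify that the canonical pairing swaps the summands of these decompositions. The difference lies in how you conclude. You split into a generic case (writing $A$ as a graph and computing $A^\perp$ as the graph of the inverse) and a degenerate case (handled by isotropic reduction with respect to $A \cap \bw{3}{V_5}$). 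The paper instead observes that once the symplectic isomorphism $\bw{3}{V_6} \to \bw{3}{V_6^\vee}$ is shown to swap summands and carry $A$ to $A^\perp$, the quadric $Q_2^{A^\perp}$ is identified with $Q_1^{A}$, and then Proposition~\ref{lemma-lagrangian-quadratic}(b) directly gives that $Q_1^{A}$ and $Q_2^{A}$ are projectively dual for \emph{any} Lagrangian $A$, with no case analysis. Your graph computation is essentially a re-derivation of that proposition in the transversal case, and your degenerate-case reduction re-derives its extension to cones; invoking it as a black box is cleaner. Note also that the paper does use isotropic reduction, but for a different step: to pass from the full quadric $Q(X')$ on $W = W_0 \oplus W_1$ to its restriction $Q_0(X')$ on $W_0 \subset \bw{2}{V_5}$, by reducing the large symplectic space $\symv = \bw{3}{V_6} \oplus (\k \oplus L)$ with respect to the isotropic line $L$.
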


\begin{proof}
 The first statement is obvious. For the second, choose generators
 $v\in {V'_5}^\perp \subset V_6$ and $v' \in V_5^\perp \subset V_6^\vee$. By assumption, $v \notin V_5$ and $v' \notin V'_5$.
Consider the direct sum decomposition
\begin{equation*} 
\symv = 
 \bigl(\bw3{V_5} \oplus L) \oplus \bigl((v\wedge\bw2{V_5}) \oplus \k \bigr).
\end{equation*}
The quadric $Q(X') = Q(v)$ is the quadric corresponding to the Lagrangian subspace $\hat{A} \subset \symv$
via Proposition~\ref{lemma-lagrangian-quadratic}, denoted there by $Q_2^{\hat{A}}$. By Lemma~\ref{lemma-isotropic-reduction},
the quadric $Q_0(X')$ is the result of the isotropic reduction with respect to the isotropic subspace $L \subset \symv$.
Therefore, $Q_0(X')$ is the quadric corresponding to the Lagrangian subspace $\An \subset \symv_0 = \bw3V_6$
with respect to the direct sum decomposition
\begin{equation*}
\bw3V_6 =  \bw3{V_5} \oplus (v\wedge\bw2{V_5}),
\end{equation*}
\ie, $Q_0(X') = Q_2^{\An}$. The same argument shows   $Q_0(X) = Q_2^{\An^\perp}$ for the direct sum decomposition
\begin{equation*}
\bw3V_6^\vee =  \bw3{V'_5} \oplus (v'\wedge\bw2{V'_5}).
\end{equation*}
Finally, note  that the isomorphism $\bw3V_6 \isomto \bw3V_6^\vee$ given by the symplectic form interchanges the summands
of these decompositions. To  prove this, since the summands are Lagrangian,  it is enough to show that $v'\wedge \bw2V'_5$
annihilates $\bw3V_5$ and that $v\wedge \bw2V_5$ annihilates $\bw3V'_5$. This follows from the fact that $v'$
annihilates $V_5$ (by its definition) and that $v$ annihilates $V'_5$. 

{Since $X$ and $X'$ are dual,} this isomorphism also 
takes $\An$ to $\An^\perp$, hence identifies $Q_2^{\An^\perp}$ with $Q_1^{\An} \subset \P(\bw3V_5)$. The proposition follows, since
the quadrics $Q_1^{\An}$ and $Q_2^{\An}$ are projectively dual by Proposition~\ref{lemma-lagrangian-quadratic}.
 \end{proof}

 If $\k$ is quadratically closed, the variety $X$ is determined by the quadric $Q_0(X')$: if $X$ is ordinary, we have $X = \Gr(2,V_5) \cap Q_0(X')$,
and if $X$ is special,  $X = (\Gr(2,V_5) \cap Q_0(X'))^\spe$. Analogously, $X'$ is determined by the quadric $Q_0(X)$.~Proposition \ref{prop326} thus shows that   projective duality of quadrics governs duality of GM varieties.

\begin{rema}
One can also show that if  $X$ and $X'$ are both special GM varieties, the duality between $V_5$ and~$V'_5$ can be extended
to a duality between $\bw2V_5 \oplus \k$ and $\bw2V'_5 \oplus \k$ in such a way that the quadric
$Q(X') \subset \P(\bw2V_5 \oplus \k)$ is projectively dual to the quadric $Q(X) \subset \P(\bw2V'_5 \oplus \k)$.
\end{rema}

\section{Rationality and birationalities}\label{section-birationalities}

The goal of this section is to prove that  {smooth} complex  GM varieties $X_1$ and $X_2$ of dimension $\ge3$ which are either 
period partners in the sense of Definition \ref{pepa} (so that the  Lagrangians $A(X_1)$ and $A(X_2)$  are  isomorphic) 
or dual in the sense of Definition \ref{dual} (so that $A(X_2)$ is isomorphic to $A(X_1)^\perp$)  are birationally isomorphic.

The proof is based on the construction of two
quadric fibrations. Although all dimensions could be treated with our methods, we prefer, for the sake of simplicity, to prove first that all smooth complex GM varieties  of dimensions 5 or 6 are rational, and then   concentrate on the cases of dimensions 3 or 4, for which  birationality is really meaningful.

\subsection{Rationality of smooth complex GM varieties  of dimensions 5 or 6}

The rationality of a general complex GM fivefold  is explained in \cite[Section 5]{rot}.
We give a different argument, showing that all smooth GM fivefolds and sixfolds are rational.

\begin{lemm}\label{lemma-56fold-delpezzo}
Any smooth complex GM  of dimension $5$ or $6$ contains a smooth quintic del~Pezzo surface.
\end{lemm}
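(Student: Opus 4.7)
The plan is to reduce the six-dimensional and special five-dimensional cases to the ordinary case in dimensions four or five, and then to construct a del Pezzo inside such an ordinary GM variety via a vector bundle argument on a Grassmannian of six-planes. For the reduction: a smooth GM sixfold is automatically special (since smooth ordinary GM varieties have dimension at most five), and is realized as a double cover $X\to\Gr(2,V_5)$ branched along a smooth ordinary GM fivefold $Y$; similarly, a smooth special GM fivefold $X$ is a double cover of $M'_X=\Gr(2,V_5)\cap\P^8$ branched along a smooth ordinary GM fourfold $Y'$. In both cases the branch locus embeds into $X$ as the ramification locus, so any smooth del Pezzo quintic in $Y$ (resp.~$Y'$) produces one in $X$.

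For a smooth ordinary GM $n$-fold $X=\Gr(2,V_5)\cap\P(W)\cap Q$ with $n\in\{4,5\}$ and $\dim W=n+5$, I look for $\P^5\subset\P(W)$ such that $S:=\Gr(2,V_5)\cap\P^5$ is a smooth del Pezzo quintic contained in $Q$. The five restrictions of the Pl\"ucker quadrics span the ideal $I_2(S)\subset H^0(\P^5,\cO(2))$, so the condition $S\subset Q$ is the vanishing of the class of $Q|_{\P^5}$ in the $16$-dimensional quotient $H^0(\P^5,\cO(2))/I_2(S)$. On $Y:=\Gr(6,W)$ (of dimension $6(n-1)$), I introduce the tautological sub-bundle $\cW\subset W\otimes\cO_Y$, the rank-$5$ sub-bundle $\cI\subset\Sym^2\!\cW^\vee$ spanned by the Pl\"ucker quadrics restricted to $\P(\cW)$, and the rank-$16$ quotient $\cF:=\Sym^2\!\cW^\vee/\cI$. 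Then $Q$ induces a section $\sigma_Q$ of $\cF$ whose zero scheme $Z(\sigma_Q)\subset Y$ parametrizes the $\P^5$'s giving a del Pezzo in $X$.

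Non-emptiness of $Z(\sigma_Q)$ for every $Q$ follows from a properness argument on the universal incidence $\cE:=\{(Q',\P^5)\in\Sym^2\!W^\vee\times Y:\sigma_{Q'}(\P^5)=0\}$: this is an affine bundle over $Y$ of rank $\dim\Sym^2\!W^\vee-16$, the second projection $\cE\to\Sym^2\!W^\vee$ is proper (since $Y$ is projective), and, provided the generic fiber has the expected dimension $\dim Y-16\ge 2$, a dimension count forces the image to cover the whole target. This dimension hypothesis amounts to the non-vanishing of $c_{16}(\cF)$ in the Chow ring of $Y$. A generic element of $Z(\sigma_Q)$ then yields a smooth del Pezzo, since the closed locus in $Y$ of $\P^5$'s meeting $\Gr(2,V_5)$ non-transversely or in a singular scheme has codimension at least one and, by a second dimension count, cannot contain all of $Z(\sigma_Q)$.

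The main obstacle is verifying $c_{16}(\cF)\ne 0$: using $c(\cF)=c(\Sym^2\!\cW^\vee)/c(\cI)$ with $\cI$ trivial of rank $5$ on a suitable open subset, this reduces to a Schubert-calculus computation on $\Gr(6,W)$ which, while elementary, involves many Chern roots. Alternatively one may sidestep it by producing an explicit smooth pair $(X_0,S_0)$ with $S_0\subset X_0$ (e.g.\ by choosing $\P^5_0$ so that $S_0=\Gr(2,V_5)\cap\P^5_0$ is smooth and then a generic $Q_0$ vanishing on $S_0$) and propagating existence to every smooth $X$ by semi-continuity, using that $H^1(S,N_{S/X})$ vanishes (which follows from standard normal-bundle sequences) and that the moduli of smooth GM varieties in each type is irreducible.
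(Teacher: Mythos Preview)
Your reduction of the sixfold and special-fivefold cases to ordinary GM varieties via the ramification locus is correct and clean. However, the core argument you propose is quite different from the paper's and has a genuine gap.

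The paper's key geometric idea is the following. Since $A(X)$ has no decomposable vectors, the EPW stratum $Y^2_{A}$ is an integral surface not contained in any hyperplane (Lemma~\ref{lemma-y2-hyperplane}), so there exists $v\in Y^2_A\setminus\P(V_5)$. By~\eqref{kernel-qv} the corresponding non-Pl\"ucker quadric $Q(v)$ has corank~$2$, hence rank $n+3$ in $\P^{n+4}$, and therefore possesses a $6$-dimensional isotropic subspace $I\subset W$. The crucial point is that $\P(I)\subset Q(v)$ forces
\[
\P(I)\cap X=\P(I)\cap\CGr(2,V_5)
\]
automatically: no separate condition ``$S\subset Q$'' is needed. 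Generic choice of $I$ then yields a smooth del~Pezzo via a short incidence-variety argument (the total family over $\OGr(4,n+3)$ is smooth because its projection to $X$ has smooth fibers $\OGr(3,n+1)$).

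Your approach instead imposes $S\subset Q$ as a codimension-$16$ condition on $\Gr(6,W)$ and argues via $c_{16}(\cF)\ne 0$. There are two problems. First, your sheaf $\cI$ is the image of $V_5\otimes\cO_Y\to\Sym^2\cW^\vee$, which is a rank-$5$ subbundle only on the open locus where no Pl\"ucker quadric vanishes identically on the moving $\P^5$; off this locus $\cF$ is not locally free, so both the Chern-class formalism and the properness argument (which needs $Z(\sigma_Q)$ closed in the projective $Y$) require extra care you do not supply. Second, and more seriously, you never verify $c_{16}(\cF)\ne 0$: you call it ``the main obstacle'' and then offer a deformation alternative that invokes $H^1(S,N_{S/X})=0$ and irreducibility of the moduli space, neither of which you prove. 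As written, the argument is incomplete.

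What the paper's approach buys is precisely the avoidance of this enumerative calculation: by exploiting the EPW stratification to produce a corank-$2$ quadric, existence of the del~Pezzo reduces to the elementary fact that a quadric of rank $n+3\ge 8$ has an isotropic subspace of dimension $6$. Your route, if the Chern class were established, would have the merit of not invoking EPW theory at all; but that computation is then the entire content, and it is missing.
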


\begin{proof}
Let $(V_6,V_5,\An)$ be a  Lagrangian data corresponding to such a variety $X$  and let $Y_{\An} \subset \P(V_6)$ be the corresponding EPW sextic.
By Theorem~\ref{theorem-singdec}, $\An$ contains no decomposable vectors. By Lemma~\ref{lemma-y2-hyperplane},
there is a point $v \in Y^2_{\An} \setminus \P(V_5)$ and by~\eqref{kernel-qv}, the corresponding quadric $Q(v) \subset \P(W)$  has corank 2 and $X = \CGr(2,V_5) \cap Q(v)$. 

Let $n:=\dim(X)\in\{5,6\}$. 
The quadric $Q(v)$ has rank $n+3$ in $\P^{n+4}$, hence has an isotropic subspace $I \subset W$ of dimension $2 + \left\lfloor\tfrac{n+3}{2}\right\rfloor \ge6$.
Then $\P(I) \cap X = \P(I) \cap \CGr(2,V_5)$ is a quintic del Pezzo surface if the intersection is dimensionally transverse and smooth. 
We show that this is true for a general choice of~$I$.

The space of all $Q(v)$-isotropic 6-subspaces is $\OGr(4,n+3)$, a smooth variety which we denote by $B$. Let $\cI \subset W\otimes \cO_B$ be the tautological bundle
of $Q(v)$-isotropic subspaces over $B$ and let $\P_B(\cI)$ be its projectivization. It comes with a natural map $\P_B(\cI) \to \P(\bw2V_5 \oplus \k)$
induced by the embedding $\cI \subset W\otimes\cO_B \subset (\bw2V_5 \oplus \k)\otimes \cO_B$. The scheme 
\begin{equation*}
\cX = \P_B(\cI) \times_{{\P(\bigwedge^2{\!V_5} \oplus \k)}} \CGr(2,V_5)
\end{equation*}
maps to $B$ and the fibers are the intersections $\P(I) \cap X$. 
 
To show that the general fiber is dimensionally transverse and smooth,
it is enough to show that $\cX$ is  {itself} smooth of the expected dimension.
 {For this,} consider the projection $\cX \to \CGr(2,V_5)$. Since the fibers of $\P_B(\cI)$ are contained in $Q(v)$,
the image of $\cX$ is contained in $\CGr(2,V_5) \cap Q(v) = X$. Moreover, the fiber of this projection over a point
$w \in X \subset \P(W)$ is the space of all $Q(v)$-isotropic 6-subspaces $I \subset W$ such that $w \in I$.

Since $w$ cannot be in the kernel of $Q(v)$ (otherwise, $X$ would be singular),  
the space of such $I$ is parameterized by $\OGr(3,n+1)$.
All fibers of the projection $\cX \to X$ are therefore smooth of the same dimension, hence $\cX$ is smooth.
As   observed earlier, the general fiber of the projection $\cX \to B$ is therefore a smooth quintic del Pezzo surface.
\end{proof}

We can now prove rationality of smooth GM fivefolds and sixfolds.

\begin{prop}\label{proposition-56fold-rational}
Any smooth complex GM variety of dimension $5$ or $6$ is rational.
\end{prop}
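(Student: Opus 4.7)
The strategy is to use the smooth quintic del Pezzo surface $S \subset X$ provided by Lemma~\ref{lemma-56fold-delpezzo} together with the corank-$2$ quadric $Q(v)$ to realize $X$ birationally as a fibration in quintic del Pezzo surfaces over a projective space, and then to invoke a classical rationality theorem for such surfaces.

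Recall from the proof of Lemma~\ref{lemma-56fold-delpezzo} that $S = \cone{K}\Gr(2,V_5) \cap \Pi$, where $\Pi := \P(I) \cong \P^5$ is an isotropic subspace of the corank-$2$ quadric $Q(v)$; in particular $\Pi \subset Q(v)$. I would consider the linear projection
\begin{equation*}
\pi \colon X \dashrightarrow \P^{n-2}
\end{equation*}
from $\Pi$, where $n := \dim(X) \in \{5,6\}$. For a general point $p \in \P^{n-2}$, let $\Lambda_p := \langle \Pi, p\rangle \cong \P^6$. Since $\Pi \subset Q(v)$ is a hyperplane in $\Lambda_p$ and no $\P^6$ can lie in $Q(v)$ (the maximal isotropic subspaces of $Q(v)$ have projective dimension $5$), the section $Q(v) \cap \Lambda_p$ is a rank-$2$ quadric and splits as $\Pi \cup \Pi'_p$ with $\Pi'_p \cong \P^5$. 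Consequently
\begin{equation*}
X \cap \Lambda_p \;=\; \cone{K}\Gr(2,V_5) \cap Q(v) \cap \Lambda_p \;=\; S \cup S'_p,
\end{equation*}
where $S'_p := \cone{K}\Gr(2,V_5) \cap \Pi'_p$ is the residual surface. For generic $p$, the subspace $\Pi'_p$ avoids the (at most one-dimensional) vertex $\P(K)$ of the cone, so projection from this vertex identifies $S'_p$ with a linear section of $\Gr(2,V_5) \subset \P^9$ by a $\P^5$. The same argument as in the proof of Lemma~\ref{lemma-56fold-delpezzo} (using that $\An$ contains no decomposable vectors, cf.\ Theorem~\ref{theorem-singdec}) then shows that $S'_p$ is a smooth quintic del Pezzo surface for a general $p$. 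Thus, after blowing up $S$ to resolve the base locus of $\pi$, I obtain a morphism $\wtilde{\pi}\colon \wtilde{X} \to \P^{n-2}$ whose generic fiber is birational to a smooth quintic del Pezzo surface over the function field $\k(\P^{n-2})$.

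The main obstacle is to deduce rationality of the total space $\wtilde{X}$ from this fibration structure. For this, I would invoke the classical theorem (due to Enriques and Iskovskikh) that every smooth quintic del Pezzo surface over an arbitrary field of characteristic zero is rational over that field: such a surface carries a rational point and admits an explicit birational map to $\P^2$ (e.g., by projection from a point, after blowing up and down). Applied to the generic fiber of $\wtilde{\pi}$, this yields a birational equivalence $\wtilde{X} \sim \P^{n-2} \times \P^2 \cong \P^n$ over $\k$, whence the rationality of $X$.
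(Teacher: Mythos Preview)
Your proof is correct and follows essentially the same route as the paper's: project from the $\P^5$ span of the smooth quintic del Pezzo surface provided by Lemma~\ref{lemma-56fold-delpezzo}, observe that the residual component of each fiber is again a linear $\P^5$-section of $\cone{K}\Gr(2,V_5)$ (hence a quintic del Pezzo surface), argue by the incidence-variety method of Lemma~\ref{lemma-56fold-delpezzo} that the general residual fiber is smooth, and conclude by Enriques' theorem on the rationality of quintic del Pezzo surfaces over arbitrary fields. The only cosmetic difference is that the paper phrases the smoothness step as requiring a general choice of the initial surface $S_0$, whereas you fix $S$ from the lemma; both come to the same thing once one notes that the residual spaces $\Pi'_p$ sweep out a general family of $Q(v)$-isotropic $6$-spaces.
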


\begin{proof}
We   use the notation of the previous proof.
Let $Q = Q(v)$ be a quadric of corank~$2$ in $\P(W)$ such that $X = \CGr(2,V_5) \cap Q$. 
Let $I_0 \subset W$ be a general $Q$-isotropic 6-subspace, so that $S_0 := \P(I_0) \cap X$ is a smooth
quintic del Pezzo surface. Consider the linear projection
\begin{equation*}
\pi_{S_0} \colon X \dashrightarrow \P^{n-2} 
\end{equation*}
 from the  linear span $\P(I_0)$ of  $S_0$, where $n = \dim (X)$. A general fiber   is also a quintic del Pezzo surface.
Indeed, the intersection of a general $\P^6$ containing $\P(I_0) = \P^5$ with $X$ is a 2-dimensional GM intersection  
containing the quintic del Pezzo surface $S_0$. It is the union  of $S_0$ and a residual quintic
del Pezzo surface $S$ (more precisely, the intersection of $Q$ with this $\P^6$ is the union 
of $\P(I_0)$ and a residual space $\P(I)$, where $I$ is another $Q$-isotropic 6-subspace, and $S = \P(I) \cap X$).

The argument of the proof of  Lemma~\ref{lemma-56fold-delpezzo} shows that for a general choice of $S_0$, the general fiber of 
$\pi_{S_0}$ is smooth. Hence the field of rational functions on $X$ is the field of rational functions on 
a smooth quintic del Pezzo surface defined over the field of rational functions on $\P^{n-2}$.
But a smooth quintic del Pezzo surface is rational over any field by a theorem of Enriques
(see~\cite{sb92}).
\end{proof}

\begin{rema}
Let $X$ be a smooth complex GM fourfold such that $Y^3_{A(X)} \setminus \P(V_5(X)) \ne \emptyset$.
The same argument shows that $X$ is rational. More generally, using Theorem~\ref{coro4}, one can show 
that $X$ is rational as soon as $Y^3_{A(X)} \ne \emptyset$  {(see~\cite[Lemma~6.7]{KP} for more details)}.
\end{rema}

\subsection{The first quadric fibration}

Let $X$ be a smooth GM variety of dimension~$n\ge3$, with  {extended} Lagrangian data $(V_6,V_5,\An, A_1 )$ and GM data $(W,V_6,V_5,L,\mu,\bq,\eps)$.
By Theorem~\ref{theorem-singdec}, $\An$ contains no decomposable vectors. Consider the Gushel bundle $\cU_X$ and its projectivization $\P_X(\cU_X)$. 
The canonical embedding $\cU_X \hra V_5 \otimes \cO_X$ induces a regular map
\begin{equation*}
\rho_1\colon \P_X(\cU_X) \to \P(V_5).
\end{equation*}
In   Proposition \ref{proposition-kernel-plucker}, we   prove that $\rho_1$ is a quadric fibration  and identify its discriminant locus.

Let $W_0^\perp = \An \cap \bw{3}{V_5}$ be the even part of the space of linear equations of $X$.
 We consider $W_0^\perp$ as a subspace of the space $\bw2V_5^\vee$ of skew forms on $V_5$. 
Let 
\begin{equation*}
  \Sigma_1(X) \subset \P(V_5)
\end{equation*}
be the union of  the kernels of all  {non-zero skew forms in the space $W_0^\perp$}. It can be described in terms of the incidence correspondence 
$\widehat{Y}_{\An}$ and its two projections $p\colon \widehat{Y}_{\An} \to Y_{\An} \subset \P(V_6)$ and $q\colon \widehat{Y}_{\An} \to Y_{{\An}^\perp} \subset \P(V_6^\vee)$
(see~\eqref{yahat}).

\begin{lemm}\label{lemma-sigma-1}
We have $\Sigma_1(X) = p(q^{-1}([V_5]))$. In particular, $\Sigma_1(X) \subset Y_{A} \cap \P(V_5)$. Moreover, 
$\dim( \Sigma_1(X)) \le 1$ if $n=4$, and 
$\dim (\Sigma_1(X)) \le 2$ if $n=3$.
\end{lemm}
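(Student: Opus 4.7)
The plan is to unpack both sides of the claimed equality and verify they describe the same subset of $\P(V_5)$. The incidence correspondence $\widehat{Y}_{\An}$ from~\eqref{yahat} parametrizes pairs $([v],[V_5'])$ with $v \in V_5'$ and $\An \cap (v \wedge \bw{2}{V_5'}) \neq 0$, so $q^{-1}([V_5])$ is the set of $[v] \in \P(V_5)$ for which there exists $\alpha \in \bw{2}{V_5}$ with $\omega := v \wedge \alpha$ nonzero and in $\An$. Such $\omega$ automatically lies in $\An \cap \bw{3}{V_5} = W_0^\perp$ (by Proposition~\ref{lemma-wx}(a)), and under the identification $\bw{3}{V_5} \simeq \bw{2}{V_5^\vee} \otimes \det(V_5)$ the relation $\omega \in v \wedge \bw{2}{V_5}$ is precisely the condition that $v$ lies in the kernel of the skew form on $V_5$ associated with $\omega$. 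Conversely, any $v$ in the kernel of some nonzero $\omega \in W_0^\perp$ admits a factorization $\omega = v \wedge \alpha$ with $\alpha \in \bw{2}{V_5} \setminus \{0\}$, so $[v]$ lifts to $q^{-1}([V_5])$. This will identify $\Sigma_1(X) = p(q^{-1}([V_5]))$.

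The inclusion $\Sigma_1(X) \subset Y_{\An} \cap \P(V_5)$ will then be immediate: points of $\Sigma_1(X)$ lie in $\P(V_5)$ by definition, and any $v \in \Sigma_1(X)$ with witness $\omega = v \wedge \alpha \in \An$ also satisfies $\An \cap (v \wedge \bw{2}{V_6}) \neq 0$, so $[v] \in Y_{\An}$ by the very definition of the EPW sextic.

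For the dimension bounds, I will introduce the auxiliary incidence
\begin{equation*}
I := \bigl\{ ([v],[\omega]) \in \P(V_5) \times \P(W_0^\perp) : \omega \in v \wedge \bw{2}{V_5} \bigr\},
\end{equation*}
whose first projection surjects onto $\Sigma_1(X)$. The fiber of the second projection over $[\omega]$ is the projectivized kernel of the skew form on $V_5$ associated with $\omega$. Since $X$ is a smooth GM variety of dimension $\ge 3$, Theorem~\ref{theorem-singdec} shows that $\An$ contains no decomposable vectors, so in particular no nonzero $\omega \in W_0^\perp$ is a decomposable element of $\bw{3}{V_5}$. Because a nondecomposable $3$-form on a $5$-dimensional space corresponds under Hodge duality to a skew form of maximal rank $4$ with $1$-dimensional kernel, every fiber of $I \to \P(W_0^\perp)$ is a single point. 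Hence $\dim \Sigma_1(X) \le \dim I = \dim W_0^\perp - 1$. By~\eqref{eq22}, $\dim W_0^\perp = \dim(\An \cap \bw{3}{V_5}) \le 2$ when $n = 4$ and $\le 3$ when $n = 3$, giving the required bounds $\dim \Sigma_1(X) \le 1$ and $\le 2$.

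The main point to pin down carefully will be the precise form of the incidence correspondence $\widehat{Y}_{\An}$ in~\eqref{yahat} and the equivalence between the two descriptions of the "kernel" of $\omega \in \bw{3}{V_5}$—as a subspace of $V_5$ via Hodge duality, and as $\{v \in V_5 : \omega \in v \wedge \bw{2}{V_5}\}$. Once these identifications are in place, the rest reduces to straightforward dimension counting using the no-decomposable-vector property.
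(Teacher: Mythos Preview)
Your proof is correct and follows essentially the same approach as the paper. The paper's proof is a one-liner deferring to Remark~\ref{remark-hat-y-q}, which records precisely the identification $q^{-1}([V_5]) \cong \P(\An \cap \bw3V_5)$ with $p$ sending each form to its kernel; you have simply unpacked that remark inline, including the key observation that the absence of decomposable vectors forces every nonzero form in $W_0^\perp$ to have rank~$4$ and hence a one-dimensional kernel.
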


\begin{proof}
The lemma follows immediately from Remark~\ref{remark-hat-y-q}.
\end{proof}

We now prove an analogue of Proposition~\ref{proposition-kernel-nonplucker}. 
 
\begin{prop}\label{proposition-kernel-plucker}
Let $X$ be a smooth GM variety of dimension~$n\ge3$.

Over $\P(V_5) \setminus \Sigma_1(X)$, the map $\rho_1\colon \P_X(\cU_X) \to \P(V_5)$ is a relative quadratic hypersurface in a $\P^{n-2}$-fibration, whose fiber over a point $v $ has corank $k$ if and only if $v \in Y^k_{\An}  {\cap \P(V_5)}$. 
In particular, in $\P(V_5) \setminus \Sigma_1(X)$, its discriminant locus is $Y_{\An} \cap (\P(V_5) \setminus \Sigma_1(X))$.

Over $\Sigma_1(X)$, the map $\rho_1 $ is a relative quadratic hypersurface in a $\P^{n-1}$-fibration, whose fiber over a point $v $ has corank $k$ if and only if $v \in Y^{k+1}_{\An} $. 
\end{prop}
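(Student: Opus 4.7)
The plan is to reduce the entire statement to a linear algebra computation via the Lagrangian-quadric correspondence (Appendix~\ref{section-lagrangians-and-quadrics}). \emph{First}, I would identify the fiber of $\rho_1$ over $V_1 \in \P(V_5)$ geometrically. The embedding $\cU_X \hookrightarrow V_5 \otimes \cO_X$ presents $\P_X(\cU_X)$ as a subscheme of $X \times \P(V_5)$, so the fiber is the set $\{x \in X : V_1 \subset (\cU_X)_x\}$; via the Gushel map this equals $X \cap \P((V_1 \wedge V_5) \oplus K)$ with $K = \Ker(\mu)$. Since every nonzero element of $V_1 \wedge V_5$ is decomposable, $\P(V_1 \wedge V_5) \subset \Gr(2,V_5)$ entirely, so $\cone{K}(\P(V_1 \wedge V_5)) \subset \cone{K}\Gr(2,V_5)$; consequently, for any fixed $v_0 \in V_6 \setminus V_5$ the fiber is the quadric hypersurface $\P(L_{V_1} \oplus W_1) \cap Q(v_0)$, where $L_{V_1} := W_0 \cap (V_1 \wedge V_5)$ (using $\mu_0\colon W_0 \hookrightarrow \bw{2}{V_5} \otimes L^\vee$).

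\emph{Second}, a direct linear algebra computation yields
\begin{equation*}
\dim L_{V_1} \;=\; \dim W_0 - 6 + \dim\bigl(W_0^\perp \cap (V_1 \wedge V_5)^\perp\bigr).
\end{equation*}
By Proposition~\ref{lemma-wx}(a) one has $W_0^\perp = \An \cap \bw{3}{V_5}$, and identifying $\bw{3}{V_5}$ with skew forms on $V_5$ via the wedge pairing, the condition $V_1 \subset \Ker(\omega_\xi)$ on $\xi \in \bw{3}{V_5}$ becomes $v_1 \wedge \xi = 0$, equivalently $\xi \in v_1 \wedge \bw{2}{V_5}$. Hence $\dim(W_0^\perp \cap (V_1 \wedge V_5)^\perp) = \dim(\An \cap v_1 \wedge \bw{2}{V_5})$, which vanishes precisely when $V_1 \notin \Sigma_1(X)$ and is generically one on $\Sigma_1(X)$. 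Since $\dim W_0$ equals $n+5$ (ordinary) or $n+4$ (special), this yields the ambient spaces $\P^{n-2}$ off $\Sigma_1$ and $\P^{n-1}$ on the generic part of $\Sigma_1$ in both types.

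\emph{Third}, for the corank I would apply isotropic reduction in $\symv$ with the $6$-dimensional isotropic subspace $I := V_1 \wedge \bw{2}{V_5} \subset \bw{3}{V_5}$. A direct computation gives $I^\perp = \bw{3}{V_5} \oplus (v_0 \wedge V_1 \wedge V_5)$, and one checks that $I^{\perp}_\symv/I$ inherits the compatible Lagrangian decomposition
\begin{equation*}
\bigl(\bw{3}{(V_5/V_1)} \oplus L\bigr) \oplus \bigl((v_0 \wedge V_1 \wedge V_5) \oplus \k\bigr).
\end{equation*}
Because $I \cap (v_0 \wedge \bw{2}{V_5}) = 0$, isotropic reduction identifies the quadric attached to the reduced Lagrangian $\hat{A}' := ((\An \cap I^\perp) + I)/I \oplus A_1$ with the restriction of $\bq(v_0)$ to the image of $\hat{A}'$ in the second summand, which turns out to be $L_{V_1} \oplus W_1$; this is precisely the fiber quadric. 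By Proposition~\ref{lemma-lagrangian-quadratic}, its corank equals $\dim(\hat{A}' \cap ((v_0 \wedge V_1 \wedge V_5) \oplus \k))$, and a short diagram chase identifies this with
\begin{equation*}
\dim(\An \cap V_1 \wedge \bw{2}{V_6}) - \dim(\An \cap V_1 \wedge \bw{2}{V_5}) + \dim(A_1 \cap \k).
\end{equation*}

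\emph{Finally}, the lci condition forces $A_1 \cap \k = 0$ in both cases ($A_1 = L$ ordinary or $A_1 \neq \k, L$ special), so the corank simplifies to $\dim(\An \cap v_1 \wedge \bw{2}{V_6}) - \dim(\An \cap v_1 \wedge \bw{2}{V_5})$. For $V_1 \notin \Sigma_1(X)$ the second term vanishes and the corank equals $k$ precisely when $V_1 \in Y^k_\An$; for $V_1 \in \Sigma_1(X)$ with $\dim(\An \cap v_1 \wedge \bw{2}{V_5}) = 1$ (the generic condition producing the advertised $\P^{n-1}$-fibration), the corank equals $k$ precisely when $V_1 \in Y^{k+1}_\An$, matching the statement. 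The delicate point will be verifying that isotropic reduction genuinely identifies the abstract reduced quadric with the concrete restriction of $\bq(v_0)$ to $L_{V_1} \oplus W_1$; this amounts to a careful bookkeeping of the $L^\vee$-twists implicit in $\mu_0\colon W_0 \hookrightarrow \bw{2}{V_5} \otimes L^\vee$ and of the compatibility of the Lagrangian decomposition of $\symv$ with $I$.
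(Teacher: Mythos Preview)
Your approach is essentially the paper's own: identify the fiber as the restriction of $Q(v_0)$ to the linear subspace corresponding to $v_1\wedge V_5$, then use isotropic reduction with respect to $\II_{v_1}=v_1\wedge\bw2V_5\subset\LL_1$ to read off the span and kernel of the restricted quadric. The paper packages steps~2 and~3 together by quoting Corollary~\ref{corollary-quadric-restriction} directly (so both the codimension of the span and the kernel come out of the formulas~\eqref{span-ker}), whereas you compute the span dimension separately via the elementary count $\dim L_{V_1}=\dim W_0-6+\dim(A\cap v_1\wedge\bw2V_5)$; but this is the same computation unpacked.

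There is one genuine gap. You treat the condition $\dim(A\cap v_1\wedge\bw2V_5)=1$ as merely \emph{generic} on $\Sigma_1(X)$, and accordingly only claim the $\P^{n-1}$-fibration and the corank formula over the generic locus of $\Sigma_1(X)$. The proposition, however, asserts this over \emph{all} of $\Sigma_1(X)$. What you are missing is the observation that, since $X$ is smooth of dimension $\ge 3$, the Lagrangian $A$ contains no decomposable vectors (Theorem~\ref{theorem-singdec}); hence $\dim(A\cap v_1\wedge\bw2V_5)\le 1$ everywhere, because any $2$-dimensional subspace of $v_1\wedge\bw2V_5$ meets the cone of decomposable vectors (the latter being the cone over the quadric $\Gr(2,V_5/v_1)\subset\P(\bw2(V_5/v_1))$). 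Once you insert this remark, your argument covers all of $\Sigma_1(X)$ and matches the paper's proof.
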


\begin{proof}
Choose an arbitrary $v_0 \in V_6 \setminus V_5$ and let $v \in V_5$.
The fiber $\rho_1^{-1}(v)$ is the set of  $V_2\subset V_5$ containing $v$ which correspond to points of $X$. This is the intersection $Q'(v)$ of the  quadric $Q(v_0) \subset \P(W)$ defining $X$  
with the subspace $\P\bigl( ((v \wedge V_5)\oplus \k )\cap W\bigr) \subset \P(W)$. 

On the other hand, by the argument of Theorem~\ref{theo-bijection}, the quadric $Q(v_0)$ is the quadric
corresponding to the Lagrangian 
 {subspace $\hat{A} = A \oplus A_1 \subset \symv$ with respect to the Lagrangian} decomposition 
$\symv = \LL_1\oplus \LL_2$  of \eqref{dsd}, with $\LL_1=  \bw3{V_5} \oplus L$ and $\LL_2= (v_0\wedge\bw2{V_5}) \oplus \k$, 
under the Lagrangian-quadratic correspondence of Appendix~\ref{section-lagrangians-and-quadrics} (it is the quadric $Q_2^{\hat{A}}$
of Proposition~\ref{lemma-lagrangian-quadratic}). Therefore, the intersection 
\begin{equation*}
Q'(v) = Q(v_0) \cap \P\bigl( (v \wedge V_5)\oplus \k )\cap W\bigr) 
\end{equation*}
can be described by the isotropic reduction of Corollary~\ref{corollary-quadric-restriction}.

For this, we set $\BLL_2:= (v_0 \wedge v \wedge V_5)\oplus \k \subset \LL_2$ and we consider 
the isotropic subspace   
\begin{equation*}
\II_v :=\LL_1\cap \BLL_2^\perp  =
v \wedge \bw{2}{V_5}\subset \LL_1. 
\end{equation*}
According to Corollary~\ref{corollary-quadric-restriction}, the codimension in $\P(\BLL_2)$ of the span of the quadric $Q'(v)$ is equal to $\dim\bigl((\hat{A} \cap \LL_1)/(\hat{A} \cap \II_v)\bigr)$ 
and its kernel is equal to $\bigl(\hat{A} \cap (\II_v \oplus \BLL_2)\bigr)/(\hat{A} \cap \II_v)$, so it remains to describe the intersections 
of $\hat{A}$ with $\II_v$, $\LL_1$, and $\II_v \oplus \BLL_2$. 

First, we have $\hat{A} \cap \II_v = \An \cap (v \wedge \bw2V_5) \subset \An \cap \bw3V_5 = W_0^\perp$, the space of linear equations of~$X$. 
A skew form on~$V_5$, considered as an element of $\bw3V_5$, belongs to $v \wedge \bw2V_5$ if and only if $v$ is in its kernel. 
This means that $\hat{A} \cap \II_v$ is non-zero only for $v \in \Sigma_1(X)$, and the intersection $\hat{A} \cap \II_v$
is then 1-dimensional, since otherwise the space $\An \cap (v \wedge \bw2V_5)$ would contain decomposable vectors. Therefore,
\begin{equation}\label{equation-dim-a-cap-iv}
\dim(\hat{A} \cap \II_v) = 
\begin{cases}
1  & \text{if $v \in \Sigma_1(X)$,}\\
0 & \text{if $v \notin \Sigma_1(X)$.}
\end{cases}
\end{equation} 
Since the space $\hat{A} \cap \LL_1 = W^\perp$ has dimension $6 - n$,   the codimension of the span of $Q'(v)$
in~$\PP(\BLL_2)$ is $5-n$ for $v \in \Sigma_1(X)$ and $6-n$ for $v \notin \Sigma_1(X)$. 
Since the dimension of $\P(\BLL_2)$ is 4, it follows that $Q'(v)$ is a quadratic hypersurface in $\P^{n-1}$ or   $\P^{n-2}$, depending on whether $v$ is in $\Sigma_1(X)$ or not.
Finally, we have 
\begin{equation*}
\II_v \oplus \BLL_2 = ((v \wedge \bw{2}{V_5})\oplus (v_0 \wedge v \wedge V_5)) \oplus \k = v \wedge \bw2V_6 \oplus \k.
\end{equation*}
Therefore, the dimension of the intersection $\hat{A} \cap (\II_v \oplus \BLL_2) = \An \cap (v \wedge \bw2V_6)$ is detected 
by the position of $v \in \P(V_6)$ with respect to the EPW stratification defined by $\An$ on $\P(V_6)$. Taking into account~\eqref{equation-dim-a-cap-iv},
we obtain that for $v \in Y^k_{\An} \setminus \Sigma_1(X)$, the corank of $Q'(v)$ is $k$, and for $v \in Y^k_{\An} \cap \Sigma_1(X)$, the corank of $Q'(v)$ is $k-1$
(this gives   another proof of the inclusion $\Sigma_1(X) \subset Y_{A}$). 
 \end{proof}

\subsection{Birationality of period partners of dimension 3}\label{subsec-bir-pp}

Let $X_1$ and $X_2$ be smooth complex GM threefolds. Assume that they are
period partners, so that they are constructed from   Lagrangian data with the same $V_6$ and $\An$,
 but possibly different Pl\"ucker hyperplanes $V_5^1 \subset V_6$ and $V_5^2 \subset V_6$ and 
 {possibly different Lagrangian subspaces $A_1$ and $A'_1$}.
 The aim of this section is to prove that the varieties $X_1$ and $X_2$ are birationally isomorphic.
 
Consider the diagrams
\begin{equation*}
\xymatrix@R=5mm@C=6mm{
& \P_{X_1}(\cU_{X_1}) \ar[dl] \ar[dr]^{\rho_1^1} \\
X_1 && \P(V_5^1) 
} \qquad
\xymatrix@R=5mm@C=6mm{
& \P_{X_2}(\cU_{X_2}) \ar[dl]_{\rho_1^2} \ar[dr] \\
 \P(V_5^2) && X_2,
}
\end{equation*}
where $\rho_1^1$ and $\rho_1^2$ are the first quadratic fibrations associated with the threefolds $X_1$ and $X_2$ respectively.
Denote by $\Sigma_1(X_1) \subset Y_{\An} \cap \P(V_5^1)$ and $\Sigma_1(X_2) \subset Y_{\An} \cap \P(V_5^2)$ the associated
subsets of the previous section. 
Assume moreover $V_5^1 \ne V_5^2 $, set $V_4 := V_5^1 \cap V_5^2 \subset V_6$, and restrict everything to the common base $\P(V_4)$. We get a diagram
\begin{equation*}
\xymatrix@R=5mm@C=7mm{
& \wtilde X_1 \ar[dl] \ar[dr]^{\tilde\rho_1^1} && \wtilde X_2 \ar[dl]_{\tilde\rho_1^2} \ar[dr] \\
X_1 && \P(V_4) && X_2,
}
\end{equation*}
where $\wtilde{X}_i = \P_{X_i}(\cU_{X_i}) \times_{\P(V_5^1)} \P(V_4)$ and $\tilde\rho^i_1$ is the restriction of $\rho^i_1$ to $\wtilde{X}_i$.

\begin{lemm}\label{lemma-tilde-x-1}
For each $i\in\{1,2\}$, the map  $\wtilde{X}_i \to X_i$ is the blow up  of  
$X_i \cap \CGr(2,V_4)$. In particular, $\wtilde{X}_i$
is irreducible,  generically  reduced, and birational to $X_i$.
\end{lemm}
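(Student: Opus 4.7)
The plan is to analyze $\wtilde{X}_i \to X_i$ fiberwise, realize $\wtilde{X}_i$ as the zero scheme of a section of a line bundle on $\P_{X_i}(\cU_{X_i})$, and then identify this with the blow up of $X_i$ along $C_i := X_i \cap \CGr(2,V_4)$ via the standard description of blow-ups along zero schemes of sections of rank-$2$ vector bundles.

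First I would describe the fibers of $\wtilde{X}_i \to X_i$ set-theoretically. A point of $\wtilde{X}_i$ over $x \in X_i$ is a line $\ell \subset V_4$ contained in the Gushel $2$-plane $\cU_{X_i,x} \subset V_5^i$. Since $V_4 \subset V_5^i$ is a hyperplane, $\cU_{X_i,x} \cap V_4$ is $1$-dimensional if $\cU_{X_i,x} \not\subset V_4$ (so the fiber is a single reduced point) and equals $\cU_{X_i,x}$ itself when $\cU_{X_i,x} \subset V_4$, \ie, when $x \in C_i$ (in which case the fiber is $\P(\cU_{X_i,x}) \cong \P^1$).

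Next, the quotient $V_5^i \thra V_5^i/V_4 \cong \k$ composed with the tautological inclusion $\cU_{X_i} \hra V_5^i \otimes \cO_{X_i}$ produces a morphism $\phi \colon \cU_{X_i} \to \cO_{X_i}$, \ie, a section of $\cU_{X_i}^\vee$, whose zero scheme is exactly $C_i$. Pulling back to $\pi \colon \P_{X_i}(\cU_{X_i}) \to X_i$ and precomposing with the tautological inclusion $\cO(-1) \hra \pi^*\cU_{X_i}$ yields a section $s \in H^0(\P(\cU_{X_i}),\cO(1))$ vanishing precisely on $\wtilde{X}_i$. Thus $\wtilde{X}_i$ is a divisor in the $\P^1$-bundle $\P_{X_i}(\cU_{X_i})$, of dimension $3 = \dim(X_i)$.

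Third, I would verify that $C_i$ has codimension $2$ in $X_i$. Writing $M_{X_i} = \cone{\Ker\mu}\Gr(2,V_5^i) \cap \P(W)$, one has $M_{X_i} \cap \CGr(2,V_4) = \cone{\Ker\mu}\Gr(2,V_4) \cap \P(W)$; since $X_i$ is smooth of dimension $3$, Proposition~\ref{lhull} and Theorem~\ref{theorem-singdec} imply that the Lagrangian $\An$ contains no decomposable vectors, which (together with $V_5^1 \ne V_5^2$) forces this intersection to be dimensionally transverse of dimension $2$, so $C_i = M_{X_i} \cap \CGr(2,V_4) \cap Q$ has dimension $1$ and is a local complete intersection in $X_i$ of codimension $2$.

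Finally, since $C_i$ is the zero scheme of a regular section of the rank-$2$ bundle $\cU_{X_i}^\vee$ on the smooth variety $X_i$, the standard description of blow-ups applies: $\mathrm{Bl}_{C_i} X_i$ is isomorphic to the closure in $\P_{X_i}(\cU_{X_i}^\vee)$ of the graph of the rational map $X_i \dashrightarrow \P(\cU_{X_i}^\vee)$ given by the section $\phi$. Dualizing line/hyperplane in the $2$-dimensional fibers of $\cU_{X_i}$ identifies $\P(\cU_{X_i}^\vee) \cong \P(\cU_{X_i})$ by sending $[\phi_x] \mapsto [\Ker(\phi_x)] = [\cU_{X_i,x} \cap V_4]$, and under this identification the graph of $x \mapsto [\phi_x]$ becomes exactly the fiberwise description of $\wtilde{X}_i \to X_i$ obtained in the first step. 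Hence $\wtilde{X}_i \cong \mathrm{Bl}_{C_i} X_i$; irreducibility, generic reducedness, and birationality to $X_i$ follow.

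The main obstacle is the dimension count in the third step: one must rule out the possibility that $C_i$ acquires an unexpected component of dimension $\ge 2$, which would invalidate the identification with the blow up. I expect this to follow cleanly from the absence of decomposable vectors in $\An$ (Theorem~\ref{theorem-singdec}) and a direct linear-algebra check using the Grassmannian hulls, but this is the only place where the hypotheses on $X_i$ genuinely enter.
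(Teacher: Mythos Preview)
Your fiberwise description and the identification of $\wtilde{X}_i$ as the zero locus of a section of $\cO(1)$ on $\P_{X_i}(\cU_{X_i})$ are correct and match the paper. The reduction of the blow-up statement to checking that $C_i = X_i \cap \CGr(2,V_4)$ has codimension $2$ is also right (the paper cites \cite[Lemma~2.1]{K} for the same reduction). The gap is exactly where you flag it, and your proposed fix does not close it.

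The claim that ``$\An$ contains no decomposable vectors (together with $V_5^1 \ne V_5^2$) forces $M_{X_i}\cap\CGr(2,V_4)$ to be dimensionally transverse of dimension $2$'' is not justified, and I do not see a direct linear-algebra argument of this kind. More seriously, even granting that $M_{X_i}\cap\CGr(2,V_4)$ is a surface, nothing you wrote prevents the quadric $Q$ from containing a component of it, which would make $C_i$ two-dimensional and kill the blow-up identification. The paper bypasses both issues at once with a different, shorter argument: since $X_i$ is cut out of $\CGr(2,V_5^i)$ by three hyperplanes and a quadric, $C_i$ is cut out of $\CGr(2,V_4)$ by the same equations, hence has dimension $\ge 1$ and degree $\le 4$; but $X_i$ is a smooth threefold with $\Pic(X_i)=\Z H$ (Lemma~\ref{lemma:pic-gm}), so by Lefschetz any surface in $X_i$ has degree a multiple of $H^3=10$, forcing $\dim(C_i)=1$. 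This Picard/degree obstruction is the missing idea; it replaces your attempted transversality check entirely.
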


\begin{proof}
By definition, $\wtilde{X}_1 = \P_{X_1}(\cU_{X_1}) \times_{\P(V_5)} \P(V_4)$ is a divisor in a smooth 4-dimensional variety
$\P_{X_1}(\cU_{X_1})$, hence each irreducible component of $\wtilde{X}_1$ is at least $3$-dimensional.
The fiber of $\wtilde{X}_1 \to X_1$ over a point $w \in X_1$ is a linear section of the fiber of $\P_{X_1}(\cU_{X_1}) \to X_1$,
so it is a point if $\cU_{X_1,w} \not\subset V_4$, or the line $\P(\cU_{X_1,w})$ if $\cU_{X_1,w} \subset V_4$.
Therefore, the locus of   non-trivial fibers is $X_1 \cap \CGr(2,V_4)$. 

To show that the map $\wtilde{X}_1\to X_1$ is the blow up, it is enough by~\cite[Lemma~2.1]{K} to check that the intersection $X_1 \cap \CGr(2,V_4)$ has dimension $\le 1$.
Since $X_1$ is cut out in $\CGr(2,V_5)$ by three hyperplanes and a quadric, the intersection $X_1 \cap \CGr(2,V_4)$ is 
also cut out in $\CGr(2,V_4)$ by three hyperplanes and a quadric, hence its  {dimension is at least~1 and its degree is at most~4}.
But~$X_1$ is a smooth threefold  {with $\Pic(X_1) = \Z H$}, so by the Lefschetz Theorem, it  contains no surfaces of degree  {less than} $H^3 = 10$, hence  {we have} $\dim(X_1 \cap \CGr(2,V_4)) = 1$.

The same argument works for $\wtilde{X}_2$ as well.
\end{proof}

\begin{theo}\label{proposition-birationality-3}
Let  $X_1$ and $X_2$ be smooth complex GM threefolds which are period partners,
 {\ie, with $A(X_1) = A(X_2)$. Assume $V_5(X_1) \ne V_5(X_2)$ and let $V_4 = V_5(X_1) \cap V_5(X_2) \subset V_6$.}
 Over an open subset   $U \subset \P(V_4)$, there is 
an isomorphism $(\tilde\rho_1^1)^{-1}(U) \cong (\tilde\rho_1^2)^{-1}(U)$. In particular, $\wtilde{X}_1$ is birationally isomorphic 
to $\wtilde{X}_2$ over $\P(V_4)$ and $X_1$ is birationally isomorphic to $X_2$.
\end{theo}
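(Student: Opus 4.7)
The strategy is to identify both $\tilde\rho_1^1$ and $\tilde\rho_1^2$ as generically finite double covers of $\P(V_4)$ whose common branch divisor depends only on the shared Lagrangian $A := A(X_1) = A(X_2)$; uniqueness of such double covers over an appropriate open $U \subset \P(V_4)$ will then yield the desired isomorphism.

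First, I specialize Proposition~\ref{proposition-kernel-plucker} to $n = 3$: the relative $\P^{n-2} = \P^1$-fibration contains a relative quadric hypersurface, so $\rho_1^i \colon \P_{X_i}(\cU_{X_i}) \to \P(V_5(X_i))$ is finite of degree $2$ over $\P(V_5(X_i)) \setminus \Sigma_1(X_i)$, with branch divisor equal to $Y_A \cap \P(V_5(X_i))$ there. Restricting along $\P(V_4) \hookrightarrow \P(V_5(X_i))$, the map $\tilde\rho_1^i \colon \wtilde{X}_i \to \P(V_4)$ becomes a generically finite degree-$2$ morphism whose branch divisor is the sextic surface $Y_A \cap \P(V_4) \subset \P^3$---and this divisor depends only on $A$, not on the individual hyperplane $V_5(X_i)$. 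Consequently, $\tilde\rho_1^1$ and $\tilde\rho_1^2$ are double covers of $\P(V_4)$ sharing the same branch divisor.

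Next, I set $U := \P(V_4) \setminus \bigl(\Sigma_1(X_1) \cup \Sigma_1(X_2) \cup \Sing(Y_A \cap \P(V_4))\bigr)$. By Lemma~\ref{lemma-sigma-1} each $\Sigma_1(X_i)$ has dimension at most $2$ inside the four-dimensional $\P(V_5(X_i))$, so each intersection $\Sigma_1(X_i) \cap \P(V_4)$ has dimension at most~$1$, hence codimension at least $2$ in $\P(V_4) \cong \P^3$. Thus $U$ differs from $\P^3$ only in codimension $\ge 2$, and $\Pic(U) = \Pic(\P^3) = \Z\cdot H$ is torsion-free. Over $U$, each $\tilde\rho_1^i$ is an honest finite double cover branched precisely along the smooth divisor $D := Y_A \cap U$, and is determined by a pair $(L, s)$ with $L^{\otimes 2} \cong \cO_U(D)$ and $s$ a section cutting out $D$. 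Since $\Pic(U)$ has no $2$-torsion, $L \cong \cO_U(3H)$ is unique; and since $\k = \C$, the section $s$ is unique up to a square scalar, which acts by isomorphisms on the double cover. Hence $(\tilde\rho_1^1)^{-1}(U) \cong (\tilde\rho_1^2)^{-1}(U)$ as $U$-schemes. Combining this with Lemma~\ref{lemma-tilde-x-1}, which identifies each $\wtilde{X}_i$ with the blow-up of $X_i$ along a curve, then produces the desired birational isomorphism $X_1 \dashrightarrow X_2$.

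The main obstacle is to secure an honest isomorphism over $U$ rather than a quadratic twist, which is exactly where the torsion-freeness of $\Pic(U)$ enters; this dictates that the removed locus be of codimension at least $2$ in $\P(V_4)$, a condition guaranteed by the dimension bound of Lemma~\ref{lemma-sigma-1} applied to both $\Sigma_1(X_1)$ and $\Sigma_1(X_2)$.
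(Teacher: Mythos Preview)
Your overall strategy---restricting the first quadric fibrations to $\P(V_4)$, identifying each $\tilde\rho_1^i$ as a double cover branched along $Y_{\An} \cap \P(V_4)$, and invoking uniqueness of such covers---is exactly the paper's approach. However, your codimension claim has a gap. From $\dim \Sigma_1(X_i) \le 2$ you cannot deduce $\dim(\Sigma_1(X_i) \cap \P(V_4)) \le 1$: intersecting with a hyperplane drops the dimension only when the subvariety is not already contained in that hyperplane, and you have not ruled this out. For special threefolds one has $\dim(A \cap \bw3V_5^i) = 3$, so $\Sigma_1(X_i) = p(q^{-1}([V_5^i]))$ is the image of a $\P^2$ and can genuinely be a surface; nothing prevents it from lying in $\P(V_4)$. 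Removing $\Sing(Y_{\An} \cap \P(V_4))$ is likewise both unnecessary (the classification of double covers by pairs $(L,s)$ with $L^{\otimes 2}\cong\cO(D)$ requires only that $D$ be Cartier) and risky, since that singular locus could a priori be $2$-dimensional. If any of the removed pieces is a divisor in $\P^3$, then $\Pic(U)$ becomes a proper quotient of $\Z$ and may acquire $2$-torsion, which would destroy your uniqueness step.

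The paper sidesteps this by taking the Stein factorization $\wtilde X_i \xrightarrow{g_i} \bar X_i \xrightarrow{h_i} \P(V_4)$: here $g_i$ is birational and $\bar X_i$ is identified with the restriction to $\P(V_4)$ of $\SSpec(\rho_{1*}^i\cO)$, so $h_i$ is a flat double cover of the \emph{full} $\P(V_4)=\P^3$ branched along $Y_{\An}\cap\P(V_4)$. Uniqueness is then invoked on $\P^3$ itself, where $\Pic(\P^3)=\Z$ is manifestly torsion-free, giving $\bar X_1\cong\bar X_2$ and hence a birational isomorphism $\wtilde X_1 \isomdra \wtilde X_2$. You can repair your argument the same way: rather than excising the bad loci, push the double-cover structure forward to all of $\P(V_4)$ via Stein factorization and run your $\Pic$ argument there.
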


\begin{proof}
By Proposition \ref{proposition-kernel-plucker},  the morphism $\rho_1^1\colon \P_{X_1}(\cU_{X_1}) \to \P(V_5(X_1))$ is, 
outside the locus $\Sigma_1(X_1) \subset \P(V_5)$ (which has dimension $\le 2$ by Lemma~\ref{lemma-sigma-1}), 
a flat double cover branched along the sextic $Y_{\An}\cap \P(V_5(X_1))$.
The morphism $\tilde\rho_1^1\colon \wtilde{X}_1 \to \P(V_4)$ is obtained from $\rho_1^1$ by a base change.

If $\wtilde{X}_1 \xrightarrow{\ g_1\ } \bar{X}_1 \xrightarrow{\ h_1\ } \P(V_4)$ is its Stein factorization, $g_1$ is birational, 
$\bar{X}_1$ is isomorphic to the restriction of $\SSpec(\rho_{1*}^1\cO_{\P_{X_1}(\cU_{X_1})})$ to $\P(V_4)$, 
hence $h_1$ is the double cover of $\P(V_4)$ branched along $Y_{\An} \cap \P(V_4)$
 (note  that $Y_{\An} \cap \P(V_4) \ne \P(V_4)$ since $\wtilde{X}_1$, hence also $\bar X_1$, is generically reduced).

The same argument shows that in the Stein factorization $\tilde\rho_1^2\colon \wtilde{X}_2 \xrightarrow{\ g_2\ } \bar{X}_2 \xrightarrow{\ h_2\ } \P(V_4)$,
the  map $g_2$ is also birational and $h_2$ is again the double covering  branched along the same sextic $Y_{\An} \cap \P(V_4)$.

Since the branch divisors coincide,   
$\bar{X}_1 $ and $ \bar{X}_2 
$ are isomorphic,   
$\wtilde{X}_1$ and $\wtilde{X}_2$ are birationally isomorphic. By Lemma~\ref{lemma-tilde-x-1}, $X_1$ and $X_2$ are birationally isomorphic as well.
\end{proof}

\begin{rema}
One can explicitly decompose the birational  transformation $X_1\isomdra X_2$ into several steps. 
The first half   can be described as follows:
\begin{itemize}
\item the map 
$\wtilde{X}_1 \to X_1$ is the blow up of the union  {$X_1 \cap \CGr(2,V_4) = c_1\cup c_2$} of two conics intersecting in two points;
\item  the map $\wtilde{X}_1 \to \bar{X}_1$ contracts all lines intersecting $c_1\cup c_2$ and
all conics intersecting $c_1\cup c_2$ twice (these conics are  $\sigma$-conics and correspond to points of $\Sigma_1 \cap \P(V_4)$),
\end{itemize}
 and similarly for the second part. This transformation is very similar to the conic transformation used  in \cite[Theorem 7.4]{dim} 
 to prove  Theorem~\ref{proposition-birationality-3} when $X_1$ is a general GM threefold.
\end{rema}

One can use the same idea to prove  birationality of period partners in any dimension $n \ge 3$.
It requires, however, a long and cumbersome analysis, so we decided to omit it. Instead,
we will use birationalities for dual GM varieties (proved later in this section) to establish the result in dimension 4  {(Theorem~\ref{coro4})}.
Since in dimensions $n \ge 5$, all smooth GM varieties are rational, this shows the result holds in all dimensions $n \ge 3$.

\subsection{The second quadric fibration}\label{sqf}

Let $X$ be a smooth GM variety of dimension $n\ge3$. Instead of the Gushel bundle $\cU_X$, 
we now consider the quotient $(V_5 \otimes \cO_X)/\cU_X$, which we denote simply by $V_5/\cU_X$, 
 its projectivization $\P_X(V_5/\cU_X)$, and the map $\rho_2\colon \P_X(V_5/\cU_X) \to \Gr(3,V_5)$.
In Proposition \ref{proposition-kernel-plucker2}, we   prove that $\rho_2$ is a quadric fibration  
and identify its discriminant locus. 

Let as before $W_0^\perp = \An \cap \bw3V_5$ be the even part of the space of linear equations of $X$.
Considering its elements as skew forms on $V_5$,  we denote by
\begin{equation}\label{Sigma2}
  \Sigma_2^{\ge k}(X) \subset \Gr(3,V_5)
\end{equation}
the set of all 3-dimensional subspaces $V_3$ in $V_5$ which are isotropic for all elements of some $k$-dimensional space of these skew forms. As usual, set $\Sigma_2(X) := \Sigma_2^{\ge 1}(X)$. 

Finally, we will use the   EPW sequence  
\begin{equation*} 
Z^{\ge k}_{A}  =  \{ V_3 \in \Gr(3,V_6) \mid \dim(A \cap ( V_6 \wedge \bw{2}{V_3} ) ) \ge k  \} 
\end{equation*}
defined in \eqref{equation-epw-quartic}  and set $Z_A:=Z^{\ge 1}_{A}$.
It  plays for the second quadratic fibration the same role as the EPW sequence of $\P(V_6)$ plays for the first. 

 In the next lemma, we consider 1-dimensional subspaces $V_1 \subset V_5$ and identify the Grassmannian $\Gr(2,V_5/V_1)$
with the subscheme of $\Gr(3,V_5)$ parameterizing all 3-dimensional subspaces containing $V_1$. 

\begin{lemm}\label{lemma-sigma-2}
For   any smooth GM variety $X$ of dimension $n\ge3$, we have 
$\Sigma_2(X) \subset Z_{\An} \cap \Gr(3,V_5)$.

Assume  moreover  that either  $n=3$ and $X$     ordinary, or $n\ge 4$. If
$V_1 \notin \Sigma_1(X)$,   we have $\dim(\Sigma_2(X) \cap \Gr(2,V_5/V_1)) \le 2$.
\end{lemm}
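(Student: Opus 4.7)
The plan is to translate both assertions into linear-algebra statements about $\An$ via the standard Hodge duality
\begin{equation*}
\bw3 V_5 \;\cong\; \bw2 V_5^\vee \otimes \det(V_5),
\end{equation*}
which, after choosing a volume form on $V_5$, identifies $3$-forms with skew forms. A direct computation shows that the skew forms vanishing on a given $V_3 \in \Gr(3,V_5)$ correspond exactly to the subspace $V_5 \wedge \bw2 V_3 \subset \bw3 V_5$. Consequently,
\begin{equation*}
\Sigma_2(X) = \{V_3 \in \Gr(3,V_5) : \An \cap (V_5 \wedge \bw2 V_3) \ne 0\},
\end{equation*}
and since $V_5 \wedge \bw2 V_3 \subset V_6 \wedge \bw2 V_3$, the first assertion $\Sigma_2(X) \subset Z_{\An} \cap \Gr(3,V_5)$ follows immediately from the definition of $Z_{\An}$.

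For the dimension bound, set $W_0^\perp := \An \cap \bw3 V_5$ and $\ell := \dim W_0^\perp$. I would first invoke formula~\eqref{eq22} to see that the hypothesis ($n \ge 4$, or $n = 3$ and $X$ ordinary) forces $\ell \le 2$ in all cases. Since in particular $\ell \le 3$, Theorem~\ref{theorem-singdec} (see also Remark~\ref{rem39}) guarantees that $\An$ contains no decomposable vectors. The main technical point of the proof is then to pass from this condition to a rank statement for the skew forms in $W_0^\perp$: under the Hodge identification, decomposable elements of $\bw3 V_5$ correspond precisely to the rank-$2$ skew forms on $V_5$, so every nonzero $\omega \in W_0^\perp$ has rank $4$, with a one-dimensional kernel $K_\omega \subset V_5$.

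Finally, the condition $V_1 \notin \Sigma_1(X)$ says exactly that $V_1 \not\subset K_\omega$ for every nonzero $\omega \in W_0^\perp$. For such $\omega$, an isotropic $V_3 \supset V_1$ must in fact also contain $K_\omega$: otherwise $V_3$ would inject into the non-degenerate $4$-dimensional symplectic quotient $V_5/K_\omega$, producing there an impossible $3$-dimensional isotropic subspace. Thus $V_3/K_\omega$ is a Lagrangian in $V_5/K_\omega$ containing the isotropic line $V_1 \bmod K_\omega$, and such Lagrangians form a $\P^1$. Letting $[\omega]$ range over $\P(W_0^\perp) \cong \P^{\ell - 1}$ then covers $\Sigma_2(X) \cap \Gr(2, V_5/V_1)$ by a $\P^1$-family, bounding its dimension by $(\ell - 1) + 1 = \ell \le 2$, as required.
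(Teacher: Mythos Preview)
Your proof is correct and follows essentially the same route as the paper's: both identify $\bw3V_5$ with skew forms on $V_5$, use the absence of decomposable vectors in $\An$ to force rank $4$, and then parametrize the isotropic $V_3 \supset V_1$ for each $\omega$ by a $\P^1$ over $\P(W_0^\perp)$. Your write-up is slightly more explicit about the Hodge duality and about why an isotropic $V_3$ must contain $K_\omega$, but the argument is the same.
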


\begin{proof}
Let $V_3\in \Sigma_2(X)$,  {\ie, the subspace $V_3$} is   isotropic for some non-zero $\omega \in \An \cap \bw3V_5$, considered as a skew form on $V_5$.
Then $\omega \in \bw2V_3 \wedge V_5$. It follows that $V_3 \in Z_{\An}$, which proves 
the first assertion of the lemma.

 Let $\omega \in \An \cap \bw3V_5$ be non-zero. Since $X$ is smooth of dimension $\ge3$, the Lagrangian $\An$ contains no decomposable vectors hence $\omega$,
viewed as a skew form on $V_5$, has 
  rank   4  and its kernel is a 1-dimensional
subspace $K_1(\omega) \subset V_5$. Any 3-dimensional subspace $V_3$ of $V_5$ which is isotropic for $\omega$ contains $K_1(\omega)$, hence the set of such subspaces can be identified with the set of 2-dimensional subspaces in $V_5/K_1(\omega)$ 
which are isotropic for the non-degenerate skew form induced by $\omega$, \ie, with  {a 3-dimensional quadric} $\LGr(2,4)$.

It remains to describe which of these subspaces $V_3$ contain $V_1$.
Since $V_1 $ is not in $ \Sigma_1(X)$, it   projects onto a non-zero subspace $V'_1 \subset V_5/K_1(\omega)$.
The subset of $\LGr(2,4)$ consisting of subspaces containing $V'_1$ is $\P( V^{\prime\perp}_1 /V'_1) = \P^1$.
It follows  that $\Sigma_2(X) \cap \Gr(2,V_5/V_1)$ is dominated by a $\P^1$-fibration over $\P(\An \cap \bw3V_5)$.
Since $\dim (\An \cap \bw3V_5) \le 2$ for GM fourfolds and ordinary GM threefolds (see \eqref{eq22}), the claim follows. 
\end{proof}

\begin{prop}\label{proposition-kernel-plucker2}
 Let $X$ be a smooth GM variety of dimension~$n\ge3$. 

Over $\Gr(3,V_5) \setminus \Sigma_2(X)$, the map $\rho_2\colon \P_X(V_5/\cU_X) \to \Gr(3,V_5)$ is a relative quadratic hypersurface in a $\P^{n-3}$-fibration, whose 
  fiber over a point $U_3 $ has corank $k$ if and only if \mbox{$U_3 \in Z^k_{\An}$}. In
  particular, in $\Gr(3,V_5) \setminus \Sigma_2(X) $, its discriminant locus is $Z_{\An} \cap (\Gr(3,V_5) \setminus \Sigma_2(X))$.

Over $\Sigma_2^l(X)$, the map $\rho_2\colon \P_X(V_5/\cU_X) \to \Gr(3,V_5)$ is a relative quadratic hypersurface in a $\P^{n+l-3}$-fibration, whose fiber over a point $U_3 $ has corank $k$ if and only if $U_3 \in Z^{k+l}_{\An}  $.
\end{prop}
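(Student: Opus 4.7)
The plan is to follow the template of Proposition~\ref{proposition-kernel-plucker}, once again applying the Lagrangian--quadratic correspondence together with the isotropic reduction of Corollary~\ref{corollary-quadric-restriction}. I would work in the Lagrangian decomposition $\symv = \LL_1 \oplus \LL_2$ from~\eqref{dsd}, with $\LL_1 = \bw3V_5 \oplus L$ and $\LL_2 = (v_0 \wedge \bw2V_5) \oplus \k$, for a fixed $v_0 \in V_6 \setminus V_5$.

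The first step is to identify the fiber of $\rho_2$. A point of $\P_X(V_5/\cU_X)$ is a pair $(x,V_3)$ with $\cU_{X,x} \subset V_3 \subset V_5$, so the fiber over $V_3 \in \Gr(3,V_5)$ consists of those $x \in X$ whose image under the Gushel map lies in $\Gr(2,V_3) = \P(\bw2V_3)$. In the language of the Lagrangian--quadratic correspondence, this fiber is the restriction of the quadric $Q(v_0)$ to $\P(\BLL_2)$, where $\BLL_2 := (v_0 \wedge \bw2V_3) \oplus \k \subset \LL_2$.

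The second step is the isotropic reduction. A direct computation with the symplectic form on $\symv$ shows that $\II_{V_3} := \LL_1 \cap \BLL_2^\perp = \bw2V_3 \wedge V_5$, a $7$-dimensional subspace of $\bw3V_5 \subset \LL_1$, and hence $\II_{V_3} \oplus \BLL_2 = (V_6 \wedge \bw2V_3) \oplus \k$. The three dimensions needed for Corollary~\ref{corollary-quadric-restriction} can then be computed:
\begin{itemize}
\item $\dim(\hat{A} \cap \LL_1) = 6-n$, by Proposition~\ref{lemma-wx}(a);
\item $\dim(\hat{A} \cap \II_{V_3}) = \dim(A \cap (\bw2V_3 \wedge V_5)) = l$ exactly when $V_3 \in \Sigma_2^l(X)\setminus\Sigma_2^{l+1}(X)$, because an element $\omega \in \bw3V_5$, viewed as a skew form on $V_5$, vanishes on $V_3$ precisely when $\omega \in \bw2V_3 \wedge V_5$;
\item $\dim(\hat{A} \cap (\II_{V_3} \oplus \BLL_2)) = \dim(A \cap (V_6 \wedge \bw2V_3)) = k$ exactly when $V_3 \in Z^k_{\An}$; here one uses $A_1 \cap \k = 0$, which is the lci condition $A_1 \ne \k$.
\end{itemize}
Plugging these into the isotropic reduction formula yields a quadric whose span in $\P(\BLL_2) = \P^3$ has codimension $(6-n)-l$, hence equals $\P^{n+l-3}$, and whose corank is $k-l$. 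Specializing to $l=0$ recovers the statement over the open stratum; for $V_3 \in \Sigma_2^l(X)$, requiring corank equal to $k'$ translates to $V_3 \in Z^{k'+l}_{\An}$, matching the second claim.

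The main technical hurdle is the linear-algebra identity $\II_{V_3} = \bw2V_3 \wedge V_5$, which reduces to the elementary fact that for $\xi \in \bw3V_5$, one has $\xi \wedge \bw2V_3 = 0$ in $\bw5V_5$ if and only if $\xi \in \bw2V_3 \wedge V_5$; this can be verified in a basis adapted to the inclusion $V_3 \subset V_5$. Once this is in hand, the rest of the argument is a direct transcription of the method used to prove Proposition~\ref{proposition-kernel-plucker}.
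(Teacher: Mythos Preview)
Your proposal is correct and follows essentially the same route as the paper's own proof: identical Lagrangian decomposition, the same isotropic subspace $\II_{V_3} = \bw2V_3 \wedge V_5$, and the same three dimension counts feeding into Corollary~\ref{corollary-quadric-restriction}. You are even slightly more explicit than the paper in invoking the lci hypothesis $A_1 \ne \k$ to ensure $\hat{A} \cap (\II_{V_3} \oplus \BLL_2) = A \cap (V_6 \wedge \bw2V_3)$.
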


\begin{proof}
We follow the proof of Proposition~\ref{proposition-kernel-plucker} and use the notation introduced therein.
Choose  $v_0 \in V_6 \setminus V_5$ and let $V_3 \subset V_5$.
The fiber $\rho_2^{-1}(V_3)$ is the set of  $V_2\subset V_3$ which correspond to points of $X$. 
This is the intersection 
 \begin{equation*}
Q'(V_3) := Q(v_0) \cap \P\bigl( ( \bw2V_3 \oplus \k ) \cap W\bigr), 
\end{equation*}
so it can be described by the isotropic reduction of Corollary~\ref{corollary-quadric-restriction}.

For this, we let $\BLL_2:= \bw2V_3 \oplus \k \subset \LL_2$ and we consider 
the isotropic subspace   
\begin{equation*}
\II_{V_3} :=\LL_1\cap \BLL_2^\perp  = (\bw3V_5 \oplus L) \cap (\bw2V_3 \oplus \k)^\perp = V_5 \wedge \bw2V_3.
\end{equation*}
According to   Corollary~\ref{corollary-quadric-restriction}, the codimension in $\P(\BLL_2)$ of the span of the quadric $Q'(V_3)$ is equal to $\dim\bigl((\hat{A} \cap \LL_1)/(\hat{A} \cap \II_{V_3})\bigr)$ 
and its kernel is   $(\hat{A} \cap (\II_{V_3} \oplus \BLL_2))/(\hat{A} \cap \II_{V_3})$. It remains to describe the intersections 
of $\hat{A}$ with $\II_{V_3}$, $\LL_1$, and $\II_{V_3} \oplus \BLL_2$. 

We have $\hat{A} \cap \II_{V_3} = \An \cap (V_5 \wedge \bw2V_3) \subset \An \cap \bw3V_5 = W_0^\perp$, the space of linear equations of $X$. 
A skew form on~$V_5$, considered
as an element of $\bw3V_5$, is contained in the subspace $V_5 \wedge \bw2V_3$ if and only if $V_3$ is isotropic for it. 
This means that for $V_3 \in \Sigma_2^l(X)$, we have  
\begin{equation}\label{equation-dim-a-cap-iu}
\dim(\hat{A} \cap \II_{V_3}) = l.
\end{equation} 
Furthermore, the space $\hat{A} \cap \LL_1 = W^\perp$ has dimension $6 - n $, hence the codimension of the span of $Q'(V_3)$
in $\PP(\BLL_2)$ is $6 - n - l$. Since the dimension of $\P(\BLL_2)$ is 3, it follows that $Q'(V_3)$ is a quadratic 
hypersurface in $\P^{n + l - 3}$.
Finally, we have
\begin{equation*}                 
\II_{V_3} \oplus \BLL_2 = (V_5 \wedge \bw{2}{V_3}) \oplus (v_0 \wedge \bw2V_3) \oplus \k = V_6 \wedge \bw2V_3 \oplus \k.                          
\end{equation*}
Therefore, the dimension of   $\hat{A} \cap (\II_{V_3} \oplus \BLL_2) = \An \cap (V_6 \wedge \bw2V_3)$ is detected 
by the position of $V_3 \in \Gr(3,V_5)$ with respect to the    {stratification}  $Z_\An^\bullet$ of   $\Gr(3,V_6)$  defined  in \eqref{equation-epw-quartic}. Taking~\eqref{equation-dim-a-cap-iu} into account,
we deduce that for $V_3 \in Z^k_{\An} \cap \Sigma_2^l(X)$, the corank of $Q'(v)$ is $k-l$. 
\end{proof}

\subsection{Birationality of dual varieties of dimension 4}\label{subsec-bir-dual}

Let $X$ and $X'$ be smooth dual GM varieties of the same dimension $n\ge3$. 
This means that they are constructed from  dual Lagrangian data $(V_6,V_5,\An,A_1)$
and $(V_6^\vee,V'_5,\An^\perp,A'_1)$.  
Set  $V_1:=V^{\prime \bot}_5 \subset V_6$ and assume additionally
\begin{equation}\label{equation-v1-v5}
V_1\subset V_5
\end{equation} 
(we will see later that the general case reduces to this one).

\begin{rema}
The condition~\eqref{equation-v1-v5} is symmetric with respect to $X$ and $X'$. 
Indeed, it can be reformulated as the degeneracy of the restriction of the natural pairing
$V_6 \otimes V_6^\vee \to \k$ to the subspace $V_5 \otimes V'_5 \subset V_6 \otimes V_6^\vee$.
\end{rema}

The construction of birationalities is very similar to the one used in Section~\ref{subsec-bir-pp}.
Assume   $n = 3$ or $n = 4$, and consider the diagrams
\begin{equation*}
\xymatrix@R=5mm@C=5mm{
& \P_{X}(V_5/\cU_{X}) \ar[dl] \ar[dr]^{\rho_2} &&& \P_{X'}(V'_5/\cU_{X'}) \ar[dl]_{\rho'_2} \ar[dr] \\
X && \Gr(3,V_5) & \Gr(3,V'_5) && X',
}
\end{equation*}
where $\rho_2$ and $\rho'_2$ are the second quadric fibrations defined in Section \ref{sqf}, together with the subsets
 $\Sigma_2(X) \subset Z_{\An} \cap \Gr(3,V_5)$ and $\Sigma_2(X') \subset Z_{\An} \cap \Gr(3,V'_5)$. 
We have $\Gr(3,V_5) \subset \Gr(3,V_6)$ and $\Gr(3,V'_5) \subset \Gr(3,V_6^\vee) = \Gr(3,V_6)$, and their intersection is, since~\eqref{equation-v1-v5} is satisfied,   
\begin{equation*}
\Gr(2,V_5/V_1) \cong \Gr(2,4)
\end{equation*}
(it is empty if~\eqref{equation-v1-v5} does not hold). We restrict everything 
to the common base $\Gr(2,V_5/V_1)$ and get a diagram
\begin{equation}\label{defxt}
\xymatrix@R=5mm@C=6mm{
& \wtilde{X} \ar[dl] \ar[dr]^{\tilde\rho_2} && \wtilde{X}' \ar[dl]_{\tilde\rho'_2} \ar[dr] \\
X && \Gr(2,V_5/V_1) && X',
}
\end{equation}
 where $\wtilde{X} = \P_X(V_5/\cU_X) \times_{\Gr(3,V_5)} \Gr(2,V_5/V_1)$ and $\tilde\rho_2$ is the restriction 
 of $\rho_2$ to $\wtilde{X}$---and analogously for $\wtilde{X}'$ and $\tilde\rho'_2$. 

\begin{lemm}\label{lemma-tilde-x-2}
If 
\begin{equation}\label{condition}
V_1 \notin \Sigma_1(X) \cup Y_{\An(X)}^{\ge n-1},
\end{equation} 
  the map $\wtilde{X} \to X$ is the blow up of  
$X \cap \CGr(1,V_5/V_1)$. In particular, $\wtilde{X}$ is irreducible, generically  reduced, and birational to $X$.
Analogous claims hold for $\wtilde{X}'$.
\end{lemm}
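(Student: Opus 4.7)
The plan parallels the proof of Lemma~\ref{lemma-tilde-x-1}. First, I would analyze the fibers of $\wtilde{X} \to X$. A point of the fiber over $w \in X$ is a 3-dimensional subspace $V_3 \subset V_5$ satisfying $\cU_{X,w} \subset V_3$ and $V_1 \subset V_3$. When $V_1 \not\subset \cU_{X,w}$, these conditions force $V_3 = \cU_{X,w} + V_1$ uniquely and the fiber is a reduced point; when $V_1 \subset \cU_{X,w}$, the second condition is vacuous, so the fiber is $\P(V_5/\cU_{X,w}) \cong \P^2$. Hence the locus of non-trivial fibers is $E := \{w \in X \mid V_1 \subset \cU_{X,w}\} = X \cap \CGr(1,V_5/V_1)$.

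The key observation is that $E$ coincides, as a scheme, with the fiber $\rho_1^{-1}(V_1)$ of the first quadric fibration, since both parametrize the condition $V_1 \subset \cU_{X,w}$. I would then apply Proposition~\ref{proposition-kernel-plucker}: the hypothesis $V_1 \notin \Sigma_1(X)$ places $E$ inside a $\P^{n-2}$-fibration (rather than a $\P^{n-1}$-fibration) and identifies it with a quadric of corank $k$ for $V_1 \in Y^k_{\An(X)}$. The second hypothesis $V_1 \notin Y^{\ge n-1}_{\An(X)}$ forces $k < n-1$, so this quadric is a proper nonzero hypersurface of dimension $n-3$, giving $\dim(E) = n-3$.

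With the bound $\dim(E) \le n-3$ in hand, the codimension-$3$ blow-up criterion~\cite[Lemma~2.1]{K}---the same one used in Lemma~\ref{lemma-tilde-x-1}---identifies $\wtilde{X} \to X$ with the blow-up of $E$, in agreement with the $\P^2$-fibers obtained from the fiber analysis. Irreducibility, generic reducedness, and the birationality of $\wtilde{X} \to X$ are then immediate consequences of standard properties of blow-ups of integral varieties. The analogous claims for $\wtilde{X}'$ are obtained by exchanging the roles of $(X,V_1,\An,V_5)$ and $(X', V_5^\perp, \An^\perp, V'_5)$, using the symmetric hypothesis on $V_5^\perp \subset V_6^\vee$.

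The main obstacle is the dimension bound $\dim(E) \le n-3$: both hypotheses on $V_1$ are essential---one prevents the ambient projective space of the first quadric fibration from jumping from $\P^{n-2}$ to $\P^{n-1}$, while the other prevents the quadric itself from degenerating to the whole ambient space. Once this dimension count is established, everything else is formal.
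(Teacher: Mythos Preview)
Your proposal is correct and follows essentially the same approach as the paper: both identify the locus of non-trivial fibers as $X \cap \CGr(1,V_5/V_1)$, bound its dimension by $n-3$ under the hypothesis~\eqref{condition}, and then invoke the blow-up criterion of~\cite[Lemma~2.1]{K} as in Lemma~\ref{lemma-tilde-x-1}. The only cosmetic difference is that the paper phrases the dimension count as an intersection of $6-n$ hyperplanes and a quadric in $\CGr(1,V_5/V_1)\cong\P^4$ and then checks when this intersection is dimensionally transverse, whereas you recognize this intersection directly as the fiber $\rho_1^{-1}(V_1)$ and read off its dimension from Proposition~\ref{proposition-kernel-plucker}; since the paper itself appeals to that proposition to justify the equivalence with $V_1 \in Y_{\An(X)}^{\ge n-1}$, the two arguments are really the same computation viewed from slightly different angles.
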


\begin{proof}
The proof is analogous to that of Lemma~\ref{lemma-tilde-x-1}. 
The fiber of the map $\wtilde{X} \to X$ over a point $w \in X$ is a point if $V_1 \not\subset \cU_{w}$, and $\P(V/\cU_{w}) \cong \P^2$ otherwise.
The locus of   non-trivial fibers is therefore $X \cap \CGr(1,V_5/V_1)$. Since $X \subset \CGr(2,V_5)$ is an intersection
of $6-n$ hyperplanes and a quadric, $X \cap \CGr(1,V_5/V_1)$ is the intersection of $6-n$ hyperplanes and a quadric 
in $\CGr(1,V_5/V_1) = \P^4$, hence its expected dimension is $n-3$.

The dimension can jump if 
\begin{itemize}
\item either one of the hyperplanes
contains $\CGr(1,V_5/V_1)$, \ie, $V_1 \in \Sigma_1(X)$;
\item or  the quadric contains  {the intersection} $\P(W) \cap \CGr(1,V_5/V_1) = \P^{n-2}$; 
this is equivalent to $V_1 \in Y_{\An(X)}^{\ge n-1}$ by Proposition~\ref{proposition-kernel-plucker}.
\end{itemize}
Therefore, if \eqref{condition} is satisfied, the dimension does not jump and $\dim\bigl(X \cap \CGr(1,V_5/V_1)\bigr) \le n-3$. 
The exceptional set of $\wtilde{X} \to X$ has dimension at most $n-1$  and thus cannot be an irreducible component of $\wtilde{X}$.
It follows that $\wtilde{X}$ is integral 
and the map $\wtilde{X} \to X$ is the blow up of $X \cap \CGr(1,V_5/V_1)$. 
The same argument works for $\wtilde{X}'$.
\end{proof}

\begin{prop}\label{proposition-birationality-4}
Let  $X$ and $X'$ be smooth dual complex GM fourfolds satisfying \eqref{equation-v1-v5}
and
such that    \eqref{condition} hold for both $X$ and $X'$. 
Over a dense open subset $U \subset \Gr(2,V_5/V_1)$, there is an isomorphism $\tilde\rho_2^{-1}(U) \cong \tilde\rho_2^{\prime-1}(U)$. 
In particular, $\wtilde{X}$ is birationally isomorphic to $\wtilde{X}'$ over $\Gr(2,V_5/V_1)$ and $X$ is birationally isomorphic to $X'$.
\end{prop}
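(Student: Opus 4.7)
The plan is to mirror the proof of Theorem~\ref{proposition-birationality-3}, but using the second quadric fibration $\rho_2$ of Section~\ref{sqf} in place of the first. By Proposition~\ref{proposition-kernel-plucker2} applied with $n=4$, both
\begin{equation*}
\tilde\rho_2\colon\wtilde X \to \Gr(2,V_5/V_1),
\qquad
\tilde\rho_2'\colon\wtilde X' \to \Gr(2,V_5/V_1)
\end{equation*}
are, outside the loci $\Sigma_2(X)$ and $\Sigma_2(X')$, quadric fibrations in $\P^1$-bundles (the fibers being pairs of points generically), hence generically finite of degree~$2$; by Lemma~\ref{lemma-sigma-2}, these defect loci meet the $4$-dimensional base $\Gr(2,V_5/V_1) \cong \Gr(2,4)$ in codimension at least~$2$. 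The respective discriminant divisors are $D := Z_{\An} \cap \Gr(2,V_5/V_1)$ and $D' := Z_{\An^\perp} \cap \Gr(2,V_5'/V_1')$, and taking Stein factorizations $\tilde\rho_2 = h\circ g$, $\tilde\rho_2' = h'\circ g'$ produces birational contractions $g,g'$ together with degree-$2$ finite covers $h\colon\bar X \to \Gr(2,V_5/V_1)$, $h'\colon\bar X' \to \Gr(2,V_5/V_1)$ branched along $D$ and $D'$ respectively.

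The crux is the identification $D = D'$, which comes from the self-duality $Z_{\An^\perp} = Z_{\An}$ under the natural isomorphism $\Gr(3,V_6^\vee) \cong \Gr(3,V_6)$ sending $U_3 \mapsto U_3^\perp$. For any $V_3 \subset V_6$, the $10$-dimensional subspace $V_6 \wedge \bw2{V_3} \subset \bw3{V_6}$ is Lagrangian for the wedge pairing, and its annihilator under the natural duality $\bw3{V_6} \otimes \bw3{V_6^\vee} \to \k$ is precisely $V_6^\vee \wedge \bw2{V_3^\perp} \subset \bw3{V_6^\vee}$ (a direct check on a basis $e_1,\dots,e_6$ adapted to $V_3 = \langle e_1,e_2,e_3\rangle$ shows both are spanned by the decomposable vectors $e_I$, respectively $e_I^*$, with $|I \cap \{1,2,3\}| \ge 2$, respectively $|I \cap \{4,5,6\}| \ge 2$, which are complementary index sets). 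Taking annihilators, $\An \cap (V_6 \wedge \bw2{V_3}) \ne 0$ if and only if $\An + V_6 \wedge \bw2{V_3} \ne \bw3{V_6}$, if and only if $\An^\perp \cap (V_6^\vee \wedge \bw2{V_3^\perp}) \ne 0$, which is the identity $V_3 \in Z_{\An} \iff V_3^\perp \in Z_{\An^\perp}$.

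Since $\Gr(2,V_5/V_1) \cong \Gr(2,4)$ is smooth with torsion-free $\Pic = \Z$, a double cover branched along a given reduced divisor is unique up to isomorphism, so $\bar X \cong \bar X'$ over a dense open subset $U$ of the base obtained by removing $\Sigma_2(X)$, $\Sigma_2(X')$, and any other defect locus where the quadric-fibration description fails. Pulling back by the birational morphisms $g$ and $g'$ yields the desired isomorphism $\tilde\rho_2^{-1}(U) \cong \tilde\rho_2^{\prime-1}(U)$, and composing with the blow-up contractions $\wtilde X \to X$ and $\wtilde X' \to X'$ of Lemma~\ref{lemma-tilde-x-2} (whose hypotheses are guaranteed by~\eqref{equation-v1-v5} and~\eqref{condition}) produces the birational isomorphism $X \dashrightarrow X'$.

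The main obstacle is to verify the self-duality $Z_\An = Z_{\An^\perp}$ in a natural (coordinate-free) way and to track the Stein factorization carefully, ensuring it actually produces the expected double covers branched exactly along $D$ and $D'$; this involves handling the dimension jumps of $\rho_2$ and $\rho_2'$ over the codimension-$\ge 2$ loci $\Sigma_2^{\ge 2}(X)$, $\Sigma_2^{\ge 2}(X')$ and $Z^{\ge 2}_\An$, but since these contribute only in codimension $\ge 2$ on the base, they can be discarded when passing to the open~$U$.
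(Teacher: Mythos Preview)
Your proof is correct and follows essentially the same route as the paper: Stein-factorize the restricted second quadric fibrations over $\Gr(2,V_5/V_1)$, identify both as double covers branched along $Z_{\An}\cap\Gr(2,V_5/V_1)$ via the self-duality $Z_{\An}=Z_{\An^\perp}$ (which is Lemma~\ref{lem39} in the paper, proved there coordinate-free but which you verify inline), and conclude using Lemma~\ref{lemma-tilde-x-2}. The only cosmetic difference is that you reprove Lemma~\ref{lem39} rather than cite it.
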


\begin{proof}
By Proposition \ref{proposition-kernel-plucker2},  the morphism $\rho_2\colon \P_X(V_5/\cU_X) \to \Gr(3,V_5)$ is, 
outside the locus $\Sigma_2(X)\subset \Gr(3,V_5)$, a flat double cover branched along the quartic $Z_{\An} \cap \Gr(3,V_5 )$. 
The morphism $\tilde\rho_2\colon \wtilde{X} \to \Gr(2,V_5/V_1)$ is obtained from $\rho_2$ by a base change. 
Outside of $\Sigma_2(X) \cap \Gr(2,V_5/V_1)$ (which has dimension at most~$2$ by Lemma~\ref{lemma-sigma-2}), 
$\tilde\rho_2$ is therefore a double covering   branched along $Z_{\An} \cap  \Gr(2,V_5/V_1)$.

Let $\wtilde{X} \xrightarrow{\ g\ } \bar{X} \xrightarrow{\ h\ } \Gr(2,V_5/V_1)$ be its Stein factorization. 
As in the proof of Proposition~\ref{proposition-birationality-3},  we see that $g$  {is} birational and $h$ is the double cover branched along  $Z_{\An} \cap  \Gr(2,V_5/V_1)$.

The same argument shows that in the Stein factorization $\tilde\rho'_2\colon \wtilde{X}' \xrightarrow{\ g'\ }  \bar{X}' \xrightarrow{\ h'\ }  \Gr(2,V_5/V_1)$,
the   map $g'$ is also birational and $h'$ is again the double covering branched 
along the same quartic $Z_{\An} \cap \Gr(2,V_5/V_1)$ (Lemma~\ref{lem39}).
As the branch divisors coincide,  $\bar{X} $ is isomorphic to~$\bar{X}'$, hence $\wtilde{X}$ and $\wtilde{X}'$ are birationally isomorphic.
By Lemma~\ref{lemma-tilde-x-2}, we also have a birational isomorphism between $X$ and $X'$.
\end{proof}

\begin{rema}
Again, one can explicitly describe the resulting birational transformation. 
The map $\wtilde{X} \to X$ is the blow up of a $\sigma$-conic  {$c = X \cap \CGr(1,V_5/V_1)$} and the map $\wtilde{X} \to \bar{X}$  contracts   
all lines intersecting~$c$ (they are parameterized by the curve $Z_{\An}^{\ge 2} \cap \Gr(2,V_5/v')$) 
as well as all conics intersecting $c$ twice (these conics are  $\rho$-conics and correspond to points of $\Sigma_2(X) \cap \Gr(2,V_5/V_1)$). 
The second half of the transformation is analogous.
 \end{rema}

\subsection{Combined birationalities}

In this section, we  combine birational isomorphisms from the previous sections to show
that all period partners and all dual varieties of any dimension $n \ge 3$ are birationally isomorphic.
We first deal with period partners.

\begin{theo}\label{coro4}
Any two smooth complex GM fourfolds which are period partners are birationally isomorphic.
\end{theo}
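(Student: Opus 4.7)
The plan is to reduce the problem to the birationality of dual GM fourfolds (Proposition~\ref{proposition-birationality-4}) by producing a smooth GM fourfold $X'$ that is simultaneously dual to both $X_1$ and $X_2$.

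Fix an identification of the Lagrangian data for $X_1$ and $X_2$ so that they correspond to triples $(V_6, V_5^1, A)$ and $(V_6, V_5^2, A)$ with a common Lagrangian $A \subset \bw3 V_6$. If $V_5^1 = V_5^2$, the dimension formula~\eqref{eq22} forces both varieties to have the same type, hence $X_1 \cong X_2$ by Corollary~\ref{corollary:isom-gm}; so I may assume $V_5^1 \ne V_5^2$ and set $V_4 := V_5^1 \cap V_5^2 \subset V_6$, a four-dimensional subspace. A candidate common dual $X'$ has Lagrangian data $(V_6^\vee, V_5', A^\perp)$ for some hyperplane $V_5' \subset V_6^\vee$; the condition~\eqref{equation-v1-v5} of Proposition~\ref{proposition-birationality-4} applied simultaneously to both pairs $(X_i, X')$ is equivalent to $V_1 := (V_5')^\perp \subset V_4$, parameterizing $V_5'$ by $V_1 \in \P(V_4) \cong \P^3$.

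Next, a direct computation using the natural pairing $\bw3 V_6 \otimes \bw3 V_6^\vee \to \k$ gives the identity
\begin{equation*}
\dim\!\bigl(A^\perp \cap \bw3 V_5'\bigr) = \dim\!\bigl(A \cap V_1 \wedge \bw2 V_6\bigr),
\end{equation*}
so by formula~\eqref{eq22}, the constraint $\dim X' = 4$ becomes $V_1 \in Y_A \setminus Y_A^{\ge 2}$. Hence $V_1$ must lie in the dense open part of the sextic surface $S := Y_A \cap \P(V_4) \subset \P^3$. On this open part of $S$, the remaining hypotheses of Proposition~\ref{proposition-birationality-4}---smoothness of $X'$ and condition~\eqref{condition} for the pairs $(X_1, X')$ and $(X_2, X')$ in both directions---each cut out further dense open subsets: condition~\eqref{condition} fails only on $\Sigma_1(X_i) \cap S$ (at most a curve, by Lemma~\ref{lemma-sigma-1}), on $Y_A^{\ge 3} \cap S$, and on the analogous loci for $X'$. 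Once a suitable $V_1 \in S$ has been fixed, applying Proposition~\ref{proposition-birationality-4} twice gives birational isomorphisms $X_1 \dashrightarrow X' \dashrightarrow X_2$, whose composition yields the claim.

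The principal obstacle is ensuring the smoothness of $X'$: by Theorem~\ref{theorem-singdec}, this requires every decomposable vector of $A^\perp$ (if any) to lie in $\bw3 V_5'$, a condition that, via the duality identity above, constrains $V_1$ to lie in a specific three-dimensional subspace of $V_6$ for each such decomposable vector. When $A^\perp$ has no decomposable vectors---the generic situation given that $A$ itself has none by smoothness of $X_1, X_2$ combined with Theorem~\ref{theorem-singdec}---this condition is vacuous and a generic $V_1 \in S$ works. In general one must verify that the necessary three-dimensional subspaces of $V_6$ meet $V_4$ in a manner compatible with $S$, so that the resulting conditions on $V_1$ leave a non-empty open subset; the required codimension estimates should come from the stratification of EPW sextics and constitute the main technical content of the proof.
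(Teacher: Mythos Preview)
Your overall strategy---finding a common dual $X'$ and applying Proposition~\ref{proposition-birationality-4} twice---is exactly the paper's approach. But your identification of the ``principal obstacle'' is mistaken, and this causes you to miss where the actual work lies.

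The smoothness of $X'$ is \emph{automatic}. Since $X_1$ is smooth of dimension $\ge 3$, Theorem~\ref{theorem-singdec} says $A$ contains no decomposable vectors. But the natural isomorphism $\Gr(3,V_6) \cong \Gr(3,V_6^\vee)$ identifies $\Theta_A$ with $\Theta_{A^\perp}$ (see Section~\ref{depw}), so $A^\perp$ also contains no decomposable vectors. Hence, again by Theorem~\ref{theorem-singdec}, \emph{every} GM variety of dimension $\ge 3$ built from $A^\perp$ is smooth. There is no ``generic situation'' here and no case analysis on decomposable vectors of $A^\perp$ is needed.

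The genuine content is what you skate over: showing that a suitable $V_1$ exists. You need $V_1 \in Y_A^1 \cap \P(V_4)$ avoiding $\Sigma_1(X_1) \cup \Sigma_1(X_2)$. The set $Y_A \cap \P(V_4)$ is a degree-$6$ surface in $\P^3$ (or has dimension $3$), while $Y_A^{\ge 2}$ is an \emph{integral} surface of degree~$40$ (Theorem~\ref{theorem-ogrady-stratification}); comparing degrees, $Y_A^{\ge 2}$ cannot contain $Y_A \cap \P(V_4)$, so $Y_A^1 \cap \P(V_4)$ is non-empty of dimension $\ge 2$. Since $\Sigma_1(X_i)$ has dimension $\le 1$ (Lemma~\ref{lemma-sigma-1}), the choice is possible. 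You also glossed over verifying condition~\eqref{condition} from the side of $X'$: one needs $(V_5^i)^\perp \notin \Sigma_1(X')$ and $V_5^i \notin Y_{A^\perp}^{\ge 3}$. The latter holds because $X_i$ is a fourfold; the former follows from the symmetry of $\widehat{Y}_A$ (Lemma~\ref{lemma-sigma-1}): if $(V_5^i)^\perp \in \Sigma_1(X')$ then $(V_1,V_5^i) \in \widehat{Y}_A$, forcing $V_1 \in \Sigma_1(X_i)$, which was excluded.
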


\begin{proof}
Let $X_1$ and $X_2$ be smooth GM fourfolds obtained from the same Lagrangian subspace~$\An$ with no decomposable vectors, possibly different hyperplanes 
$V_5^1 \subset V_6$ and $V_5^2 \subset V_6$, {and possibly different subspaces $A_1^1$ and $A_1^2$}.
By Theorem~\ref{lemma-isoclasses-dual}, smooth ordinary GM fourfolds that are dual to both  {$X_1$ and~$X_2$} are parametrized by points $V_1$ of the locus $Y_{\An}^1$ defined in \eqref{yaell}. 
If we moreover impose   conditions \eqref{equation-v1-v5} and~\eqref{condition}
for   both $ V_5^1$  and $V_5^2$, we need to take $V_1$ in $ Y_{\An}^1\cap \P(V_5^1\cap V_5^2)$,
but not in $\Sigma_1(X_1) \cup \Sigma_1(X_2)$ (which has dimension at most  1 by Lemma~\ref{lemma-sigma-1}).
If $ Y_{\An}\cap \P(V_5^1\cap V_5^2 )$ has dimension $\ge 3$, this is certainly possible; if it has dimension 2, it is a surface of degree 6, whereas 
$Y_{\An}^{\ge2}$ is an integral surface of degree~40 (Theorem \ref{theorem-ogrady-stratification}), so it is again possible.

We make this choice of $V_1$   and we let $X'$ be the corresponding GM fourfold, dual to both~$X_1$ and $X_2$.
 By the choice of $V_1$, condition~\eqref{equation-v1-v5} holds for both pairs $(X_1,X')$ and $(X_2,X')$;  we also have $V_1 \notin \Sigma_1(X_1) \cup \Sigma_1(X_2) \cup Y_{\An}^{\ge 3}$ by   construction.
Moreover, for each $i\in\{1,2\}$, we have  $(V_5^i)^\perp \notin \Sigma_1(X')$ (by Lemma~\ref{lemma-sigma-1}, $(V_5^i)^\perp \in \Sigma_1(X')$  implies
$(V_1,V_5^i) \in \widehat{Y}_{\An}$,   hence   $V_1 \in \Sigma_1(X_i)$)
and
$V_5^i \notin Y_{\An^\perp}^{\ge 3}$, because $X_1$ and $X_2$ are fourfolds.
Thus   Condition~\eqref{condition} also holds for both pairs.
Applying Proposition \ref{proposition-birationality-4}, we conclude that $X_1$ and $X'$ are birationally isomorphic, and so are  $X_2$ and~$X'$. This proves the theorem.
\end{proof}

\begin{coro}\label{coro417} 
Smooth complex period partners of the same dimension $n \ge 3$  are birationally isomorphic.
\end{coro}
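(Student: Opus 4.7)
The plan is to deduce Corollary~\ref{coro417} by a simple case analysis on the dimension $n$, using the results already established in the excerpt. Recall that smooth GM varieties have dimension at most $6$, so only the cases $n \in \{3,4,5,6\}$ occur.

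First, for $n = 3$, I would simply invoke Theorem~\ref{proposition-birationality-3}: given smooth GM threefolds $X_1, X_2$ which are period partners, either $V_5(X_1) = V_5(X_2)$, in which case the extended Lagrangian data $(V_6, V_5, A, A_1^i)$ determine $X_1$ and $X_2$ as period partners sharing even Lagrangian data with (possibly) different odd parts $A_1^i$; by Lemma~\ref{lemma-dimx-ax0} and Corollary~\ref{corollary:isom-gm}, the two varieties are then either isomorphic, or one is the double cover of $M'_X$ branched over the other, so in particular birationally isomorphic. If $V_5(X_1) \ne V_5(X_2)$, Theorem~\ref{proposition-birationality-3} applies directly.

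Second, for $n = 4$, the birational isomorphism follows immediately from Theorem~\ref{coro4}, whose proof already handles the case of period partners of both types (ordinary and special), through a choice of common dual fourfold $X'$.

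Third, for $n = 5$ or $n = 6$, the result is even more transparent: by Proposition~\ref{proposition-56fold-rational}, any smooth complex GM variety of such dimension is rational, so both $X_1$ and $X_2$ are birational to $\mathbb{P}^n$, hence to each other. No reference to the period partnership structure is needed in these dimensions. The main conceptual point is that while in dimensions $3$ and $4$ the birationalities are constructed carefully through quadric fibrations (based on the Gushel bundle or its quotient), in dimensions $5$ and $6$ rationality trivializes the problem. Since no smooth GM varieties exist in dimension $\ge 7$, this case analysis is exhaustive and completes the proof.
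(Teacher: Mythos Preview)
Your case analysis is exactly the paper's approach: its proof reads, in full, ``For $n = 3$, this is Theorem~\ref{proposition-birationality-3}; for $n = 4$, this is Theorem~\ref{coro4}; for $n \ge 5$, this is Proposition~\ref{proposition-56fold-rational}.''

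Your treatment is actually more careful in that you address, for $n=3$, the case $V_5(X_1)=V_5(X_2)$, which Theorem~\ref{proposition-birationality-3} explicitly excludes and the paper glosses over. However, your reasoning in that sub-case is slightly off: the alternative ``one is the double cover of $M'_X$ branched over the other'' cannot occur, since by~\eqref{eq22} an ordinary and a special GM variety sharing $(V_6,V_5,A)$ differ in dimension by one, contradicting the equal-dimension hypothesis in the definition of period partners (and in any case a double cover of $M'_X$ is not birational to its branch locus). The correct observation is simply that equal dimension forces $X_1$ and $X_2$ to have the same type, whence Corollary~\ref{corollary:isom-gm}(b) gives $X_1 \cong X_2$ outright.
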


\begin{proof}
For $n = 3$, this is Theorem~\ref{proposition-birationality-3}; for $n = 4$, this is Theorem~\ref{coro4};
  for $n \ge 5$, this is Proposition~\ref{proposition-56fold-rational}.
\end{proof}

The following corollary was brought to our attention by T.~de Fernex.

\begin{coro}\label{coro418}
No smooth complex GM variety $X$ of dimension  $n \ge 3$ is birationally rigid: 
there are smooth prime Fano varieties that are birationally isomorphic to, but not  {biregularly} isomorphic to,~$X$.
\end{coro}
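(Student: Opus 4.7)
The strategy is to produce, for a given smooth complex GM variety $X$ of dimension $n \ge 3$, a smooth period partner $X'$ that is not biregularly isomorphic to $X$. Such an $X'$ is automatically birationally isomorphic to $X$ by Corollary~\ref{coro417} and, by Lemma~\ref{lemma:pic-gm}, it is a smooth prime Fano variety (locally factorial, with $\Pic(X') = \Z H'$ and $K_{X'} \lin -(n-2)H'$); its existence therefore proves the non-rigidity of $X$.

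Concretely, let $(V_6,V_5,\An)$ be the Lagrangian data attached to $X$. Since $X$ is smooth of dimension $\ge 3$, Theorem~\ref{theorem-singdec} tells us that $\An$ contains no decomposable vectors. By Theorem~\ref{lemma-isoclasses-pp}, the set of isomorphism classes of period partners of $X$ is then in bijection with $(Y_{\An^\bot}^{5-n} \sqcup Y_{\An^\bot}^{6-n})/\PGL(V_6)_{\An}$, and by Proposition~\ref{autoepw}(a) the group $\PGL(V_6)_{\An}$ is finite.

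The main step is therefore a case analysis showing that, in each dimension $n \in \{3,4,5,6\}$, at least one of the two strata $Y_{\An^\bot}^{5-n}$, $Y_{\An^\bot}^{6-n}$ is positive-dimensional. Using the structure of the dual EPW stratification recalled in Theorem~\ref{theorem-ogrady-stratification}: for $n \in \{5,6\}$, the stratum $Y_{\An^\bot}^0$ is a dense open subset of $\P(V_6^\vee)$, hence $5$-dimensional; for $n=4$, the stratum $Y_{\An^\bot}^1$ is a dense open subset of the sextic hypersurface $Y_{\An^\bot}$, hence $4$-dimensional; and for $n=3$, the stratum $Y_{\An^\bot}^2$ is an integral surface of degree~$40$. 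In each case a finite group cannot have Zariski-dense orbits on a positive-dimensional variety, so the quotient is infinite; in particular one can pick a hyperplane $V_5'\subset V_6$ whose orbit differs from that of $V_5$. The associated GM variety $X'$ is smooth by Theorem~\ref{theorem-singdec}, is a period partner of $X$ by construction, and is not biregularly isomorphic to $X$ by Corollary~\ref{corollary:isom-gm}(b).

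No step of this plan is a genuine obstacle: birationality is provided by Corollary~\ref{coro417}, smoothness and primality of the Fano variety $X'$ by Theorem~\ref{theorem-singdec} and Lemma~\ref{lemma:pic-gm}, and the existence of infinitely many period partners reduces to the positive-dimensionality of the relevant EPW strata. The only mildly delicate point is the dimension count of the strata $Y_{\An^\bot}^{5-n}$ and $Y_{\An^\bot}^{6-n}$, which however is an immediate reading of the EPW stratification recalled from O'Grady in Appendix~\ref{section-epw}.
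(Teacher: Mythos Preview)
Your proposal is correct and follows essentially the same approach as the paper: use Theorem~\ref{lemma-isoclasses-pp} to identify isomorphism classes of period partners with $(Y_{\An^\bot}^{5-n} \sqcup Y_{\An^\bot}^{6-n})/\PGL(V_6)_{\An}$, invoke finiteness of $\PGL(V_6)_{\An}$ (Proposition~\ref{autoepw}(a)) to conclude this set is infinite, and apply Corollary~\ref{coro417} for birationality. You have simply spelled out in more detail the dimension count of the strata and the primality of the partners (via Lemma~\ref{lemma:pic-gm}), points the paper leaves implicit; one very minor omission is that applying Theorem~\ref{theorem-ogrady-stratification} to $\An^\perp$ requires noting that $\An^\perp$ also contains no decomposable vectors, which follows from the duality remark in Section~\ref{depw}.
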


\begin{proof}
By Theorem~\ref{lemma-isoclasses-pp}, there is a bijection between the set of isomorphism classes of period partners of $X$ and the set 
$\bigl(Y_{\An^\bot}^{5-n} \sqcup Y_{\An^\bot}^{6-n}\bigr)/\PGL(V_6)_{\An}$. Since the group $\PGL(V_6)_{\An}$ is finite 
(Theorem \ref{autoepw}(a)), these sets are   infinite. By  Corollary \ref{coro417}, all these period partners of $X$ are 
birationally isomorphic to $X$. This proves the corollary.
\end{proof}

 For threefolds, the assumption~\eqref{condition} is very restrictive; for instance, it rules out all ordinary dual varieties.
We replace it with the more flexible assumption
\begin{equation}\label{condition-2}
V_1 \in (Y_{\An}^2 \cap \PP(V_5)) \setminus \Sigma_1(X).
\end{equation}
The  {scheme} $\tilde{X} \subset \P_X(V_5/\cU_X)$  {discussed in Lemma~\ref{lemma-tilde-x-2}} is then reducible. 
We  describe its irreducible components and the restriction of the map $\tilde\rho_2$ to these components.

\begin{lemm}\label{lemma-tilde-x-special}
Let $X$ be a smooth ordinary GM threefold   and let $V_1 \subset V_5$ be a subspace  such that~\eqref{condition-2} holds.
The scheme $\wtilde{X}$ in~\eqref{defxt} has two irreducible components:   the proper preimage $\wtilde{X}_1$ of~$X$,
and  the total preimage $\wtilde{X}_2$ of a particular line $M \subset X$. 
The restriction of the map $\tilde\rho_2$
to $\wtilde{X}_2$ is a birational isomorphism onto a Schubert hyperplane section of $\Gr(2,V_5/V_1)$.
\end{lemm}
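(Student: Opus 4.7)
The plan is to describe $\wtilde X = \P_X(V_5/\cU_X) \times_{\Gr(3,V_5)} \Gr(2,V_5/V_1)$ pointwise as parametrizing pairs $(w,V_3)$ with $w \in X$ and $V_3 \subset V_5$ a $3$-subspace containing both $V_1$ and $\cU_{X,w}$. The fiber of the projection $\wtilde X \to X$ over $w$ is then a single reduced point (namely $V_3 = V_1 + \cU_{X,w}$) when $V_1 \not\subset \cU_{X,w}$, and the $\P^2 = \P(V_5/\cU_{X,w})$ when $V_1 \subset \cU_{X,w}$; this second locus is exactly $M := X \cap \CGr(1,V_5/V_1)$. One thereby obtains a decomposition $\wtilde X = \wtilde X_1 \cup \wtilde X_2$, with $\wtilde X_1$ the closure of the locus over $X \setminus M$ mapping birationally onto $X$, and $\wtilde X_2 = \P_M(V_5/\cU_X\vert_M)$ the total preimage of $M$.

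To identify $M$ as a line, I would invoke Proposition~\ref{proposition-kernel-plucker} applied to the first quadric fibration $\rho_1\colon\P_X(\cU_X) \to \P(V_5)$: its fiber over $V_1$ is naturally identified with $\{w \in X : V_1 \subset \cU_{X,w}\} = M$ and, since $V_1 \in Y^2_{\An} \cap \P(V_5)$ and $V_1 \notin \Sigma_1(X)$, it is a quadric of corank $2$ in $\P^{n-2} = \P^1$, hence the whole $\P^1$. Since $M \subset \CGr(1,V_5/V_1) = \P(V_1 \wedge V_5 \oplus \k) = \P^4$ is linear, it is a line of $X$ in the embedding $X \hookrightarrow \P(W)$. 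Consequently $\wtilde X_2$ is a $\P^2$-bundle over $\P^1$, irreducible of dimension~$3$; together with the $3$-dimensional $\wtilde X_1$, these are the two asserted components.

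For the final assertion, I would describe the image of $\tilde\rho_2\vert_{\wtilde X_2}$. Since $X$ does not meet the vertex of $\CGr(1,V_5/V_1)$, neither does $M$, and projecting from this vertex identifies $M$ with a line $\ell \subset \P(V_5/V_1) = \P^3$, namely $\ell = \{\cU_{X,w}/V_1 : w \in M\}$. The image of $\tilde\rho_2\vert_{\wtilde X_2}$ thus consists of the $2$-subspaces $V_3/V_1 \subset V_5/V_1$ containing $\cU_{X,w}/V_1$ for some $w \in M$; viewed inside $\Gr(2,V_5/V_1) = \Gr(2,4)$, these are the lines in $\P(V_5/V_1) \cong \P^3$ meeting $\ell$, i.e., the Schubert variety $\sigma_\ell$, which is a hyperplane section of the Pl\"ucker quadric $\Gr(2,4) \subset \P^5$. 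For a general $V_3/V_1 \in \sigma_\ell$, the intersection $V_3/V_1 \cap \ell$ is a single point, which determines a unique $w \in M$ with $\cU_{X,w}/V_1 = V_3/V_1 \cap \ell$; so the general fiber of $\tilde\rho_2\vert_{\wtilde X_2}$ is one reduced point. Since $\dim \wtilde X_2 = 3 = \dim\sigma_\ell$, this yields the claimed birationality.

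The main technical obstacle will be verifying that the scheme-theoretic fiber product really decomposes cleanly into exactly these two irreducible components. Conceptually, this is because the two equations cutting out $\Gr(2,V_5/V_1) \subset \Gr(3,V_5)$ collapse to a single (vacuous) condition precisely along $M$, forcing the excess $\P^2$-bundle component; everything else in the argument is elementary linear algebra combined with the direct application of Proposition~\ref{proposition-kernel-plucker}.
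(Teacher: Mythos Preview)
Your proposal is correct and follows essentially the same route as the paper: both identify the locus of non-trivial fibers of $\wtilde X \to X$ with the fiber of the first quadric fibration $\rho_1$ over $V_1$, invoke Proposition~\ref{proposition-kernel-plucker} to see that this fiber is a corank-$2$ quadric in $\P^1$ (hence all of $\P^1$, giving the line $M$), and then describe $\tilde\rho_2\vert_{\wtilde X_2}$ as mapping onto the Schubert hyperplane of $2$-planes in $V_5/V_1$ meeting the line $\ell = M/V_1$. One minor slip: since $X$ is \emph{ordinary}, you should write $\Gr(1,V_5/V_1) = \P(V_1 \wedge V_5) \cong \P^3$ rather than $\P(V_1 \wedge V_5 \oplus \k) = \P^4$, and there is no vertex to avoid; this does not affect the argument.
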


\begin{proof}
The proof of Lemma~\ref{lemma-tilde-x-2} shows that the locus of non-trivial fibers of the projection $\wtilde{X} \to X$
is  $X \cap \Gr(1,V_5/V_1)$, \ie, the intersection in $\Gr(1,V_5/V_1) = \P^3$ of two hyperplanes 
and a  quadric. Since $V_1 \notin \Sigma_1(X)$, the intersection of the hyperplanes is a line, which we denote by $M$. 
Its intersection with the remaining  quadric can be identified with the fiber of the first quadric fibration over the point $V_1 \in \P(V_5)$.
Therefore, under Condition~\eqref{condition-2}, its corank is 2, hence it coincides with $M$. Since the non-trivial fibers of the projection
$\wtilde{X} \to X$ are $\P^2$, the preimage of $M$ in $\wtilde{X}$ has dimension 3,   hence
is an irreducible component of $\wtilde{X}$ which we denote by $\wtilde{X}_2$. The other irreducible component
is the proper preimage $\wtilde{X}_1$ of $X$ in $\wtilde{X}$.

Since $M$ is a line in $\Gr(1,V_5/V_1) = \P(V_5/V_1)$,   it  can be written, as a subvariety of $\Gr(2,V_5)$, as
$M = \{ U_2 \in \Gr(2,V_5) \mid V_1 \subset U_2 \subset V_3 \}$, for some
 3-dimensional subspace $V_3 \subset V_5$
containing $V_1$.
From the definition of the fibers of the map $\wtilde{X} \to X$, we get
\begin{equation*}
\wtilde{X}_2 \cong \{ (U_2,U_3) \in \Fl(2,3;V_5) \mid V_1 \subset U_2 \subset V_3\ \text{ and }\ U_2 \subset U_3 \subset V_5 \}.
\end{equation*}
The map $\tilde\rho_2\colon \wtilde{X} \to  {\Gr(2,V_5/V_1) \subset} \Gr(3,V_5)$ takes a point $(U_2,U_3) \in \wtilde{X}_2$ to $U_3/V_1 \in \Gr(2,V_5/V_1)$, 
  hence maps $\wtilde{X}_2$ birationally onto the Schubert hyperplane section of $\Gr(2,V_5/V_1)$ of all 2-dimensional subspaces intersecting $V_3/V_1$ 
non-trivially (in fact, $\wtilde{X}_2$ is a Springer-type resolution of singularities of the Schubert hyperplane).
\end{proof}

We now  show that under assumption~\eqref{condition-2}, 
 we still have birational threefolds.

\begin{prop}\label{proposition-birationality-duality-3}
Let  $X$ and $X'$ be   dual  smooth ordinary complex GM threefolds. Assume that
$V'_1$ satisfies~\eqref{condition-2} for $X$ and that $V_5^\perp$ satisfies~\eqref{condition-2} for $X'$.
Then 
 $X$ and $X'$ are birationally isomorphic.
\end{prop}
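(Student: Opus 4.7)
The plan is to mimic the proof of Proposition~\ref{proposition-birationality-4}, using the second quadric fibrations $\rho_2$ of $X$ and $\rho'_2$ of $X'$, while taking care of the reducibility of $\wtilde{X}$ and $\wtilde{X}'$ described by Lemma~\ref{lemma-tilde-x-special}. First, since condition~\eqref{condition-2} applied to $V'_1$ forces $V_1 \subset V_5$ and the symmetric condition on $V_5^\perp$ forces $V_5^\perp \subset V'_5$, the natural pairing $V_6 \otimes V_6^\vee \to \k$ induces a perfect pairing between the 4-dimensional quotients $V_5/V_1$ and $V'_5/V_5^\perp$, and hence a canonical identification $\Gr(2, V_5/V_1) \cong \Gr(2, V'_5/V_5^\perp)$, which we denote by $\Gr(2,4)$. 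I would then form $\wtilde{X}$ and $\wtilde{X}'$ as the base changes to $\Gr(2,4)$ as in~\eqref{defxt}, with maps $\tilde\rho_2$ and $\tilde\rho'_2$ to the common base.

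Next, I would apply Lemma~\ref{lemma-tilde-x-special} to both sides: we obtain decompositions $\wtilde{X} = \wtilde{X}_1 \cup \wtilde{X}_2$ and $\wtilde{X}' = \wtilde{X}'_1 \cup \wtilde{X}'_2$, where $\wtilde{X}_1 \to X$ and $\wtilde{X}'_1 \to X'$ are birational, while $\wtilde{X}_2$ and $\wtilde{X}'_2$ each map birationally onto a Schubert hyperplane section of $\Gr(2,4)$. For an ordinary GM threefold, Proposition~\ref{proposition-kernel-plucker2} shows that the generic fiber of $\rho_2$ is a quadric in $\P^{0}$: it is a single point over $V_3 \in Z_{\An} \cap \Gr(3,V_5)$ (corank $1$) and empty elsewhere (corank $0$). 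Therefore, outside a proper closed subvariety, $\tilde\rho_2|_{\wtilde{X}_1}$ is birational onto its image, which is the residual component of $Z_{\An} \cap \Gr(2,V_5/V_1)$ in $\Gr(2,4)$ after removing the Schubert hyperplane covered by $\wtilde{X}_2$. Similarly, $\tilde\rho'_2|_{\wtilde{X}'_1}$ is birational onto the residual component of $Z_{\An^\perp} \cap \Gr(2,V'_5/V_5^\perp)$.

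The crucial geometric ingredient is an identification of the two EPW quartics under the Grassmannian isomorphism. A direct computation shows that for $V_3 \subset V_5$ containing $V_1$, the $10$-dimensional subspace $V_6 \wedge \bw{2}{V_3} \subset \bw{3}{V_6}$ is Lagrangian for the symplectic form on $\bw{3}{V_6}$, with orthogonal complement $V_6^\vee \wedge \bw{2}{V_3^\perp} \subset \bw{3}{V_6^\vee}$; the Lagrangian duality between $\An$ and $\An^\perp$ then yields the equality $\dim\bigl(\An \cap (V_6 \wedge \bw{2}{V_3})\bigr) = \dim\bigl(\An^\perp \cap (V_6^\vee \wedge \bw{2}{V_3^\perp})\bigr)$, so that $V_3 \in Z_{\An}$ if and only if $V_3^\perp \in Z_{\An^\perp}$. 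Thus the two EPW quartics coincide as subschemes of $\Gr(2,4)$, and a parallel argument matches the two Schubert hyperplanes swept out by $\wtilde{X}_2$ and $\wtilde{X}'_2$. It follows that $\wtilde{X}_1$ and $\wtilde{X}'_1$ are birational to the same divisor in $\Gr(2,4)$, yielding a birational isomorphism $X \dra X'$. The main obstacle will be pinning down precisely that the Schubert hyperplanes covered by $\wtilde{X}_2$ and $\wtilde{X}'_2$ correspond under the duality, so that the residual components coincide as divisors rather than merely set-theoretically; this requires tracking the line $M \subset X$ produced in Lemma~\ref{lemma-tilde-x-special} through the Lagrangian duality and verifying that its dual counterpart is the analogous line in $X'$.
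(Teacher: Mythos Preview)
Your outline matches the paper's proof essentially step for step: decompose $\wtilde{X}$ and $\wtilde{X}'$ via Lemma~\ref{lemma-tilde-x-special}, use Proposition~\ref{proposition-kernel-plucker2} to see that away from the bad loci $\tilde\rho_2$ and $\tilde\rho'_2$ are isomorphisms onto $Z_{\An}\cap\Gr(2,V_5/V_1)$ and $Z_{\An^\perp}\cap\Gr(2,V'_5/V_5^\perp)$, identify these quartic sections via Lemma~\ref{lem39} (which is exactly your Lagrangian orthogonality argument), and then match the Schubert components so that the residual cubic hypersurfaces coincide.

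The one step you flag as the obstacle is indeed the only non-formal point, and the paper handles it by a direct computation rather than a ``parallel argument''. Writing $\An\cap(V_1\wedge\bw2{V_6})=\langle v\wedge\xi_1,\,v\wedge\xi_2\rangle$ with $v$ generating $V_1$ and $f\in V_6^\vee$ the equation of $V_5$, the $3$-space $V_3$ defining the Schubert hyperplane for $\wtilde{X}_2$ is spanned by $v$ and the contractions $\xi_i(f,-)$; symmetrically, $V'_3$ is spanned by $f$ and contractions $\eta_j(v,-)$ where $\An^\perp\cap(V_5^\perp\wedge\bw2{V_6^\vee})=\langle f\wedge\eta_1,\,f\wedge\eta_2\rangle$. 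The identity $(v\wedge\xi_i,\,f\wedge\eta_j)=(\xi_i(f,-),\,\eta_j(v,-))$, which uses $f(v)=0$, together with the orthogonality of $\An$ and $\An^\perp$, gives $(\xi_i(f,-),\,\eta_j(v,-))=0$ and hence $V'_3=V_3^\perp$. Once you have this, the two Schubert hyperplanes coincide under the duality $\Gr(2,V_5/V_1)\cong\Gr(2,V'_5/V_5^\perp)$, and the residual cubic $Z'$ is the common birational image of $\wtilde{X}_1$ and $\wtilde{X}'_1$.
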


\begin{proof}
Let $\wtilde{X} = \wtilde{X}_1 \cup \wtilde{X}_2$ and $\wtilde{X}' = \wtilde{X}'_1 \cup \wtilde{X}'_2$
be the decompositions into irreducible components of Lemma~\ref{lemma-tilde-x-special}. Consider the maps $\tilde\rho_2$
and $\tilde\rho'_2$ outside of   $(\Sigma_2(X) \cup \Sigma_2(X')) \cap \Gr(2,V_5/V'_1)$ (this
locus is 2-dimensional  {(Lemma~\ref{lemma-sigma-2})}, so it does not affect the birational geometry of $X$ and~$X'$).

 Over a point of $\Gr(2,V_5/V'_1)$ not in $\Sigma_2(X)$,  the fiber of $\wtilde{X}$ is, 
by Proposition~\ref{proposition-kernel-plucker2}, a quadric in~$\P^0$. 
It is non-empty if and only if the corresponding quadratic form is zero, \ie, if its kernel is non-zero.
By Proposition~\ref{proposition-kernel-plucker2} again, this happens
precisely for points of $Z_{\An} \cap \Gr(2,V_5/V'_1)$. This means that away from $\Sigma_2(X) \cap \Gr(2,V_5/V'_1)$,
the map $\tilde\rho_2$ is an isomorphism from $\wtilde{X}$ onto $Z_{\An} \cap \Gr(2,V_5/V'_1)$.
Analogously, away from $\Sigma_2(X') \cap \Gr(2,V_5/V'_1)$, the map $\tilde\rho'_2$ 
is an isomorphism from $\wtilde{X}'$ onto $Z_{\An} \cap \Gr(2,V_5/V'_1)$.
 
By Lemma~\ref{lemma-tilde-x-special}, the components $\wtilde{X}_2$ and $\wtilde{X}'_2$
map birationally onto the Schubert hyperplanes of $\Gr(2,V_5/V'_1)$ given by the three-dimensional spaces $V_3 \subset V_6$ and $V'_3 \subset V_6^\vee$
corresponding to the lines $M_{V'_1} \subset X$ and $M_{V_5^\perp} \subset X'$ respectively. Let us show   $V'_3 = V_3^\perp$, so that the Schubert hyperplanes coincide.

By definition of the line $M_{V'_1}$, the space $V_3$ can be constructed as follows. Let 
\begin{equation*}
\An \cap (V'_1 \wedge \bw2V_6) = \langle v\wedge \xi_1, v\wedge \xi_2 \rangle,
\end{equation*}
where $v$ is a generator of $V'_1$ and $\xi_1,\xi_2 \in \bw2V_6$. Let   $f \in V_6^\vee$ be an equation of $V_5$.
Then $V_3$ is spanned by $v$ and the contractions $\xi_i(f,-)$. Analogously, if
\begin{equation*}
\An^\perp \cap (V_5^\perp \wedge \bw2V_6^\vee) = \langle f\wedge \eta_1, f \wedge \eta_2 \rangle,
\end{equation*}
 the space $V'_3$ is spanned by $f$ and the contractions $\eta_i(v,-)$ of $\eta_i \in \bw2V_6^\vee$.
To identify $V'_3$ with~$V_3^\perp$, it is therefore enough to show   
\begin{equation*}
(\xi_i(f,-),\eta_j(v,-)) = 0 
\end{equation*}
for all $ i,j  \in\{1,2\}$. This  follows   from the equality
\begin{equation*}
(v \wedge \xi_i, f \wedge \eta_j) = (\xi_i(f,-),\eta_j(v,-)),
\end{equation*}
which in   turn follows from $f(v) = 0$.

 We proved that $\wtilde{X}_2$ and $\wtilde{X}'_2$ are mapped by $\tilde\rho_2$ and $\tilde\rho'_2$
onto the same Schubert hyperplane section $H \subset \Gr(2,V_5/V'_1)$. Since  both $\wtilde{X}$ and $\wtilde{X}'$
map birationally onto the quartic hypersurface $Z_{\An} \cap \Gr(2,V_5/V'_1)$, it follows that this quartic   has
two components $ H $ and $ Z'$, where $Z'$ is a cubic hypersurface in $\Gr(2,V_5/V'_1)$ onto which both
  $\wtilde{X}_1$ and $\wtilde{X}'_1$   map  birationally. 
Thus~$\wtilde{X}_1$ is birational to $\wtilde{X}'_1$,   hence $X$ is birational to $X'$. 
\end{proof}

The birational isomorphism of Proposition~\ref{proposition-birationality-duality-3} coincides with the   transformation of~$X$
 (with respect to the line $M$)  defined in \cite[Section~7.2]{dim}.

We can now prove the main result of this section.

\begin{theo}\label{theorem-birationality-duals}
Any two dual smooth complex GM varieties of the same dimension $n \ge 3$ are birationally isomorphic.
\end{theo}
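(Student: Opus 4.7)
The plan is to split the proof by dimension $n$ and in each case reduce to the relevant birationality result already established in this section.

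For $n \in \{5,6\}$, there is nothing more to do: Proposition~\ref{proposition-56fold-rational} shows that every smooth complex GM variety of dimension $5$ or $6$ is rational, so any two such are birationally isomorphic.

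For $n = 4$, the plan is to reduce to Proposition~\ref{proposition-birationality-4} by replacing $X'$ with a better dual of $X$. Given dual smooth GM fourfolds $X$ and $X'$ with Lagrangian data $(V_6,V_5,\An)$ and $(V_6^\vee,V'_5,\An^\perp)$, I will construct a third GM fourfold $X''$ dual to $X$, of the same type as $X'$, such that the pair $(X,X'')$ satisfies the technical hypotheses~\eqref{equation-v1-v5} and~\eqref{condition}. Since $X'$ and $X''$ share the Lagrangian $\An^\perp$, they are period partners, so Corollary~\ref{coro417} gives $X' \sim X''$ birationally, and Proposition~\ref{proposition-birationality-4} gives $X \sim X''$, yielding $X \sim X'$. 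To construct $X''$, I use Theorem~\ref{lemma-isoclasses-dual}: duals of~$X$ of a prescribed type correspond to points $V_1$ in $Y_\An^{1}$ (ordinary case) or $Y_\An^{2}$ (special case), and the conditions on $(X,X'')$ translate into requiring $V_1 \in \P(V_5)$ lying outside $\Sigma_1(X) \cup Y_\An^{\ge 3}$, together with a symmetric condition on $X''$. Non-emptiness of this locus follows from dimension bounds: $\dim \Sigma_1(X) \le 1$ by Lemma~\ref{lemma-sigma-1}, $\dim Y_\An^{\ge 3} \le 0$ by Theorem~\ref{theorem-ogrady-stratification}, while the restriction of each relevant stratum of $Y_\An$ to $\P(V_5)$ still has positive dimension.

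For $n = 3$, the plan is to reduce to Proposition~\ref{proposition-birationality-duality-3}, which additionally requires both varieties to be ordinary. First, I use Corollary~\ref{coro417} to replace $X$ and $X'$ by ordinary period partners $\tilde X$ and $\tilde X'$; ordinary period partners exist because, by Theorem~\ref{lemma-isoclasses-pp}, they are parametrized by $Y_{\An^\perp}^{2}$, which is an integral surface by Theorem~\ref{theorem-ogrady-stratification}. Since period-partner replacement preserves the Lagrangian, $\tilde X$ and $\tilde X'$ remain dual once their Pl\"ucker hyperplanes are chosen compatibly, i.e., so that $V_5(\tilde X')^\perp \subset V_5(\tilde X)$. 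A final adjustment of $V_5(\tilde X) \in Y_{\An^\perp}^2$ and of $V_1 := V_5(\tilde X')^\perp \in Y_\An^2 \cap \P(V_5(\tilde X))$ ensures that~\eqref{condition-2} holds for both $\tilde X$ and $\tilde X'$; then Proposition~\ref{proposition-birationality-duality-3} gives $\tilde X \sim \tilde X'$, and combined with $X \sim \tilde X$ and $X' \sim \tilde X'$, this yields $X \sim X'$.

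The main obstacle is the last case: for generic $\An$, the stratum $Y_\An^2$ is only a surface (of degree $40$ by Theorem~\ref{theorem-ogrady-stratification}), so the intersection $Y_\An^2 \cap \P(V_5)$ can be quite small, and one must verify that within it the open locus where~\eqref{condition-2} holds is non-empty. This requires a careful dimension estimate for $\Sigma_1(\tilde X) \cap Y_\An^2 \cap \P(V_5)$ using Lemma~\ref{lemma-sigma-1}, combined with the symmetric analysis for $\tilde X'$; in boundary cases, one may need to perform several successive period-partner replacements to enlarge the pool of available $V_1$'s before all conditions can be simultaneously satisfied.
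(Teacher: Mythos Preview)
Your overall architecture is the same as the paper's: split by dimension, use rationality for $n\ge 5$, and for $n\in\{3,4\}$ insert an auxiliary dual variety so that Proposition~\ref{proposition-birationality-4} (resp.\ Proposition~\ref{proposition-birationality-duality-3}) applies, then close up using birationality of period partners (Corollary~\ref{coro417}). For $n=4$ your argument is essentially identical to the paper's; the paper simply refers back to the proof of Theorem~\ref{coro4} for the existence of a suitable $V_1''\subset V_5$.

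The gap is in the $n=3$ case, at exactly the point you flag as the ``main obstacle.'' What you need is a pair $(V_1,V_5)$ with $V_1\in Y_{\An}^2$, $[V_5]\in Y_{\An^\perp}^2$, $V_1\subset V_5$, and $(V_1,V_5)\notin (p\times q)(\widehat{Y}_{\An})$; the last condition simultaneously encodes $V_1\notin\Sigma_1(\tilde X)$ and the symmetric condition $V_5^\perp\notin\Sigma_1(\tilde X')$ (via Lemma~\ref{lemma-sigma-1} and the self-duality of $\widehat{Y}_{\An}$). This is precisely the content of Lemma~\ref{lemma-y2y2-yhat}, and the paper invokes it directly. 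Your proposed route---a dimension count for $\Sigma_1(\tilde X)\cap Y_{\An}^2\cap\P(V_5)$, possibly iterated---does not obviously succeed: both $Y_{\An}^2\cap\P(V_5)$ and $\Sigma_1(\tilde X)$ are typically curves, so naive dimension comparison gives no information, and ``successive period-partner replacements'' is not a substitute for an actual argument. The paper's Lemma~\ref{lemma-y2y2-yhat} settles the question by a divisor-class computation on $\widehat{Y}_{\An}$ (comparing the exceptional divisors of $p$ and $q$ and deriving a contradiction from $E=E'$), which is a genuinely different idea from anything in your outline. Once you have that lemma, your reduction goes through exactly as the paper's does.
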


\begin{proof}
We keep the same notation as above.
For $n \ge 5$, the result is trivial by Proposition~\ref{proposition-56fold-rational}, so we may assume $n \in \{3,4\}$.

Assume first  $n = 4$. 
 As in the proof of Theorem~\ref{coro4}, we may choose $V''_1 \subset V_5$ such that the corresponding smooth GM fourfold $X''$ is dual to $X$. Then
$X$ is birationally isomorphic to $X''$ by Proposition~\ref{proposition-birationality-4} and
$X'$ and $X''$ are birationally isomorphic   by Theorem~\ref{coro4}, hence $X$ is birationally isomorphic to $X'$.

Now let $X$ be a smooth GM threefold and set $\An := \An(X)$. By Lemma~\ref{lemma-y2y2-yhat}, we can find $v \in \PP(V_6)$
and $V_5 \subset V_6$ such that 
\begin{equation}\label{equation-choice}
v \in Y_{\An}^2,
\qquad
[V_5] \in Y_{\An^\perp}^2,
\qquad
v \in V_5,
\quad\text{and}\quad
(v,V_5) \not \in (p\times q)(\widehat{Y}_{\An}).
\end{equation}
Let $X'$ be the ordinary GM threefold corresponding to the Lagrangian $\An$ and the Pl\"ucker hyperplane $V_5 \subset V_6$, and let
$X''$ be the ordinary GM threefold corresponding to the Lagrangian $\An^\perp$ and the Pl\"ucker hyperplane $v^\perp \subset V_6^\vee$.

Then $X'$ is a period partner of $X$, hence $X'$ is birational to $X$ by Theorem~\ref{proposition-birationality-3}.
Furthermore, $X''$ is dual to $X'$ and the conditions of Proposition~\ref{proposition-birationality-duality-3} are satisfied
for the pair $(X',X'')$ (the last condition in~\eqref{equation-choice} is equivalent to $v \notin \Sigma_1(X')$ 
by Lemma~\ref{lemma-sigma-1}). Therefore, $X''$ is birational to $X'$ by Proposition~\ref{proposition-birationality-duality-3}. 
Combining these two birationalities, we conclude that $X''$ is birational to $X$. It remains to note that 
any other dual GM threefold of $X$ is a period partner of $X''$, hence is birational to $X''$ by Theorem~\ref{proposition-birationality-3}.
 \end{proof}

\appendix\section{Excess conormal sheaves}\label{section-excess-conormal-sheaves}

 Let $W$ be a $\k$-vector space and 
let $X \subset \PP(W)$ be a closed $\k$-subscheme which is an  intersection of quadrics, \ie,  the twisted ideal sheaf $\cI_X(2)$ on $\PP(W)$ is globally generated. Let 
\begin{equation*}
V_X := H^0(\PP(W),\cI_X(2))
\end{equation*}
be the space of quadrics through $X$. The canonical map $V_X\otimes\cO_{\PP(W)}(-2) \to \cI_X$ is surjective
and its restriction to $X$ induces an epimorphism $V_X \otimes \cO_X(-2) \thra \cI_X/\cI_X^2 = \cN^\vee_{X/\PP(W)}$ 
onto the conormal sheaf of $X$.

\begin{defi}
The {\sf excess conormal sheaf} $\cE\!\cN^\vee_{X/\PP(W)}$ of an intersection of quadrics $X \subset \PP(W)$
is the kernel of the canonical map $V_X \otimes \cO_X(-2) \thra \cN^\vee_{X/\PP(W)}$.
\end{defi}

 When the ambient space $\PP(W)$ is clear, we will simply write $ \cE\!\cN_X^\vee$.
  This sheaf  is defined over $\k$ and  fits in the \emph{excess conormal sequence}
\begin{equation*}\label{defen}
0 \to \cE\!\cN^\vee_{X/\PP(W)} \to V_X \otimes \cO_X(-2) \to \cN^\vee_{X/\PP(W)} \to 0.
\end{equation*}
  We will also often consider the twist
\begin{equation}\label{defent}
0 \to \cE\!\cN^\vee_{X/\PP(W)}(2) \to V_X \otimes \cO_X \to \cN^\vee_{X/\PP(W)}(2) \to 0.
\end{equation}

In this appendix, we discuss some properties of the excess conormal sheaf. 

\begin{lemm}\label{en-lf}
The excess conormal sheaf is locally free on the lci locus of $X$.
\end{lemm}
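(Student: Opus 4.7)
The plan is to read this off directly from the excess conormal sequence
\[
0 \to \cE\!\cN^\vee_{X/\PP(W)} \to V_X \otimes \cO_X(-2) \to \cN^\vee_{X/\PP(W)} \to 0.
\]
The middle term is manifestly locally free on all of $X$, since $\cO_X(-2)$ is a line bundle and $V_X$ is a finite-dimensional vector space. On the lci locus $U \subset X$, the conormal sheaf $\cN^\vee_{X/\PP(W)}\vert_U$ is locally free of rank equal to the codimension of $X$ in $\PP(W)$; this is the defining property of a local complete intersection (the ideal is locally generated by a regular sequence of the expected length, so $\cI_X/\cI_X^2$ is locally free of the expected rank).

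Given this, the key step is the standard fact that the kernel of a surjection between locally free coherent sheaves of finite rank is itself locally free. Restricting the excess conormal sequence to $U$, we obtain a short exact sequence of coherent sheaves in which both the middle and right-hand terms are locally free. Working stalkwise at a point $x \in U$, we get a surjection $\cO_{X,x}^{\oplus a} \thra \cO_{X,x}^{\oplus b}$ of free modules over the local ring $\cO_{X,x}$, which splits by projectivity of the free quotient (lifting the identity of $\cO_{X,x}^{\oplus b}$ section by section). Thus the stalk of $\cE\!\cN^\vee_{X/\PP(W)}$ at $x$ is a direct summand of a free module, hence projective, hence free over the local ring $\cO_{X,x}$. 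Since this holds at every point of $U$ and the sheaf is coherent, it is locally free on $U$.

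The only thing to verify carefully is therefore the local freeness of $\cN^\vee_{X/\PP(W)}$ on the lci locus, and no real obstacle arises: this is entirely routine once one invokes the Koszul resolution associated with a local regular sequence cutting out $X$ in $\PP(W)$. I expect the whole proof to be one short paragraph.
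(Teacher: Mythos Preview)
Your proposal is correct and follows exactly the same approach as the paper: the paper's entire proof is the single sentence ``This follows from the local freeness of the conormal bundle on the lci locus of $X$,'' and your argument simply unpacks that sentence in detail.
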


\begin{proof}
This follows   from the  local freeness of the conormal bundle on the lci locus of $X$.
\end{proof}

\begin{prop}\label{en-funct}
Consider a closed subscheme $X \subset \PP(W)$ and a cone $\cone{K}X \subset \P(W \oplus K)$.
Let $W' \subset W \oplus K$ be a vector subspace and let $X' \subset \P(W') \cap \cone{K}X$ be a closed subscheme.
Set $X'_0 := X' \cap \pcone{K}X$.
If both $X$ and $X'$ are intersections of quadrics,  there is    a canonical commutative diagram 
\begin{equation}\label{equation-excess-normal-compatibility}
\vcenter{\xymatrix@R=6mm@C=6mm{
& \mu^* \cE\!\cN^\vee_{X/\PP(W)} \ar[r] \ar[d] & V_X\otimes\cO_{X'_0}(-2) \ar[r] \ar[d] & \mu^*\cN^\vee_{X/\PP(W)} \ar[r] \ar[d] & 0 \\
0 \ar[r] &  \cE\!\cN^\vee_{X'_0/\PP(W')} \ar[r] & V_{X'}\otimes\cO_{X'_0}(-2) \ar[r] & \cN^\vee_{X'_0/\PP(W')} \ar[r] & 0 
}}
\end{equation}
 of sheaves on $X'_0$, where $\mu$ denotes the natural projection $\P(W\oplus K) \setminus \P(K) = \pcone{K}\P(W) \to \P(W)$ as well as its restriction to $X'_0$.
\end{prop}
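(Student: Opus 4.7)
The plan is to build the diagram from the right: first construct the middle and right vertical arrows, then verify commutativity of the right square, and finally induce the left vertical arrow from the kernel description of the excess conormal sheaf.

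I would begin by defining a canonical linear map $V_X \to V_{X'}$. A quadratic form $Q \in V_X \subset \Sym^2\!W^\vee$ vanishing on $X$ extends uniquely, via the projection $W \oplus K \twoheadrightarrow W$, to a quadratic form $\cone{K}Q \in \Sym^2(W\oplus K)^\vee$, which vanishes on the cone $\cone{K}X$. Since $X' \subset \cone{K}X$ by hypothesis, the restriction of $\cone{K}Q$ to $W' \subset W \oplus K$ is a quadratic form on $W'$ vanishing on $X' \subset \P(W')$, hence an element of $V_{X'}$. Tensoring with $\cO_{X'_0}(-2)$ yields the middle vertical arrow of \eqref{equation-excess-normal-compatibility}.

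For the right vertical arrow, set $U := \P(W \oplus K) \setminus \P(K)$, so that $\mu\colon U \to \P(W)$ is a smooth (affine-bundle) morphism with $\mu^{-1}(X) = \pcone{K}X$; in particular $\mu^* \cN^\vee_{X/\P(W)} \cong \cN^\vee_{\pcone{K}X/U}$. The chain of closed immersions $X'_0 \hookrightarrow \pcone{K}X \hookrightarrow U$, combined with the closed immersion $X'_0 \hookrightarrow \P(W') \cap U$, yields by functoriality of conormal sheaves a canonical morphism $\cN^\vee_{\pcone{K}X/U}|_{X'_0} \to \cN^\vee_{X'_0/\P(W')}$. Commutativity of the right square is then a local verification: both compositions send a local section $Q \otimes s$ of $V_X \otimes \cO_{X'_0}(-2)$ to the class modulo $\cI_{X'/\P(W')}^2$ of $(\cone{K}Q)|_{W'} \cdot s$.

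Finally, since $\mu$ is smooth, pullback is exact and the top row of \eqref{equation-excess-normal-compatibility} is in fact a short exact sequence (with the initial $0\to$ implicit); the bottom row is exact by definition of the excess conormal sheaf. The universal property of kernels, applied to the commutative right square, then produces a unique morphism $\mu^* \cEN^\vee_{X/\P(W)} \to \cEN^\vee_{X'_0/\P(W')}$ which completes the diagram and makes it commute. The only mild point to watch is the identification of $X'_0$ as a closed subscheme in two different ambient smooth schemes ($U$ and $\P(W') \cap U$) when setting up the functoriality of the conormal sheaves, but this is a routine manipulation of ideal sheaves on the open subscheme $U$, and I do not expect any essential obstacle.
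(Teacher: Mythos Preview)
Your proposal is correct and follows essentially the same approach as the paper: construct the map $V_X \to V_{X'}$ by pulling quadrics up to the cone and restricting to $W'$, build the right vertical arrow from functoriality of conormal sheaves (the paper phrases this via the inclusion $\mu^*\cI_X \subset \cI_{X'_0}$ on $\P(W')\setminus\P(K)$, which amounts to the same thing), check commutativity of the right square, and then induce the left arrow from the kernel description.

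One small overstatement to correct: you assert that the top row is exact on the left ``since $\mu$ is smooth''. The smooth map is $U \to \P(W)$, but the pullback appearing in the top row is along $\mu\vert_{X'_0}\colon X'_0 \to X$, which factors through the closed immersion $X'_0 \hookrightarrow \pcone{K}X$ and is not flat in general. This is why the proposition as stated omits the initial $0\to$ on the top row. Fortunately your argument does not actually need left-exactness: to invoke the universal property of the kernel in the bottom row you only need that the composition $\mu^*\cEN^\vee_{X} \to V_X\otimes\cO_{X'_0}(-2) \to \mu^*\cN^\vee_{X}$ be zero, and this holds simply because pullback of a complex is a complex.
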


\begin{proof}
The embedding $W' \subset W \oplus K$ induces a map $\Sym^2\!W^\vee \to \Sym^2\!W^{\prime\vee}$. 
Denote by $V'$ the image of $V_X$ under this map. Clearly, $\P(W') \cap \cone{K}X$
is cut out in $\P(W')$ by the quadrics in~$V'$. Hence the space $V_{X'}$ of quadrics 
cutting out $X'$ in $\P(W')$ contains $V'$ and   we obtain a canonical map $V_X \to V_{X'}$. 
Furthermore, on the open subset $\P(W') \setminus \P(K) \subset \P(W')$,
the pullback $\mu^*\cI_X$ of the ideal of $X$ in $\P(W)$ is contained in the ideal $\cI_{X'_0}$ of $X'_0$ in $\P(W') \setminus \P(K)$,
hence we have a commutative diagram
\begin{equation*}
\xymatrix@R=6mm@C=6mm{
V_X\otimes\cO_{\PP(W')\setminus\P(K)}(-2) \ar[r] \ar[d] & \mu^*\cI_{X} \ar[d] \\
V_{X'}\otimes\cO_{\PP(W')\setminus\P(K)}(-2) \ar[r] & \cI_{X'_0} .
}
\end{equation*}
Restricting it to $X'_0$ and extending lines to excess conormal sequences proves the claim.
\end{proof}

In some cases, the vertical map in (\ref{equation-excess-normal-compatibility}) between   excess conormal sheaves is an isomorphism.

\begin{lemm}\label{lemma:excess-cone}
In the situation of Proposition {\rm\ref{en-funct}}, assume moreover $W' = W \oplus \k$ and  $X' = \cone{\k}X \subset \P(W')$, 
so that  $X'_0 = \pcone{\k}X$ is the punctured cone. If $\mu\colon X'_0 \to X$ is the natural projection, we have $ \cE\!\cN^\vee_{X'}\vert_{X'_0} \cong \mu^* \cE\!\cN^\vee_X$.
\end{lemm}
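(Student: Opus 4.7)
The plan is to apply Proposition \ref{en-funct} to obtain the commutative diagram \eqref{equation-excess-normal-compatibility} and then verify that its middle and right vertical maps are isomorphisms; the Five Lemma will then give the desired isomorphism for the excess conormal sheaves on the left.

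First I would check that the right vertical map $\mu^*\cN^\vee_{X/\PP(W)} \to \cN^\vee_{X'_0/\PP(W')}$ is an isomorphism. The key observation is that $\mu\colon \PP(W') \setminus \PP(\k) \to \PP(W)$ is a smooth morphism (an $\mathbb{A}^1$-bundle over $\PP(W)$, since the blow-up of $\PP(W')$ at the point $\PP(\k)$ is a $\PP^1$-bundle over $\PP(W)$), and the embedding $X'_0 \hookrightarrow \PP(W') \setminus \PP(\k)$ is precisely the pullback of the embedding $X \hookrightarrow \PP(W)$ along $\mu$. This follows from the scheme-theoretic description $\widetilde{\cone{\k}X} = \widetilde{X} \times \k$ of the affine cone. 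Therefore the ideal sheaf of $X'_0$ is pulled back from that of $X$, and the conormal sheaf is pulled back accordingly.

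Next I would check that the middle vertical map $V_X \otimes \cO_{X'_0}(-2) \to V_{X'} \otimes \cO_{X'_0}(-2)$ is an isomorphism. This decomposes into two pieces. On the line bundle side, there is a canonical isomorphism $\mu^*\cO_{\PP(W)}(1) \isomto \cO_{\PP(W')}(1)\vert_{\PP(W')\setminus\PP(\k)}$: both line bundles have the same transition functions on the standard affine charts $\{x_i \ne 0\}$ (indexed by a basis of $W$), so they agree on the complement of $\PP(\k)$. Twisting appropriately gives $\cO_{X'_0}(-2) = \mu^*\cO_X(-2)$. On the vector space side, the natural map $V_X \to V_{X'}$ induced by the inclusion $\Sym^2\!W^\vee \hookrightarrow \Sym^2\!W^{\prime\vee}$ is an isomorphism: a quadric on $W$ containing $X$, extended trivially to $W \oplus \k$, manifestly contains the whole cone $X'$, and conversely any quadric $q' = q_0 + t\ell + ct^2$ containing $X'$ must vanish at the vertex $\PP(\k)$ (giving $c = 0$) and, along every line from the vertex to a point of $X$, the vanishing of $\lambda^2 q_0(x) + \lambda\mu\,\ell(x)$ for all $[\lambda:\mu]$ forces $\ell$ to vanish on the linear span of $X$, so that $q' = q_0$ comes from $V_X$.

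With these two identifications, the Five Lemma applied to \eqref{equation-excess-normal-compatibility} yields $\mu^*\cE\!\cN^\vee_{X/\PP(W)} \isomto \cE\!\cN^\vee_{X'_0/\PP(W')} = \cE\!\cN^\vee_{X'/\PP(W')}\vert_{X'_0}$, as claimed. The main potential obstacle is the verification that $V_X \to V_{X'}$ is surjective, which implicitly requires $X$ to span $\PP(W)$; this is harmless in practice since an intersection of quadrics that is geometrically integral and not contained in a hyperplane automatically satisfies this condition, and in all intended applications (to Grassmannians and GM varieties) non-degeneracy is clear.
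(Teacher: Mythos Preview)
Your approach is essentially the same as the paper's: apply the diagram of Proposition~\ref{en-funct}, check that the middle and right vertical maps are isomorphisms, and conclude. One point worth making explicit: the top row of~\eqref{equation-excess-normal-compatibility} is only written as right exact in Proposition~\ref{en-funct}, so before invoking the Five Lemma you need left exactness as well; this follows because $\mu$ is flat (you note it is smooth, which suffices), and the paper singles out exactly this step. Your observation that $V_X \to V_{X'}$ being surjective tacitly requires $X$ to span $\PP(W)$ is correct and more careful than the paper, which glosses over it; in every application the non-degeneracy holds.
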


\begin{proof}
The intersection in $\P(W')$ of the quadrics in $V_X \subset \Sym^2\!W^\vee \subset \Sym^2\!W^{\prime\vee}$ is the cone $X' = \cone{\k}X$. 
Moreover,  $\cN_{X'_0/\P(W')} \cong \mu^*\cN_{X/\P(W)}$. Thus, the central and   right vertical arrows in~\eqref{equation-excess-normal-compatibility} are isomorphisms. 
Moreover, the map $\mu$ is flat, hence the top line is exact on the left. Therefore the left arrow is an isomorphism as well.
\end{proof}

\begin{lemm}\label{en-hplane}
Assume that $X\subset \P(W)$ is an intersection of quadrics and  is linearly normal.
Let $W' \subset W$ be a  hyperplane  
such that $X' := X \cap \PP(W')$ is a dimensionally transverse intersection.
Then $X'\subset \P(W')$ is an intersection of quadrics and $ \cE\!\cN^\vee_{X'_{\rm lci} } \cong  \cE\!\cN^\vee_{X_{\rm lci} }\vert_{X'_{\rm lci} }$,
where $X_{\rm lci} $ is the lci locus of $X$ and $X'_{\rm lci} = X' \cap X_{\rm lci} $.
\end{lemm}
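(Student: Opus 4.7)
The plan is to reduce the isomorphism of excess conormal sheaves to a five-lemma chase on the commutative diagram produced by Proposition~\ref{en-funct} applied with $K=0$, $W'\subset W$ and $X'_0=X'$. To invoke that proposition, I first need to know that $X'$ is an intersection of quadrics in $\P(W')$; to conclude by the five lemma, I will need the induced map $V_X\to V_{X'}$ to be an isomorphism, and the natural map $\cN^\vee_{X/\P(W)}|_{X'_{\rm lci}}\to\cN^\vee_{X'/\P(W')}|_{X'_{\rm lci}}$ to be an isomorphism as well.

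For the first two items, I will tensor the Koszul resolution $0\to\cO_{\P(W)}(-1)\xrightarrow{\ell}\cO_{\P(W)}\to\cO_{\P(W')}\to 0$ (where $\ell\in W^\vee$ is an equation of $\P(W')$) with $\cI_X(2)$. Dimensional transversality gives $\Tor_1(\cO_X,\cO_{\P(W')})=0$, which identifies $\cI_X\otimes\cO_{\P(W')}$ with $\cI_{X'/\P(W')}$, and torsion-freeness of $\cI_X$ ensures exactness on the left, yielding a short exact sequence
\begin{equation*}
0\to\cI_X(1)\xrightarrow{\ell}\cI_X(2)\to\cI_{X'/\P(W')}(2)\to 0.
\end{equation*}
Restricting the surjection $V_X\otimes\cO_{\P(W)}(-2)\thra\cI_X$ to $\P(W')$ gives a surjection $V_X\otimes\cO_{\P(W')}(-2)\thra\cI_{X'/\P(W')}$, showing that $X'$ is an intersection of quadrics. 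Taking cohomology of the displayed SES produces
\begin{equation*}
0\to H^0(\cI_X(1))\to V_X\to V_{X'}\to H^1(\cI_X(1)),
\end{equation*}
and linear normality---which I read as saying that $H^0(\P(W),\cO(1))\to H^0(X,\cO_X(1))$ is an isomorphism---together with $H^i(\P(W),\cO(1))=0$ for $i\ge 1$, forces both $H^0(\cI_X(1))$ and $H^1(\cI_X(1))$ to vanish, so $V_X\isomto V_{X'}$.

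Dimensional transversality also implies that $X'$ is lci at every point of $X'_{\rm lci}:=X'\cap X_{\rm lci}$: locally a regular sequence of equations of $X$ together with $\ell$ is regular, hence restricts to a regular sequence of equations of $X'$ in $\P(W')$, which simultaneously gives the identification $\cN^\vee_{X/\P(W)}|_{X'_{\rm lci}}=\cN^\vee_{X'/\P(W')}|_{X'_{\rm lci}}$. The top row of the Proposition~\ref{en-funct} diagram is exact on the left because, by Lemma~\ref{en-lf}, the excess conormal sequence of $X$ is a short exact sequence of locally free sheaves on $X_{\rm lci}$, and restriction to $X'_{\rm lci}\subset X_{\rm lci}$ preserves exactness. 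The five lemma then delivers the isomorphism $\cE\!\cN^\vee_{X'_{\rm lci}/\P(W')}\cong\cE\!\cN^\vee_{X/\P(W)}|_{X'_{\rm lci}}$. The main obstacle is the cohomological input from linear normality: without the vanishing $H^0(\P(W),\cI_X(1))=0$, the map $V_X\to V_{X'}$ could fail to be injective, and the diagram chase would produce only a surjection of excess conormal sheaves rather than an isomorphism.
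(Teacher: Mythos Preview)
Your proof is correct and follows essentially the same approach as the paper: the short exact sequence $0\to\cI_X(1)\to\cI_X(2)\to\cI_{X'/\P(W')}(2)\to 0$, linear normality to identify $V_X\isomto V_{X'}$, and the diagram of Proposition~\ref{en-funct} together with local freeness on the lci locus to conclude. You have simply filled in more details (the Tor-vanishing justification for the SES, the explicit regular-sequence argument for the conormal identification, and the appeal to Lemma~\ref{en-lf} for left-exactness of the top row), but the skeleton is identical to the paper's argument.

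One small remark: the standard meaning of ``linearly normal'' is only the \emph{surjectivity} of $H^0(\P(W),\cO(1))\to H^0(X,\cO_X(1))$, which yields $H^1(\cI_X(1))=0$; the vanishing $H^0(\cI_X(1))=0$ is nondegeneracy of $X$ in $\P(W)$. The paper is equally loose on this point (it also needs both vanishings to get $V_{X'}=V_X$), and in the applications $X$ is always nondegenerate, so this does not affect correctness.
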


\begin{proof}
We have an exact sequence
\begin{equation*}
0 \to \cI_X(-1) \to \cI_X \to \cI_{X'/\P(W')} \to 0.
\end{equation*}
Twisting it by $\cO_{\P(W)}(2)$ and using the linear normality of $X$, we conclude that $X'$ is an intersection of quadrics and  $V_{X'} = V_X$.
The diagram of Proposition~\ref{en-funct} gives a commutative diagram
\begin{equation*}
\xymatrix@R=6mm@C=6mm{
&  \cE\!\cN^\vee_{X}\vert_{X'} \ar[r] \ar[d] & V_X \otimes \cO_{X'}(-2) \ar[r] \ar@{=}[d] & \cN_{X/\PP(W)}^\vee\vert_{X'} \ar[r] \ar[d] & 0 \\
0 \ar[r] &  \cE\!\cN^\vee_{X'} \ar[r] & V_{X'} \otimes \cO_{X'}(-2) \ar[r] & \cN_{X'/\PP(W')}^\vee \ar[r] & 0.
}
\end{equation*}
The  right vertical arrow is an isomorphism by the dimensionally transverse intersection condition. Moreover, 
the top line is exact on the lci locus, since the conormal bundle is locally free. Hence the left arrow is an isomorphism.
\end{proof}

\begin{lemm}\label{en-quadric}
Assume $H^0(X,\cO_X) = \k$ and let $Q \subset \PP(W)$ be a quadratic hypersurface such that $X' = X \cap Q$ is a dimensionally transverse intersection.
Then $ \cE\!\cN^\vee_{X'_{\rm lci}} \cong \cE\!\cN^\vee_{X_{\rm lci}}\vert_{X'_{\rm lci}}$, where again $X_{\rm lci} $ is the lci locus of $X$ and $X'_{\rm lci}  = X' \cap X_{\rm lci} $.
\end{lemm}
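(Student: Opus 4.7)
The plan is to compute the space $V_{X'}$ of quadrics through $X'$, apply the functorial diagram of Proposition~\ref{en-funct}, and conclude by the snake lemma. First I would identify $V_{X'}$. Since $X\cap Q$ is dimensionally transverse, tensoring the Koszul resolution $0\to\cO_{\P(W)}(-2)\xrightarrow{\,Q\,}\cO_{\P(W)}\to\cO_Q\to 0$ with $\cO_X$ gives an exact sequence
\begin{equation*}
0\to \cO_X(-2)\xrightarrow{\,Q\,}\cO_X\to\cO_{X'}\to 0,
\end{equation*}
i.e.\ $\cI_{X'/X}(2)\cong\cO_X$. Pushing to $\P(W)$ and twisting by $\cO(2)$ yields
\begin{equation*}
0\to\cI_X(2)\to\cI_{X'}(2)\to j_*\cO_X\to 0,
\end{equation*}
where $j\colon X\hookrightarrow\P(W)$. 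Taking global sections and using the hypothesis $H^0(X,\cO_X)=\k$, and noting that the image of $Q\in V_{X'}$ under $V_{X'}\to\k$ is the generator (since $Q|_X$ trivializes $\cI_{X'/X}(2)$), one obtains
\begin{equation*}
V_{X'}=V_X\oplus\k\cdot Q.
\end{equation*}

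Next I would apply Proposition~\ref{en-funct} with $W'=W$ and $K=0$ to get the commutative diagram
\begin{equation*}
\xymatrix@R=5mm@C=5mm{
& \cE\!\cN^\vee_{X}\vert_{X'_{\rm lci}} \ar[r] \ar[d] & V_X\otimes\cO_{X'_{\rm lci}}(-2) \ar[r] \ar[d] & \cN^\vee_{X/\P(W)}\vert_{X'_{\rm lci}} \ar[r] \ar[d] & 0 \\
0 \ar[r] & \cE\!\cN^\vee_{X'_{\rm lci}} \ar[r] & V_{X'}\otimes\cO_{X'_{\rm lci}}(-2) \ar[r] & \cN^\vee_{X'/\P(W)}\vert_{X'_{\rm lci}} \ar[r] & 0,
}
\end{equation*}
in which both rows are exact on the left by local freeness of the conormal sheaves on lci loci (note that $X'_{\rm lci}=X'\cap X_{\rm lci}$ lies in the lci locus of $X'$ since $Q$ is a hypersurface and the intersection is Tor-independent). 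The cokernel of the middle vertical arrow is canonically $\cO_{X'_{\rm lci}}(-2)$, generated by the class of $Q$. Since the intersection is dimensionally transverse and Tor-independent, on $X'_{\rm lci}$ the conormal sequence splits into a direct sum
\begin{equation*}
\cN^\vee_{X'/\P(W)}\vert_{X'_{\rm lci}} \cong \cN^\vee_{X/\P(W)}\vert_{X'_{\rm lci}} \oplus \cN^\vee_{Q/\P(W)}\vert_{X'_{\rm lci}},
\end{equation*}
and the second summand is $\cO_{X'_{\rm lci}}(-2)$, again trivialized by $Q$. Thus the cokernel of the right vertical arrow is also $\cO_{X'_{\rm lci}}(-2)$, and the induced map between the two cokernels sends the generator $Q$ to $Q$, hence is an isomorphism. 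The snake lemma then forces the left vertical map $\cE\!\cN^\vee_X\vert_{X'_{\rm lci}}\to\cE\!\cN^\vee_{X'_{\rm lci}}$ to be an isomorphism.

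The main obstacle will be the careful identification of the two cokernels: one must verify that the map $V_{X'}\to\cN^\vee_{X'/\P(W)}(2)$ sends $Q$ to a class trivializing the $\cN^\vee_{Q/\P(W)}$ summand, which ultimately rests on the Tor-independence hypothesis guaranteeing that the conormal bundle decomposition on the lci locus is compatible with the decomposition $V_{X'}=V_X\oplus\k\cdot Q$. Everything else is formal.
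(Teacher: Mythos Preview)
Your proposal is correct and follows essentially the same approach as the paper: identify $V_{X'}=V_X\oplus\k$ via the ideal-sheaf sequence, set up the functorial diagram from Proposition~\ref{en-funct}, and conclude by comparing the cokernels $\cO_{X'_{\rm lci}}(-2)$ of the middle and right vertical maps. The only cosmetic difference is that you phrase the right column as a direct sum decomposition of conormal bundles on the lci locus, whereas the paper uses the (non-split) exact column $0\to\cN^\vee_{X/\P(W)}\vert_{X'}\to\cN^\vee_{X'/\P(W)}\to\cO_{X'}(-2)\to 0$; either formulation suffices for the snake-lemma step.
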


\begin{proof}
Tensoring  the exact sequences 
\begin{equation*}
0 \to \cI_X \to \cO_{\P(W)} \to \cO_X \to 0
\qquad\text{and}\qquad
0 \to \cO_{\P(W)}(-2) \to \cO_{\P(W)} \to \cO_Q \to 0
\end{equation*}
and taking into account the dimensional transversality  of the intersection $X' = X \cap Q$, we obtain an exact sequence
\begin{equation*}
0 \to \cI_X(-2) \to \cI_X \oplus \cO_{\PP(W)}(-2) \to \cI_{X'} \to 0.
\end{equation*}
Twisting it by $\cO_{\P(W)}(2)$ and taking into account that the condition $H^0(X,\cO_X) = \k$ implies $H^0(\P(W),\cI_X) = H^1(\P(W),\cI_X) = 0$,
we obtain an isomorphism $V_{X'} = V_X \oplus \k$.
It follows that the diagram of Proposition~\ref{en-funct} extends to a commutative diagram
\begin{equation*}
\xymatrix@R=5mm@C=6mm{
&& 0 \ar[d] & 0 \ar[d] \\
&  \cE\!\cN^\vee_{X}\vert_{X'} \ar[r] \ar[d] & V_X \otimes \cO_{X'}(-2) \ar[r] \ar[d] & \cN_{X/\PP(W)}^\vee\vert_{X'} \ar[r] \ar[d] & 0 \\
0 \ar[r] &  \cE\!\cN^\vee_{X'} \ar[r] & V_{X'} \otimes \cO_{X'}(-2) \ar[r] \ar[d] & \cN_{X'/\PP(W')}^\vee \ar[r] \ar[d] & 0 \\
&& \cO_{X'}(-2) \ar@{=}[r] \ar[d] & \cO_{X'}(-2) \ar[d] \\
&& 0 & 0 .
}
\end{equation*}
Its middle column is exact by the above argument  and the right column is exact by the dimensionally transverse intersection condition.
Moreover, the top line is exact on   $X_{\rm lci} $, hence the left vertical arrow is an isomorphism.
\end{proof}

Finally, we compute the excess conormal bundle for some examples. 
Let $\cU$ be  the rank-2 tautological vector bundle on the Grassmannian $\Gr(2,V_5)$.

\begin{prop}\label{en-g25}
Let $X = \Gr(2,V_5) \subset \PP(\bw{2}{V_5})$. Then $ \cE\!\cN^{\vee}_X \isom \det (V_5^\vee) \otimes \cU(-2)$.
\end{prop}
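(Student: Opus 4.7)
The plan is to realize both sides of the claimed isomorphism as the same rank-$2$ subbundle of $V_X \otimes \cO_X$, where $V_X$ denotes the $5$-dimensional space of Pl\"ucker quadrics through $\Gr(2,V_5)$. First I would make this space intrinsic: the quadratic map $\omega \mapsto \omega \wedge \omega$ from $\bw{2}{V_5}$ to $\bw{4}{V_5}$ cuts out $\Gr(2,V_5)$ scheme-theoretically, so $V_X \isom \bw{4}{V_5}^\vee \isom V_5 \otimes \det(V_5^\vee)$. With this identification, the Pl\"ucker quadric attached to $v \in V_5$ (up to the $\det V_5$-twist) is
\[
Q_v(\omega) \;=\; v \wedge \omega \wedge \omega \;\in\; \bw{5}{V_5} \;=\; \det V_5.
\]
Since $\dim V_X = 5$ and $\cN^\vee_X$ has rank $\dim \PP(\bw{2}{V_5}) - \dim \Gr(2,V_5) = 9 - 6 = 3$, the excess conormal sequence shows that $\cE\!\cN^\vee_X(2)$ is locally free of rank $2$.

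Next I would construct the canonical map. Tensoring the tautological embedding $\cU \hra V_5 \otimes \cO_X$ with the line $\det(V_5^\vee)$ produces a rank-$2$ subbundle
\[
\det(V_5^\vee) \otimes \cU \;\hra\; V_5 \otimes \det(V_5^\vee) \otimes \cO_X \;=\; V_X \otimes \cO_X.
\]
The key verification is that this lands inside $\cE\!\cN^\vee_X(2)$. Fix $\omega_0 \in \Gr(2,V_5)$ and $v \in \cU_{\omega_0}$; the image of $Q_v$ in the fiber $\cN^\vee_{X,\omega_0}$ is represented by the differential $dQ_v|_{\omega_0}$, which a one-line computation gives as
\[
dQ_v|_{\omega_0}(\delta) \;=\; 2\,v \wedge \omega_0 \wedge \delta, \qquad \delta \in \bw{2}{V_5}.
\]
If $v \in \cU_{\omega_0}$ then $v \wedge \omega_0 = 0$, so this vanishes identically as a linear form on $T_{\PP(\bw{2}{V_5}),\omega_0}$; a fortiori it represents zero in $\cN^\vee_{X,\omega_0}$, so the subbundle inclusion factors through $\cE\!\cN^\vee_X(2)$.

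To upgrade this to an isomorphism, I would compare fibers. The converse of the above computation does the job: if $v \notin \cU_{\omega_0}$, the $3$-form $v \wedge \omega_0 \in \bw{3}{V_5}$ is nonzero, so $\delta \mapsto v \wedge \omega_0 \wedge \delta$ is a nonzero map $\bw{2}{V_5} \to \det V_5$, and thus $dQ_v|_{\omega_0} \ne 0$ as a form on $T_{\PP,\omega_0}$; this certifies that $v$ does not lie in $\cE\!\cN^\vee_X(2)|_{\omega_0}$. Hence the fiber of $\cE\!\cN^\vee_X(2)$ at $\omega_0$ is exactly $\cU_{\omega_0}$ (twisted by $\det V_5^\vee$), so the two rank-$2$ subbundles $\det(V_5^\vee) \otimes \cU$ and $\cE\!\cN^\vee_X(2)$ of $V_X \otimes \cO_X$ coincide on every fiber and therefore as subsheaves. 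Untwisting by $\cO(-2)$ gives the claimed identification $\cE\!\cN^\vee_X \isom \det(V_5^\vee) \otimes \cU(-2)$. The only real computational ingredient is the differential formula $dQ_v|_{\omega_0}(\delta) = 2\,v \wedge \omega_0 \wedge \delta$, which simultaneously encodes both the containment and the equality; the remainder is rank counting and the standard interpretation of the Pl\"ucker relations via the tautological bundle, and I do not anticipate any serious obstacle.
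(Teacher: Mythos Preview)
Your proof is correct and takes a more elementary route than the paper. Both proofs identify $V_X \cong \det(V_5^\vee)\otimes V_5$ via the squaring map $\omega\mapsto\omega\wedge\omega$, and both ultimately show that the rank-$2$ subbundle $\cE\!\cN^\vee_X(2)\subset V_X\otimes\cO_X$ coincides with $\det(V_5^\vee)\otimes\cU$. The difference is in how this is verified. The paper tensors the standard resolution of $\cO_{\Gr(2,V_5)}$ in $\P(\bw{2}{V_5})$ by $\cO_X$, invokes $\Tor_i(\cO_X,\cO_X)\cong\bw{i}{\cN^\vee_X}$ to produce a four-term exact sequence, and then identifies the image of the middle map $\alpha\vert_X$ (given by the contraction $V_5^\vee\otimes\bw{2}{V_5}\to V_5$) as $\det(V_5^\vee)\otimes\cU(-2)$ via the factorization $V_5^\vee\otimes\cO_X\twoheadrightarrow\cU^\vee\cong\cU(1)\hookrightarrow V_5\otimes\cO_X(1)$. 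Your argument bypasses the resolution entirely: you compute the differential $dQ_v\vert_{\omega_0}(\delta)=2\,v\wedge\omega_0\wedge\delta$ directly and read off that it vanishes precisely when $v\in\cU_{\omega_0}$, which (since $\cN^\vee_X$ is locally free on the smooth variety $X$, so fibers of the kernel are kernels on fibers) pins down the excess conormal fiberwise. Your approach is self-contained and avoids the $\Tor$ formalism; the paper's approach, while heavier, fits into a uniform framework that they reuse verbatim for $\Gr(2,V_6)$ in the next proposition.
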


\begin{proof}
The standard resolution of the structure sheaf   $\cO_X$  {in $\P(\bw2V_5)$} is
\begin{equation*}
0 \to \det (V_5^\vee)^{\otimes 2} \otimes \cO(-5) \to \det (V_5^\vee) \otimes V_5^\vee\otimes\cO(-3) \xrightarrow{\ \alpha\ }  \det (V_5^\vee) \otimes V_5\otimes\cO(-2) \to \cO \to \cO_X \to 0,
\end{equation*}
where  {the map $\alpha$} is induced by the contraction $V_5^\vee \otimes \bw{2}{V_5} \to V_5$.
In particular, the space $\det (V_5^\vee) \otimes V_5 = \bw4{V_5^\vee}$ can be identified with the space of quadrics through $X$.
Tensoring this resolution with $\cO_X$ and taking into account that $\Tor_i(\cO_X,\cO_X) = \bw{i}{\cN^\vee_X}$, we deduce
an exact sequence
\begin{equation*}
0 \to \bw{2}{\cN^\vee_X} \to \det (V_5^\vee) \otimes V_5^\vee\otimes\cO_X(-3)  {\xrightarrow{\ \alpha\vert_X\ }} \det (V_5^\vee) \otimes V_5\otimes\cO_X(-2) \to \cN^\vee_X \to 0.
\end{equation*}
The above description of $\alpha$ shows that  {$\alpha\vert_X$} is the  twist by $\det (V_5^\vee)$ of the composition of the epimorphism
$V_5^\vee\otimes\cO_X(-3) \thra \cU^\vee(-3)$, an isomorphism $\cU^\vee(-3) \isomto \cU(-2)$, and the monomorphism 
$\cU(-2) \hra V_5\otimes\cO_X(-2)$; hence the claim.
\end{proof}

Let $V_6$ be a $\k$-vector space of dimension 6 and let again  $\cU$ be the rank-2 tautological
vector bundle on the Grassmannian  $\Gr(2,V_6)$.

\begin{prop}\label{en-g26}
Let $X = \Gr(2,V_6) \subset \PP(\bw{2}{V_6})$. Then $ \cE\!\cN^{\vee}_X $ is isomorphic to $ \det (V_6^\vee) \otimes \left((V_6\otimes\cU(-2))/(\Sym^2\!\cU)(-2)\right)$.
\end{prop}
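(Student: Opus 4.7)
The plan is to follow the strategy of Proposition~\ref{en-g25}, but since the minimal free resolution of $\cO_X$ in $\PP(\bw{2}{V_6})$ is considerably more involved (the Pl\"ucker ideal is not a complete intersection), I would describe the canonical surjection $V_X\otimes\cO_X(-2)\twoheadrightarrow\cN_X^\vee$ directly in terms of the tautological subbundle $\cU\hookrightarrow V_6\otimes\cO_X$ and identify its kernel geometrically using the filtration of $\bw{4}{V_6}\otimes\cO_X$ induced by $\cU$.

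The Pl\"ucker ideal is generated by the quadrics $Q_\alpha(\omega):=\langle\alpha,\omega\wedge\omega\rangle$ for $\alpha\in\bw{4}{V_6^\vee}$, so $V_X=\bw{4}{V_6^\vee}$. The differential $dQ_\alpha|_{\omega_0}=2\iota_{\omega_0}\alpha$, globalized with $\omega_0\in\bw{2}{\cU}\cong\cO_X(-1)$, gives a morphism
\[
\mathbf{M}\colon \bw{4}{V_6^\vee}\otimes\bw{2}{\cU} \longrightarrow \bw{2}{V_6^\vee}\otimes\cO_X,\qquad \alpha\otimes\omega\mapsto\iota_\omega\alpha,
\]
whose twist by $\cO_X(1)$ realizes the canonical map $V_X\otimes\cO_X\to\cN_X^\vee(2)$. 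Pointwise, the kernel of $\iota_{\omega_0}\colon\bw{4}{V_6^\vee}\to\bw{2}{V_6^\vee}$ is the annihilator of $\omega_0\wedge\bw{2}{V_6}=\bw{2}{\cU}\wedge\bw{2}{(V_6/\cU)}$. Setting $F^2:=\bw{2}{\cU}\wedge\bw{2}{(V_6/\cU)}\subset\bw{4}{V_6}\otimes\cO_X$, a rank-6 subbundle, we conclude that $\cEN_X^\vee(2)$ is isomorphic to the dual of the rank-9 quotient $\bw{4}{V_6}\otimes\cO_X/F^2$.

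To identify this quotient, apply the canonical three-step filtration of $\bw{4}{V_6}\otimes\cO_X$ induced by $\cU\subset V_6\otimes\cO_X$, with graded pieces $\bw{i}{\cU}\otimes\bw{4-i}{(V_6/\cU)}$ for $i=0,1,2$. Since $F^2$ is the bottom piece, dualizing the remaining two-step filtration gives
\[
0 \to (\bw{4}{(V_6/\cU)})^\vee \to \cEN_X^\vee(2) \to (\cU\otimes\bw{3}{(V_6/\cU)})^\vee \to 0.
\]
Using $\bw{4}{(V_6/\cU)}\cong\det V_6\otimes\cO_X(1)$, $(\bw{3}{(V_6/\cU)})^\vee\cong (V_6/\cU)\otimes\det V_6^\vee\otimes\cO_X(-1)$, and $\cU^\vee\cong\cU\otimes\cO_X(1)$, this simplifies to
\[
0\to \det V_6^\vee\otimes\cO_X(-1) \to \cEN_X^\vee(2)\to \det V_6^\vee\otimes\cU\otimes(V_6/\cU)\to 0,
\]
which has the same shape as the twist by $\det V_6^\vee$ of $0\to\bw{2}{\cU}\to(V_6\otimes\cU)/\Sym^2\!\cU\to(V_6/\cU)\otimes\cU\to 0$.

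To match the two extensions, use the wedge map $V_6\otimes\cU\to\bw{2}{V_6}\cong\bw{4}{V_6^\vee}\otimes\det V_6$ (the second identification via the perfect pairing $\bw{2}{V_6}\otimes\bw{4}{V_6}\to\det V_6$). A short rank count gives $\ker=\Sym^2\!\cU$, so it induces an injection $(V_6\otimes\cU)/\Sym^2\!\cU\hookrightarrow\bw{4}{V_6^\vee}\otimes\det V_6$; moreover, for $u\in\cU$ and $\omega_0\in\bw{2}{\cU}$ one has $u\wedge\omega_0=0$ (as $\bw{3}{\cU}=0$), which forces the image to lie inside $\cEN_X^\vee(2)\otimes\det V_6$. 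Since both source and target have rank~9, this injection is an isomorphism, and twisting by $\det V_6^\vee\otimes\cO_X(-2)$ gives the proposition. The main obstacle is the careful bookkeeping of the twists by $\cO_X(\pm 1)$, $\det V_6^{\pm 1}$, and $\bw{2}{\cU}$ throughout these identifications.
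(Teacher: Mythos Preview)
Your proposal is correct, and it takes a route that is dual to the paper's in a pleasing way. The paper uses the first two terms of the minimal free resolution of $\cO_X$ (citing Weyman for the $\fsl(V_6)$ syzygy term) to identify $\cEN_X^\vee$ as the \emph{image} of the second differential $\beta\vert_X$, then observes that $\beta\vert_X$ factors through $V_6\otimes\cU(-2)$ with kernel $\Sym^2\!\cU(-2)$. You instead compute the \emph{kernel} of the differential $V_X\otimes\cO_X\to\cN_X^\vee(2)$ directly via the contraction $\iota_{\omega_0}$ and then match it with the image of the wedge map $V_6\otimes\cU\to\bw2V_6\cong V_X\otimes\det V_6$. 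Both arguments terminate at exactly the same wedge map and the same identification of its kernel as $\Sym^2\!\cU$; the difference is only in how you arrive there. Your approach has the advantage of not needing the resolution (in particular the $\fsl(V_6)$ term), at the price of the extra filtration bookkeeping in the middle---which, as you implicitly note, is only a sanity check: once you observe that $u\wedge\omega_0=0$ forces the image into the annihilator $(F^2)^\perp$, the rank-$9$ count finishes the proof on its own.
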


\begin{proof}
The standard resolution (\cite[Theorem~6.4.1]{W}) of the structure sheaf of $X$  {in $\P(\bw2V_6)$} is
\begin{equation*}
\dots \to \det (V_6^\vee) \otimes \fsl(V_6)\otimes\cO(-3) \xrightarrow{\ \beta\ } \det (V_6^\vee) \otimes \bw{2}{V_6}\otimes\cO(-2) \to \cO \to \cO_X \to 0,
\end{equation*}
where $\beta$ is given by the natural Lie algebra action $\fsl(V_6) \otimes \bw{2}{V_6} \to \bw{2}{V_6}$.
Tensoring this resolution with $\cO_X$, we deduce an exact sequence
\begin{equation*}
\det (V_6^\vee) \otimes \fsl(V_6)\otimes\cO_X(-3)  {\xrightarrow{\ \beta\vert_X\ }} \det (V_6^\vee) \otimes \bw{2}{V_6}\otimes\cO_X(-2) \to \cN^\vee_X \to 0.
\end{equation*}
The above description of $\beta$ shows that   {$\beta\vert_X$} is the  twist by $\det (V_6^\vee)$ of  the composition 
 \begin{equation*}
\fsl(V_6)\otimes\cO_X(-3) \twoheadrightarrow V_6\otimes\cU^\vee(-3) \isomto V_6\otimes\cU(-2) \to \bw{2}{V_6}\otimes\cO_X(-2).
\end{equation*}
Since the first arrow is surjective,   the image of the composition (\ie, the excess conormal bundle of the Grassmannian) 
is isomorphic (up to a twist) to the image of $V_6 \otimes \cU(-2)$. The kernel of the last map is $(\Sym^2\!\cU)(-2)$, hence the claim.
 \end{proof}

 \section{Eisenbud--Popescu--Walter sextics}\label{section-epw}

These sextics were introduced by Eisenbud, Popescu, and Walter in~\cite[Example~9.3]{epw}
and were thoroughly investigated, over $\C$,  by O'Grady in the series of articles~\cite{og1,og2,og3,og4,og5,og6,og7}.
For their remarkable properties, we refer to these articles.  In this appendix, we   sketch 
the original construction of   EPW sextics from   Lagrangian subspaces  and discuss some of their properties.
The only new results here are Lemmas~\ref{lemma-y2y2-yhat} and~\ref{lemma-y2-hyperplane}. 
To be able to use O'Grady's  {results}, we work over $\C$, although many of his results are valid over more general fields.

  \subsection{Overview of EPW sextics}\label{section-epw-overview}

Let $V_6$ be a 6-dimensional complex vector space and let  $A \subset \bw{3}{V_6}$ be a Lagrangian subspace for the  $\det(V_6)$-valued  symplectic form $\omega$
defined by 
\begin{equation*}
\omega(\xi,\eta) = \xi\wedge\eta.
\end{equation*}

\begin{defi}\label{definition-epw}
For any integer $\ell$, we set
\begin{equation}\label{yabot}
Y_A^{\ge \ell}:=\bigl\{[v]\in\P(V_6) \mid \dim\bigl(A\cap (v \wedge\bw{2}{V_6} )\bigr)\ge \ell\bigr\}
\end{equation}
and endow it with a   scheme structure as in \cite[Section 2]{og1}. 
The locally closed subsets
\begin{equation}\label{yaell}
Y_A^\ell := Y_A^{\ge \ell} \setminus Y_A^{\ge \ell + 1} 
\end{equation}
form the   {\sf EPW stratification} of $\P(V_6)$ and the sequence of inclusions 
\begin{equation*}
\P(V_6) = Y_A^{\ge 0} \supset Y_A^{\ge 1} \supset Y_A^{\ge 2} \supset \cdots
\end{equation*}
is called   the {\sf EPW sequence}.
When the scheme $Y_A := Y_A^{\ge 1}$ is not the whole space $\P(V_6)$, it is a sextic hypersurface (\cite[(1.8)]{og1}) called an {\sf EPW sextic}. 
The scheme $Y_A^{\ge 2}$ is non-empty and has everywhere dimension $\ge 2$ (\cite[(2.9)]{og1}).
\end{defi}

%KEEP
%{Remark: if $Y_A^{\ge 7}\ne\vide$, we have $Y_A =\P(V_6)$ (in the 10 dimensional vector space $v \wedge\bw{2}{V_6} $, $A\cap (v \wedge\bw{2}{V_6} )$ will meet any $v \wedge w\wedge  V_6 $. O'Grady studies $A$ for which $Y_A^{\ge 6}\ne\vide$ and shows that they form an unstable stratum which he calls $X^F_{\cA_+}$ (\cite{og3}, Table 5, p.\ 259); similarly, those $A$ for which $Y_A^{\ge 5}\ne\vide$   form a  semistable non-stable stratum which he calls $B^F_{\cA}$ (\cite{og3}, Table 4, p.\ 259). I was unable to find in O'Grady's articles a geometric description of the corresponding EPW sextics (they   include the (reducible) EPW of bielliptic curves)...}

Non-zero elements of $A$
which can be written as  $v_1\wedge v_2\wedge v_3$ are called {\sf  decomposable vectors}, so that 
$A$ 
{\sf contains no decomposable vectors} if the scheme
$$\Theta_A:=\PP(A) \cap \Gr(3,V_6)$$
is empty.   When $  \P(V_6)\ne\bigcup_{V_3\in \Theta_A }\P(V_3)$ (\eg, when $\dim(\Theta_A)\le 2$),
the scheme $Y_A$ is not $ \P(V_6)$ and its  singular locus can be described as  (\cite[Corollary 2.5]{og3})
\begin{equation}\label{singya}
\Sing(Y_A)=Y_A^{\ge 2} \cup 
 \Bigl( \bigcup_{V_3\in \Theta_A }\hskip-2mm\P(V_3)\Bigr).
\end{equation}

The following theorem gathers various results of O'Grady's. 
%KEEP HIDDEN
%The first item is a consequence of the others. \cite{og4}, Claim 3.7 (and its proof) gives $Y_A^{\ge 3}  $    finite and smooth. The equality $Y_A^{\ge 2} = \Sing(Y_A^{\ge 1})$ is \eqref{singya}. 
%\cite{og6}, Proposition 3.1 
%
%gives $  \Sing(Y_A^{\ge 2})\subset $Y_A^{\ge 3}  $ (it is standard that the reverse inclusion holds) and that  $Y_A^{\ge 2}$ is a surface. It is lci by \cite{og1}, (2.14), hence normal since its singular locus is finite; it has degree 40 by \cite{og1}, (2.19) or \cite{og3}, (2.40). If two irreducible components meet, the lci condition implies that they meet in codimension one, which is impossible. It remains to prove connectedness, which is in  \cite{og7}, Theorem 1.1.

\begin{theo}[O'Grady]\label{theorem-ogrady-stratification}
Let $A \subset \bw{3}{V_6}$
be a Lagrangian subspace. If $A$  contains only finitely many decomposable vectors, 
\begin{itemize}
\item[\rm (a)] $Y_A $ is an integral normal sextic hypersurface in $\P(V_6)$;
\item[\rm (b)] $Y_A^{\ge 2} $ is a surface;
\item[\rm (c)] $ Y_A^{\ge 3} $ is finite if the scheme $\Theta_A$ is moreover reduced;
\item[\rm (d)] $ Y_A^{\ge 5} $ is empty.
\end{itemize}
If moreover $A$ contains no decomposable vectors,
\begin{itemize}
\item[\rm (b$^\prime$)] $Y_A^{\ge 2} = \Sing(Y_A )$ is an integral normal  Cohen--Macaulay surface of degree $40$;
\item[\rm (c$^\prime$)] $Y_A^{\ge 3} = \Sing(Y_A^{\ge 2})$ is  finite and smooth,  and is empty for  $A$ general;
\item[\rm (d$^\prime$)] $Y_A^{\ge 4} $  is empty.
\end{itemize}
\end{theo}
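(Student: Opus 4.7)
The proof is essentially a compendium of results from O'Grady's series of papers \cite{og1,og2,og3}, so the plan is to sketch the main geometric constructions behind each statement and indicate where each piece is established.

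The starting point is the degeneracy-locus presentation of $Y_A$. On $\P(V_6)$, consider the rank-$10$ subbundle $F \subset \bw3 V_6 \otimes \cO$ with fibers $F_{[v]} = v \wedge \bw2 V_6$, and the quotient $F' := (\bw3 V_6 \otimes \cO)/F$. The symplectic form identifies $F'$ with $F^\vee \otimes \cO(1)$, and the trivial Lagrangian subbundle $A \otimes \cO \hookrightarrow \bw3 V_6 \otimes \cO$ composes with the projection to give a morphism of rank-$10$ bundles
\begin{equation*}
\lambda_A \colon A \otimes \cO \lra F'
\end{equation*}
whose $k$-th degeneracy locus is exactly $Y_A^{\ge k}$ as a scheme. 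Its determinant is a section of $\det(F')^\vee \cong \det(F) \otimes \cO(10) \cong \cO(6)$, which gives that $Y_A$, when it is not all of $\P(V_6)$, is a sextic hypersurface (cf.~\cite[(1.8)]{og1}).

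For (a), I would assume the finiteness of $\Theta_A$ and invoke \eqref{singya}, which identifies $\Sing(Y_A)$ with $Y_A^{\ge 2}$ together with finitely many $2$-planes. Using (b) to see that $Y_A^{\ge 2}$ has codimension $\ge 2$ in $Y_A$, and combining with the Cohen--Macaulay-ness of determinantal loci of the expected codimension, Serre's criterion yields that $Y_A$ is normal, while integrality follows from connectedness (the sextic is irreducible since its singular locus has codimension $\ge 2$). Statement (d) is a direct check: if $\dim(A \cap (v \wedge \bw2 V_6)) \ge 5$, then together with $A \cap \bw3V_5$ for some $V_5 \not\ni v$, one obtains a Lagrangian subspace meeting $v \wedge \bw2V_6$ in dimension $\ge 5$, which combined with the fact that $A$ is Lagrangian produces a decomposable vector in $A$, contradicting the finiteness of $\Theta_A$. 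Statement (d$'$) is an analogous but cleaner argument under the stronger no-decomposable-vector assumption.

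For (b) and (c), I would apply the standard dimension bound for degeneracy loci: $\codim_{\P(V_6)} Y_A^{\ge k} \le \binom{k+1}{2}$. Together with (d) and the integrality of the strata, this forces $\dim Y_A^{\ge 2} = 2$, proving (b); the refinement (c) under the reducedness of $\Theta_A$ follows by observing that $Y_A^{\ge 3}$ contains the planes $\P(V_3)$ for $V_3 \in \Theta_A$ and the remaining part is zero-dimensional by a local computation of the corank-$3$ stratum. Under the no-decomposable-vectors assumption, (b$'$) comes from Porteous' formula: the class of the second degeneracy locus is $c_2(F' - A \otimes \cO)$, and a Chern class computation on $\P(V_6)$ gives degree $40$. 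The Cohen--Macaulay property is automatic for degeneracy loci of the expected codimension, while normality and irreducibility again follow from Serre plus (c$'$), which in turn comes from the expected-codimension bound $\codim Y_A^{\ge 3} \le 6$.

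The main technical obstacle, as in O'Grady's work, is the irreducibility of $Y_A^{\ge 2}$ in (b$'$) and the smoothness of $Y_A^{\ge 3}$ in (c$'$). These are not formal consequences of the degeneracy-locus framework; they require explicit local analysis at points $[v] \in Y_A^{\ge 2}$ using the symplectic structure to show that the tangent cone is a union of two smooth sheets meeting in a specific way (cf.~\cite[Section~2]{og3}). The genericity statement in (c$'$), that $Y_A^{\ge 3}$ is empty for generic $A$, follows from a parameter count on $\LGr(\bw3 V_6)$ and the fact that demanding a point of $Y_A^{\ge 3}$ imposes more conditions than the dimension of the moduli.
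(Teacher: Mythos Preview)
Your proposal has the right architecture but contains a real gap in the dimension estimates for (b) and (c), and a factual error about the planes $\P(V_3)$.

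The degeneracy-locus bound $\codim_{\P(V_6)} Y_A^{\ge k} \le \binom{k+1}{2}$ gives only a \emph{lower} bound $\dim Y_A^{\ge 2} \ge 2$; it never gives the upper bound you need for (b). Knowing that $Y_A^{\ge 5}=\varnothing$ does not help: there is no formal reason a corank-$2$ locus could not be, say, $3$-dimensional while the corank-$5$ locus is empty. The paper (following \cite[Proposition~2.9]{og3} and the proof of \cite[Claim~3.7]{og4}) instead does a genuine local computation at points of $Y_A^2$ and $Y_A^3$ away from the finitely many planes $\P(V_3)$, showing that the Zariski tangent space to $Y_A^{\ge 2}$ is $2$-dimensional there and that points of $Y_A^{\ge 3}$ are isolated. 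That local analysis is the actual content of (b) and (c) and cannot be replaced by the formal degeneracy-locus inequality.

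Your treatment of (c) also asserts that the planes $\P(V_3)$ for $V_3\in\Theta_A$ lie in $Y_A^{\ge 3}$. They do not: by~\eqref{singya} they lie in $\Sing(Y_A)$, but a generic point of such a plane is only in $Y_A^{\ge 1}$ (the decomposable vector $\bw3V_3$ is typically the \emph{only} element of $A\cap(v\wedge\bw2V_6)$ for generic $v\in V_3$). The paper's argument for (c) works in the opposite direction: one must show $Y_A^{\ge 3}\cap\P(V_3)$ is \emph{finite}, which under the reducedness hypothesis on $\Theta_A$ uses the identification $A\cap(\bw2V_3\wedge V_6)=\bw3V_3$ and \cite[Proposition~3.3.6]{og5}.

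Two smaller points. Your argument for (d) is garbled; the clean version is that a $\P^4\subset\P(A\cap(v\wedge\bw2V_6))$ inside $\P(\bw2(V_6/v))\cong\P^9$ must meet the $6$-dimensional $\Gr(2,V_6/v)$ in a positive-dimensional scheme, contradicting finiteness of $\Theta_A$. And for the irreducibility of $Y_A^{\ge2}$ in (b$'$), the paper does not rely on local tangent-cone analysis but rather on a deformation argument: one knows irreducibility for general $A$ by \cite{og7}, then specialises along a one-parameter family to an arbitrary $A$ with no decomposable vectors, using that a general hyperplane slice of $Y_A^{\ge2}$ is a smooth curve which therefore cannot acquire extra components in the limit.
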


{Note that} if $A $ is the dual Lagrangian (see Section \ref{depw}) associated with a strongly smooth ordinary GM curve, $A$ contains 5 decomposable vectors  and $Y_A^{\ge 4}$ is non-empty (Remark \ref{rem39}).

\begin{proof}
The finite union $S:= \bigcup_{V_3\in \Theta_A }\P(V_3)$  
is a surface if $\Theta_A$ is non-empty (finite)  and it is empty otherwise. Relation \eqref{singya} holds, hence (a) will follow from (b).

 {As we already mentioned, the scheme $Y_A^{\ge 2}$ has everywhere dimension $\ge 2$ (\cite[(2.9)]{og1}).}
We now  show that  $Y_A^{\ge 2}$ has dimension $\le 2$ at any point $v\notin S$. If $v\in Y_A^2$, this is  \cite[Proposition 2.9]{og3}.  
Assume $v\in  Y_A^{\ge 3}$; the proof of  \cite[Claim 3.7]{og4} still applies because $v\notin S$. It gives $v\in  Y_A^3$ and $v$ isolated in $ Y_A^{\ge 3}$, which proves that $Y_A^{\ge 3} \setminus S$ is finite.
This implies that in any positive-dimensional component $T$ of $Y_A^{\ge 2}\setminus S$, the open 
subset $T\cap Y_A^2$ is dense and has dimension $\le 2$. Therefore,  $\dim(T)\le 2$ and (b) is proved. 

To finish the proof of c), we show that the set $Y_A^{\ge 3} \cap\P(V_3)$ is finite for each $V_3\in \Theta_A$, under the additional assumption that $\Theta_A$ is reduced. 
Since
\begin{equation*}
\T_{\Theta_A,[V_3]}=\P(A)\cap \T_{\Gr(3,V_6),[V_3]}= \P(A\cap(\bw2V_3\wedge V_6))\subset \P(\bw3V_6),
\end{equation*}
we have $A \cap (\bw2V_3 \wedge V_6) = \bw3V_3$. 
In the notation of \cite[Definition~3.3.3]{og5}, this implies
\begin{equation*}
\cB(V_3,A) = \{ [v] \in \P(V_3) \mid  \exists V_3'\in\Theta_A\ \ V'_3\ne V_3 \ \textnormal{and}\  v\in V'_3 \}.
\end{equation*}
But this set is finite: if $V_3,V_3'\in\Theta_A$ and $\dim(V_3\cap V'_3)\ge 2$, the line spanned by $[V_3]$ and $[V'_3]$ in $\P(\bw3V_6)$ is contained in $\Gr(3,V_6)$, hence in $\Theta_A$,  contradicting the finiteness of $\Theta_A$.  
The finiteness of $ Y_A^{\ge 3} \cap\P(V_3)$ then follows from \cite[Proposition~3.3.6]{og5} and c) is proved.

Assume finally $v\in Y_A^{\ge 5}$. Inside $\P(\bw2(V_6/v))=\P^9$, the linear space $\P \bigl( A\cap (v \wedge\bw{2}{(V_6/v)} ) \bigr)$ (of dimension $\ge 4$) and the Grassmaniann $\Gr(2,V_6/v)$ (of dimension 6) 
meet along a positive-dimensional locus contained in $\Theta_A$, which is absurd. 
This proves (d).

For  the proof of (b$^\prime$), (c$^\prime$), and (d$^\prime$), we have $\Theta_A=\vide$. 

Again, we have $\Sing(Y_A) = Y_A^{\ge 2}$ by~\eqref{singya}; moreover, $Y_A^2$ is smooth of the expected dimension~2 
(\cite[Proposition 2.9]{og3}), so that $\Sing(Y_A^{\ge 2})\subset Y_A^{\ge 3} $. On the other hand, \cite[Claim 3.7]{og4} says 
that (d$^\prime$) holds; the proof actually shows that the tangent space to $Y_A^{\ge 3}$ at any point 
is zero, so that is smooth. It is moreover empty for $A$ general (\cite[Claim~2.6]{og1}).
 
All this implies that determinantal locus $Y_A^{\ge 2} $ has everywhere the expected dimension,~$2$. 
Therefore, it is Cohen--Macaulay  and $  Y_A^{\ge 3} \subset \Sing(Y_A^{\ge 2})$. This completes the proof of (c$^\prime$). 
It also implies that $Y_A^{\ge 2}$ is normal since it  is Cohen--Macaulay and its singular locus is finite.   
Its degree is $40$ by \cite[(2.9)]{og1}.

It remains to show that $Y_A^{\ge 2}$ is irreducible. When $A$ is general, 
this follows from \cite[Theorem 1.1]{og7}.
% KEEP HIDDEN {We cannot explain everything here, but this theorem (part (1)) 
% says that there exists a double EPW sextic $\widetilde Y_A$ for which the surface $Y_A^{\ge 2} $ is smooth irreducible. 
% The proof is quite simple (but as usual, one has to dig deep into Kieran's writings): for $A$ very general, the space 
% of Hodge classes $H^{2,2}_\Q(\widetilde Y_A )$ has rank 2 (this is Proposition 3.2.(4), but one has to look at the proof 
% page 564), generated by $h^2$ and $c_2$. As explained p. 572, classes of Lagrangian surfaces are all proportional  
% and of non-zero square. Since the pull-back of any component of $Y_A^{\ge 2} $ is such a surface, it must be irreducible.}
To deduce irreducibility for any $A$ with no decomposable vectors, we use a standard trick. Let $(T,0)$ be 
a smooth pointed connected curve that parametrizes Lagrangians $(A_t)_{t\in T}$ with no decomposable vectors such 
that for $t\in T$ general, $Y_{A_t}^{\ge 2}$ is smooth irreducible and $A_0 = A$. 

Let $H\subset\P(V_6)$ be a general hyperplane, 
so that $H\cap Y_{A_0}^{\ge 2}$ is a smooth curve and $H\cap Y_{A_t}^{\ge 2}$ is smooth irreducible for general $t$. 
In $T\times \P^5$, consider the union  of all surfaces $\{ t\}\times Y_{A_t}^{\ge 2}$. As a determinantal locus,   
it has everywhere codimension $\le 3$, hence  its intersection $\cY$ with  $T\times H$ has everywhere dimension $\ge 2$. 
Since the fiber   $\cY_0=H\cap Y_{A_0}^{\ge 2}$   is 1-dimensional, any component of $\cY_0$ deforms to a neighborhood 
of $0$ in $T$. Since $\cY_t$ is irreducible for $t$ general, so is $\cY_0$, hence so is $Y_{A_0}^{\ge 2}$. 
This finishes the proof of the theorem.
\end{proof}

 \subsection{Duality of EPW sextics} \label{depw}
 
 If  $A \subset \bw{3}{V_6}$ is a Lagrangian subspace, its orthogonal 
$A^ \perp \subset \bw{3}{V_6^\vee}$ is also a Lagrangian subspace. In the dual projective space $\P(V_6^\vee) = \Gr(5,V_6)$, the EPW sequence    for  $A^\bot$ can be described  {in terms of $A$} as
\begin{equation}\label{dualYA}
Y_{A^\perp}^{\ge \ell} = \big\{ V_5 \in \Gr(5,V_6) \mid \dim (A \cap \bw{3}{V_5}) \ge \ell \big\}. 
\end{equation}
The canonical identification $\Gr(3,V_6) \isom \Gr(3,V_6^\vee) $ induces an isomorphism between the scheme $\Theta_A$ of 
decomposable vectors in $\P(A)$ and the scheme $\Theta_{A^\bot}$  of decomposable vectors in $\P(A^\bot)$ (\cite[(2.82)]{og3}).
In particular, $A$ contains no decomposable vectors if and only if the same is true for $A^\perp$.

One of the interesting properties of a EPW sextic is that its projective dual is often also an EPW sextic. 
  The proof we give  is essentially equivalent to O'Grady's (\cite[Corollary 3.6]{og2}) 
but is written in  more geometrical terms.

Consider the  scheme  
\begin{equation}\label{yahat}
\widehat{Y}_A := \{ (v,V_5) \in  \Fl(1,5;V_6)\mid   A \cap (v \wedge \bw2V_5) \ne 0 \}
\end{equation}
and the   projections 
\begin{equation*}
\xymatrix@R=5mm{
& \widehat{Y}_A \ar[dl]_p \ar[dr]^q \\
Y_A && Y_{A^\perp}.
} 
\end{equation*}
 
\begin{prop}\label{prop-hat-y}
%If $A$ has only finitely many decomposable vectors, 
If the scheme $\Theta_A$ of decomposable vectors in $\P(A)$ is finite and reduced, the hypersurfaces $Y_A \subset \P(V_6)$ and $Y_{A^\perp} \subset \P(V_6^\vee)$ are projectively dual.
If $A$ contains no  decomposable vectors, $\widehat{Y}_A$ is irreducible and realizes the projective duality between these two hypersurfaces.
\end{prop}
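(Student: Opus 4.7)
The plan is to compute the tangent hyperplane to $Y_A$ at a smooth point via the Lagrangian structure on $\bw{3}{V_6}$, identify it with a point of $Y_{A^\perp}$, and conclude the projective duality by biduality of integral hypersurfaces.

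At a point $[v] \in Y_A^1$, I would pick a generator $v \wedge \xi$ of the one-dimensional space $A \cap (v \wedge \bw{2}{V_6})$, where $\xi \in \bw{2}{V_6}$ is well-defined modulo $v \wedge V_6$. Varying $[v]$ to $[v + tw]$ and requiring $A \cap ((v + tw) \wedge \bw{2}{V_6}) \ne 0$ to first order in $t$ shows that $w$ lies in $T_{Y_A,[v]}$ if and only if $w \wedge \xi \in A + v \wedge \bw{2}{V_6}$. Since both $A$ and $v \wedge \bw{2}{V_6}$ are Lagrangian subspaces of $\bw{3}{V_6}$ for the wedge pairing (the latter because $v \wedge v = 0$), the orthogonal of their sum equals their intersection $\langle v \wedge \xi \rangle$. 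The tangency condition therefore reduces to the explicit relation $w \wedge v \wedge \xi^2 = 0$ in $\bw{6}{V_6}$. Under the hypothesis that $\Theta_A$ is finite, the class $\bar\xi \in \bw{2}{(V_6/\langle v \rangle)}$ has rank $4$ for a generic smooth point $[v]$ (otherwise $v \wedge \xi$ would be decomposable, contradicting the finiteness of $\Theta_A$ on a Zariski-dense subset of $Y_A^1$), so $\xi^2$ is the Pl\"ucker form of some $4$-plane $U_4 \subset V_6/\langle v \rangle$ and the tangent hyperplane $V_5$ is the preimage of $U_4$ in $V_6$. A lift of $\xi$ can then be chosen in $\bw{2}{V_5} + v \wedge V_6$, so $v \wedge \xi \in A \cap \bw{3}{V_5}$, which exhibits $[V_5] \in Y_{A^\perp}$. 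This proves $Y_A^\vee \subset Y_{A^\perp}$, and applying the same argument to $A^\perp$ (whose scheme of decomposable vectors is naturally identified with $\Theta_A$, hence still finite and reduced) yields the reverse inclusion; biduality of integral hypersurfaces then gives $Y_A^\vee = Y_{A^\perp}$.

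For the second assertion, I would realize $\widehat{Y}_A$ as a degeneracy locus inside $\Fl(1,5;V_6)$: the map of vector bundles $A \otimes \cO \to \cF$ of ranks $10$ and $14$, where the fiber of $\cF$ at $(v,V_5)$ is $\bw{3}{V_6} / (v \wedge \bw{2}{V_5})$, fails to be injective exactly on $\widehat{Y}_A$. The expected codimension is therefore $5$, so every irreducible component of $\widehat{Y}_A$ has dimension at least $4$. On the other hand, the no-decomposable-vectors assumption forces $\bar\xi$ to have rank $4$ for every nonzero $v \wedge \xi \in A$, so the tangent-space computation above attaches a unique hyperplane to each line in $A \cap (v \wedge \bw{2}{V_6})$. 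Hence the fiber of $p$ over $[v] \in Y_A^k$ is parametrized by $\P(A \cap (v \wedge \bw{2}{V_6})) = \P^{k-1}$, and combining with $\dim Y_A^{\ge 2} = 2$ and $\dim Y_A^{\ge 3} = 0$ from Theorem~\ref{theorem-ogrady-stratification} yields $\dim p^{-1}(Y_A^{\ge 2}) \le 3$. No $4$-dimensional component of $\widehat{Y}_A$ can therefore live over $Y_A^{\ge 2}$, so every component dominates $Y_A$; but the birationality of $p$ over $Y_A^1$ (single-point fibers) forces this component to be unique, hence $\widehat{Y}_A$ is irreducible and coincides with the conormal variety of $Y_A$, thereby realizing the projective duality.

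The main obstacle is the tangent-space computation itself together with the rank analysis of $\bar\xi$: the hypothesis on $\Theta_A$ is precisely what prevents the Gauss map from degenerating and what rules out spurious components of $\widehat{Y}_A$ lying over the higher strata $Y_A^{\ge 2}$. Once these are controlled, surjectivity of the Gauss map onto $Y_{A^\perp}$ follows automatically from biduality, and the irreducibility of $\widehat{Y}_A$ reduces to a standard degeneracy-locus dimension count.
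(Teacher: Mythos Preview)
Your proposal is correct and follows essentially the same route as the paper. The tangent computation---deforming $v$ to $v+tw$, using the Lagrangian property of $A$ and of $v\wedge\bw2V_6$ to reduce to the equation $w\wedge v\wedge\xi^2=0$, then reading off the tangent hyperplane $V_5$ from $\xi^2$---is exactly the paper's argument (the paper writes $\eta$ for your $\xi$ and arrives at $v\wedge v'\wedge\eta\wedge\eta=0$). Likewise, the irreducibility argument via an expected-codimension bound plus fiber dimensions over the strata $Y_A^k$ matches the paper's, the only cosmetic difference being that the paper works on the incidence variety $\widehat{Y}'_A\subset\P(A)\times\Fl(1,5;V_6)$ (a zero locus of a rank-$14$ bundle on an $18$-fold) whereas you phrase it as a degeneracy locus directly in $\Fl(1,5;V_6)$; both give the same lower bound $\dim\ge4$. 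One small point: your appeal to ``biduality of integral hypersurfaces'' to pass from $Y_A^\vee\subset Y_{A^\perp}$ and $Y_{A^\perp}^\vee\subset Y_A$ to equality is slightly elliptic---what is actually used is that in characteristic~$0$ the dual of an integral non-linear hypersurface is again a hypersurface, so $Y_A^\vee$ and $Y_{A^\perp}$ are two integral hypersurfaces with one contained in the other. The paper sidesteps this by observing directly that the main component $\widehat{Y}''_A$ dominates both $Y_A$ and $Y_{A^\perp}$ (by the symmetry of the construction), which amounts to the same thing.
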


\begin{proof}
  We keep the notation $S = \bigcup_{V_3 \in \Theta_A} \P(V_3)$ and define
\begin{equation*}\label{yahatp}
\widehat Y'_A := \{ (a,v,V_5) \in \PP(A) \times \Fl(1,5;V_6)\mid a \in  \P(A \cap (v \wedge \bw2V_5)) \},
\end{equation*}
with the forgetful map $\widehat Y'_A \to \widehat Y_A $ and the maps $p'\colon\widehat Y'_A\to \widehat Y_A  \xrightarrow{p } Y_A$ and $q'\colon  \widehat Y'_A\to \widehat Y_A  \xrightarrow{q } Y_{A^\perp}$.

Let us determine the fiber  of $p'$ over   $v \in Y_A$. Take   $a \in A \cap (v \wedge \bw2V_6)$ non-zero and  write $a = v \wedge \eta$ with $\eta \in \bw2(V_6/v)$. 

If $a$ is not decomposable, so is $\eta$, hence   its rank   is 4 and   there is a unique hyperplane 
$V_5 \subset V_6$ containing $v$ such that $\eta \in \bw2(V_5/v)$. 
Moreover, the space $V_5$ corresponding to $a = v \wedge \eta$ is given by the 4-form 
\begin{equation*}
\eta\wedge\eta \in \bw4(V_6/v) \cong (V_6/v)^\vee = v^\perp \subset V_6^\vee.
\end{equation*}
If $a$ is decomposable, \ie,  {$a$ belongs to}   some $\bw3V_3\subset \bw3V_6$, the hyperplanes $V_5\subset V_6$ such that $a\in v \wedge \bw2V_5$ satisfy $\bw3V_3\subset \bw3V_5$, hence $V_3\subset V_5$.
This means that the natural map
\begin{equation*}
p^{\prime-1}(v) \to \P(A \cap (v\wedge \bw2V_6))
\end{equation*}
is a $\P^2$-fibration over $\P(A \cap (v\wedge \bw2V_6)) \cap \Gr(3,V_6)$ and is an isomorphism over its complement.
If $v \not\in S$, the first case does not happen, hence
\begin{equation}\label{d1}
p^{\prime-1}(v) \cong \PP(A \cap (v \wedge \bw2V_6)).
\end{equation}

Since $\widehat Y'_A$ can be defined as the zero-locus
of a section of a rank-$14$ vector bundle on the   smooth 18-dimensional variety $\P(A) \times \Fl(1,5;V_6)$,
  any irreducible
component of $\widehat Y'_A$ has dimension at least 4. On the other hand, the description \eqref{d1} of the fibers of $p'$ outside of the surface  {$S$}
implies $\dim (p^{\prime-1}(Y_A^1\moins S))=4$, and, using also Theorem \ref{theorem-ogrady-stratification},
 \begin{eqnarray*}
\dim (p^{\prime-1}(Y_A^{\ge 2}\moins S)) &\le&  
 \max\{\dim (p^{\prime-1}(Y_A^2\moins S)), \dim (p^{\prime-1}(Y_A^{3}\moins S)), \dim (p^{\prime-1}(Y_A^{4}\moins S))\} \\
 &\le &
\max\{2+1, 0+2,   0+3\}=3.
  \end{eqnarray*}
Using the description of the fibers of $p'$ over   the surface $S$, we conclude that the irreducible components of $\widehat Y'_A$ are 
\begin{itemize}
\item a ``main''   component $ \widehat Y''_A$ which dominates $Y_A$; 
\item one component $\{a\} \times \PP(V_3) \times \PP(V_3^\perp)  $ for each decomposable vector  {$a  \in \bw3 V_3 \subset  A$}.
\end{itemize}

It   now remains to prove that  $ \widehat Y''_A$ defines the projective duality between $Y_A  $ and $Y_{A^\perp}  $. 
Let  $v \in Y_A^1\moins S$ and set $V_5 := q(p^{-1}(v))$. 
Let us show that the hyperplane $\P(V_5) \subset \P(V_6)$is tangent to $Y_A$ at $v$. 
Since $p'$ is  {an isomorphism} over a neighborhood of  $v$, the tangent space $\T_{Y_A^1,v}$ is identified 
with the tangent space $\T_{\widehat Y'_A,(a,v,V_5)}$. Let $(a +  a't,v + v't,V_5+V'_5t)$ be a tangent vector to $\widehat Y'_A$,
\ie, a $\k[t]/t^2$-point of this variety, with  $a' \in A$. Since $A$ is Lagrangian,
we have
\begin{equation}\label{aa'}
a \wedge a' = 0.
\end{equation}
On the other hand,   by definition of $\widehat{Y}'_A$, we can write $a + a't = (v + v't) \wedge (\eta + \eta't)$, which gives $a = v\wedge \eta$ and $a' = v \wedge \eta' + v' \wedge \eta$.
Substituting into \eqref{aa'}, we get
\begin{equation}\label{bb}
0 = (v\wedge \eta) \wedge (v \wedge \eta' + v'\wedge \eta) = v \wedge v' \wedge \eta \wedge \eta.
\end{equation}
But $v'$ is the image of the tangent vector in $\T_{Y_A^1,v}$, while $v \wedge \eta \wedge \eta$ is the equation of $V_5$.
Equation \eqref{bb} therefore implies $v' \in V_5$. Since this holds for any $v'$, we deduce $\T_{Y_A^1,v} \subset \P(V_5)$.
Since $v$ is a smooth point of the hypersurface $Y_A$, this implies   $\T_{Y_A ,v} = \P(V_5)$,
as required. 
 \end{proof}

\begin{rema}\label{remark-hat-y-q}
Assume that $A$ contains no decomposable vectors. 
First, the forgetful map $\widehat Y'_A \to \widehat Y_A $ is  {an isomorphism}: if $(a,v,V_5),(a',v,V_5) \in \widehat Y'_A$ with $a \ne a'$, there would be decomposable  vectors in $A$ in the span of $a$ and $a'$.
 
Second, the definition of $\widehat{Y}_A$ is symmetric with respect to $V_6$ and $V_6^\vee$. In particular,  we obtain from \eqref{d1}
an identification
 $q^{-1}(V_5) \isom \P(A \cap \bw3V_5)$
and $p$ takes $a \in A \cap \bw3V_5$ to the kernel of $a$ considered as a skew form on $V_5$ via the   isomorphism $\bw3V_5 \cong \bw2V_5^\vee$.
\end{rema}

 The following two lemmas were used in the article.

\begin{lemm}\label{lemma-y2y2-yhat}
If $A$ contains no decomposable vectors, we have 
\begin{equation*}
(Y_A^2 \times Y_{A^\perp}^2) \cap \Fl(1,5;V_6) \not\subset (p\times q)(\widehat{Y}_A).
\end{equation*}
\end{lemm}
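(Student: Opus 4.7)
My plan is to exhibit a pair $(v_0, V_5) \in \Fl(1,5;V_6)$ with $v_0 \in Y_A^2$ and $V_5 \in Y_{A^\perp}^2$ but $(v_0, V_5) \notin \widehat{Y}_A$, by a degree count comparing two curves in a $\P^4$ of hyperplanes.

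Fix any $v_0 \in Y_A^2$; such a point exists since $Y_A^{\ge 2}$ is a surface (Theorem~\ref{theorem-ogrady-stratification}(b$'$)). Identify the set $\{V_5 \subset V_6 \mid v_0 \in V_5\}$ of hyperplanes through $v_0$ with $\P((V_6/v_0)^\vee) \cong \P^4$. By Remark~\ref{remark-hat-y-q} we have $p^{-1}(v_0) \cong \P(A\cap(v_0\wedge\bw2 V_6)) \cong \P^1$, and by the explicit description in the proof of Proposition~\ref{prop-hat-y} the induced map $p^{-1}(v_0)\to\P^4$ sends $[\eta]$ (with $a = v_0\wedge\eta \in A$) to $[\eta\wedge\eta] \in \P(\bw4(V_6/v_0)) \cong \P((V_6/v_0)^\vee)$. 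This map is quadratic, so its image $\ell_0 \subset \P^4$ is a curve of degree at most~$2$.

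On the other hand, consider the hyperplane $H_{v_0} := \{V_5 \in \P(V_6^\vee) \mid v_0 \in V_5\}$ and let $C := Y_{A^\perp}^{\ge 2} \cap H_{v_0}$. By Theorem~\ref{theorem-ogrady-stratification}(b$'$) (applied to $A^\perp$, which contains no decomposable vectors since $A$ doesn't, by Section~\ref{depw}), $Y_{A^\perp}^{\ge 2}$ is an integral projective surface of degree~$40$ in $\P(V_6^\vee)$. Thus $C$ either equals all of $Y_{A^\perp}^{\ge 2}$ (if $Y_{A^\perp}^{\ge 2} \subset H_{v_0}$) or is a curve of degree~$40$ by Bezout; in both cases $C$ cannot be contained in $\ell_0$, since $\deg \ell_0 \le 2 < 40$ (and $\dim C \ge 1$).

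Hence $C \setminus \ell_0$ is non-empty and at least $1$-dimensional; since $Y_{A^\perp}^{\ge 3}$ is finite (Theorem~\ref{theorem-ogrady-stratification}(c$'$)), we may choose $V_5 \in C \setminus (\ell_0 \cup Y_{A^\perp}^{\ge 3})$, which then lies in $Y_{A^\perp}^2$. The pair $(v_0, V_5)$ belongs to $(Y_A^2 \times Y_{A^\perp}^2) \cap \Fl(1,5;V_6)$ (as $v_0 \in V_5$) but not to $(p\times q)(\widehat Y_A)$ (as $V_5 \notin \ell_0$), proving the lemma. The only delicate step is the degree bound for $\ell_0$, which reduces to the observation that the map $[\eta] \mapsto [\eta\wedge\eta]$ is quadratic of source $\P^1$.
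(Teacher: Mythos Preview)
Your approach---fixing $v_0 \in Y_A^2$ and comparing the curve $\ell_0 = q(p^{-1}(v_0))$ with the hyperplane slice $C = Y_{A^\perp}^{\ge 2} \cap H_{v_0}$---is natural and quite different from the paper's global divisor-class computation on $\widehat Y_A$. However, the degree step has a genuine gap.

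You argue that $C$ cannot be contained in $\ell_0$ because $\deg C = 40 > 2 \ge \deg \ell_0$. This inequality only rules out \emph{scheme-theoretic} containment. What you actually need is that the \emph{set} $C$ is not contained in $\ell_0$, i.e.\ $C_{\rm red} \not\subset \ell_0$. But $C$ is the section of $Y_{A^\perp}^{\ge 2}$ by the specific hyperplane $H_{v_0}$, and nothing you have said prevents it from being highly non-reduced: a priori one could have $C = 20\,\ell_0$ as Weil divisors on the surface (with $\ell_0$ a conic), in which case $C_{\rm red} = \ell_0$ and $C \setminus \ell_0 = \varnothing$. Bertini gives reducedness only for a \emph{general} hyperplane, and you have no control over whether $H_{v_0}$ is general as $v_0$ ranges over the surface $Y_A^2$.

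To repair the argument you would need to know, for some $v_0 \in Y_A^2$, that $\ell_0 \not\subset Y_{A^\perp}^{\ge 2}$ (since $\ell_0$ is irreducible, $C_{\rm red} \subset \ell_0$ forces $C_{\rm red} = \ell_0 \subset Y_{A^\perp}^{\ge 2}$). Equivalently, you need the fibre $p^{-1}(v_0)$ not to lie entirely in the exceptional divisor $E' = q^{-1}(Y_{A^\perp}^{\ge 2})$; since these fibres sweep out $E = p^{-1}(Y_A^{\ge 2})$, this amounts to $E \ne E'$. That is precisely what the paper proves, by a Picard-group computation: the Gauss map gives $H' \sim 5H - E$ and symmetrically $H \sim 5H' - E'$, whence $E = E'$ would force $6E \sim 24H$, contradicting the fact that $E$ is $p$-exceptional while $H$ is $p$-trivial. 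So the missing ingredient in your argument is exactly the content of the paper's proof.
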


\begin{proof}
Assume by contradiction $(Y_A^{\ge 2} \times Y_{A^\perp}^{\ge 2}) \cap \Fl(1,5;V_6) \subset (p\times q)(\widehat{Y}_A)$. 
The left side is the intersection in $\P(V_6) \times \P(V_6^\vee) $  of the fourfold $Y_A^{\ge 2} \times Y_{A^\perp}^{\ge 2}   $ 
with the hypersurface $\Fl(1,5;V_6)  $, hence has dimension everywhere at least 3. 
On the other hand, it   sits in $E:=p^{-1}(Y_A^{\ge 2})$,   the exceptional divisor of the birational morphism $p$ and it follows from the proof of Proposition~\ref{prop-hat-y} that $E$ is irreducible of dimension 3. Therefore,
  we have 
\begin{equation*}
(Y_A^{\ge 2} \times Y_{A^\perp}^{\ge 2}) \cap \Fl(1,5;V_6)   = E.
\end{equation*}
 By symmetry, we also have 
\begin{equation*}
(Y_A^{\ge 2} \times Y_{A^\perp}^{\ge 2}) \cap \Fl(1,5;V_6) = q^{-1}(Y_{A^\perp}^{\ge 2}) =: E',
\end{equation*}
  the exceptional divisor of the birational morphism $q$. It follows that $E = E'$.

Denote by $H$ and $H'$ the respective restrictions to $Y_A$ and $Y_{A^\perp}$ of the hyperplane classes from $\P(V_6)$ and $\P(V_6^\vee)$. 
The birational isomorphism $q\circ p^{-1} \colon Y_A \dashrightarrow Y_{A^\perp}$ is given by the Gauss map (\ie, by the partial derivatives of the equation of $Y_A$). 
Since the equation of $Y_A$ has degree~6 and any point of $Y_A^2$ has multiplicity 2 on $Y_A$ (\cite[Corollary 2.5]{og3}),  {the partial derivatives have degree 5 and multiplicity 1, and} we get
\begin{equation*}
H' \lin 5H - E.
\end{equation*}
By symmetry, we also have
$
H \lin 5H' - E'$.
Combining these two equations, we obtain
\begin{equation*}
E' \lin 24H - 5E.
\end{equation*}
From $E' = E$, we obtain $24H \lin 6E$, which is a contradiction, since $E$ has negative intersection
with curves contracted by $p$,
while $H$ has zero intersection with such curves.
\end{proof}

\begin{lemm}\label{lemma-y2-hyperplane}
If $A$ contains no decomposable vectors, the scheme $Y^{\ge 2}_A \subset \P(V_6)$ is not contained in a hyperplane.
\end{lemm}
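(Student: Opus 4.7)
The plan is to derive a contradiction by comparing degrees. Suppose for contradiction that $Y^{\ge 2}_A \subset \P(V_5)$ for some hyperplane $\P(V_5) \subset \P(V_6)$.

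First I would use Theorem~\ref{theorem-ogrady-stratification}: since $A$ contains no decomposable vectors, part~(a) tells us that $Y_A \subset \P(V_6)$ is an integral sextic hypersurface, and part~(b$'$) tells us that $Y^{\ge 2}_A = \Sing(Y_A)$ is an integral surface of degree $40$. In particular, the inclusion $Y^{\ge 2}_A \subset Y_A$ holds.

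Next, since $Y_A$ is integral of degree $6$ and $\P(V_5)$ has degree $1$, the hyperplane $\P(V_5)$ is not contained in $Y_A$. The scheme-theoretic intersection $Y_A \cap \P(V_5)$ is therefore a proper hyperplane section, hence a Cartier divisor on $Y_A$ of pure dimension $2$ and total degree $6$ in $\P(V_5) \cong \P^4$.

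By our assumption together with $Y^{\ge 2}_A \subset Y_A$, we have $Y^{\ge 2}_A \subset Y_A \cap \P(V_5)$. Since $Y^{\ge 2}_A$ is integral of dimension $2$ and $Y_A \cap \P(V_5)$ also has pure dimension~$2$, the surface $Y^{\ge 2}_A$ must be (set-theoretically) an irreducible component of $Y_A \cap \P(V_5)$. But then $\deg(Y^{\ge 2}_A) \le \deg(Y_A \cap \P(V_5)) = 6$, contradicting the equality $\deg(Y^{\ge 2}_A) = 40$. There is no real obstacle here; the whole argument is essentially a degree comparison, and the only ingredients are the two parts of Theorem~\ref{theorem-ogrady-stratification} already invoked.
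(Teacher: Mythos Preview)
Your argument contains a dimension error that breaks the degree comparison. The hypersurface $Y_A \subset \P(V_6) = \P^5$ has dimension~$4$, so its hyperplane section $Y_A \cap \P(V_5)$ is a \emph{threefold} of degree~$6$ in $\P(V_5) \cong \P^4$, not a surface. Consequently the integral surface $Y^{\ge 2}_A$ is only a codimension-$1$ subscheme of this threefold, not an irreducible component, and there is no bound forcing $\deg Y^{\ge 2}_A \le 6$. A sextic threefold in $\P^4$ can perfectly well contain surfaces of large degree (for instance complete intersections with a further hypersurface), so the inequality $40 \le 6$ you are aiming for simply does not follow. One might hope to rescue the idea by arguing that on a \emph{smooth} sextic threefold every surface has degree divisible by~$6$ (Lefschetz), but $Y_A \cap \P(V_5)$ is singular precisely along $Y^{\ge 2}_A$, so that route is blocked as well.

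The paper's proof is genuinely different: it works on the incidence variety $\widehat{Y}_A$ with its two birational projections $p\colon \widehat{Y}_A \to Y_A$ and $q\colon \widehat{Y}_A \to Y_{A^\perp}$, and compares divisor classes. Writing $H,H'$ for the pullbacks of the hyperplane classes and $E,E'$ for the exceptional divisors of $p$ and $q$, the Gauss-map description gives $H' \sim 5H - E$ and, by symmetry, $E' \sim 24H - 5E$. If $Y^{\ge 2}_A$ lay in a hyperplane, the system $|H-E|$ would be non-empty, making $|E'| = |19H + 5(H-E)|$ movable; but $E'$ is a $q$-exceptional divisor, hence rigid. That is the actual contradiction.
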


\begin{proof}
We   use the notation introduced in the proof of Lemma~\ref{lemma-y2y2-yhat}. If $Y^{\ge 2}_A$ is contained in a hyperplane, 
the linear system $|H-E|$ is non-empty, hence the linear system 
\begin{equation*}
 |E'| = |24H-5E| = |19H + 5(H-E)| 
\end{equation*}
is movable. But $E'$ is the exceptional divisor of the birational morphism $q$, hence  $E'$  is rigid, so this is a contradiction.
\end{proof}

  \subsection{Double EPW sextics} 

The properties of EPW sextics that are of most interest come
from the existence of a (finite) canonical double cover $f_A\colon\widetilde Y_A\to Y_A$ (\cite[Section~1.2]{og4}) with the following properties.

\begin{theo}[O'Grady]\label{doubleepw}
Let $A \subset \bw{3}{V_6}$
be a Lagrangian subspace which contains no decomposable vectors and let $Y_A\subset \P(V_6)$ be the associated EPW sextic.
\begin{itemize}
\item[\rm (a)] The double cover $f_A\colon\widetilde Y_A\to Y_A$ is branched over the surface  $Y_A^{\ge 2}$ and induces  the universal cover of $Y_A^1$.
\item[\rm (b)] The variety $\widetilde Y_A $ is irreducible and normal, and its singular locus is the finite set $f_A^{-1}(Y_A^{\ge 3})$.
\item[\rm (c)] When $Y_A^{\ge 3}$ is empty, $\widetilde Y_A $ is a smooth   hyperk\"ahler fourfold which is a deformation of the symmetric square of a K3 surface.
\end{itemize}
\end{theo}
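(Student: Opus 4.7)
My plan is to construct $\wtilde Y_A$ as the relative spectrum over $Y_A$ of a degree-two $\cO_{Y_A}$-algebra of the form $\cO_{Y_A}\oplus \zeta_A$ for a suitable rank-one reflexive sheaf $\zeta_A$, and to derive each of the three claims from local models near the EPW strata $Y_A^k$. On $\P(V_6)$ the subspaces $v\wedge\bw{2}{V_6} \subset \bw{3}{V_6}$ form a rank-$10$ Lagrangian subbundle $F\subset \bw{3}{V_6}\otimes\cO$ (Lagrangian because $(v\wedge\alpha)\wedge(v\wedge\beta)=0$), so the composition
$$\alpha_A\colon A\otimes \cO \lhra \bw{3}{V_6}\otimes \cO \thra \bw{3}{V_6}\otimes\cO /F \cong F^\vee$$
is a map between rank-$10$ bundles whose $k$-th corank locus is $Y_A^{\ge k}$. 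I will set $\zeta_A := \Ker(\alpha_A)^\vee$, a rank-one reflexive sheaf on $Y_A$; the Lagrangian structure on $A$ induces a canonical $\cO_{Y_A}$-algebra structure on $\cO_{Y_A}\oplus \zeta_A$ via the symplectic form, and the double cover $f_A\colon\wtilde Y_A := \SSpec(\cO_{Y_A}\oplus \zeta_A) \to Y_A$ is then defined. For part~(a), the branch locus is the zero locus of the multiplication $\zeta_A\otimes \zeta_A\to \cO_{Y_A}$, which a local Koszul computation identifies with $Y_A^{\ge 2}$. The universal cover claim will then reduce to $\pi_1(Y_A^1)=\Z/2$: a Lefschetz-type argument yields $H_1(Y_A,\Z)=0$ (using $\codim_{Y_A}(\Sing Y_A)=2$ from Theorem~\ref{theorem-ogrady-stratification}), while $\zeta_A|_{Y_A^1}$ realizes the nontrivial double cover.

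For part~(b), normality is automatic because $\zeta_A$ is reflexive over the normal variety $Y_A$, so $\cO_{Y_A}\oplus \zeta_A$ is $S_2$ and $\wtilde Y_A$ is $S_2$ and generically smooth; irreducibility will follow from the connectedness of the \'etale double cover over~$Y_A^1$ combined with normality. For the singular locus, a transverse local model does the job: along $Y_A^k$, in \'etale coordinates the symmetric degeneracy $\alpha_A$ decomposes as a nondegenerate quadratic form in $k$ slice variables plus an invertible block, so $\wtilde Y_A$ is locally $\Spec(\cO[z]/(z^2-q_k))$ with $q_k$ a nondegenerate quadratic form in $k$ variables on a slice transverse to $Y_A^k$ in $Y_A$. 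This is smooth for $k\in\{1,2\}$ and has an isolated ordinary double point for $k=3$. Since $Y_A^{\ge 4}=\emptyset$ and $Y_A^{\ge 3}$ is finite by Theorem~\ref{theorem-ogrady-stratification}(c$'$,d$'$), I conclude $\Sing(\wtilde Y_A)=f_A^{-1}(Y_A^{\ge 3})$.

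Part~(c) will be the main obstacle. When $Y_A^{\ge 3}=\emptyset$, $\wtilde Y_A$ is smooth by~(b); moreover $\omega_{\wtilde Y_A}\cong f_A^*\omega_{Y_A}\cong \cO_{\wtilde Y_A}$ because $f_A$ is \'etale in codimension one and adjunction on the sextic gives $\omega_{Y_A}\cong \cO_{Y_A}$, while simple-connectedness follows from part~(a). The hard step will be exhibiting a holomorphic symplectic form: the plan is to produce one by globalizing the symplectic form on $\bw{3}{V_6}$ along the tautological line $A\cap F_v$ over $f_A^{-1}(Y_A^1)$, thereby obtaining an element of $H^0(\wtilde Y_A,\Omega^2_{\wtilde Y_A})$ whose non-degeneracy at a point of $f_A^{-1}(Y_A^1)$ reduces to a linear-algebra computation using the Lagrangian property of $A$ and extends across the codimension-$2$ branch locus by reflexivity. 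Combined with $\pi_1(\wtilde Y_A)=1$, the Beauville--Bogomolov decomposition will force $\wtilde Y_A$ to be an irreducible holomorphic symplectic (\ie, hyperk\"ahler) fourfold. The final and most delicate task is deformation equivalence with $S^{[2]}$: I plan to establish connectedness of the parameter space of Lagrangians $A\in \LGr(\bw{3}{V_6})$ with no decomposable vectors and $Y_A^{\ge 3}=\emptyset$ via a codimension analysis of the stratification in $\LGr(\bw{3}{V_6})$, and then to verify the claim at a single explicit $A_0$ where $\wtilde Y_{A_0}\cong S^{[2]}$ for a K3 surface $S$ can be checked directly---for instance by taking $A_0$ coming from a generic degree-$10$ polarized K3 surface, where a Beauville--Mukai-type argument provides the required identification; connectedness of the parameter space then upgrades ``hyperk\"ahler'' to ``deformation equivalent to $S^{[2]}$''.
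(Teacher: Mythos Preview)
The paper does not prove this theorem: it simply records that (a), (b), (c) are results of O'Grady and gives precise references to \cite{og4} and \cite{og1}. Your proposal instead attempts to reconstruct O'Grady's arguments from scratch, and your outline does broadly follow his actual strategy---the reflexive-sheaf construction $\wtilde Y_A=\SSpec(\cO_{Y_A}\oplus\zeta_A)$, the local transverse-slice model for analyzing singularities, and the connectedness-plus-one-example argument for the deformation type. So the divergence from the paper is not mathematical but expository: the paper treats the theorem as a black box, while you try to open it.

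That said, two steps in your sketch are genuinely incomplete. In part~(a), your argument for $\pi_1(Y_A^1)=\Z/2$ does not work as written: a Lefschetz argument (even granting it) controls $H_1(Y_A,\Z)$ or $\pi_1(Y_A)$, but $Y_A^1$ is obtained by removing the \emph{singular locus} $Y_A^{\ge 2}$, and excising a codimension-$2$ subvariety from a singular variety can enlarge $\pi_1$. The existence of the nontrivial \'etale double cover only gives a surjection $\pi_1(Y_A^1)\twoheadrightarrow\Z/2$; you still need $\pi_1(f_A^{-1}(Y_A^1))=1$, which O'Grady obtains by a separate argument (essentially by showing $\wtilde Y_A$ is simply connected and then using that the branch locus has codimension~$2$ in the smooth cover). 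In part~(c), the construction of a global holomorphic $2$-form from the symplectic form on $\bw3V_6$ and the verification of its nondegeneracy are the technical heart of O'Grady's \cite{og1}; ``globalizing along the tautological line'' and ``reduces to a linear-algebra computation'' hide substantial work, and you should either carry it out or, as the paper does, cite it.
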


\begin{proof}
Item (a) is proved in \cite[Proof of Theorem 4.15, p.~179]{og4}, 
item (b) follows from statement (3) in the introduction of \cite{og4}, and item (c) is \cite[Theorem~1.1(2)]{og1}.
\end{proof}

We draw a consequence of this theorem.

\begin{prop}\label{prop33}
Let $A\subset \bw3V_6$ and $A'\subset \bw3V'_6$ be Lagrangian subspaces and let
$Y_A\subset \P(V_6)$ and $Y_{A'}\subset \P(V'_6)$ be the schemes defined in \eqref{yabot}.
  \begin{itemize}
\item[\rm (a)]
Any linear isomorphism $\varphi\colon V_6\isomlra V'_6$  such that $(\bw3\varphi)( A)=A'$ induces an isomorphism $Y_A\isomlra Y_{A'}$.
\item[\rm (b)]
Assume $Y_A\ne \P(V_6)$. Any isomorphism $Y_A\isomlra Y_{A'}$ is induced by a linear isomorphism $\varphi\colon V_6\isomlra V'_6$ and, if
 $A$ contains no decomposable vectors, $A'$ contains no decomposable vectors and $(\bw3\varphi)( A)=A'$.
\end{itemize}
\end{prop}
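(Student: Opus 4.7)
Part (a) is a direct verification from the defining condition~\eqref{yabot}. If $\varphi\colon V_6 \isomlra V'_6$ is a linear isomorphism with $(\bw3\varphi)(A) = A'$, then $\bw3\varphi$ sends the subspace $v \wedge \bw2V_6 \subset \bw3V_6$ to $\varphi(v) \wedge \bw2V'_6 \subset \bw3V'_6$, hence induces an isomorphism $A \cap (v \wedge \bw2V_6) \isomlra A' \cap (\varphi(v) \wedge \bw2V'_6)$. Therefore $v \in Y_A^{\ge \ell}$ iff $\varphi(v) \in Y_{A'}^{\ge \ell}$, so $\varphi$ restricts to an isomorphism $Y_A \isomlra Y_{A'}$ (and even of the full EPW stratification).

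For part (b), the plan has two steps. In Step 1, I would show that any abstract isomorphism $\psi\colon Y_A \isomlra Y_{A'}$ is induced by a linear map. The key is that the hyperplane class is intrinsic: since $H^1(\P(V_6),\cO(-5)) = 0$, we have $H^0(Y_A,\cO_{Y_A}(1)) = V_6^\vee$, so once one knows $\psi^*\cO_{Y_{A'}}(1) \cong \cO_{Y_A}(1)$ in $\Pic(Y_A)$, the induced isomorphism on global sections $V_6^{\prime\vee} \isomlra V_6^\vee$ extends $\psi$ to a linear map $\varphi\colon V_6 \isomlra V'_6$. To establish that $\psi$ preserves the hyperplane class, I would lift $\psi$ to an isomorphism $\widetilde\psi\colon \widetilde Y_A \isomlra \widetilde Y_{A'}$ of the canonical double covers of Theorem~\ref{doubleepw} (the double cover is intrinsic because its branch divisor $Y_A^{\ge 2}$ is identified with a specific part of the singular locus of $Y_A$ via~\eqref{singya}), and then use the characterization of the pulled-back polarization $f_A^*\cO(1)$ on $\widetilde Y_A$ by numerical properties (its self-intersection and, in the hyperk\"ahler case, its Beauville--Bogomolov square~$2$), together with an analysis of the action of $\widetilde\psi^*$ on~$\Pic(\widetilde Y_A)$. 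For $A$ with decomposable vectors, the singular planes $\P(V_3)$ appearing in~\eqref{singya} give additional rigidifying data.

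In Step 2, assuming $A$ contains no decomposable vectors and that $\psi$ is induced by a linear $\varphi$, I would deduce $(\bw3\varphi)(A) = A'$. First, since $\varphi$ takes $\Sing(Y_A) = Y_A^{\ge 2}$ to $\Sing(Y_{A'})$ and $A$ has no decomposable vectors (so $\Sing(Y_A)$ is an integral surface of degree 40 by Theorem~\ref{theorem-ogrady-stratification}(b$'$)), the corresponding statement must hold for $A'$: the singular locus of $Y_{A'}$ cannot contain any linear $2$-plane, so by~\eqref{singya} applied to $A'$, $A'$ contains no decomposable vectors either. Then, to show $(\bw3\varphi)(A) = A'$, I would recover $A$ from $Y_A$ by an intrinsic construction. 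One possibility is to use the sheaf-theoretic description of O'Grady: the Lagrangian~$A$ appears as the kernel of a canonical map between reflexive sheaves naturally associated with the pair $(\P(V_6), Y_A^{\ge 2})$, and this construction commutes with linear isomorphisms.

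The main obstacle will be Step 1, specifically showing that $\psi^*$ preserves the hyperplane class in $\Pic(Y_A)$. Since $Y_A$ has trivial canonical class, one cannot read off the polarization from the dualizing sheaf; the argument must pass through the hyperk\"ahler geometry of the double cover $\widetilde Y_A$, where uniqueness of the polarization up to the natural involution (the deck transformation) requires delicate Hodge-theoretic and Beauville--Bogomolov-form input from O'Grady's series of papers. The case when $A$ has decomposable vectors, or when $\widetilde Y_A$ is singular, requires an additional case analysis.
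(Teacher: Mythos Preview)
Your part~(a) and the outline for showing that $A'$ has no decomposable vectors are correct and match the paper. The main gap is in Step~1.

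You identify ``showing that $\psi^*$ preserves the hyperplane class in $\Pic(Y_A)$'' as the main obstacle and propose to attack it via the double cover $\widetilde Y_A$ and hyperk\"ahler geometry. This is unnecessary and, as stated, does not cover the full claim: the first sentence of~(b) asserts linearity of~$\psi$ for \emph{any} $A$ with $Y_A\ne\P(V_6)$, but Theorem~\ref{doubleepw} on the double cover requires $A$ to have no decomposable vectors, so your route does not apply in general. The paper's argument is one line: since $Y_A$ is a hypersurface of dimension~$4$ in $\P(V_6)$, the Grothendieck--Lefschetz theorem \cite[Exp.~XII, cor.~3.7]{sga2} gives $\Pic(Y_A)=\Z H$. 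Hence $\psi^*\cO_{Y_{A'}}(1)\cong\cO_{Y_A}(1)$ automatically, and $\psi$ is linear. No passage to $\widetilde Y_A$, no Beauville--Bogomolov form, no case analysis.

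For Step~2, the paper does use the double cover, but only \emph{after} linearity is established and only in the no-decomposable-vectors case. Since $\varphi$ is linear and both $A,A'$ have no decomposable vectors, one has $\varphi(Y_A^1)=Y_{A'}^1$; this lifts to the universal covers $f_A^{-1}(Y_A^1)\isomto f_{A'}^{-1}(Y_{A'}^1)$ (Theorem~\ref{doubleepw}(a)) and extends by normality to $\widetilde Y_A\isomto\widetilde Y_{A'}$. The paper then invokes \cite[Proof of Proposition~1.0.5]{og5} to conclude $(\bw3\varphi)(A)=A'$. Your ``sheaf-theoretic description of O'Grady'' is in the right spirit but you should locate the precise reconstruction statement rather than leave it vague.
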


\begin{proof} 
Item (a) follows from the definition of $Y_A$.

(b) If $Y_A\ne \P(V_6)$, it follows from the Lefschetz theorem  \cite[Expos\'e XII, cor.~3.7]{sga2}  that $\Pic(Y_A)$ is generated by the hyperplane class. Any isomorphism $Y_A\isomlra Y_{A'}$ is therefore induced by a linear isomorphism $\varphi\colon V_6\isomlra V'_6$.

If   $A$ contains no decomposable vectors, $\Sing(Y_A)$ is an integral surface of degree 40 (Theorem \ref{theorem-ogrady-stratification}) hence  
contains no planes, and the same holds for $\Sing(Y_{A'})$. It follows from \eqref{singya} that $A'$ contains 
no decomposable vectors. Applying Theorem \ref{theorem-ogrady-stratification} again, we obtain $\varphi (Y_A^1) = Y_{A'}^1$. This isomorphism lifts to the universal covers 
 to an isomorphism $f_A^{-1}(Y_A^1)\isomto f_{A'}^{-1}(Y_{A'}^1)$, and extends to the normal completions  to an isomorphism $\widetilde Y_A\isomto \widetilde Y_{A'}$ (Theorem \ref{doubleepw}).
By \cite[Proof of Proposition 1.0.5]{og5}, 
this implies $(\bw3\varphi)(A) = A' $. 
\end{proof}

Finally, we   characterize the automorphism group of an EPW sextic.

\begin{prop}[O'Grady]\label{autoepw}
Let $A\subset \bw3V_6$  be a Lagrangian subspace which contains no decomposable vectors  and let $Y_A\subset \P(V_6)$ be the associated EPW sextic.
 \begin{itemize}
\item[\rm (a)] 
The automorphism group of $Y_A$ is finite and equal to the group
\begin{equation}\label{pgla}
\PGL(V_6)_{A} := \{ g \in \PGL(V_6) \mid  (\bw3g)(A) = A \}.
\end{equation}
\item[\rm (b)]
When $A$ is very general, these groups are trivial.
\end{itemize}
\end{prop}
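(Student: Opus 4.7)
The plan is as follows. For (a), the equality $\Aut(Y_A) = \PGL(V_6)_A$ will follow immediately from Proposition~\ref{prop33}, while finiteness will reduce to showing that the identity component is trivial, which I will handle by lifting the action to the double cover $\widetilde Y_A$ and exploiting its hyperk\"ahler structure. For (b), I will combine the finiteness from (a) with the faithfulness of $\bw3$ as a representation of $\PGL_6$.

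For the equality in (a), note that $Y_A$ is a sextic hypersurface in $\P(V_6)$ by Theorem~\ref{theorem-ogrady-stratification}(a), so the hypothesis $Y_A \ne \P(V_6)$ of Proposition~\ref{prop33}(b) is satisfied. Part (a) of that proposition shows every element of $\PGL(V_6)_A$ induces an automorphism of $Y_A$; part (b) gives the converse, since $A$ contains no decomposable vectors. Hence $\Aut(Y_A) \cong \PGL(V_6)_A$, realized as a closed algebraic subgroup of $\PGL(V_6)$. To prove finiteness, it is enough to show the identity component $G := \PGL(V_6)_A^\circ$ is trivial. Since $G$ is connected, its action on $Y_A$ lifts canonically to an action on the double cover $\widetilde Y_A$ of Theorem~\ref{doubleepw}(a). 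By Theorem~\ref{doubleepw}(b)--(c), $\widetilde Y_A$ is an irreducible normal projective variety with at most finitely many singular points, and its smooth locus carries a nowhere-degenerate holomorphic $2$-form $\sigma$ obtained by flat degeneration from the smooth hyperk\"ahler case. Contraction with $\sigma$ identifies $T \cong \Omega^1$ on the smooth locus; since $\widetilde Y_A$ is simply connected (by specialization from the smooth simply connected hyperk\"ahler case) and projective, it has no non-zero holomorphic $1$-forms, hence no non-zero holomorphic vector fields on $\widetilde Y_A^{\mathrm{sm}}$. By normality these extend to $\widetilde Y_A$, so $H^0(\widetilde Y_A, T_{\widetilde Y_A}) = 0$ and $\Aut^\circ(\widetilde Y_A) = 1$. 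This forces the connected lift of $G$ to be trivial, and thus $G = 1$.

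For (b), the key observation is that the $\PGL(V_6)$-action on $\LGr(\bw3 V_6)$ is faithful: if $g \in \PGL(V_6)$ fixed every Lagrangian, it would act by a scalar on each line $\langle v \rangle \subset \bw3 V_6$ (since the intersection of all Lagrangians through a non-zero $v$ equals $\langle v \rangle$), hence scalarly on $\bw3 V_6$, hence trivially in $\PGL(V_6)$ (as $\bw3$ defines a faithful representation of $\PGL_6$). Therefore, for every non-identity $g \in \PGL(V_6)$, the fixed locus $\LGr^g := \{A : g \cdot A = A\}$ is a proper closed subset of $\LGr(\bw3 V_6)$. By part~(a), the locus of $A$ with non-trivial stabilizer is contained in $\bigcup_g \LGr^g$ over non-identity elements $g$ of finite order. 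Stratifying such elements by conjugacy class (countably many, indexed by eigenvalue data in $\PGL_6$) and using $\PGL(V_6)$-equivariance of $\LGr^g$ under conjugation, one expresses this as a countable union of proper closed subsets; a very general $A$ lies in its complement, giving $\PGL(V_6)_A = 1$.

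The main obstacle is the finiteness assertion in part~(a): the identification of $\Aut(Y_A)$ with $\PGL(V_6)_A$ is essentially formal given Proposition~\ref{prop33}, but ruling out a positive-dimensional identity component forces one to pass to $\widetilde Y_A$ and invoke its hyperk\"ahler geometry, with some care at its finitely many singular points to ensure that vector fields extend across them. Once finiteness is in hand, part~(b) is a relatively routine application of faithfulness and a countability argument.
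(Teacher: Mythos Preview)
Your identification $\Aut(Y_A) = \PGL(V_6)_A$ via Proposition~\ref{prop33} matches the paper, but both remaining arguments have genuine gaps.

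For finiteness in (a), your hyperk\"ahler approach is fine when $\widetilde Y_A$ is smooth, but the proposition covers every $A$ with no decomposable vectors, and by Theorem~\ref{theorem-ogrady-stratification}(c$'$) the set $Y_A^{\ge 3}$ may be nonempty, so $\widetilde Y_A$ may be singular. Your appeals to ``flat degeneration'' for the symplectic form and to ``specialization'' for simple connectedness are not justified as stated (specialization does not in general preserve simple connectedness), and the step ``simply connected projective $\Rightarrow H^0(\Omega^1) = 0$'' is delicate on a singular variety. The paper avoids all of this with a one-line GIT argument: O'Grady's classification shows that the non-stable locus in $\LGr(\bw3V_6)$ lies inside the locus of Lagrangians containing a decomposable vector, so any $A$ without decomposable vectors is GIT-stable and hence has finite stabilizer.

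For (b), your countability argument is wrong as a general principle: faithfulness plus finite stabilizers does \emph{not} force the very general stabilizer to be trivial. For instance, $\PGL_2$ acts faithfully by left translation on the irreducible threefold $\PGL_2/H$ with $H \cong \Z/2$ (the normal core of $H$ is trivial since $\PGL_2$ is simple); every stabilizer is conjugate to $H$, each individual fixed locus $(\PGL_2/H)^g$ is one-dimensional hence proper, yet the union over the single conjugacy class of involutions is all of $\PGL_2/H$. So the conjugacy-class unions need not lie in proper closed subsets, and some input specific to this action is required. The paper supplies it via hyperk\"ahler geometry: for very general $A$ the cover $\widetilde Y_A$ is a smooth hyperk\"ahler fourfold, any lift $\tilde g$ of $g \in \Aut(Y_A)$ that persists under all small deformations must act as $\pm\Id$ on $h^\perp \subset H^2(\widetilde Y_A,\Z)$, and injectivity of $\Aut(\widetilde Y_A) \to \Aut(H^2(\widetilde Y_A,\Z))$ then forces $\tilde g$ to be the identity or the covering involution, whence $g = 1$. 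Note the role reversal: the hyperk\"ahler input you tried to use for (a) is exactly what the paper uses for (b).
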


\begin{proof} 
(a) The equality $\Aut(Y_A)=\PGL(V_6)_{A} $ follows from Proposition \ref{prop33}. 
 These groups are finite because $A$ is a stable point of $\LGr(\bw{3}{V_6})$ for the action of $\PGL(V_6)$: the non-stable locus has 12 components, listed in \cite[Table 1]{og5}, and one checks that they are all contained in  the locus of Lagrangians 
which contain   decomposable vectors. 

(b) Let $f_A\colon \widetilde Y_A\to Y_A$ be the canonical double cover, where $\widetilde Y_A$ is a smooth hyperk\"ahler fourfold (Theorem~\ref{doubleepw}(c)). As explained in the proof of Proposition~\ref{prop33}(b), any automorphism $g$ of $Y_A$ is linear and lifts to an automorphism $\tilde g$ of $\widetilde Y_A$.

Let $h$ be the class in $H^2(\widetilde Y_A,\Z)$ of $f_A^*(\cO_{Y_A}(1))$.
 The automorphism $\tilde g$  
 fixes $h$ hence acts   on its orthogonal $h^{\bot}\subset H^2(\widetilde Y_A,\C)$ for the Beauville--Bogomolov form as a linear map which we denote by $\tilde g^*$. The line
 $H^{2,0}(\widetilde Y_A)\subset H^2(\widetilde Y_A,\C)$ is contained in an eigenspace of $\tilde g^*$ and the small
 deformations $A_t$
 of $A$  for which $g$ extends
 must maintain $H^{2,0}(\widetilde Y_{A_t})$ in that same eigenspace.
 It follows that if $g$ extends for all small deformations of $A$, the eigenspace containing
 $H^{2,0}(\widetilde Y_A)$ is all of $h^{\bot}$. Thus $\tilde g^*=\zeta \Id_{h^{\bot}}$, where $\zeta$ is some root of unity. Since $\tilde g^*$ is real, we have $\zeta=\pm 1$.

But  the morphism
 $\Aut(\widetilde Y_A) \to \Aut(H^2(\widetilde Y_A,\Z))$ is injective: its kernel is deforma\-tion  invariant (\cite[Theorem 2.1]{hats}) and it is trivial for symmetric squares of K3 surfaces (\cite[prop.~10]{bea}); the statement therefore follows from Theorem~\ref{doubleepw}(c).
This implies that   $\tilde g$ is either the identity or the involution associated with the double covering $f_A$. In both cases, $g=\Id_{Y_A}$.
\end{proof}

\subsection{EPW quartics}
We   introduce certain quartic hypersurfaces in $\Gr(3,V_6)$ also associated with Lagrangian subspaces $A \subset \bw3V_6$. For each $\ell \ge 0$, we set
\begin{equation}\label{equation-epw-quartic}
Z^{\ge \ell}_{A} := \big\{ V_3 \in \Gr(3,V_6) \mid \dim(A \cap ( V_6 \wedge \bw{2}{V_3} ) ) \ge \ell \big\}.
\end{equation}

The subschemes $Z^{\ge \ell}_{A} \subset \Gr(3,V_6)$ were recently studied in \cite{ikkr}. In particular, if $A$ contains no decomposable vectors, the scheme $Z_{A} = Z^{\ge 1}_{A}$ is an integral quartic hypersurface (\cite[Corollary 2.10]{ikkr}).
We will call it the {\sf EPW quartic} associated with $A $.
We   will only need the following duality property.

\begin{lemm}\label{lem39}
The natural isomorphism $\Gr(3,V_6) \cong \Gr(3,V_6^\vee)$ induces an identification between the EPW quartics 
$Z_{A} \subset \Gr(3,V_6)$ and $Z_{A^\perp} \subset \Gr(3,V_6^\vee)$.
\end{lemm}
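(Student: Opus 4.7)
The plan is to show that under the natural identification $V_3 \leftrightarrow V_3^\perp$, the defining conditions of $Z_A$ and $Z_{A^\perp}$ coincide pointwise; in fact I expect to prove the stronger equality
\begin{equation*}
\dim\bigl(A \cap (V_6 \wedge \bw2 V_3)\bigr) \;=\; \dim\bigl(A^\perp \cap (V_6^\vee \wedge \bw2 V_3^\perp)\bigr).
\end{equation*}
This will   follow from a general linear algebra identity applied to two suitably chosen subspaces of $\bw3 V_6$ and their annihilators in $\bw3 V_6^\vee$.

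First I would compute $\dim(V_6 \wedge \bw2 V_3) = 10$. Choosing a complement $V'_3$ of $V_3$ in $V_6$, there is a decomposition
\begin{equation*}
V_6 \wedge \bw2 V_3 \;=\; \bw3 V_3 \oplus (V'_3 \wedge \bw2 V_3),
\end{equation*}
of dimension $1 + 9 = 10$. By the same count, $\dim(V_6^\vee \wedge \bw2 V_3^\perp) = 10$ as well.

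Next, I would establish the key identification: under the natural pairing $\bw3 V_6 \times \bw3 V_6^\vee \to \k$, the annihilator of $V_6 \wedge \bw2 V_3$ is precisely $V_6^\vee \wedge \bw2 V_3^\perp$. Dimensions match (both are $10 = 20-10$), so it suffices to check that any $v^* \wedge \alpha^* \wedge \beta^*$ with $\alpha^*,\beta^* \in V_3^\perp$ pairs trivially with any $v \wedge \alpha \wedge \beta$ with $\alpha,\beta \in V_3$. But this pairing is the determinant of the $3\times3$ matrix of evaluations, which contains a $2\times 2$ zero block in the rows indexed by $\alpha^*,\beta^*$ and the columns indexed by $\alpha,\beta$, hence vanishes.

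Finally, I apply the elementary identity: for any subspaces $W,U \subset V$ with annihilators $W^\circ, U^\circ \subset V^\vee$, from $(W+U)^\circ = W^\circ \cap U^\circ$ one obtains
\begin{equation*}
\dim(W^\circ \cap U^\circ) \;=\; \dim V - \dim W - \dim U + \dim(W \cap U).
\end{equation*}
Taking $V = \bw3 V_6$, $W = A$, and $U = V_6 \wedge \bw2 V_3$ (all satisfying $\dim V = 20$, $\dim W = \dim U = 10$), the right-hand side reduces to $\dim(A \cap (V_6 \wedge \bw2 V_3))$, and the left-hand side is $\dim(A^\perp \cap (V_6^\vee \wedge \bw2 V_3^\perp))$ by the previous step. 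This yields the desired equality and hence the identification $Z_A = Z_{A^\perp}$ under $\Gr(3,V_6) \cong \Gr(3,V_6^\vee)$. The only step requiring even mild care is the annihilator calculation, which is a direct $3\times 3$ determinant; there is no real obstacle.
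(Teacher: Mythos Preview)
Your proof is correct and follows essentially the same approach as the paper. Both arguments hinge on the same key computation: that under the natural pairing $\bw3 V_6 \times \bw3 V_6^\vee \to \k$, the subspaces $V_6 \wedge \bw2 V_3$ and $V_6^\vee \wedge \bw2 V_3^\perp$ are mutual annihilators (both are $10$-dimensional and the pairing between them vanishes --- your $3\times 3$ determinant argument is exactly the ``straightforward'' verification the paper alludes to). The only cosmetic difference is in the final step: the paper phrases it via the symplectic-form isomorphism $\bw3 V_6 \isomto \bw3 V_6^\vee$, which carries the Lagrangian $A$ to $A^\perp$ and $V_6 \wedge \bw2 V_3$ to $V_6^\vee \wedge \bw2 V_3^\perp$, hence maps the intersections isomorphically; you instead use the dimension identity $\dim(W^\circ \cap U^\circ) = \dim V - \dim W - \dim U + \dim(W\cap U)$, which uses only $\dim A = 10$ rather than the Lagrangian property per se. Both routes are equivalent here.
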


\begin{proof}
The isomorphism of the Grassmannians takes $V_3 \subset V_6$ to $V_3^\perp \subset V_6^\vee$.
It is therefore enough to show that the isomorphism $\bw3V_6 \to \bw3V_6^\vee$ given by the symplectic
form takes the subspace $V_6\wedge \bw2V_3$ to the subspace $V_6^\vee\wedge\bw2V_3^\perp$.
As both subspaces are Lagrangian, it is enough to verify that the pairing between 
$V_6\wedge \bw2V_3$ and $V_6^\vee\wedge\bw2V_3^\perp$ induced by the pairing between $V_6$ and $V_6^\vee$
is trivial, and this is straightforward.
\end{proof}

 \section{Projective duality of quadrics and Lagrangian subspaces}\label{section-lagrangians-and-quadrics}

 {Let $V$ be a $\k$-vector space. We discuss projective duality of quadrics  (of all possible dimensions)  in $\P(V)$
and its interpretation in terms of Lagrangian geometry. To simplify the statements, it is convenient to define 
a quadric $Q$ in $\P(V)$ as a subvariety of a (possibly empty) linear subspace $\P(W) \subset \P(V)$ defined by a (possibly zero) quadratic form $q \in \Sym^2W^\vee$
(if $q = 0$, then $Q = \P(W)$ is a {\em linear} subspace of $\P(V)$; it should not be confused with rank-1 quadrics, which   are {\em double} linear subspaces).
With each such form $q$, we can associate   its kernel space $K \subset W$. The induced form on the (possibly zero) quotient space $W/K$ is then non-degenerate.}

The projective dual variety $Q^\vee \subset \P(V^\vee)$ is also a quadric which can be constructed as follows.
The quadratic form $q$ induces an isomorphism
\begin{equation*}
q\colon W/K \isomlra (W/K)^\vee = K^\perp/W^\perp,
\end{equation*}
where $W^\perp \subset K^\perp \subset V^\vee$ are the orthogonals.  
The inverse isomorphism 
\begin{equation*}
q^{-1}\colon K^\perp/W^\perp \isomlra W/K = (K^\perp/W^\perp)^\vee
\end{equation*}
defines a quadratic form $q^\vee$
on $K^\perp \subset V^\vee$ with kernel $W^\perp$.
The corresponding quadric in $\P(K^\perp) \subset \P(V^\vee)$ is the projective dual $Q^\vee$ of $Q$.

It is a classical observation that projective duality of quadrics can be   described
in terms of Lagrangian subspaces in a symplectic vector space equipped with a Lagrangian
direct sum decomposition. For the reader's convenience, we summarize this relation   
and develop it a bit.

We recall how a Lagrangian subspace gives a pair of projectively dual quadrics.
 
\begin{prop}\label{lemma-lagrangian-quadratic}
Let $(\VV,\omega)$ be a  symplectic $\k$-vector space and let $\LL_1$ and $\LL_2  $ be Lagrangian subspaces such that
\begin{equation*}
\VV = \LL_1 \oplus \LL_2.
\end{equation*}
Denote by $\pr_1$ and $\pr_2 \in \End(\VV)$
 the projectors to $\LL_1$ and $\LL_2$.

{\rm (a)} For any Lagrangian subspace $A \subset \VV$, the bilinear form $q^A$ on $A$ defined by
\begin{equation}\label{equation-quadric-s}
q^A(x,y) := \omega(\pr_1(x),\pr_2(y))
\end{equation}
  is symmetric.
 Its kernel is given by
\begin{equation}\label{equation-kernel-qs}
\Ker (q^A) =  (A \cap \LL_1  ) \oplus  (A \cap \LL_2  ).
\end{equation} 

{\rm (b)}   The quadratic form $q^A$ induces quadratic forms $q^A_1$ and $q^A_2$ on the subspaces
\begin{equation*}
W_1 := \pr_1(A) = (A \cap \LL_2)^\perp \subset \LL_1
\qquad\text{and}\qquad 
W_2 := \pr_2(A) = (A \cap \LL_1)^\perp \subset \LL_2
\end{equation*}
with respective kernels
\begin{equation*}
 {K_1 := } A \cap \LL_1 \subset W_1
\qquad\text{and}\qquad 
 {K_2 := } A \cap \LL_2 \subset W_2.
\end{equation*}
The corresponding quadrics $Q^A_1 \subset \P(W_1) \subset \P(\LL_1)$ and $Q^A_2\subset \P(W_2) \subset \P(\LL_2)$
are projectively dual with respect to the duality between $\LL_1$ and $\LL_2$ induced by the symplectic form~$\omega$.
\end{prop}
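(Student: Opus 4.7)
The plan is to prove (a) by a direct expansion, handle the descriptions in (b) by a dimension count and a short diagram chase, and then reduce the projective duality to identifying $q^A_2$ with the inverse of $q^A_1$ under the $\omega$-duality between $\LL_1$ and $\LL_2$.

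For part (a), I would first check symmetry by expanding the Lagrangian identity $0=\omega(x,y)$ for $x,y\in A$ using the decomposition $x=\pr_1(x)+\pr_2(x)$ and $y=\pr_1(y)+\pr_2(y)$: the two diagonal terms $\omega(\pr_i(x),\pr_i(y))$ vanish because $\LL_1$ and $\LL_2$ are Lagrangian, forcing the two cross terms to be opposite, which rearranges to $\omega(\pr_1(x),\pr_2(y))=\omega(\pr_1(y),\pr_2(x))$, i.e.\ $q^A(x,y)=q^A(y,x)$. For the kernel formula~\eqref{equation-kernel-qs}, the inclusion $\supseteq$ is immediate: if $x\in A\cap\LL_i$ then $\pr_{3-i}(x)=0$, so a short computation using the Lagrangian property of $A$ and $\LL_i$ yields $q^A(x,y)=0$ for all $y\in A$. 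For the converse, I would rewrite $q^A(x,y)=\omega(\pr_1(x),\pr_2(y))=\omega(\pr_1(x),y)$ (the term $\omega(\pr_1(x),\pr_1(y))$ vanishing trivially); the condition $q^A(x,y)=0$ for all $y\in A$ then forces $\pr_1(x)\in\LL_1\cap A^\perp=\LL_1\cap A$, and symmetrically $\pr_2(x)\in\LL_2\cap A$.

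For part (b), the equalities $W_1=(A\cap\LL_2)^\perp$ and $W_2=(A\cap\LL_1)^\perp$ (with respect to the perfect $\omega$-pairing $\LL_1\times\LL_2\to\k$) follow from the easy inclusion $\pr_1(A)\subseteq(A\cap\LL_2)^\perp$ together with the dimension count $\dim\pr_1(A)=\dim A-\dim(A\cap\LL_2)=\dim\LL_1-\dim(A\cap\LL_2)$, using that $A,\LL_1,\LL_2$ all have the same dimension. That $q^A$ descends to a well-defined form $q^A_i$ on $W_i$ is then a consequence of~(a): the kernel of the surjection $\pr_i\colon A\thra W_i$ is $A\cap\LL_{3-i}$, which lies in $\Ker(q^A)$. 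Pushing $\Ker(q^A)$ forward along $\pr_i$ gives $\Ker(q^A_i)=A\cap\LL_i=K_i$.

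The main step is the projective duality. Using $\omega$ to identify $\LL_2$ with $\LL_1^\vee$, the $\omega$-annihilator of $W_1\subseteq\LL_1$ in $\LL_2$ is exactly $K_2$, and the $\omega$-annihilator of $K_1$ is $W_2$; so the projective dual of $Q^A_1$ lives in $\P(K_1^\perp)=\P(W_2)$ with kernel $\P(W_1^\perp)=\P(K_2)$, matching the ambient data of $Q^A_2$. To match the quadratic forms, the key observation is that $q^A_1$ and $q^A_2$ are both pullbacks of a common non-degenerate form $\bar q^A$ on $A/\Ker(q^A)$ under the induced isomorphisms $\pr_i\colon A/\Ker(q^A)\isomto W_i/K_i$. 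The composite $\psi:=\pr_2\circ\pr_1^{-1}\colon W_1/K_1\isomto W_2/K_2$ then satisfies, directly from the definition of $q^A$, the identity $\omega(v_1,\psi(v_1'))=q^A_1(v_1,v_1')$. Under the $\omega$-identification $W_2/K_2\cong(W_1/K_1)^\vee$, this says that $\psi$ \emph{coincides} with $q^A_1\colon W_1/K_1\to(W_1/K_1)^\vee$; hence $\psi^{-1}$ corresponds to $(q^A_1)^{-1}$, and since $q^A_2$ is the form on $W_2/K_2$ associated via $\omega$ to the transport of $\bar q^A$ through $\psi^{-1}$, we obtain $q^A_2=(q^A_1)^{-1}$, which is precisely the statement of projective duality. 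The hard part will be keeping straight these three parallel identifications (ambient spaces, kernels, and forms) through the $\omega$-duality; once they are laid out, each individual computation is routine.
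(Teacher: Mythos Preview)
Your proposal is correct and follows essentially the same approach as the paper: the symmetry and kernel computations in~(a) are identical up to swapping the roles of $x$ and $y$, and in~(b) your isomorphism $\psi=\pr_2\circ\pr_1^{-1}$ is exactly the ``graph'' isomorphism the paper invokes, with your identity $\omega(v_1,\psi(v_1'))=q^A_1(v_1,v_1')$ being the content of the paper's phrase ``the duality identifies this isomorphism with $q^A_1$.'' Your explicit dimension count for $W_i=(A\cap\LL_{3-i})^\perp$ is a harmless expansion of what the paper leaves as ``follows from the Lagrangian property.''
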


\begin{proof}
(a)
 For all $x, y \in A$, we have 
\begin{multline*}
q^A(x,y) - q^A(y,x) 
 = \omega(\pr_1(x),\pr_2(y)) + \omega(\pr_2(x),\pr_1(y)) 
\\
= \omega(\pr_1(x) + \pr_2(x),\pr_1(y) + \pr_2(y)) - \omega(\pr_1(x),\pr_1(y)) - \omega(\pr_2(x),\pr_2(y)).
\end{multline*}
In this last expression, the first term equals $\omega(x,y)$, hence is zero since $A$ is Lagrangian; the second and the third terms are zero since $\LL_1$ and $\LL_2$ are Lagrangian.
Thus the expression vanishes  and   $q^A$ is symmetric.

It remains to compute the kernel of $q^A$. Since $x = \pr_1(x) + \pr_2(x)$ and $\LL_2$ is Lagrangian, we have
\begin{equation*}
q^A(x,y) = \omega(\pr_1(x),\pr_2(y)) = \omega(x - \pr_2(x),\pr_2(y)) =\omega(x,\pr_2(y)).
\end{equation*}
Since $A$ is Lagrangian, its $\omega$-orthogonal coincides with $A$, so 
$y \in \Ker (q^A)$ is equivalent to $\pr_2(y) \in A$. Writing $y = \pr_1(y) + \pr_2(y)$, this implies 
that also $\pr_1(y) \in A$. Thus $\pr_1(y) \in A \cap \LL_1$ and $\pr_2(y) \in A \cap \LL_2$.
This proves one inclusion. For the other inclusion, both $A \cap \LL_1$ and $A \cap \LL_2$
are in the kernel of $q^A$: for $A \cap \LL_1$, this is because $\pr_2(y) = 0$ for  $y \in A \cap \LL_1$,
and for $A \cap \LL_2$, because   $\pr_1(x) = 0$ for $x \in A \cap \LL_2$.

(b) By~\eqref{equation-quadric-s}, the kernels $A \cap \LL_2$ and $A \cap \LL_1$ of the restrictions $\pr_1\!\vert_A$ and $\pr_2\!\vert_A$
are contained in the kernel of $q^A$, hence   $q^A$ induces quadratic forms 
on  $W_1 = \pr_1(A) \subset \LL_1$ and $W_2 = \pr_2(A) \subset \LL_2$. The equalities $W_1 = (A \cap \LL_2)^\perp$
and $W_2 = (A \cap \LL_1)^\perp$ follow from the Lagrangian property of~$A$. The kernels $K_1$ and $K_2$
of the induced quadratic forms $q^A_1$ and $q^A_2$ are the  respective images of the kernel of~$q^A$ under $\pr_1$ and $\pr_2$, so~\eqref{equation-kernel-qs} implies $K_1 = A \cap \LL_1$ and $K_2 = A \cap \LL_2$.

Note that $A/(K_1 \oplus K_2) \subset W_1/K_1 \oplus W_2/K_2$
is the graph of an isomorphism $W_1/K_1 \cong W_2/K_2$. The duality between
$W_1/K_1$ and $W_2/K_2$ given by the symplectic form~$\omega$ identifies this isomorphism
with the quadratic form $q^A_1$  and its inverse with $q^A_2$. This means that the quadrics
$Q^A_1$ and $Q^A_2$ are projectively dual.
 \end{proof}

The following construction shows that any pair of projectively dual quadrics comes from a Lagrangian subspace.

\begin{lemm}\label{lemma-quadric-lagrangian}
Let $Q \subset \P(\LL)$ be a quadric and let $Q^\vee \subset \P(\LL^\vee)$ be its projective dual.
In the   space $\VV := \LL \oplus \LL^\vee$ endowed with the symplectic form 
\begin{equation*}
\omega(x,y) = (x_1,y_2) - (y_1,x_2),
\end{equation*}
there is a unique Lagrangian subspace $A \subset \VV$ such that $Q^A_1 = Q$ and $Q^A_2 = Q^\vee$.
\end{lemm}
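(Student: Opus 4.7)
The plan is to construct $A$ explicitly from the defining data of $Q$ and to verify the claimed properties directly; uniqueness will then follow from Proposition~\ref{lemma-lagrangian-quadratic}(b). Suppose $Q \subset \P(W) \subset \P(\LL)$ is defined by a symmetric bilinear form $q$ with kernel subspace $K \subset W$, so that $Q^\vee \subset \P(K^\perp) \subset \P(\LL^\vee)$ is cut out by $q^\vee$ with kernel $W^\perp$, where $q^\vee$ corresponds to the inverse of the canonical isomorphism $q\colon W/K \isomto (W/K)^\vee = K^\perp/W^\perp$.

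The candidate Lagrangian I would take is
\begin{equation*}
A := \bigl\{(x,\xi) \in W \oplus K^\perp \subset \LL \oplus \LL^\vee \bigm| \xi|_W = q(x,-) \text{ in } W^\vee \bigr\}.
\end{equation*}
For fixed $x \in W$, the $\xi \in K^\perp$ satisfying the constraint form a coset of $W^\perp$, so $\dim A = \dim W + \dim W^\perp = \dim \LL = \tfrac{1}{2}\dim \VV$. Isotropy of $A$ follows from the symmetry of $q$: for $(x,\xi),(y,\eta) \in A$,
\begin{equation*}
\omega((x,\xi),(y,\eta)) = \eta(x) - \xi(y) = q(y,x) - q(x,y) = 0.
\end{equation*}

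Next, I would verify that $A$ realizes the prescribed pair of quadrics in the sense of Proposition~\ref{lemma-lagrangian-quadratic}. Direct inspection gives $A \cap \LL = K$ and $A \cap \LL^\vee = W^\perp$, together with $\pr_1(A) = W$ (since any $x \in W$ defines $q(x,-) \in K^\perp/W^\perp$ which lifts to some $\xi \in K^\perp$) and symmetrically $\pr_2(A) = K^\perp$. For the induced quadratic form on $W_1 = W$, a lift $(x,\xi) \in A$ of $x$ gives $q^A_1(x) = \omega((x,0),(0,\xi)) = \xi(x) = q(x,x)$, so $Q^A_1 = Q$. A parallel computation, together with the defining identity $q^\vee(\xi) = q(x,x)$ whenever $\xi|_W = q(x,-)$, yields $Q^A_2 = Q^\vee$.

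For uniqueness, suppose $A'$ is another Lagrangian with $Q^{A'}_1 = Q$ and $Q^{A'}_2 = Q^\vee$. By Proposition~\ref{lemma-lagrangian-quadratic}(b), $A'$ must satisfy $A' \cap \LL = K$, $A' \cap \LL^\vee = W^\perp$, $\pr_1(A') = W$, $\pr_2(A') = K^\perp$, and induce the isomorphism $W/K \isomto K^\perp/W^\perp$ coming from $q$; but the same proposition also shows that $A'/(K \oplus W^\perp)$ is precisely the graph of this isomorphism inside $W/K \oplus K^\perp/W^\perp$, so $A' = A$. The main subtlety---more bookkeeping than genuine obstacle---is to track the identifications between $W^\vee$, $K^\perp/W^\perp$, and their duals carefully enough to recover $q^\vee$ exactly, so that the equalities $Q^A_1 = Q$ and $Q^A_2 = Q^\vee$ hold as equalities of quadrics equipped with their defining forms, and not merely up to scalar.
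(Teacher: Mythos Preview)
Your construction of $A$ is exactly the paper's (the paper writes it as the kernel of the map $W \oplus \LL^\vee \to W^\vee$, $(x,\xi) \mapsto q(x,-) + \xi|_W$, which coincides with your explicit description once one notes that $\xi|_W = q(x,-)$ forces $\xi \in K^\perp$), and your verification of isotropy, $Q^A_1 = Q$, and uniqueness via Proposition~\ref{lemma-lagrangian-quadratic}(b) matches the paper's line by line. The only cosmetic difference is that the paper obtains $Q^A_2 = Q^\vee$ directly from Proposition~\ref{lemma-lagrangian-quadratic}(b) (since $Q^A_2$ is already known to be projectively dual to $Q^A_1 = Q$) rather than by a parallel computation.
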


\begin{proof}
 Let $K \subset \LL$ and $W \subset \LL$ be the kernel and span of $Q$  and let  $q \in \Sym^2 \!W^\vee$ be a quadratic form defining $Q$. 
 Let $A$ be the kernel of the sum of   the induced map $q\colon W \to W^\vee$  with the canonical surjection
$\LL^\vee \to W^\vee$. It fits into an exact sequence
\begin{equation}\label{eqlem}
0 \to A \to W \oplus \LL^\vee \to W^\vee \to 0.
\end{equation}
The embedding $W \to \LL$ induces an embedding $A \to \LL \oplus \LL^\vee = \VV$. Let us show the image is Lagrangian.
It can be explicitly described as
\begin{equation*}
A = \{ (x_1,x_2) \in \LL \oplus \LL^\vee \mid x_1 \in W \quad\text{and}\quad x_2 \equiv q(x_1) \bmod W^\perp \}.
\end{equation*}
Given    $x = (x_1,x_2)$ and $y = (y_1,y_2)$ in $A$, we have
\begin{equation*}
\omega(x,y) = (x_1,y_2) - (y_1,x_2) = (x_1,q(y_1)) - (y_1,q(x_1)) = q(y_1,x_1) - q(x_1,y_1) = 0,
\end{equation*}
so $A$ is indeed Lagrangian. Note that $\pr_1(A) = W$ by definition of $A$. Moreover, if $q^A$ 
is the quadratic form on $A$ defined in Lemma~\ref{lemma-lagrangian-quadratic},  
for $x = (x_1,x_2)$ and $y = (y_1,y_2)$ in $A$, we have
\begin{equation*}
q^A(x,y) = \omega(\pr_1(x),\pr_2(y)) = \omega(x_1,y_2) = (x_1,q(y_1)) = q(x_1,y_1) = q(\pr_1(x),\pr_1(y)).
\end{equation*}
This shows $q^A_1 = q$, so $Q^A_1 = Q$. Since $Q^A_2$ is projectively dual to $Q^A_1$, we obtain $Q^A_2 = Q^\vee$.

For the uniqueness, it is enough to check that the construction of this lemma is inverse to the construction
of Proposition~\ref{lemma-lagrangian-quadratic}, which is straightforward.
\end{proof}

We now discuss the   so-called
``isotropic reduction'' of Lagrangian subspaces.

Let $(\VV,\omega)$ be a symplectic vector space. Let $\II \subset \VV$ be an isotropic subspace and let $\II^\perp$ be its $\omega$-orthogonal.
The restriction of the symplectic form $\omega$ to $\II^\perp$ is degenerate and its kernel   is   $\II \subset \II^\perp$.
Hence the form descends to a symplectic form $\bar\omega$ on   $\BVV := \II^\perp/\II$. Moreover,
for each Lagrangian subspace $\LL \subset \VV$, the subspace
\begin{equation*}
\BLL := (\LL \cap \II^\perp)/(\LL \cap \II) \subset \BVV
\end{equation*}
is Lagrangian. We call the symplectic space $\BVV$  the {\sf isotropic reduction} of $\VV$,
and the Lagrangian subspace $\BLL$  the {\sf isotropic reduction} of $\LL$.

Let $\VV = \LL_1 \oplus \LL_2$  be a Lagrangian direct sum decomposition and let $\II \subset \LL_1$   (automatically isotropic). Isotropic reduction
  gives us a Lagrangian direct sum decomposition
\begin{equation*}
\BVV = \BLL_1 \oplus \BLL_2,
\end{equation*}
where $\BLL_1 = \LL_1/\II$ and  $\BLL_2 = \LL_2 \cap \II^\perp$. 
Furthermore, if $A \subset \VV$ is Lagrangian,  {its isotropic reduction $\BA \subset \BVV$}
produces a projectively dual pair of quadrics $(Q^\BA_1,Q^\BA_2)$. We will say that this pair is obtained
from the pair $(Q^A_1,Q^A_2)$ by  {\sf isotropic reduction with respect to} $\II$.
The next lemma gives a geometric relation between a pair of projectively dual quadrics
and their isotropic reduction.

\begin{lemm}\label{lemma-isotropic-reduction}
In the situation described above, we have the following.

{\rm (a)} The quadratic form $q^\BA$ on $\BA$ is induced by the restriction of the  quadratic form $q^A$ to 
$A \cap \II^\perp \subset A$. In particular,
\begin{equation*}
Q^A \cap \PP(A \cap \II^\perp) = \cone{A\cap \II}(Q^\BA).
\end{equation*}

{\rm (b)}  The quadric $Q^\BA_2$ is the  linear section $  Q^A_2 \cap \PP(\BLL_2)$ of $Q^A_2$ 
and the quadric $Q^\BA_1$ is its projective dual. 
 Moreover, the span and the kernel of $Q^\BA_2$ are
\begin{equation}\label{span-ker}
\mathop{\Span}(Q^\BA_2) = ((A \cap \LL_1)/(A \cap \II))^\perp \subset \BLL_2,
\quad
\Ker(Q^\BA_2) = (A \cap (\II \oplus \BLL_2))/(A \cap \II).
\end{equation}  
\end{lemm}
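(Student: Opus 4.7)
My plan is to unwind the definitions on both sides and match them directly, using only Proposition~\ref{lemma-lagrangian-quadratic} applied once to $A \subset \VV$ and once to $\BA \subset \BVV$. First I would establish the following preliminary observation: for any $x \in \II^\perp$, writing $x = x_1 + x_2$ with $x_i \in \LL_i$, the component $x_2$ automatically lies in $\BLL_2 = \LL_2 \cap \II^\perp$. Indeed, for any $z \in \II \subset \LL_1$, the Lagrangian property of $\LL_1$ gives $\omega(x_1, z) = 0$, so $0 = \omega(x, z) = \omega(x_2, z)$, hence $x_2 \in \II^\perp$. Equivalently, $A \cap \II^\perp = \{x \in A \mid \pr_2(x) \in \BLL_2\}$, and in particular $\LL_1 \subset \II^\perp$.

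For part~(a), the observation shows that the induced projections on the reduction satisfy $\bar\pr_2(\bar x) = x_2$ and $\bar\pr_1(\bar x) = x_1 + \II$ for $x = x_1 + x_2 \in A \cap \II^\perp$, so
$$q^\BA(\bar x, \bar y) = \bar\omega\bigl(\bar\pr_1(\bar x), \bar\pr_2(\bar y)\bigr) = \omega(x_1, y_2) = q^A(x,y),$$
the middle equality being valid because $x_1, y_2 \in \II^\perp$ make $\omega(x_1, y_2)$ independent of the choice of lift of $x_1$ modulo $\II$. Thus $q^\BA$ is the form on $\BA$ induced by $q^A|_{A \cap \II^\perp}$; its kernel therefore contains $A \cap \II$ (consistent with~\eqref{equation-kernel-qs}), and the projective identity $Q^A \cap \PP(A \cap \II^\perp) = \cone{A \cap \II}(Q^\BA)$ is immediate.

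For part~(b), I would use the preliminary observation once more to identify $\bar W_2 := \bar\pr_2(\BA) = \pr_2(A \cap \II^\perp) = W_2 \cap \BLL_2$; combined with part~(a), the quadratic form $q^\BA_2$ on $\bar W_2$ then coincides with the restriction of $q^A_2$ to $W_2 \cap \BLL_2$, so $Q^\BA_2 = Q^A_2 \cap \PP(\BLL_2)$. The projective duality of $Q^\BA_1$ and $Q^\BA_2$ is then a direct instance of Proposition~\ref{lemma-lagrangian-quadratic}(b) applied to the Lagrangian $\BA \subset \BVV$. For the span and kernel, that same proposition gives $\Span(Q^\BA_2) = (\BA \cap \BLL_1)^\perp$ and $\Ker(Q^\BA_2) = \BA \cap \BLL_2$. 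Using $\LL_1 \subset \II^\perp$, one computes $\BA \cap \BLL_1 = (A \cap \LL_1)/(A \cap \II)$; and by writing a preimage of an element of $\BA \cap \BLL_2$ as $a = v - i$ with $v \in \BLL_2$ and $i \in \II$, one sees that $\BA \cap \BLL_2 = (A \cap (\II \oplus \BLL_2))/(A \cap \II)$. Both formulas in~\eqref{span-ker} then fall out.

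The only step requiring any real care is the identification $\bar W_2 = W_2 \cap \BLL_2$, since one must verify that every element of $W_2 \cap \BLL_2$ actually arises from some $x \in A \cap \II^\perp$ and not merely from some $x \in A$; this is precisely the content of the preliminary observation, which relies in an essential way on $\LL_1$ being Lagrangian. Everything else is mechanical bookkeeping of kernels and quotients.
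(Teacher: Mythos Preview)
Your proof is correct and follows essentially the same route as the paper's. The paper also reduces everything to Proposition~\ref{lemma-lagrangian-quadratic} applied to $A$ and to $\BA$, identifies $\bar W_2 = \pr_2(A\cap\II^\perp) = W_2 \cap \BLL_2$ via the inclusion $\LL_1 \subset \II^\perp$, and computes $\BA\cap\BLL_1$ and $\BA\cap\BLL_2$ by lifting to $\II^\perp$; your ``preliminary observation'' just makes explicit the step the paper encodes as $\pr_2(A \cap \II^\perp) = \pr_2(A) \cap \pr_2(\II^\perp)$.
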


\begin{proof}
(a) First, we have $A \cap \II \subset A \cap \LL_1 \subset \Ker (q^A)$.
Thus $q^A$ induces a quadratic form on $\BA$. Moreover, the projectors $\overline\pr_1$ and $\overline\pr_2$ 
onto the components of the decomposition $\BVV = \BLL_1 \oplus \BLL_2$ are induced 
by the projectors $\pr_1$ and $\pr_2$. Hence, for $x,y \in A \cap \II^\perp$, we have
\begin{equation*}
q^\BA(x,y) = \overline\omega(\overline\pr_1(x),\overline\pr_2(y)) = \omega(\pr_1(x),\pr_2(y)) = q^A(x,y).
\end{equation*}
This means that the quadratic form induced on $\BA$ is $q^\BA$
and thus proves the first equality. 

(b) We prove that $Q^\BA_2$ is this linear section of $Q^A_2$. Since $\LL_1 \subset \II^\perp$, we have 
\begin{equation*}
\BW_2 := \overline\pr_2(\BA) = \pr_2(A \cap \II^\perp) = \pr_2(A) \cap \pr_2(\II^\perp) = W_2 \cap \BLL_2
\end{equation*}
and the kernel of the projection $A \cap \II^\perp \to \pr_2(A \cap \II^\perp)$ is $A \cap \LL_1$. Therefore we have a commutative diagram
\begin{equation*}
\xymatrix@R=5mm{
0 \ar[r] & A \cap \LL_1 \ar[r] \ar[d] & A \cap \II^\perp \ar[r]^{\pr_2} \ar@{->>}[d] & W_2 \cap \BLL_2 \ar@{=}[d] \ar[r] & 0\\
0 \ar[r] & \BA \cap \BLL_1 \ar[r] & \BA \ar[r]^{\overline\pr_2} & \BW_2 \ar[r] & 0.
}
\end{equation*}
Since $A \cap \LL_1$ is contained in the kernel of the quadratic form $q^A\vert_{A \cap \II^\perp}$,  the quadratic form
induced by it on $W_2 \cap \BLL_2$ via the projection $\pr_2$ coincides with the quadratic form obtained in two steps, 
by first inducing a quadratic form on $\BA$ via the middle vertical arrow, and then on $\BW_2$ via $\overline\pr_2$. 
The quadratic form induced via $\pr_2$ is $q^A_2\vert_{W_2\cap\BLL_2}$, the quadratic form induced via the vertical map is $q^\BA$,
and the quadratic form induced by it via $\overline\pr_2$ is $q^\BA_2$. 
Therefore, by the commutativity of the right square, we deduce $q^A_2\vert_{W_2\cap\BLL_2} = q^\BA_2$. 

We now identify the span and the kernel of $Q^\BA_2$. By Proposition~\ref{lemma-lagrangian-quadratic},
the span is  $(\BA \cap \BLL_1)^\perp$ and the kernel is   $\BA \cap \BLL_2$. To describe the first, 
consider the  commutative diagram above. Since the middle vertical map is surjective with kernel $A \cap \II$, 
 the same is true for the left vertical map, hence
$\BA \cap \BLL_1 = (A \cap \LL_1) / (A \cap \II)$. 

For the second, recall that $\BA$ is the image of $A \cap \II^\perp$ under the projection $\II^\perp \to \BVV$ with kernel $\II$.
Therefore, $\BA \cap \BLL_2$ is the image in $\BVV$ of the intersection of $A \cap \II^\perp$ with the preimage of $\BLL_2$ in $\II^\perp$, \ie, with $\II \oplus \BLL_2$.
Thus it is the image in $\BVV$ of $A \cap (\II \oplus \BLL_2)$. Since the kernel of the projection $\II^\perp \to \BVV$ is $\II$, we   see that
$\BA \cap \BLL_2 = (A \cap (\II \oplus \BLL_2))/(A \cap \II)$. 
 \end{proof}

Choosing an isotropic space appropriately, we can realize any subquadric in $Q_2$ as the result 
of an isotropic reduction applied to the pair $(Q_1,Q_2)$. 
This gives a convenient way to control the span and the kernel of a subquadric.

\begin{coro}\label{corollary-quadric-restriction}
Let $(Q_1,Q_2)$ be a projectively dual pair of quadrics corresponding to a Lagrangian decomposition $\VV = \LL_1 \oplus \LL_2$
and a Lagrangian subspace $A \subset \VV$. Let $\BLL_2 \subset \LL_2$ be an arbitrary subspace and set
\begin{equation*}
\II := \LL_1 \cap \BLL_2^\perp.
\end{equation*}
The isotropic reduction with respect to $\II$ produces a projectively dual pair of quadrics $(\overline{Q}_1,\overline{Q}_2)$,
where $\overline{Q}_2 = Q_2 \cap \P(\BLL_2)$. In particular, the span and the kernel of $\overline{Q}_2$ are given by~\eqref{span-ker}.
\end{coro}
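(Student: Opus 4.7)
The plan is to reduce everything to Lemma~\ref{lemma-isotropic-reduction} by checking that the isotropic subspace $\II := \LL_1 \cap \BLL_2^\perp \subset \LL_1$ recovers $\BLL_2$ as the second Lagrangian summand of the isotropic reduction. Once this is established, parts (a) and (b) of the corollary follow immediately from the corresponding assertions in Lemma~\ref{lemma-isotropic-reduction}.

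First I would observe that $\II \subset \LL_1$ is automatically isotropic (since $\LL_1$ is Lagrangian), so the isotropic reduction is defined and, by the construction reviewed just before Lemma~\ref{lemma-isotropic-reduction}, yields the Lagrangian decomposition $\BVV = \BLL_1 \oplus \BLL_2'$ with $\BLL_1 = \LL_1/\II$ and $\BLL_2' = \LL_2 \cap \II^\perp$. The key step is then the identification
\begin{equation*}
\LL_2 \cap \II^\perp = \BLL_2.
\end{equation*}
The inclusion $\BLL_2 \subset \LL_2 \cap \II^\perp$ is immediate from the definition of $\II$. For the reverse inclusion I would use dimension counting via the perfect pairing $\omega \colon \LL_1 \times \LL_2 \to \k$ induced by the symplectic form: setting $k := \dim(\BLL_2)$, the annihilator of $\BLL_2$ in $\LL_1$ has dimension $n - k$ (where $n = \dim \LL_1 = \dim \LL_2$), hence $\dim \II = n-k$; and the annihilator of $\II$ in $\LL_2$ therefore has dimension $k$, which equals $\dim \BLL_2$. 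Combined with the already-established inclusion, this gives the equality.

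With this identification in place, Lemma~\ref{lemma-isotropic-reduction}(b) directly gives $\overline{Q}_2 = Q_2 \cap \P(\BLL_2)$, together with the explicit description of $\mathop{\Span}(\overline{Q}_2)$ and $\Ker(\overline{Q}_2)$ via~\eqref{span-ker}, and the projective duality $\overline{Q}_1 = \overline{Q}_2^\vee$ follows from Proposition~\ref{lemma-lagrangian-quadratic} applied to the reduced pair. The only real content of the proof is thus the short dimension-counting argument verifying $\LL_2 \cap \II^\perp = \BLL_2$; I do not anticipate any serious obstacle, only care in bookkeeping the Lagrangian-perpendicular relations in $\VV$ versus those in $\LL_1 \times \LL_2$.
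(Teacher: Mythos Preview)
Your proposal is correct and follows essentially the same approach as the paper: both arguments reduce to checking $\LL_2 \cap \II^\perp = \BLL_2$ via the non-degenerate pairing between $\LL_1$ and $\LL_2$ induced by $\omega$, and then invoke Lemma~\ref{lemma-isotropic-reduction}(b). The paper phrases this as a one-line double-annihilator statement rather than an explicit dimension count, but the content is identical.
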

\begin{proof}
Since the pairing between $\LL_1$ and $\LL_2$ is non-degenerate and $\II$ is the orthogonal of $\BLL_2$ in $\LL_1$,
it follows that $\BLL_2$ is the orthogonal of $\II$ in $\LL_2$, \ie, $\BLL_2 = \LL_2 \cap \II^\perp$, and Lemma~\ref{lemma-isotropic-reduction} applies.
\end{proof}

 To conclude, we  discuss a family version of the correspondence discussed in this appendix.
Let $S$ be a scheme, let $\cV$ be a vector bundle on $S$ equipped with a symplectic form $\omega\colon \bw2\cV \to \cM$ with values in a line bundle $\cM$,
and let $\cV = \cL_1 \oplus \cL_2$ be a Lagrangian direct sum decomposition. It induces an isomorphism
\begin{equation*}
\cL_2^\vee \cong \cL_1 \otimes \cM^{-1}. 
\end{equation*}
Let   $\cA \subset \cV$ be   another Lagrangian subbundle.
Formula~\eqref{equation-quadric-s} then defines a quadratic form $q_\cA\colon \cM^{-1} \to \Sym^2\cA^\vee$ on $\cA$, but 
 if we want to consider one of the two projectively dual quadrics,
we should impose some constant rank condition  to ensure that the linear span of the quadric in question is a vector bundle.

\begin{lemm}
Assume the morphism   $\cA \hookrightarrow \cV \xrightarrow{\ \pr_2\ } \cL_2$ has constant rank and let $\cW_2 \subset \cL_2$ be its image. Then the family of quadrics $q_\cA$ induces a family
$q_{\cA,2}\colon \cM^{-1} \to \Sym^2\!\cW_2^\vee$ of quadrics on $\cW_2$  and there are isomorphisms of sheaves
\begin{equation*}\label{equation:cokernels-iso}
\begin{aligned}
\Coker(\cW_2 \otimes \cM^{-1} \xrightarrow{\ q_{\cA,2}\ } \cW_2^\vee) & \cong 
\Coker(\cA \otimes \cM^{-1} \xrightarrow{\ \pr_1\ } \cL_2^\vee) \\ 
& \cong
\Coker(\cL_2 \otimes \cM^{-1} \xrightarrow{\ \pr_1^\vee\ } \cA^\vee)
\end{aligned}
\end{equation*}
 {and similarly for the kernels.}
\end{lemm}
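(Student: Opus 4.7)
The plan is to reduce the family statement to the pointwise assertions of Proposition~\ref{lemma-lagrangian-quadratic} and Corollary~\ref{corollary-quadric-restriction}, with the constant rank hypothesis ensuring that all the relevant subsheaves are in fact subbundles. First, I would verify that $\cA \cap \cL_1$, $\cA \cap \cL_2$, $\cW_1 := \pr_1(\cA)$, and $\cW_2 = \pr_2(\cA)$ are all subbundles of $\cV$. The constant rank assumption on $\pr_2\vert_{\cA}$ is exactly what is needed for $\cW_2$ and $\cA \cap \cL_1 = \Ker(\pr_2\vert_\cA)$. Since $\cA$ and $\cL_2$ have the same rank (both being Lagrangian in $\cV$), a fiberwise application of Proposition~\ref{lemma-lagrangian-quadratic}(b) yields $\cW_2 = (\cA \cap \cL_1)^\perp$ under the non-degenerate pairing $\cL_1 \otimes \cL_2 \to \cM$, so the ranks of $\cA \cap \cL_2$ and $\cW_1$ are locally constant as well. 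The quadratic form~\eqref{equation-quadric-s} vanishes fiberwise on $\cA \cap \cL_1$ by Proposition~\ref{lemma-lagrangian-quadratic}(a), so it descends to a well-defined $q_{\cA,2}\colon \cW_2 \otimes \cM^{-1} \to \cW_2^\vee$.

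For the first isomorphism, the plan is to apply the snake lemma to the commutative diagram
\begin{equation*}
\xymatrix@R=5mm@C=7mm{
0 \ar[r] & (\cA \cap \cL_1) \otimes \cM^{-1} \ar[r] \ar@{=}[d] & \cA \otimes \cM^{-1} \ar[r]^-{\pr_2} \ar[d]^{\pr_1} & \cW_2 \otimes \cM^{-1} \ar[r] \ar[d]^{q_{\cA,2}} & 0 \\
0 \ar[r] & (\cA \cap \cL_1) \otimes \cM^{-1} \ar[r] & \cL_2^\vee \ar[r] & \cW_2^\vee \ar[r] & 0,
}
\end{equation*}
where the right vertical arrow is dual to the inclusion $\cW_2 \hookrightarrow \cL_2$ and the bottom-left arrow uses the identification $\cL_2^\vee \cong \cL_1 \otimes \cM^{-1}$ induced by $\omega$ together with the inclusion $\cA \cap \cL_1 \hookrightarrow \cL_1$. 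The kernel of the right vertical is $(\cA \cap \cL_1) \otimes \cM^{-1}$ because $\cW_2^\perp = \cA \cap \cL_1$ in $\cL_1$, and the left square commutes because $\pr_1$ restricts to the inclusion on $\cA \cap \cL_1 \subset \cL_1$. The snake lemma then identifies both the kernel and the cokernel of $q_{\cA,2}$ with those of $\pr_1$.

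For the second isomorphism, the Lagrangian property of $\cA$ yields a short exact sequence $0 \to \cA \to \cV \to \cA^\vee \otimes \cM \to 0$, and the identity $\omega(v, a) = \omega(v, \pr_1(a))$ for $v \in \cL_2$ and $a \in \cA$ shows that the restriction of the surjection $\cV \to \cA^\vee \otimes \cM$ to the summand $\cL_2$ coincides, under the identification $\cL_2 \cong \cL_1^\vee \otimes \cM$, with $\pr_1^\vee$. Since $\cA$ and $\cL_2$ have the same rank, $\pr_1 \otimes \cM^{-1}$ is a constant-rank map between vector bundles of equal rank, so kernel-cokernel duality furnishes canonical isomorphisms $\Coker(\pr_1^\vee) \cong \Ker(\pr_1 \otimes \cM^{-1})^\vee$ and $\Ker(\pr_1^\vee) \cong \Coker(\pr_1 \otimes \cM^{-1})^\vee$. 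A direct computation identifies $\Ker(\pr_1 \otimes \cM^{-1}) \cong (\cA \cap \cL_2) \otimes \cM^{-1}$ and $\Coker(\pr_1 \otimes \cM^{-1}) \cong (\cA \cap \cL_2)^\vee$ (using $\cW_1 = (\cA \cap \cL_2)^\perp$), and tracking the twists shows that the kernel and cokernel of $\pr_1^\vee$ match those of $\pr_1 \otimes \cM^{-1}$ on the nose, yielding the second isomorphism.

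The principal difficulty is not conceptual but bookkeeping: ensuring that each of the identifications (duality via $\omega$, orthogonal complements, descending $q_\cA$ to the quotient) is natural enough to promote the pointwise statements to genuine isomorphisms of coherent sheaves on $S$, and keeping the various twists by powers of $\cM$ consistent throughout. This is routine once the subbundle property established in the first step is in place.
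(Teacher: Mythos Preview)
Your argument is correct. The snake-lemma diagram you write down for the first isomorphism is exactly the left square of the paper's proof, and your handling of the constant-rank and bundle-theoretic issues is fine.

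The organisational difference is in the second isomorphism. The paper extends your diagram one column to the right,
\begin{equation*}
\xymatrix{
\cA \otimes \cM^{-1} \ar[d]^{\pr_1} \ar@{->>}[r] &
\cW_2 \otimes \cM^{-1} \ar[d]^{q_{\cA,2}} \ar@{^{(}->}[r] &
\cL_2 \otimes \cM^{-1} \ar[d]^{\pr_1^\vee} \\
\cL_2^\vee \ar@{->>}[r] &
\cW_2^\vee \ar@{^{(}->}[r] &
\cA^\vee,
}
\end{equation*}
and observes that the horizontal compositions are $\pr_2\vert_\cA$ and its dual, so that $\pr_1$ and $\pr_1^\vee$ identify their kernels and cokernels; both isomorphisms then drop out of a single diagram chase. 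Your approach instead deduces the second isomorphism from kernel--cokernel duality for the constant-rank map $\pr_1$ together with the identification $\cL_1/\cW_1 \cong (\cA\cap\cL_2)^\vee\otimes\cM$. This is perfectly valid, but slightly less direct: you end up identifying both cokernels with $(\cA\cap\cL_2)^\vee$ separately, whereas the paper's extended diagram produces the comparison map between them without naming the common target.
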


\begin{proof}
By definition of $\cW_2$, there are epimorphisms $\cA \twoheadrightarrow \cW_2$  
and  $\cL_1 \otimes \cM^{-1} \cong \cL_2^\vee \twoheadrightarrow \cW_2^\vee$. 
They combine into a commutative diagram
\begin{equation*}
\xymatrix{
\cA \otimes \cM^{-1} \ar[d]^{\pr_1} \ar@{->>}[r] &
\cW_2 \otimes \cM^{-1} \ar[d]^{q_{\cA,2}} 
\ar@{^{(}->}[r] & 
\cL_2 \otimes \cM^{-1} 
 \ar[d]^{\pr_1^\vee}
\\
 \cL_2^\vee
\ar@{->>}[r] &
\cW_2^\vee 
\ar@{^{(}->}[r] & 
\cA^\vee,
}
\end{equation*}
where the kernels and   cokernels of the horizontal compositions are identified by the maps $\pr_1$ and $\pr_1^\vee$ (the kernels also can be characterized as 
the intersection of the subbundles $\cA$ and $\cL_1$ inside $\cV$, twisted by $\cM^{-1}$, and the cokernels are dual up to a twist to the kernels).
It follows that the cokernels of the vertical maps $\pr_1$, $q_{\cA,2}$, and $\pr_1^\vee$   are isomorphic, hence the lemma.
 \end{proof}

We also discuss a family version of   isotropic reduction. Assume that in addition to the above, we are given a vector subbundle $\cI \subset \cL_1$.
The vector bundle $\overline\cV$ defined as the cohomology of the monad $\cI \hookrightarrow \cV \twoheadrightarrow \cI^\vee\otimes \cM$ then has a natural 
symplectic structure  and if $\cL \subset \cV$ is a Lagrangian subbundle {\em such that the composition $\cL \hookrightarrow \cV \twoheadrightarrow \cI^\vee \otimes \cM$
has constant rank,}
 the sheaf $\overline{\cL} := \Ker(\cL \to \cI^\vee \otimes \cM)/\Ker(\cI \to \cL^\vee \otimes \cM)$
is locally free and  is a Lagrangian subbundle in $\overline{\cV}$. In particular, when applied to $\cL = \cL_1$ and $\cL = \cL_2$, we get
\begin{equation*}
\overline\cL_1 = \cL_1/\cI 
\qquad\text{and}\qquad 
\overline\cL_2 = \Ker(\cL_2 \twoheadrightarrow \cI^\vee \otimes \cM).
\end{equation*}

\begin{lemm}
 One has a Lagrangian direct sum decomposition $\overline{\cV} = \overline{\cL}_1 \oplus \overline{\cL}_2$.
Moreover, if $\cA \subset \cV$ is a Lagrangian subbundle such that the composition $\cA \to \cV \to \cI^\vee\otimes\cM$ has constant rank,   
the vector bundle $\overline{\cW}_2$ is a subbundle in $\cW_2$ and the quadratic form $q_{\overline{\cA},2}$ on $\overline{\cW}_2$ equals
the restriction of the quadratic form $q_{\cA,2}$. Finally, the subbundle $\wtilde\cL_2 := \cI \oplus \overline\cL_2 \subset \cL_1 \oplus \cL_2 = \cV$
is Lagrangian  and there is an exact sequence
\begin{equation*}
0 \to 
 \Coker(\overline\cW_2 \otimes \cM^{-1} \to \overline\cW_2^\vee) \to
 \Coker(\cA \otimes \cM^{-1} \to \wtilde\cL_2^\vee) \to
\Coker(\cA \otimes \cM^{-1} \to \cI^\vee) \to
0.
\end{equation*}
In particular, if the map $\cA \to \cI^\vee \otimes \cM$ is an epimorphism,   
\begin{equation*}
\Coker(\overline\cW_2 \otimes \cM^{-1} \to \overline\cW_2^\vee) \cong
\Coker(\cA \otimes \cM^{-1} \to \wtilde\cL_2^\vee) \cong
\Coker(\wtilde\cL_2 \otimes \cM^{-1} \to \cA^\vee).
\end{equation*}
\end{lemm}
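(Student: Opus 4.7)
The plan is to verify the four assertions by reduction to linear algebra on fibers, using the constant-rank hypotheses to ensure that all the subsheaves that appear are in fact subbundles. First, I would check that $\overline{\cV} = \overline{\cL}_1 \oplus \overline{\cL}_2$ is a Lagrangian direct sum decomposition. Since $\cI \subset \cL_1$ and $\cL_1$ is Lagrangian, we have $\cL_1 \subset \cI^\perp$. The map $\cL_2 \to \cI^\vee \otimes \cM$ induced by the symplectic form is surjective (its dual is the inclusion $\cI \hookrightarrow \cL_1 \cong \cL_2^\vee \otimes \cM$), and its kernel is $\cL_2 \cap \cI^\perp = \overline{\cL}_2$. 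Hence $\cI^\perp = \cL_1 \oplus \overline{\cL}_2$, so $\overline{\cV} = \cI^\perp/\cI = (\cL_1/\cI) \oplus \overline{\cL}_2 = \overline{\cL}_1 \oplus \overline{\cL}_2$; the Lagrangian property of each summand is inherited from that of $\cL_i$.

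For the second assertion, the constant-rank condition makes $\overline{\cA}$ a Lagrangian subbundle of $\overline{\cV}$, so its projection $\overline{\cW}_2 := \overline{\pr}_2(\overline{\cA})$ is a subbundle of $\overline{\cL}_2 \subset \cL_2$. Since $\overline{\cW}_2 \subset \cW_2$ (as $\overline{\cA}$ is a quotient of $\cA \cap \cI^\perp \subset \cA$), the equality $q_{\overline{\cA},2} = q_{\cA,2}\vert_{\overline{\cW}_2}$ can be checked fiberwise, where it follows directly from Lemma~\ref{lemma-isotropic-reduction}(a). Third, for $\wtilde\cL_2 = \cI \oplus \overline\cL_2 \subset \cV$: both summands are isotropic (as subbundles of the Lagrangians $\cL_1$ and $\cL_2$ respectively) and $\cI \perp \overline{\cL}_2$ by the very definition $\overline{\cL}_2 = \cL_2 \cap \cI^\perp$; the rank count $\rank(\cI) + (\rank(\cL_2) - \rank(\cI)) = \tfrac12 \rank(\cV)$ then shows $\wtilde\cL_2$ is Lagrangian.

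The main obstacle, and the longest step, is the four-term exact sequence of cokernels. I plan to obtain it by applying the snake lemma to the diagram whose rows are the short exact sequence $0 \to \cI^\vee \to \wtilde\cL_2^\vee \to \overline\cL_2^\vee \to 0$ and its pullback along the identity of $\cA \otimes \cM^{-1}$; the three vertical maps are the natural ones induced by the symplectic pairing of $\cA$ with $\cI$, with $\wtilde\cL_2$, and with $\overline\cL_2$ respectively. The previous lemma, applied in the symplectic bundle $\overline\cV$ with its decomposition $\overline\cV = \overline\cL_1 \oplus \overline\cL_2$ and its Lagrangian $\overline\cA$, identifies $\Coker(\cA \otimes \cM^{-1} \to \overline\cL_2^\vee)$ with $\Coker(\overline\cW_2 \otimes \cM^{-1} \to \overline\cW_2^\vee)$; the snake lemma then yields the desired four-term sequence. (Here the constant-rank hypothesis that $\cA \to \cI^\vee \otimes \cM$ has constant rank, together with the existence of $\overline\cA$ as a subbundle, ensures that all the kernels and cokernels of the individual vertical maps are vector bundles, so the snake sequence is really a sequence of vector bundles and not just sheaves.)

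Finally, when $\cA \to \cI^\vee \otimes \cM$ is surjective, the rightmost term of the sequence vanishes, giving the first isomorphism. For the second isomorphism, I would apply the earlier lemma once more, this time to the Lagrangian decomposition $\cV = \cL'_1 \oplus \wtilde\cL_2$ for any Lagrangian complement $\cL'_1$ of $\wtilde\cL_2$ (which exists at least locally on $S$ and suffices to identify the two cokernels, since these are intrinsic to $\cA$ and $\wtilde\cL_2$ and independent of the chosen complement); this gives $\Coker(\cA \otimes \cM^{-1} \to \wtilde\cL_2^\vee) \cong \Coker(\wtilde\cL_2 \otimes \cM^{-1} \to \cA^\vee)$, completing the proof.
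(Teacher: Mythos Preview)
Your overall strategy is sound and more detailed than the paper's, which simply says ``repeat the proof of Lemma~\ref{lemma-isotropic-reduction}.'' However, the snake-lemma step contains a genuine gap.

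First a minor point: the split sequence should be taken in the order $0 \to \overline\cL_2^\vee \to \wtilde\cL_2^\vee \to \cI^\vee \to 0$, not the one you wrote; otherwise the cokernels come out in the wrong order for the conclusion.

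The real issue is your identification of the leftmost cokernel. The previous lemma, applied in $\overline\cV$ to the Lagrangian $\overline\cA$, yields
\[
\Coker\bigl(\overline\cW_2\otimes\cM^{-1}\to\overline\cW_2^\vee\bigr)\ \cong\ \Coker\bigl(\overline\cA\otimes\cM^{-1}\to\overline\cL_2^\vee\bigr),
\]
not $\Coker(\cA\otimes\cM^{-1}\to\overline\cL_2^\vee)$ as you claim. These two cokernels differ in general: the pairing $\cA\to\overline\cL_2^\vee\otimes\cM$ does \emph{not} factor through $\overline\cA$ unless one first restricts to $\cA\cap\cI^\perp$. For a concrete fiberwise counterexample, take $\cV=\k^4$ with the standard form, $\cL_1=\langle e_1,e_2\rangle$, $\cL_2=\langle f_1,f_2\rangle$, $\cI=\langle e_1\rangle$, and $\cA=\langle e_1+f_2,\ e_2+f_1\rangle$. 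Then $\overline\cL_2=\langle f_2\rangle$, the map $\cA\to\overline\cL_2^\vee$ is surjective (via $e_2+f_1$), so its cokernel vanishes; but $\overline\cA=\langle \bar f_2\rangle\subset\overline\cL_2$, so $\overline\pr_1(\overline\cA)=0$ and $\Coker(\overline\cA\to\overline\cL_2^\vee)$ is one-dimensional.

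The fix is to set up the snake diagram with top row
\[
0\ \to\ (\cA\cap\cI^\perp)\otimes\cM^{-1}\ \to\ \cA\otimes\cM^{-1}\ \to\ \bigl(\cA/(\cA\cap\cI^\perp)\bigr)\otimes\cM^{-1}\ \to\ 0
\]
mapping to $0\to\overline\cL_2^\vee\to\wtilde\cL_2^\vee\to\cI^\vee\to 0$. The rightmost vertical map is then injective, so the snake lemma gives the desired sequence with left term $\Coker\bigl((\cA\cap\cI^\perp)\otimes\cM^{-1}\to\overline\cL_2^\vee\bigr)$. Since $\cA\cap\cI\subset\cL_1$ pairs trivially with $\overline\cL_2\subset\cL_2$, this map factors through $\overline\cA\otimes\cM^{-1}$, and now the previous lemma applies correctly. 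The constant-rank hypothesis on $\cA\to\cI^\vee\otimes\cM$ is exactly what makes the top row a sequence of vector bundles.
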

\begin{proof}
Just repeat  the proof of Lemma~\ref{lemma-isotropic-reduction}. 
\end{proof}

\end{document}